\numberwithin{equation}{section}
\DeclarePairedDelimiterX{\norm}[1]{\lVert}{\rVert}{#1}
\newtheorem{theorem}{Theorem}[section]
\newtheorem*{theorem*}{Theorem}
\newtheorem{lemma}[theorem]{Lemma}
\newtheorem{proposition}[theorem]{Proposition}
\newtheorem*{proposition*}{Proposition}
\newtheorem{corollary}[theorem]{Corollary}
\theoremstyle{definition}
\newtheorem{example}[theorem]{Example}
\newtheorem{definition}[theorem]{Definition}
\newcommand{\R}{\mathbb{R}}
\newcommand{\Q}{\mathbb{Q}}
\newcommand{\N}{\mathbb{N}}
\newcommand{\Z}{\mathbb{Z}}
\newcommand{\eps}{\varepsilon}
\newcommand{\Len}{\mathrm{L}}
\newcommand{\isom}{\cong}
\newcommand{\isomto}{\xrightarrow{\isom}}
\DeclareMathOperator{\Int}{Int}
\newcommand\mydots{\makebox[3em][c]{.\hfil.\hfil.}}
\DeclareMathOperator{\Cost}{Cost}
\DeclareMathOperator{\supp}{supp}
\DeclareMathOperator{\asdim}{asdim}
\DeclareMathOperator{\im}{\operatorname{im}}
\DeclareMathOperator{\diam}{diam}
\DeclarePairedDelimiter{\abs}{\lvert}{\rvert}
\DeclareMathOperator{\perimeter}{perimeter}
\DeclareMathOperator{\CAT}{CAT}
\begin{document}
%\sloppy

\begin{abstract}
  Motivated by persistent homology and topological data analysis, we consider formal sums on a metric space with a distinguished subset. These formal sums, which we call persistence diagrams, have a canonical 1-parameter family of metrics called Wasserstein distances. We study the topological and metric properties of these spaces. Some of our results are new even in the  case of persistence diagrams on the half-plane. 
  Under mild conditions, no persistence diagram has a compact neighborhood.
  If the underlying metric space is $\sigma$-compact then so is the space of persistence diagrams. However, under mild conditions, the space of persistence diagrams is not hemicompact and the space of functions from this space to a topological space is not metrizable.
  Spaces of persistence diagrams inherit completeness and separability from the underlying metric space.
  Some spaces of persistence diagrams inherit being path connected, being a length space, and being a geodesic space, but others do not.
  We give criteria for a set of persistence diagrams to be totally bounded
and relatively compact.
 We also study the curvature and dimension of spaces of persistence diagrams and their embeddability into a Hilbert space.
As an important technical step, which is of independent interest,
 we give necessary and sufficient conditions for the existence of optimal matchings of persistence diagrams.
\end{abstract}

\title{Topological and metric properties of spaces of generalized persistence diagrams}
\author{Peter Bubenik}
\address{Department of Mathematics, University of Florida, Gainesville, USA}
\email{peter.bubenik@ufl.edu}
\author{Iryna Hartsock}
\address{Department of Machine Learning, H. Lee Moffitt Cancer Center \& Research Institute, Tampa, USA}
\email{iryna.hartsock@moffitt.org}

\keywords{Persistent homology, topological data analysis, Wasserstein distance, formal sums}

\maketitle

\setcounter{tocdepth}{1}
\tableofcontents

\section{Introduction}

We study the topological and metric properties of spaces of formal sums on metric pairs.
A metric pair, $(X,d,A)$, consists of a metric space $(X,d)$ and a subset $A \subset X$.
A formal sum on $(X,d,A)$ is an equivalence class of a formal sum on $X$ modulo formal sums on $A$.

Such sums arise in persistent homology. Indeed,
barcodes~\cite{Collins:2004} are formal sums on the metric pair $(\Int(\R),d,\emptyset)$, where $\Int(\R)$ denotes the set of intervals in $\R$ and $d$ is either the Hausdorff distance or the length of the symmetric difference, and
persistence diagrams~\cite{cseh:stability,csehm:lipschitz} are formal sums on the metric pair $(\overline{\R}^2_{\leq},d,\overline{\Delta})$, where $\overline{\R}^2_{\leq}$ denotes the set of ordered pairs $(x,y)$, where $x,y \in [-\infty,\infty]$ and $x \leq y$, $d$ is a metric induced by a norm on $\R^2$, and $\overline{\Delta}$ denotes the subset of ordered pairs with $x=y$.
Note that to accommodate these basic examples, we need to relax the usual notion of metric to allow infinite distances and remove the requirement that $d(x,y)=0$ implies that $x=y$.
Similarly, we have formal sums on the metric pairs $(\Int(\Q),d,\emptyset)$, $(\R^2_{\leq},d,\Delta)$,
$(\Q^2_{\leq},d,\Delta\cap\Q)$, and
$(\overline{\Q}^2_{\leq},d,\overline{\Delta}\cap\overline{\Q})$.
Additional cases arise in the study of persistence modules indexed by posets~\cite{Kim:2023}.
In this setting, an interval-decomposable persistence module can be described by a formal sum on the set of intervals in the poset
\cite{Skryzalin:2017,Botnan:2018,Cochoy:2020,Bjerkevik:2021,MR4334502}.
More generally, in any essentially small Krull-Schmidt category, every object can be described by a formal sum on a set of objects having local endomorphism rings. In these examples, the set $A$ consists of the zero object.
In measure theory, measures on a metric space that are sums of Dirac measures may be described by a formal sum on this metric space.
After rescaling, empirical measures on a metric space are examples of such measures.
In these examples, the set $A$ is given by a distinguished subset of the metric space.

% Furthermore, such formal sums arise from empirical measures on metric spaces with a distinguished subspace.

The set of formal sums on a metric pair, which we denote $D(X,A)$, has a canonical and universal family of metrics $W_p$, $p \in [1,\infty]$, called Wasserstein distances~\cite{bubenik2019universality}.
%Let $p \in [1,\infty]$.
We call $(D(X,A),W_p)$, a space of persistence diagrams.
These metrics canonically extend to countable formal sums~\cite{bubenik2020virtual}.

In topological data analysis, persistence diagrams summarize the shape of data and are used for statistical analysis and machine learning~\cite{Rabadan:2019,dey_wang_2022,Carlsson:2022}.
The topological and metric properties of the space of persistence diagrams both facilitate and restrict this analysis,
motivating our work.
%Consider the following examples.
% These demands motivate our study of the topological and metric properties of spaces of persistence diagrams.

\subsection*{Our contributions}

The following definitions are used in stating our results.
We say that the subset $A \subset X$ is \emph{distance minimizing} (\cref{def:distance-minimizing}) if for all $x \in X$, there exists an $a \in A$ such that $d(x,A) = d(x,a)$.
Unless it is explicitly stated, the subset $A$ does not need to be distance minimizing.
We say that $A$ is \emph{isolated} (\cref{def:isolated}) if there exists a $\delta > 0$ such that for all $x \in X$, $d(x,A) < \delta$ implies that $x \in A$.
A countable formal sum is \emph{essentially $p$-finite} (\cref{def:D_p}) if for all $\delta > 0$ its restriction to the subset of $X$ whose distance to $A$ is at least $\delta$ has finite support, and its restriction to the subset within distance $\delta$ of $A$ has finite p-Wasserstein distance to %the zero formal sum.
zero.
Our main objects of study are the metric space $(D(X,A),W_p)$, its subset $(D^n(X,A),W_p)$, consisting of formal sums of cardinality at most $n$, and its extension $(\overline{D}_p(X,A),W_p)$, consisting of countable formal sums that are essentially $p$-finite.

We start with proving some basic metric properties (\cref{sec:basic-metric}) and topological properties (\cref{sec:basic-topological}) of these spaces of persistence diagrams.
The Wasserstein distances are given by an infimum over matchings between persistence diagrams (\cref{def:wasserstein}). The existence of an optimal matching realizing this distance is a central question in optimal transportation theory.
% We determine a necessary and sufficient condition for the existence of optimal matchings between persistence diagrams.

\begin{theorem*}[\cref{thm:optimal-matching}]
Given a metric pair $(X,d,A)$, there exists an optimal matching between any two persistence diagrams if and only if $A$ is distance minimizing.
\end{theorem*}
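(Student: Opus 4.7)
My plan splits into the necessity and sufficiency directions, both hinging on the fact that unmatched mass in a persistence diagram is charged the cost $d(x,A)$, which is itself an infimum and need not be attained.

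For necessity, I would argue by contrapositive. Suppose $A$ is not distance minimizing, and choose $x_0\in X$ for which no $a\in A$ realizes $d(x_0,A)$. Let $\alpha$ be the persistence diagram consisting of $x_0$ with multiplicity one (note that $x_0\notin A$, since otherwise $d(x_0,A)=0$ is realized by $x_0$ itself), and let $\beta$ be the zero diagram. By the definition of $W_p$ one has $W_p(\alpha,\beta)=d(x_0,A)$, whereas every actual matching of $\alpha$ and $\beta$ must pair $x_0$ with some specific element $a\in A$ and pay cost $d(x_0,a)>d(x_0,A)$. Hence no matching attains the infimum.

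For sufficiency, assume $A$ is distance minimizing and first treat the case where $\alpha,\beta\in D(X,A)$ have finite support. A matching is determined by (i) a partial bijection between $\supp(\alpha)$ and $\supp(\beta)$, together with (ii) a choice of element of $A$ for each unmatched point. The data in (i) range over a finite set; for each fixed choice in (i), the cost is minimized in (ii) by sending each unmatched point to a nearest point in $A$, which exists by hypothesis. The minimum over the finitely many combinatorial patterns in (i) supplies an optimal matching.

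For countable diagrams in $\overline{D}_p(X,A)$, I would extract a limit of a minimizing sequence of matchings $\sigma_n$ by a diagonal compactness argument. Fix $\delta_k\downarrow 0$ and use essential $p$-finiteness to decompose each diagram into a finite ``far'' part, supported outside $\{x:d(x,A)<\delta_k\}$, plus a ``near'' tail of controlled $W_p$-mass. For each $k$ only finitely many edges in any bounded-cost matching can have length exceeding $\delta_k$, so after passing to a subsequence the induced assignments on the far parts of $\alpha$ and $\beta$ stabilize. A diagonal extraction in $k$ produces a candidate matching whose long edges are pinned down by the limiting assignment and whose remaining mass is sent to a nearest point of $A$, available by the distance-minimizing hypothesis. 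Lower semicontinuity of the $W_p^p$-cost under this construction then yields optimality. The main obstacle is precisely this last step: synchronizing the decomposition into long versus short edges with the decomposition of each diagram into far and near parts of $A$, so that no mass is lost or duplicated in the limit, and ensuring that the final assignments to $A$ are realized rather than merely approximated---which is exactly where the distance-minimizing hypothesis is used.
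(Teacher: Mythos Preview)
Your proposal is correct and follows essentially the same strategy as the paper: the necessity direction is identical, the finite case is identical, and the countable case extracts a limiting matching from a minimizing sequence via a diagonal argument, sending any ``leftover'' mass to a nearest point of $A$.

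The one place where the paper's organization is sharper is the compactness step. Rather than layering the argument by distance thresholds $\delta_k$ and then worrying about synchronizing the far/near decomposition with the long/short edge decomposition (which you correctly flag as the main obstacle), the paper works point-by-point on $\alpha=\sum_i x_i$ and invokes a simple dichotomy (their Lemma~\ref{lem:sequence-of-matched-dots}): since $\supp(\hat\beta)$ is countable and $\beta\in\overline{D}_p(X,A)$, any sequence in $\supp(\hat\beta)\cup A$ either has a constant subsequence or has distances to $A$ tending to zero. Applying this to the sequence $(u_i^{(\ell)})_\ell$ of partners of $x_i$ immediately gives, after a nested subsequence extraction in $i$, a well-defined target $z_i$ for each $x_i$ (either a fixed point of $\beta$ or a nearest point of $A$). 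This sidesteps the synchronization issue entirely. Your threshold approach would ultimately recover the same dichotomy, but with more bookkeeping; the paper's lemma is the cleaner device.
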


% Many algorithms, such as the Metropolis-Hastings algorithm for sampling from a probability distribution and stochastic gradient descent for optimization
% presuppose that one has a path-connected space.

Having a path-connected space is important for stochastic algorithms such as stochastic gradient descent and the Metropolis-Hastings algorithm.

\begin{theorem*}[\cref{thm:path-connected,ex:circles}]
  Assume $(X,d)$ is path connected and let $p \in [1, \infty]$. Then so are $(D(X,A),W_p)$ and $(D^n(X,A),W_p)$. However, $(\overline{D}_p(X,A),W_p)$ need not be path connected.
\end{theorem*}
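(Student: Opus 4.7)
The plan is to prove path-connectedness of $(D(X,A), W_p)$ and $(D^n(X,A), W_p)$ by constructing, for every diagram, an explicit continuous path to the zero diagram $\emptyset$; concatenation of such paths then connects any two diagrams. Given a representative $D = \sum_{i=1}^n [x_i] \in D(X,A)$ and a chosen $a \in A$ (using that $A$ is nonempty in the pertinent cases), I would use path-connectedness of $X$ to select continuous maps $\gamma_i \colon [0,1] \to X$ with $\gamma_i(0) = x_i$ and $\gamma_i(1) = a$ for each $i$. The candidate path is $t \mapsto D(t) := \sum_{i=1}^n [\gamma_i(t)]$; it satisfies $D(0) = D$, while $D(1)$ is a sum of atoms in $A$, hence equals $\emptyset$ in $D(X,A)$.

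The key step is verifying that $D(\cdot)$ is $W_p$-continuous. From \cref{def:wasserstein}, the matching that pairs $\gamma_i(s)$ with $\gamma_i(t)$ for every $i$ will give
\[
  W_p\bigl(D(s), D(t)\bigr) \leq \Bigl(\sum_{i=1}^{n} d(\gamma_i(s), \gamma_i(t))^p\Bigr)^{1/p}
\]
for $p \in [1,\infty)$, and the analogous bound with $\max_i$ in place of the $\ell^p$-sum for $p = \infty$. Uniform continuity of the finitely many $\gamma_i$ on $[0,1]$ will drive the right-hand side to zero as $s \to t$, establishing continuity. Since $|D(t)| \leq n$ for all $t$, the same path lives in $D^n(X,A)$, so the construction simultaneously covers $(D^n(X,A), W_p)$ with no modification.

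For the negative statement about $(\overline{D}_p(X,A), W_p)$, I would defer to the referenced \cref{ex:circles}. The expected pattern is a countable, essentially $p$-finite diagram whose atoms are distributed, for instance on an infinite family of loops in $X$, so that any continuous homotopy toward $\emptyset$ must at some intermediate time either leave the essentially $p$-finite regime or incur infinite $W_p$-displacement from its predecessor --- a genuine obstruction to path-connectedness. The main obstacle in the whole proof is therefore not the positive constructions, which follow transparently from the matching bound above, but rather the careful design and verification of this counterexample, which is handled separately in \cref{ex:circles}.
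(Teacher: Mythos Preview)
Your proposal is correct and follows essentially the same approach as the paper: pick a basepoint $x_0 \in A$, connect each dot $x_i$ to $x_0$ by a path $\gamma_i$ in $X$, define $\gamma(t) = \sum_i \gamma_i(t)$, and use the obvious matching bound together with uniform continuity of the finitely many $\gamma_i$ to prove $W_p$-continuity. Your observation that the path stays in $D^n(X,A)$ matches the paper's treatment, and deferring the negative part to \cref{ex:circles} is exactly what the paper does.
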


In length spaces, distances may be approximated by paths.

\begin{theorem*}[\cref{thm:length,prop:length}]
  Assume that $(X,d)$ is a length space. Then, for $p \in  [1, \infty]$, so are $(D(X,A),W_p)$, $(\overline{D}_p(X,A),W_p)$, and $(D^n(X,A),W_1)$. However, for $p \neq 1$, $(D^n(X,A),W_p)$ need not be a length space.
\end{theorem*}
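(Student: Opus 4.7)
The plan is to prove the three positive results by explicit construction of near-geodesic paths, and the negative result by a small concrete counterexample.

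\textbf{Positive results for $D(X,A)$ and $\overline{D}_p(X,A)$.} Fix $\alpha,\beta$ and $\varepsilon>0$. Choose a matching $M$ with $\Cost_p(M)\leq W_p(\alpha,\beta)+\varepsilon$, listing its non-trivial matched pairs as $(x_i,y_i)_{i\in I}$ ($I$ finite for $D(X,A)$, countable for $\overline{D}_p(X,A)$). Using that $X$ is a length space, for each $i$ pick a constant-speed path $\gamma_i\colon[0,1]\to X$ from $x_i$ to $y_i$ with $L(\gamma_i)\leq d(x_i,y_i)+\eta_i$, where the $\eta_i$ satisfy $\bigl(\tsum_i\eta_i^p\bigr)^{1/p}<\varepsilon$ for $p<\infty$ (e.g.\ $\eta_i=\varepsilon\cdot 2^{-i/p}$) and $\sup_i\eta_i<\varepsilon$ for $p=\infty$. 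Set $\Gamma(t)=\tsum_{i\in I}[\gamma_i(t)]$. The matching of $\Gamma(s)$ and $\Gamma(t)$ induced by the $\gamma_i$ gives, for $p<\infty$,
\[
W_p(\Gamma(s),\Gamma(t))\leq\Bigl(\tsum_i d(\gamma_i(s),\gamma_i(t))^p\Bigr)^{1/p}\leq|s-t|\Bigl(\tsum_i L(\gamma_i)^p\Bigr)^{1/p},
\]
with the analogous bound (supremum replacing the $\ell^p$-sum) for $p=\infty$. Hence $\Gamma$ is Lipschitz with constant at most $\bigl(\tsum_i L(\gamma_i)^p\bigr)^{1/p}\leq\Cost_p(M)+\varepsilon$ by Minkowski, so $L(\Gamma)\leq W_p(\alpha,\beta)+2\varepsilon$. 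In the $\overline{D}_p$ case the triangle inequality $W_p(\Gamma(t),0)\leq W_p(\alpha,0)+L(\Gamma|_{[0,t]})<\infty$ confirms $\Gamma(t)\in\overline{D}_p(X,A)$.

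\textbf{Positive result for $(D^n(X,A),W_1)$.} The direct product construction can have cardinality up to $|\alpha|+|\beta|$, possibly exceeding $n$. Instead, given a near-optimal matching $M$, move one non-trivial pair at a time in the order: pairs with $x_i\notin A$, $y_i\in A$ first (cardinality drops), then pairs with both off $A$ (cardinality constant), then pairs with $x_i\in A$, $y_i\notin A$ last (cardinality grows). A short case analysis shows $|\Gamma(t)|\leq\max(|\alpha|,|\beta|)\leq n$ at all times, even allowing the transient $+1$ while an ``$A$-to-off-$A$'' pair is being moved. Since only one pair moves at each instant, and $p=1$, the $W_1$-length along the concatenation is additive, so $L(\Gamma)=\sum_i L(\gamma_i)\leq\Cost_1(M)+\varepsilon\leq W_1(\alpha,\beta)+2\varepsilon$.

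\textbf{Counterexample for $p\neq 1$.} Take $X=\R$, $A=\{0\}$ and $n=1$, so $D^1(\R,\{0\})$ consists of $[0]$ and of $[x]$ for $x\neq 0$. For $\alpha=[1]$, $\beta=[-1]$, the only non-trivial matchings give $W_p(\alpha,\beta)=\min\bigl(2,(1^p+1^p)^{1/p}\bigr)$, which equals $2^{1/p}<2$ for $p\in(1,\infty)$ and equals $\max(1,1)=1<2$ for $p=\infty$. However, any continuous path $\Gamma$ from $[1]$ to $[-1]$ must pass through $[0]$: otherwise $\Gamma$ lifts to a continuous path in $\R\setminus\{0\}$ from $1$ to $-1$, contradicting the disconnectedness of $\R\setminus\{0\}$. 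Hence $L(\Gamma)\geq W_p([1],[0])+W_p([0],[-1])=1+1=2>W_p(\alpha,\beta)$, so this space is not a length space.

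The main technical obstacle I expect lies in the $\overline{D}_p$ case: verifying that $\Gamma(t)$ is essentially $p$-finite at every time $t$, and that the Lipschitz estimates go through with an infinite index set, in particular for $p=\infty$ where $\sup_i L(\gamma_i)$ must be controlled uniformly. The cardinality bookkeeping in the $D^n$ construction is straightforward but must be written out carefully to confirm the bound $|\Gamma(t)|\leq n$ holds throughout (including during single-pair movements), and that the resulting concatenation really is continuous at the switching times.
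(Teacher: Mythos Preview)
Your approach is essentially the same as the paper's: the positive results are proved via the same ``sum of constant-speed paths'' construction with a Minkowski estimate (the paper packages this as a separate lemma, \cref{prop:path}), and the $D^n$ case uses the same one-pair-at-a-time concatenation (the paper simply says ``starting with the $x_i$ in $X\setminus A$'' rather than spelling out three phases). Your counterexample $(\R,\{0\})$ with $\alpha=[1]$, $\beta=[-1]$ is simpler than the paper's choice of $(\R^2_{\leq},\Delta)$ with far-apart points $(0,1)$ and $(10,11)$, but the mechanism is identical: $d_p(x,y)=2^{1/p}$ via the through-$A$ matching, while any path must have length at least $2$.
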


The existence of geodesics is important for statistics, in order to find means, for example.

\begin{theorem*}[\cref{thm:geodesic,prop:X/A_geodesic,prop:unique_geodesic}]
  Assume that $(X,d)$ is a geodesic space and that $A$ is distance minimizing. Then, for $p \in  [1, \infty]$,  $(D(X,A),W_p)$, $(\overline{D}_p(X,A), W_p)$, and $(D^n(X,A),W_1)$ are also geodesic spaces. However, for $p \neq 1$, $(D^n(X,A),W_p)$ need not be a geodesic space.
  Furthermore, for $p \in  [1, \infty]$, if either of $(D(X,A),W_p)$ or $(\overline{D}_p(X,A),W_p)$ have a pair of persistence diagrams with distinct optimal matchings then they do not have unique geodesics.
\end{theorem*}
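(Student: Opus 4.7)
The plan is to build each asserted geodesic explicitly from an optimal matching, then to exhibit a concrete counterexample at $n=1$ on the half-plane for $p\neq1$. By \cref{thm:optimal-matching}, the distance-minimizing hypothesis gives, for any $D_1, D_2$ in the relevant space, an optimal matching $\sigma$ realizing $W_p(D_1, D_2)$, and every point sent to $A$ has a nearest representative in $A$. Because $(X,d)$ is geodesic, for each matched pair $(x,y)$ we may pick a constant-speed geodesic $\gamma_{x,y}\colon[0,1]\to X$ from $x$ to $y$, and set
\[
\gamma(t)=\sum_{(x,y)}[\gamma_{x,y}(t)] \in D(X,A).
\]
The interpolation itself is a matching between $\gamma(s)$ and $\gamma(t)$ of cost $|t-s|\,W_p(D_1,D_2)$ (using $d(\gamma_{x,y}(s),\gamma_{x,y}(t))=|t-s|\,d(x,y)$), and the triangle inequality forces the reverse bound, so $\gamma$ is a geodesic from $D_1$ to $D_2$. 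This handles $(D(X,A),W_p)$ for all $p\in[1,\infty]$; the extension to $(\overline{D}_p(X,A),W_p)$ requires the additional check that each $\gamma(t)$ remains essentially $p$-finite, a technical but direct verification from the matching structure and the essential $p$-finiteness of the endpoints.

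For $(D^n(X,A),W_1)$ the simultaneous construction can exit $D^n$ (already when a single point of $D_1$ is matched through $A$ to a single point of $D_2$), so we instead build the geodesic \emph{sequentially}: enumerate the pairs of $\sigma$ with destruction pairs $(x,a)$ first, then transport pairs, then creation pairs $(a,y)$; partition $[0,1]$ into subintervals of length $d(x_i,y_i)/W_1(D_1,D_2)$; and during the $i$-th subinterval move only the $i$-th pair along its geodesic while keeping all other pairs at their current endpoints. Additivity of $W_1$-length along the concatenation yields total length $\sum_i d(x_i,y_i)=W_1(D_1,D_2)$, and a direct count (using the ordering) shows the number of points of $\gamma(t)$ outside $A$ is at most $\max(\abs{D_1},\abs{D_2})\leq n$ at every $t$. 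This additivity fails for $p\neq1$, and there is a genuine obstruction: in $X=\R^2_{\leq}$ with the $\ell_2$ metric and $A=\Delta$, take $n=1$, $D_1=[(0,1)]$, $D_2=[(2,3)]$. Then $W_p(D_1,D_2)=(d(x,A)^p+d(y,A)^p)^{1/p}=2^{1/p-1/2}$ via matching both points to $\Delta$, whereas every continuous path in $(D^1,W_p)$ from $D_1$ to $D_2$ has length at least $\min(d(x,y),d(x,A)+d(y,A))=\sqrt 2$, which exceeds $W_p$ for $p>1$ and for $p=\infty$. I expect this lower bound on path length to be the most delicate step: I would establish it by partitioning any such path at the times where the single active point meets $A$ and applying the triangle inequality on each constant-cardinality subarc.

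Finally, for non-uniqueness of geodesics: if $\sigma_1\neq\sigma_2$ are two optimal matchings of some pair $D_1,D_2$ in $D(X,A)$ or $\overline{D}_p(X,A)$, applying the simultaneous construction to each yields geodesics $\gamma_1,\gamma_2$ sharing endpoints. If $\gamma_1\equiv\gamma_2$ as paths, then tracking the trajectories of individual points through the multisets $\gamma_i(t)$ by continuity recovers each matching from its geodesic, forcing $\sigma_1=\sigma_2$ --- a contradiction. Hence $\gamma_1(t)\neq\gamma_2(t)$ at some $t\in(0,1)$, and the two geodesics are genuinely distinct.
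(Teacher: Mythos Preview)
Your approach matches the paper's almost exactly: build geodesics in $D(X,A)$ and $\overline{D}_p(X,A)$ by applying \cref{thm:optimal-matching}, choosing constant-speed geodesics in $X$ for each matched pair, and summing (the paper packages the continuity and length estimate as \cref{prop:path}); for $(D^n(X,A),W_1)$, move one pair at a time so the cardinality never exceeds $n$; and exhibit a two-point example on the half-plane where $d_p<d_1$ but every path in $D^1\cong X/A$ has length at least $d_1$.

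Two small points where your write-up diverges. First, your counterexample uses the $\ell_2$ metric; the paper uses the $\ell_1$ metric and $x=(0,1)$, $y=(10,11)$, which makes the path-length lower bound a one-liner: the $d_p$-balls of radius $1$ about $x$ and $y$ are disjoint (since $d(x,A)=d(y,A)=1$ forces $d_p$-closeness to come from $d$-closeness), so any path has length $\geq 2>2^{1/p}$. Your sketch via ``partitioning at the times the point meets $A$'' is on the right track but needs the extra observation that on arcs bounded away from $[A]$ one has $d_p=d$ locally; the disjoint-balls argument sidesteps this.

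Second, for non-uniqueness, the paper simply constructs $\gamma,\gamma'$ from the two matchings and asserts they are distinct. Your justification---that one can recover each matching from its geodesic by tracking trajectories---is not correct as stated: when two trajectories cross, the multiset path does not determine which point went where, so the geodesic need not determine the matching. The conclusion is still true (distinct optimal matchings do yield distinct geodesics, at least for separated metrics), but it requires a different argument than the one you give; the paper does not supply one either.
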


% In probability theory, it is often convenient to have a completely metrizable and separable space (i.e. a Polish space).
In order to apply probability theory to persistence diagrams, we would like our spaces to be Polish~\cite{Billingsley:1999}.
Note that $X/A$ includes in $(D(X,A),W_p)$ and that we denote the induced (subspace) metric on $X/A$ by $d_p$ (\cref{def:dp,lem:isomorphism}).  

\begin{theorem*}[\cref{prop:separability}]
  Let $p \in  [1, \infty]$. If $(X,d)$ is separable then so is $(\overline{D}_p(X,A),W_p)$.
  Conversely, if $(\overline{D}_p(X,A),W_p)$ is separable then so is $(X/A,d_p)$. 
\end{theorem*}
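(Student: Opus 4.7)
The plan is to handle the two implications separately. For the converse, note that the map $[x] \mapsto \{x\}$ is an isometric embedding of $(X/A, d_p)$ into $(\overline{D}_p(X,A), W_p)$, by the very definition of $d_p$ as the subspace metric together with the inclusion $D(X,A) \subseteq \overline{D}_p(X,A)$. Since every subspace of a separable metric space is separable, the separability of $(\overline{D}_p(X,A), W_p)$ transfers to $(X/A, d_p)$.

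For the forward direction, fix a countable dense subset $S \subseteq X$, and let $T$ be the collection of all finite formal sums $\tsum_{i=1}^n \{s_i\}$ with $s_i \in S$ and $n \geq 0$ (repetitions allowed, with $n=0$ giving the empty diagram). Since finite multisets over a countable set form a countable family, $T$ is countable, and I would show it is dense. Given $D \in \overline{D}_p(X,A)$ and $\eps > 0$, the essential $p$-finiteness of $D$ lets me choose $\delta > 0$ such that the restriction $D_{<\delta}$ of $D$ to $\{x : d(x,A) < \delta\}$ satisfies $W_p(D_{<\delta}, 0) < \eps/2$: for $p < \infty$, $W_p(D_{<1}, 0) < \infty$ means $\tsum_{d(x,A) < 1} m(x)\,d(x,A)^p < \infty$, so this sum vanishes on the nested shrinking sets $\{d(x,A) < \delta\}$ as $\delta \to 0$ by monotone convergence; for $p = \infty$, we directly have $W_\infty(D_{<\delta}, 0) \leq \delta$. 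The complementary restriction $D_{\geq \delta}$ is a finite sum $\tsum_{i=1}^n \{x_i\}$, and picking $s_i \in S$ with $d(x_i, s_i) < \eps/(2n^{1/p})$ (or $< \eps/2$ when $p = \infty$) produces $D'' := \tsum_i \{s_i\} \in T$ with $W_p(D_{\geq \delta}, D'') \leq \eps/2$ via the matching $x_i \leftrightarrow s_i$.

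Combining these estimates with the splitting bound $W_p(D, D_{\geq \delta}) \leq W_p(D_{<\delta}, 0)$ — obtained from the matching $D \to D_{\geq \delta}$ that is the identity on the high-distance part and sends each point of $D_{<\delta}$ to $A$ — the triangle inequality gives $W_p(D, D'') < \eps$, so $T$ is dense. The main obstacle I expect is justifying the splitting bound, which requires combining a near-optimal matching of $D_{<\delta}$ with the empty diagram and the identity matching of $D_{\geq \delta}$ into a single legitimate matching of $D$ with $D_{\geq \delta}$ in the Wasserstein framework for countable formal sums on metric pairs. Once this is formulated precisely, it follows directly from the definition of $W_p$, since matchings in this framework allow points to be paired either with other points or with $A$.
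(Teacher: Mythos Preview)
Your proof is correct and follows essentially the same approach as the paper: for the converse you use the isometric embedding $(X/A,d_p)\cong (D^1(X,A),W_p)\hookrightarrow(\overline{D}_p(X,A),W_p)$ together with the fact that subspaces of separable metric spaces are separable, and for the forward direction you approximate an arbitrary diagram first by its finite upper part $u_\delta(D)$ (your $D_{\geq\delta}$) and then approximate that by a finite sum over a countable dense set. The only cosmetic difference is that the paper takes its countable dense set inside $X\setminus A$ rather than $X$, and packages the splitting bound as a lemma (\cref{lem:dist_between_lower_part_and_0}), whereas you argue it directly.
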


\begin{theorem*}[\cref{thm:completeness_inf,thm:completeness_p,prop:completion,prop:complete}]
   Let $p \in  [1, \infty]$. If $(X,d)$ is complete then so is $(\overline{D}_p(X,A),W_p)$.
  Furthermore, $(\overline{D}_p(X,A),W_p)$ is the Cauchy completion of $(D(X,A),W_p)$.
  Conversely, if $(\overline{D}_p(X,A),W_p)$ is complete then so is $(X/A,d_{p})$.
\end{theorem*}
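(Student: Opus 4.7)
The plan is to prove the three claims in sequence, starting with forward completeness (given $X$ complete), then density of $D(X,A)$ in $\overline{D}_p(X,A)$ to identify the Cauchy completion, and finally the converse via the isometric embedding of $X/A$.

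For forward completeness, given a Cauchy sequence $\{D_n\}$ in $\overline{D}_p(X,A)$, I would pass to a subsequence with $W_p(D_n, D_{n+1}) < 2^{-n}$ and choose a near-optimal matching $\sigma_n$ between $D_n$ and $D_{n+1}$ of $p$-cost at most $2 \cdot 2^{-n}$. Concatenating these matchings organizes the points of all $D_n$ into at most countably many \emph{trajectories}: sequences of points linked by successive $\sigma_n$, with births and deaths allowed at $A$. For trajectories that persist indefinitely, successive distances are $p$-summable and hence summable, so the trajectory is Cauchy in $X$; by completeness of $X$ it converges to a limit. Collecting these limits with multiplicity yields a candidate limit $D$. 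The main obstacle is to verify $D \in \overline{D}_p(X,A)$: for every $\delta > 0$, the number of trajectory limits at distance $\geq \delta$ from $A$ is finite, because each $D_n$ has finitely many points at distance $\geq \delta/2$ by essential $p$-finiteness and the cost budget prevents arbitrary accumulation; the $p$-mass near $A$ is controlled similarly. The concatenated matching then witnesses $W_p(D_n, D) \leq \sum_{k \geq n} 2^{-k+1} \to 0$. For $p = \infty$ the same strategy applies with suprema in place of sums.

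For the Cauchy completion claim, given completeness of $\overline{D}_p(X,A)$, it suffices to show that $D(X,A)$ is dense. For $D \in \overline{D}_p(X,A)$ and $\delta > 0$, let $D_\delta$ be the restriction of $D$ to $\{x \in X : d(x,A) \geq \delta\}$. By essential $p$-finiteness $D_\delta$ has finite support, hence $D_\delta \in D(X,A)$, and the matching sending the entire tail (the restriction of $D$ to the $\delta$-neighborhood of $A$) to $A$ has $p$-cost equal to the $p$-mass of that tail, which tends to $0$ as $\delta \to 0$ by essential $p$-finiteness. Hence $W_p(D, D_\delta) \to 0$.

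For the converse, use the isometric embedding $X/A \hookrightarrow \overline{D}_p(X,A)$ sending $x$ to the single-point diagram $[x]$. Given a Cauchy sequence $\{[x_n]\}$ in $(X/A, d_p)$, it is Cauchy in $\overline{D}_p(X,A)$ and thus converges to some $D$. If $d(x_n, A) \to 0$ then $[x_n] \to 0 = [A]$ in $X/A$, giving the required limit. Otherwise, pass to a subsequence with $d(x_n, A) \geq \eta > 0$; a matching argument shows that $D$ must have exactly one point $y$ at distance $\geq \eta/2$ from $A$ (two or more such points would force $W_p([x_n], D) \geq \eta/2$ for every $n$, while having none would force the single matched pair from $[x_n]$ to have cost $\geq \eta/2$). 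Any near-optimal matching from $[x_n]$ to $D$ must then match $x_n$ to $y$ with cost tending to $0$, so $d(x_n, y) \to 0$ and $[x_n] \to [y]$ in $X/A$. The main delicacy is the matching bookkeeping here, particularly ensuring that the near-$A$ tail of $D$ does not contribute a separate essential accumulation point.
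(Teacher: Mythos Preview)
Your approach for the $p=\infty$ completeness and for density of $D(X,A)$ in $\overline{D}_p(X,A)$ matches the paper's (\cref{thm:completeness_inf} and \cref{prop:completion}) essentially verbatim: pass to a geometrically fast subsequence, build trajectories from near-optimal matchings, and truncate at the $\delta$-upper part to get finite diagrams.

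For $p\in[1,\infty)$, however, the paper does \emph{not} use the trajectory method. Instead it proves a chain of lemmas (\cref{lem:for_completeness_up_part}--\cref{lem:for_completeness_low_and_up}): for each threshold $c$ the upper parts $u_{\delta_c}(\alpha_n)$ eventually have constant cardinality, form a Cauchy sequence in $D(X,A)$, and converge to some $\alpha_c$; these $\alpha_c$ nest as $c$ decreases and their union is the limit. A separate lemma shows the lower parts vanish uniformly. Your trajectory argument can be made to work here too (the Minkowski inequality gives $\norm{(\sum_{k\geq n} d_i^{(k)})_i}_p \leq \sum_{k\geq n}\norm{(d_i^{(k)})_i}_p$, which controls $W_p(D_n,D)$), but your sketch is genuinely incomplete at the step ``the $p$-mass near $A$ is controlled similarly.'' You must show $W_p(\ell_\infty(D),0)<\infty$, and this does not follow from the upper-finiteness argument you gave; you need something like the uniform bound $W_p(\ell_\infty(D_n),0)\leq K$ together with the convergence $W_p(D_n,D)\to 0$, and you must also handle the finitely many points at infinite distance from $A$ separately. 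The paper's route sidesteps this by working level-by-level.

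For the converse, the paper's argument is much shorter than yours: $D^1(X,A)$ is \emph{closed} in $\overline{D}_p(X,A)$ (\cref{prop:D^n_closed}, via the observation that any $\alpha$ with $\abs{\alpha}\geq 2$ has a ball around it disjoint from $D^1$), hence complete as a closed subset of a complete space, and is isometric to $(X/A,d_p)$. Your direct case analysis on the limit $D$ is correct once fleshed out (and in fact forces $D\in D^1(X,A)$, since the tail $W_p(D-y,0)$ is independent of $n$ and bounded by $W_p([x_n],D)\to 0$), but the closedness argument avoids all the matching bookkeeping you flag as delicate.
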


In order to apply functional analysis and machine learning to persistence diagrams, a suitable notion of compactness is often required.

\begin{proposition*}[\cref{prop:D^n(X)_is_cmpt}]
  If $(X,d)$ is compact and $p \in  [1, \infty]$ then so is $(D^n(X,A),W_p)$.
\end{proposition*}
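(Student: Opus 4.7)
The strategy is to exhibit $(D^n(X,A), W_p)$ as a continuous image of a compact space. Suppose first that $A \neq \emptyset$ and fix a point $a_0 \in A$. I would define a map $\phi \colon X^n \to D^n(X,A)$ by
\[
  \phi(x_1, \ldots, x_n) = [x_1 + \cdots + x_n],
\]
the equivalence class of the formal sum. Surjectivity is immediate: any persistence diagram in $D^n(X,A)$ has a representative $x_1 + \cdots + x_k$ with $k \leq n$, and padding with $n-k$ copies of $a_0 \in A$ does not change the equivalence class modulo formal sums on $A$, so it is represented by a tuple in $X^n$.

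For continuity, I would use the ``identity matching'' that pairs the $i$th coordinate of one tuple with the $i$th coordinate of the other. This is an admissible matching in the definition of $W_p$ (\cref{def:wasserstein}), so
\[
  W_p\bigl(\phi(x_1, \ldots, x_n),\, \phi(y_1, \ldots, y_n)\bigr)
  \;\leq\; \bigl\| \bigl(d(x_i, y_i)\bigr)_{i=1}^n \bigr\|_p,
\]
showing that $\phi$ is $1$-Lipschitz with respect to the $\ell^p$ product metric on $X^n$. Since $(X,d)$ is compact, so is $X^n$; hence $\phi(X^n) = D^n(X,A)$ is compact as the continuous image of a compact space.

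For the degenerate case $A = \emptyset$, the same argument applies with $X^n$ replaced by the finite disjoint union $\bigsqcup_{k=0}^n X^k$: this is compact because each $X^k$ is, distances between different ``strata'' under $W_p$ are infinite (so each stratum is clopen), and the map $(x_1, \ldots, x_k) \mapsto [x_1 + \cdots + x_k]$ is continuous and surjective onto $D^n(X, \emptyset)$.

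I do not anticipate a significant obstacle. The only substantive step is producing the Lipschitz parameterization $\phi$, after which the conclusion is the standard fact that the continuous image of a compact set is compact; handling $A=\emptyset$ requires only the cosmetic change of working with a finite disjoint union.
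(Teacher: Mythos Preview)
Your proof is correct and takes a genuinely different route from the paper's. The paper proves sequential compactness directly: given a sequence $(\alpha_m)$ in $D^n(X,A)$, it writes each $\alpha_m = x_1^{(m)} + \cdots + x_n^{(m)}$ (padding with points of $A$), then iteratively extracts convergent subsequences coordinate by coordinate using compactness of $X$, and finally checks that the limit diagram $\alpha = x_1 + \cdots + x_n$ satisfies $W_p(\alpha_{m_j}, \alpha) \to 0$ via the identity matching.

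Your argument packages the same ingredients more cleanly: the padding step becomes surjectivity of $\phi$, the identity-matching estimate becomes the $1$-Lipschitz property, and the subsequence extraction is absorbed into the single statement that $X^n$ is compact. What the paper's hands-on version buys is that it never leaves the space $D^n(X,A)$ itself and makes the convergence explicit; what yours buys is brevity and a direct appeal to the general fact that continuous images of compacta are compact. Note also that your treatment of the case $A = \emptyset$ is more careful than the paper's, which tacitly assumes $A \neq \emptyset$ when it says ``by adding elements of $A$ if necessary.''
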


\begin{theorem*}[\cref{thm:not_loc_compactness,thm:not_loc_compactness_D_p}]
  Assume that $A$ is not isolated and let $p \in  [1, \infty]$.
  Then every persistence diagram in $(D(X,A),W_p)$ does not have a compact neighborhood.
  Similarly, every persistence diagram in $(\overline{D}_p(X,A),W_p)$ does not have a compact neighborhood.
  Therefore, $(D(X,A),W_p)$ and $(\overline{D}_p(X,A),W_p)$ are not locally compact and any compact set in these spaces has empty interior.
\end{theorem*}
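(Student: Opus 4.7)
The plan is, for an arbitrary persistence diagram $D$ and any $r > 0$, to construct an infinite sequence in the open ball $B(D,r)$ with no Cauchy subsequence. A compact neighborhood of $D$ would contain such a ball and be totally bounded, so this rules out compact neighborhoods; the same argument forces any compact set to have empty interior. Since the construction will consist of finite formal sums, a single sequence works for both $(D(X,A),W_p)$ and $(\overline{D}_p(X,A),W_p)$.

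For $p \in [1,\infty)$, I would use non-isolation of $A$ to choose $z_n \in X \setminus A$ with $\delta_n := d(z_n,A) \to 0$, and set $k_n := \lfloor (r/2)^p / \delta_n^p \rfloor$ and $D_n := D + k_n \cdot z_n$. Then $k_n \to \infty$, $k_n\delta_n^p \to (r/2)^p$, and $W_p(D,D_n) \le r/2$, so $D_n \in B(D,r)$. To bound $W_p(D_n,D_m)$ from below, first observe that once $\delta_n,\delta_m$ are smaller than half the minimum of $d(p,A)$ over the finitely many atoms of $D$ outside $A$, displacing any atom of $D$ in a matching only raises the cost; hence the shared copy of $D$ can be matched to itself and the remaining cost is linear in the number $j$ of pairs formed between $z_n$- and $z_m$-copies. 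Linearity forces the optimum at an endpoint: $j=0$ gives $k_n\delta_n^p + k_m\delta_m^p$, while $j = \min(k_n,k_m)$ gives a cost that, via the reverse triangle inequality $d(z_n,z_m) \ge \lvert\delta_n - \delta_m\rvert$, is at least $k_{\min}\lvert\delta_n - \delta_m\rvert^p + (k_{\max} - k_{\min})\delta_{\max}^p$. For fixed $n$ and $m \to \infty$ both endpoints tend to $2(r/2)^p$, so $\liminf_{m \to \infty} W_p(D_n,D_m) \ge 2^{1/p}(r/2) > 0$; no subsequence can be Cauchy.

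For $p = \infty$ a simpler sequence works: pick $z \in X \setminus A$ with $\delta := d(z,A) < r$ (non-isolation again) and set $D_n := D + n \cdot z$. Then $W_\infty(D,D_n) \le \delta < r$, and for $\lvert n - m\rvert$ exceeding the (finite) number of atoms of $D$, at least one extra copy of $z$ must be matched to $A$, giving $W_\infty(D_n,D_m) \ge \delta$; passing to a subsequence produces an infinite $\delta$-separated family in $B(D,r)$. The main obstacle throughout is the lower bound at the pairing endpoint in the $p < \infty$ case, where the reverse triangle inequality must interact cleanly with the scaling $k_n\delta_n^p \to (r/2)^p$; the auxiliary step justifying that the shared $D$ may be matched to itself also requires care, as $A$ need not be distance minimizing.
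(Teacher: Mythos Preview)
Your overall plan and the $p=\infty$ construction match the paper's. The substantive gap is the passage to $\overline{D}_p(X,A)$. Both of your lower-bound arguments explicitly assume $D$ has finitely many atoms: for $p<\infty$ you invoke the minimum of $d(p,A)$ over $\supp(D)$, and for $p=\infty$ you invoke $|\supp(D)|$. When $D\in\overline{D}_p(X,A)\setminus D(X,A)$ has infinitely many atoms accumulating at $A$, no such minimum exists and the cardinality pigeonhole fails---an infinite diagram can absorb the extra copies of $z$ via an infinite shift of its own atoms, so nothing is forced to $A$. Your claim that ``a single sequence works for both spaces'' is therefore not justified. The paper handles this by first approximating an arbitrary $D\in\overline{D}_p(X,A)$ by a finite diagram $\alpha$ with $W_p(\alpha,D)<r/2$ (their \cref{cor:seq_convergent}) and then running the finite-case construction inside $B(\alpha,r/2)\subset B(D,r)$.

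For finite $D$ and $p<\infty$, your approach is viable but the paper's is cleaner. You flag that ``$D$ may be matched to itself'' needs care; it does, and the difficulty is combinatorial (not the failure of $A$ to be distance minimizing): one must split into the case where some atom of $D$ is paired with a $z$-copy or with $A$ (already contributing at least $(\eta/2)^p$) versus the case where the $D$-parts pair off bijectively and your linear-in-$j$ analysis applies. The paper bypasses this entirely via a one-line cardinality lemma (\cref{lem:W_p_ineq}: if $|\alpha|>|\beta|$ then $W_p(\alpha,\beta)$ is at least the $W_p$-cost of sending the $|\alpha|-|\beta|$ closest-to-$A$ atoms of $\alpha$ to $0$). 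Choosing $\delta_{n+1}<\delta_n/2^{1/p}$ so that multiplicities at least double, this yields a uniform separation $W_p(D_n,D_m)\ge r/4$ for all $n\ne m$---stronger than your $\liminf$ conclusion, and with no matching-structure analysis at all.
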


\begin{proposition*}[\cref{prop:sigma-compact}]
  If $(X, d)$ is $\sigma$-compact and $p \in  [1, \infty]$ then $(D(X,A), W_p)$ is $\sigma$-compact.
\end{proposition*}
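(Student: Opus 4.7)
The plan is to exhibit $D(X,A)$ as a countable union of continuous images of the compact spaces supplied by \cref{prop:D^n(X)_is_cmpt}. Since $(X,d)$ is $\sigma$-compact, I would fix compact sets $K_1 \subset K_2 \subset \cdots$ with $X = \bigcup_{m \geq 1} K_m$. For each $m, n \geq 0$, the metric pair $(K_m, d|_{K_m}, K_m \cap A)$ has a compact underlying space, so by \cref{prop:D^n(X)_is_cmpt} the space $(D^n(K_m, K_m \cap A), W_p)$ is compact.

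Next I would introduce the natural map $\iota_{m,n} \colon D^n(K_m, K_m \cap A) \to D(X,A)$ sending the class of a formal sum on $K_m$ modulo $K_m \cap A$ to its class on $X$ modulo $A$. It is well defined since $K_m \cap A \subset A$. To see it is $1$-Lipschitz, observe that for any matching between representatives of two diagrams on $K_m$, its cost in $(X,A)$ is bounded above by its cost in $(K_m, K_m \cap A)$, because $d(x, A) \leq d(x, K_m \cap A)$ for every $x \in K_m$ (and this remains true for $p = \infty$ after replacing $p$-sums by suprema in the matching cost). Taking infima over matchings yields
\[
W_p^{(X,A)}\bigl(\iota_{m,n}(\alpha), \iota_{m,n}(\beta)\bigr) \leq W_p^{(K_m, K_m \cap A)}(\alpha, \beta).
\]
In particular $\iota_{m,n}$ is continuous, so $C_{m,n} := \iota_{m,n}\bigl(D^n(K_m, K_m \cap A)\bigr)$ is a compact subset of $(D(X,A), W_p)$.

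Finally I would verify that $D(X,A) = \bigcup_{m, n \geq 0} C_{m,n}$. Every $[\alpha] \in D(X,A)$ has a finite representative $\alpha$ with some finite multiset support $S \subset X$. Since the $K_m$ are increasing and exhaust $X$, there exists $m$ with $S \subset K_m$; then $\alpha$ is an element of $D^n(K_m, K_m \cap A)$ for $n := \abs{S}$, and $\iota_{m,n}(\alpha) = [\alpha]$. Hence $D(X,A)$ is a countable union of compact sets, i.e., $\sigma$-compact.

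The argument is quite direct; the only substantive point is the Lipschitz inequality above, which reduces to the elementary observation that shrinking the set available to absorb mass can only increase the cost of a matching. No subtlety arises at $p = \infty$ nor from the equivalence relation, thanks to $K_m \cap A \subset A$.
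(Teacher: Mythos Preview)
Your argument is correct and takes a genuinely different route from the paper. The paper first passes to the quotient: since the quotient map $(X,d)\to(X/A,d_p)$ is continuous, $X/A$ is $\sigma$-compact, and then the pointed case (\cref{prop:sigma_comp_pointed_case}) together with the isometry $(D(X/A,A),W_p)\cong(D(X,A),W_p)$ from \cref{lem:isomorphism} finishes the proof. You instead stay in the pair setting and exhibit $D(X,A)$ directly as the union of the compact images $\iota_{m,n}\bigl(D^n(K_m,K_m\cap A)\bigr)$. Your approach is more self-contained, avoiding the quotient machinery; the paper's is slightly slicker because the pointed case sidesteps any need to discuss the map $\iota_{m,n}$ between pairs with different base sets.

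One small comment on your Lipschitz step: the inequality $d(x,A)\le d(x,K_m\cap A)$ is true but not quite the heart of the matter. In the paper's \cref{def:matching}, a matching is a concrete formal sum of pairs, so any matching $\sigma$ of $\alpha,\beta$ in $(K_m,K_m\cap A)$ is literally the same formal sum when viewed in $(X,A)$, with \emph{identical} cost. The point is simply that more matchings are available in the larger pair, so the infimum can only drop. Your conclusion is correct; the justification can be stated more directly.
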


\begin{theorem*}[\cref{thm:not_hemicompact}]
  If $A$ is not isolated  and $p \in  [1, \infty]$ then $(D(X,A),W_p)$ and $(\overline{D}_p(X,A),W_p)$ are not hemicompact (i.e. in both spaces there is no sequence of compact subsets such that every compact subset of the space lies inside some compact set in the sequence).
\end{theorem*}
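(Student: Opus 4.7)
The plan is to argue by contradiction, leveraging the previous theorem that every compact set in these spaces has empty interior (because, under the assumption that $A$ is not isolated, no persistence diagram admits a compact neighborhood).

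Let me work with the zero diagram $0 \in D(X,A)$ (the empty formal sum). Suppose for contradiction that $(D(X,A),W_p)$ is hemicompact, witnessed by a sequence of compact subsets $K_1 \subseteq K_2 \subseteq \cdots$ with the property that every compact subset lies inside some $K_n$. I may assume the chain is nested after replacing $K_n$ with $\bigcup_{i \le n} K_i$. By the previous non-local-compactness theorem, each $K_n$ has empty interior; in particular, the open ball $B(0, 1/n)$ is not contained in $K_n$, so I can choose
\[
y_n \in B(0, 1/n) \setminus K_n.
\]

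Now consider the set $S = \{y_n : n \in \N\} \cup \{0\}$. Since $W_p(y_n, 0) < 1/n \to 0$, the sequence $(y_n)$ converges to $0$, so $S$ is a convergent sequence together with its limit, hence compact. If $S \subseteq K_m$ for some $m$, then in particular $y_m \in K_m$, contradicting the choice $y_m \notin K_m$. Therefore $S$ is not contained in any $K_m$, contradicting hemicompactness. The identical argument (using that the previous theorem equally applies to $(\overline{D}_p(X,A),W_p)$) handles the second space.

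The main things to verify carefully are that the zero diagram really lies in both $D(X,A)$ and $\overline{D}_p(X,A)$ (clear, as the empty sum is essentially $p$-finite), and that the ball $B(0,1/n)$ computed with respect to $W_p$ is genuinely non-empty and open in the ambient space so that the empty-interior property of $K_n$ yields a point outside $K_n$. No real obstacle is anticipated — the whole argument is a clean application of the already-proved empty-interior statement — so the only subtlety is the small bookkeeping point of ensuring $y_n \to 0$ in the $W_p$ metric (which is immediate from $W_p(y_n,0) < 1/n$).
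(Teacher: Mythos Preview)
Your argument is correct and is essentially the same as the paper's: the paper first isolates the general lemma that a hemicompact metric space is locally compact (proved by exactly your convergent-sequence trick, using a decreasing local base at an arbitrary point in place of your balls $B(0,1/n)$), and then applies the already-established non-local-compactness. You have simply inlined that lemma at the specific point $0$.
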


In topological data analysis,
one considers continuous functions from spaces of persistence diagrams to a given topological space. For a quantitative analysis of such maps, we would like this function space to be metrizable.

\begin{theorem*}[\cref{thm:not-metrizable}]
  Let $(X,d,A)$ be a metric pair with $A$ not isolated, let $Y$ be a topological space that contains a nontrivial path, and let $p \in  [1, \infty]$.
  Then the function spaces
  $C((D(X,A), W_p), Y)$ and
  $C((\overline{D}_p(X,A), W_p), Y)$
  are not metrizable. 
\end{theorem*}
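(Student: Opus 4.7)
The strategy is to reduce non-metrizability to the non-hemicompactness established in \cref{thm:not_hemicompact}. I would prove the following general claim, from which the theorem follows at once: if $Z$ is a metric space, $Y$ contains a nontrivial path, and the compact-open function space $C(Z,Y)$ is first countable at some constant map, then $Z$ is hemicompact. Taking the contrapositive and specializing $Z$ to $(D(X,A),W_p)$ and $(\overline{D}_p(X,A),W_p)$---which, by \cref{thm:not_hemicompact}, fail to be hemicompact when $A$ is not isolated---both function spaces fail to be first countable, and hence cannot be metrizable.

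To prove the claim, I would fix a nontrivial path $\gamma \colon [0,1] \to Y$ from $y_0$ to $y_1 \neq y_0$ and pick disjoint open neighborhoods $U_0 \ni y_0$ and $U_1 \ni y_1$. Given a countable neighborhood basis $\{V_n\}$ at the constant map $c_{y_0}$, I would shrink each $V_n$ to a basic compact-open neighborhood $\bigcap_{i=1}^{j_n}\{f : f(K_n^{(i)}) \subset U_n^{(i)}\}$, necessarily with $y_0 \in U_n^{(i)}$, and set $K_n := \bigcup_i K_n^{(i)}$. The compact sets $\{K_n\}$ are the candidate witnesses of hemicompactness.

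To check this, consider an arbitrary compact $K \subset Z$. The set $\{f : f(K) \subset U_0\}$ is a neighborhood of $c_{y_0}$ and so contains some $V_n$. Assume for contradiction that $K \not\subset K_n$ and pick $x \in K \setminus K_n$. Using the metric on $Z$, the explicit formula $g(z) = d(z,K_n)/(d(z,K_n)+d(z,x))$ yields a continuous $g \colon Z \to [0,1]$ with $g|_{K_n} = 0$ and $g(x) = 1$. Then $f := \gamma \circ g \colon Z \to Y$ is continuous, satisfies $f(K_n^{(i)}) \subset \{y_0\} \subset U_n^{(i)}$ for every $i$, so $f \in V_n$, yet $f(x) = y_1 \notin U_0$---contradicting $f(K) \subset U_0$. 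Hence $K \subset K_n$, proving hemicompactness.

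The main obstacle, and the only real technical content beyond invoking \cref{thm:not_hemicompact}, is assembling the separating test function: the metric structure on $Z$ is needed to produce a $[0,1]$-valued Urysohn-style separator between $K_n$ and an arbitrary outside point, and the nontrivial path in $Y$ is needed to promote this scalar separator to a $Y$-valued continuous map that sits inside the prescribed basic compact-open neighborhood while escaping $U_0$ at the chosen point. Everything else is an unwinding of the definition of the compact-open topology.
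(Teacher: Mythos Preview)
Your approach is exactly the paper's: both reduce non-metrizability to the non-hemicompactness established in \cref{thm:not_hemicompact} via the implication ``$C(Z,Y)$ first countable $\Rightarrow$ $Z$ hemicompact'' for completely regular $Z$. The paper simply cites this implication as a black box (\cref{prop:CXY_is_metrizable}, due to McCoy), whereas you supply a direct proof in the metric case, making your argument self-contained; note that your proof never actually uses $U_1$, only that $y_1\notin U_0$, and this weaker separation is precisely what McCoy's hypothesis on $Y$ provides.
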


We remark that the previous three results seem to be new even for the example $(X,d,A) = (\R^2_{\leq},d,\Delta)$.

In various settings,
it is important to characterize relatively compact sets so that one may know when a sequence has a convergent subsequence (e.g.\ Arzel\'a-Ascoli Theorem, Prokhorov's Theorem).
We characterize relatively compact sets and totally bounded sets of persistence diagrams. 
Since $(X,d)$ need not be complete, these notions are distinct.
For example, we obtain a characterization of relatively compact sets of barcodes with rational endpoints and of persistence diagrams consisting of dots with rational coordinates.

\begin{theorem*}[\cref{thm:criterion_tot_bd}]
 Let $p \in  [1, \infty]$. A subset $S \subset (\overline{D}_p(X,A),W_p)$ is totally bounded if and only if it is
  uniformly upper finite (\cref{def:unif_upper_finite}),
  upper totally bounded (\cref{def:upper_tot_bd}),
  and uniformly $p$-vanishing (\cref{def:lower_uniform}).
\end{theorem*}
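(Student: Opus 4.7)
The plan is to exploit the natural decomposition of an essentially $p$-finite formal sum $D$ into an \emph{upper part} $D^{\geq \delta}$, the restriction to $\{x \in X : d(x,A) \geq \delta\}$, which has finite support by definition, and a \emph{lower part} $D^{<\delta}$, the restriction to $\{x : d(x,A) < \delta\}$, whose $W_p$ distance to the zero diagram is finite. Building a matching of $D$ to $D'$ by juxtaposing optimal matchings of the upper and lower parts, and then using the triangle inequality on the lower piece, gives
\begin{equation*}
  W_p(D, D') \leq W_p(D^{\geq \delta}, D'^{\geq \delta}) + W_p(D^{<\delta}, 0) + W_p(D'^{<\delta}, 0),
\end{equation*}
with the usual $\ell^\infty$ variant for $p = \infty$. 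This decoupling is the engine of the proof.

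For the forward direction, assume $S$ is totally bounded, fix $\eta > 0$, and cover $S$ by finitely many $W_p$-balls of radius $\eta$ with centers $D_1,\ldots,D_m \in \overline{D}_p(X,A)$. Uniform upper finiteness follows because each $D_j^{\geq \delta}$ has finite (hence bounded) cardinality, and a diagram within $W_p$-distance $\eta$ of $D_j$ can differ in its count above level $\delta$ by only a bounded amount before matching costs exceed $\eta$. Upper total boundedness follows because truncation $D \mapsto D^{\geq \delta}$ is $1$-Lipschitz in $W_p$ (extend any matching by sending the lower parts to $A$), so the images form a totally bounded set of finite persistence diagrams on $\{d(\cdot, A) \geq \delta\}$. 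Uniform $p$-vanishing follows because each individual $D_j \in \overline{D}_p(X,A)$ has $W_p(D_j^{<\delta}, 0) \to 0$ as $\delta \to 0$ (by the definition of essentially $p$-finite, via a dominated-convergence-style argument), and the triangle inequality propagates this control uniformly to all diagrams within $\eta$ of some $D_j$.

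For the reverse direction, given $\eps > 0$, first use uniform $p$-vanishing to pick $\delta > 0$ so that $W_p(D^{<\delta}, 0) < \eps/3$ for every $D \in S$. Uniform upper finiteness bounds the cardinality of each $D^{\geq \delta}$ by some integer $N$ independent of $D \in S$, so $\{D^{\geq \delta} : D \in S\}$ lies inside $D^N(\{d(\cdot,A)\geq\delta\}, \emptyset)$. Upper total boundedness then furnishes a finite $\eps/3$-net $C_1,\ldots, C_k$ for this truncated family in $W_p$. For arbitrary $D \in S$, pick $C_i$ with $W_p(D^{\geq \delta}, C_i) < \eps/3$; viewing $C_i$ as an element of $\overline{D}_p(X,A)$ with empty lower part, the displayed inequality with $D' = C_i$ yields $W_p(D, C_i) < \eps$. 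Hence $\{C_1,\ldots,C_k\}$ is a finite $\eps$-net for $S$, completing the proof.

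The main technical obstacle is the decomposition inequality itself: one must justify that the $W_p$-infimum over matchings decouples compatibly across the two regions $\{d(\cdot,A) \geq \delta\}$ and $\{d(\cdot, A) < \delta\}$, and that restricting/transporting mass to $A$ does not create pathologies at the boundary $d(\cdot, A) = \delta$. A secondary subtlety is checking in the forward direction that the three conditions truly follow from total boundedness alone (without assuming completeness of $X$), which is where one must resist the temptation to pass to limit diagrams and instead argue directly from finite $\eta$-nets.
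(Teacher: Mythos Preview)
Your overall strategy—decompose into upper and lower parts and control each—matches the paper's, and your reverse direction is essentially correct and close to the paper's argument. However, two steps in your forward direction contain genuine gaps.

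\textbf{Truncation is not $1$-Lipschitz.} Your proof of upper total boundedness asserts that $D \mapsto D^{\geq\delta}$ is $1$-Lipschitz in $W_p$, justified by ``extend any matching by sending the lower parts to $A$.'' This is false. Take $D=\{x\}$ with $d(x,A)=\delta+\eta$ and $D'=\{y\}$ with $d(y,A)=\delta-\eta$ and $d(x,y)<2\eta$; then $W_p(D,D')<2\eta$, but $D^{\geq\delta}=\{x\}$, $D'^{\geq\delta}=0$, and $W_p(D^{\geq\delta},D'^{\geq\delta})=d(x,A)\geq\delta$, which is not controlled by $2\eta$. The paper (Lemma~7.17) gets around this by taking a $\delta/2$-net of $S$, observing that under a matching of cost $<\delta/2$ any point $x$ with $d(x,A)\geq\delta$ must pair with a point $y$ with $d(y,A)\geq\delta/2$ (else $d(x,y)>\delta/2$), and concluding that the finite set $\bigcup_j\supp(u_{\delta/2}(\alpha_j))$ is a net for $u_\delta(S)$ \emph{as a subset of $X$}. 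Note also that the paper's definition of ``upper totally bounded'' concerns the point set $\bigcup_{D\in S}\supp(D^{\geq\delta})\subset X$, not the diagram set $\{D^{\geq\delta}\}$; you implicitly use the latter throughout.

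\textbf{Uniform $p$-vanishing does not follow from a bare triangle inequality.} Your claim that ``the triangle inequality propagates this control uniformly'' hides real work: from $W_p(D,D_j)<\eta$ one cannot directly bound $W_p(D^{<\delta},0)$ by $W_p(D_j^{<\delta'},0)+\eta$ for any $\delta'$, again because truncation is not Lipschitz. The paper's proof (Lemma~7.18) chooses the net radius $\eta<\min(\eps/4,\eps(1-2^{-p})^{1/p})$, assumes for contradiction that $W_p(\ell_{\delta/2}(\alpha),0)>\eps$, and splits an almost-optimal matching of $\alpha$ to some center $\alpha_n$ into the portion of $\ell_{\delta/2}(\alpha)$ that pairs with $u_\delta(\alpha_n)$ versus with $\ell_\delta(\alpha_n)$. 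One of these two pieces is forced to carry at least half the mass, and in each case the matching cost is shown to exceed $\eta$. This two-case argument is exactly the content your sketch omits.
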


\begin{theorem*}[\cref{thm:criterion_rel_compct}]
  Let $p \in  [1, \infty]$. A subset $S \subset (\overline{D}_p(X,A),W_p)$ is relatively compact if and only if it is
  uniformly upper finite, %(\cref{def:unif_upper_finite}),
  upper relatively compact (\cref{def:upper_compact}),
  and uniformly $p$-vanishing. %(\cref{def:lower_uniform}).
\end{theorem*}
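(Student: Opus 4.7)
The plan is to bootstrap from the preceding theorem (the characterization of totally bounded subsets of $(\overline{D}_p(X,A),W_p)$), using the standard metric-space fact that a subset is relatively compact if and only if it is totally bounded and every Cauchy sequence of its elements converges to a limit in the ambient space. Two of the three conditions (uniformly upper finite and uniformly $p$-vanishing) already appear in the total boundedness theorem, so the substantive content is that upper totally bounded must be strengthened to upper relatively compact. The uniformly $p$-vanishing hypothesis will control the part of each diagram within $\delta$ of $A$, while upper relative compactness supplies the missing convergence for the ``bulk'' of each diagram lying at distance $\geq \delta$ from $A$.

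For the forward direction, assume $S$ is relatively compact. Then $S$ is totally bounded, so the preceding theorem gives uniformly upper finite and uniformly $p$-vanishing for free. To upgrade upper total boundedness to upper relative compactness, fix $\delta>0$ and consider the truncation that restricts a diagram $D$ to the closed set $\{x \in X : d(x,A) \geq \delta\}$. A short triangle-inequality estimate for $W_p$ shows that this truncation is continuous on $\overline{D}_p(X,A)$ at all but a countable set of ``bad'' thresholds (those at which mass of the limit sits exactly at distance $\delta$ from $A$). Since continuous images of relatively compact sets are relatively compact, the truncation of $S$ at every good threshold is relatively compact, which is precisely the upper relative compactness of $S$.

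For the converse, assume the three conditions and let $(D_n) \subset S$ be arbitrary. Choose a decreasing sequence $\delta_k \to 0$ of good thresholds. By upper relative compactness and a standard diagonal argument, one extracts a subsequence $(D_{n_j})$ along which the $\delta_k$-truncations converge in $(D(X,A),W_p)$ to some $E_k$ for every $k$. The compatibility of the truncations at nested thresholds lets the $E_k$ be glued into a single countable formal sum $E$ on $X$. Uniform $p$-vanishing gives, for every $\eps>0$, a $k$ such that $W_p(D_{n_j}, E_k) < \eps$ uniformly in large $j$; this simultaneously shows $E \in \overline{D}_p(X,A)$ and $W_p(D_{n_j},E) \to 0$, so $S$ is sequentially relatively compact and hence relatively compact.

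The main technical obstacle is the failure of strict continuity of the truncation map on diagrams whose mass lies at distance exactly $\delta$ from $A$; this is what forces the hypothesis on $S$ to be phrased as an \emph{upper} condition rather than a statement about a single restriction. The standard remedy, which I expect to use throughout, is to work only with thresholds $\delta$ outside an at most countable exceptional set, and, where this is insufficient, to compare truncations at two nearby thresholds and absorb the discrepancy using the uniformly $p$-vanishing tail.
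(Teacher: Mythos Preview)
There is a definitional slip that propagates through both halves of your plan. In the paper, ``upper relatively compact'' means that for every $\eps>0$ the set $u_\eps(S)=\bigcup_{\alpha\in S}\supp(u_\eps(\alpha))$ is relatively compact \emph{as a subset of $X$}, not that the family of truncated diagrams $\{u_\eps(\alpha):\alpha\in S\}$ is relatively compact in $(\overline D_p(X,A),W_p)$. The two notions are related (and in fact equivalent once you also assume uniformly upper finite), but that equivalence itself needs a proof; your sentence ``which is precisely the upper relative compactness of $S$'' is therefore a gap, not a tautology.

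This matters most in the forward direction. Even granting that $\alpha\mapsto u_\delta(\alpha)$ is continuous at a given $\alpha$ for all but countably many $\delta$, those exceptional thresholds depend on $\alpha$, and $\bar S$ need not be countable; there may be no single $\delta$ at which the truncation is continuous on all of $\bar S$, so ``continuous images of relatively compact sets are relatively compact'' cannot be invoked as written. The paper sidesteps this by working on the $X$-side from the start: given a sequence $(x_n)$ in $u_\eps(S)\subset X$, lift to diagrams $\alpha_n\in S$ with $x_n\in\supp(\alpha_n)$, pass to a subsequence with $\alpha_n\to\alpha$, observe that any matching of small cost must pair each $x_n$ with a point of the finite set $\supp(u_{\eps/2}(\alpha))$, and pigeonhole out a convergent subsequence of $(x_n)$ in $X$.

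Your converse direction is close in structure to the paper's (recursive subsequence extraction at thresholds $\delta_k\downarrow 0$, assembly of a limit, tail control from uniform $p$-vanishing). The step you leave implicit is the passage from ``$u_{\delta_k}(S)\subset X$ relatively compact and $|u_{\delta_k}(\alpha)|\le M_{\delta_k}$'' to ``the truncated diagrams $(u_{\delta_k}(\alpha_n))_n$ admit a convergent subsequence in the diagram space.'' The paper makes this explicit by a coordinatewise extraction in $X$ (\cref{lem:relatively-compact} and \cref{cor:relatively-compact}); you should do the same rather than lean on truncation continuity, which is exactly where the bad-threshold issue bites.
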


One may apply geometric methods to a metric space if it has curvature bounded above or below (i.e. is an Alexandrov space)~\cite{burago2001course,Bridson:1999}.
%We also give some results on the curvature of spaces of persistence diagrams.

\begin{theorem*}[\cref{cor:cat}]
  Assume that $(X,d)$ is separated and geodesic,  $A$ is distance minimizing, and there exists persistence diagrams with distinct optimal matchings. Let $p \in  [1, \infty]$.
  Then $(\overline{D}_p(X,A),W_p)$ is not a $\CAT(k)$ space for any $k$.
\end{theorem*}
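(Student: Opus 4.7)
The plan is to exploit the standard fact that in a $\CAT(k)$ space, any two points at distance strictly less than $D_k := \pi/\sqrt{k}$ (with $D_k = +\infty$ when $k\leq 0$) are joined by a unique geodesic. It thus suffices to exhibit, for every $k\in\R$, a pair of persistence diagrams at $W_p$-distance less than $D_k$ admitting two distinct geodesics between them.

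The hypothesis, combined with \cref{prop:unique_geodesic}, immediately provides $\sigma,\tau\in\overline{D}_p(X,A)$ with two distinct geodesics between them. For $k\leq 0$ this already completes the argument, since $\CAT(k)$ would require uniqueness globally. The remaining content lies in the case $k>0$: the single pair $(\sigma,\tau)$ suffices only when $W_p(\sigma,\tau)<D_k$, so one must produce pairs with non-unique geodesics at arbitrarily small $W_p$-distance.

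To construct such small pairs, I would extract from two distinct optimal matchings $\phi_1,\phi_2$ a minimal subpair $(\sigma',\tau')$ on which they still differ, and scale this configuration toward $A$ using the geodesic structure of $X$. Since $A$ is distance minimizing, for each point $x$ involved we can pick an $A$-realizer $a_x$ and slide $x$ along a geodesic toward $a_x$ by a parameter $t\in(0,1]$, producing $x_t$ with $d(x_t,A)=t\cdot d(x,A)$. The cost of the ``through $A$'' matching then scales linearly in $t$, and by a coordinated choice of the slides one arranges that the direct matching cost scales in the same way. Both matchings then remain optimal for the scaled diagrams $(\sigma'_t,\tau'_t)$ while $W_p(\sigma'_t,\tau'_t)\to 0$, so \cref{prop:unique_geodesic} produces two distinct geodesics between $\sigma'_t$ and $\tau'_t$ for every sufficiently small $t$. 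Choosing $t$ so that $W_p(\sigma'_t,\tau'_t)<D_k$ contradicts $\CAT(k)$.

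The main obstacle is verifying that the coordinated scaling preserves the equality of the two matching costs. When the symmetric difference of $\phi_1$ and $\phi_2$ factors through $A$, sliding each involved point toward its $A$-realizer works directly. In the ``cycle'' case, where $\phi_1$ and $\phi_2$ differ by a permutation of $\tau$-points assigned to $\sigma$-points without touching $A$, a naive slide breaks the cost balance, and one must select coherent families of geodesics in $X$ so that the relevant distances shrink proportionally. This bookkeeping is the technical heart of the argument, and is where each of the hypotheses---separation, geodesicity of $X$, and distance-minimality of $A$---is put to genuine use.
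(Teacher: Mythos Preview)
Your plan correctly identifies the core mechanism---distinct optimal matchings yield distinct geodesics via \cref{prop:unique_geodesic}, and $\CAT(k)$ spaces have unique geodesics between points closer than $\pi/\sqrt{k}$---and this is exactly the paper's argument. Indeed, the paper's entire proof is the one line ``follows from Proposition~\ref{prop:unique_geodesic}.''

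However, you are working harder than the paper does, because the paper's \emph{actual} statement of \cref{cor:cat} (in the body, as opposed to the condensed summary in the introduction) carries the additional hypothesis that $W_p(\alpha,\beta) < \pi/\sqrt{k}$ when $k>0$. With that hypothesis in place the result is immediate: one already has a pair at small enough distance, so no scaling is needed. The introduction's phrasing ``for any $k$'' without the distance clause is a mild over-summary of what is proved.

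Your scaling construction attempts to remove that hypothesis, and this is where there is a genuine gap. Sliding each dot $x$ along a geodesic toward its $A$-realizer $a_x$ does rescale $d(x_t,A)$ linearly, so the cost of any matching that sends every dot through $A$ scales by $t$. But the cost of a ``direct'' pairing $(x,y)$ involves $d(x_t,y_t)$, and in a general geodesic space there is no reason this should equal $t\,d(x,y)$: geodesics to distinct $A$-realizers need not fan out linearly (think of trees, or of $A$ not a single point). Thus the two competing matching costs need not remain equal after scaling, and the pair $(\sigma'_t,\tau'_t)$ need not retain two optimal matchings. You flag this as ``the technical heart'' but do not resolve it, and without an extra structural assumption on $(X,d,A)$ (e.g.\ a cone or homothety structure) it appears genuinely obstructed. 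So: your argument for $k\le 0$ is correct and matches the paper; for $k>0$ the paper simply assumes what you are trying to prove, and your proposed route to removing that assumption does not go through as stated.
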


\begin{theorem*}[\cref{prop:curvature}]
  If $(X,d,A)$ is a geodesic Alexandrov space with curvature bounded from below by zero, $A$ is distance minimizing, and $p \in  [1, \infty]$ then $(\overline{D}_2(X,A),W_2)$ is a geodesic Alexandrov space with curvature bounded from below by zero.
\end{theorem*}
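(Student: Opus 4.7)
The plan is to reduce the curvature bound to a pointwise Alexandrov inequality in $X$, applied along an optimal matching. By \cref{thm:geodesic}, the hypotheses already guarantee that $(\overline{D}_2(X,A), W_2)$ is a geodesic space, so only the curvature condition needs checking. I would use the standard characterization: curvature $\geq 0$ in a geodesic space is equivalent to requiring that for every $\alpha, \beta, \gamma \in \overline{D}_2(X,A)$ and every geodesic $\mu \colon [0,1] \to \overline{D}_2(X,A)$ from $\beta$ to $\gamma$, setting $s_t := \mu(t)$, one has
\[
W_2(\alpha, s_t)^2 \geq (1-t)\, W_2(\alpha,\beta)^2 + t\, W_2(\alpha,\gamma)^2 - t(1-t)\, W_2(\beta,\gamma)^2.
\]

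To exploit this characterization, I would appeal to the structural description of $W_2$-geodesics in $\overline{D}_2(X,A)$ used in the proof of \cref{thm:geodesic}: under the distance-minimizing hypothesis, every geodesic $\mu$ from $\beta$ to $\gamma$ arises from an optimal matching $M_{\beta\gamma}$ (existing by \cref{thm:optimal-matching}) together with a selection of geodesics $\sigma_{b,c}$ in $X$ between matched pairs $(b,c)$, with $\mu(t)$ equal to the formal sum of the points $\sigma_{b,c}(t)$. Pairs involving $A$ are handled via the distance-minimizing property, which supplies an actual geodesic in $X$ from each point to its nearest point of $A$.

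Next, I would invoke \cref{thm:optimal-matching} once more to select an optimal matching $M_{\alpha, s_t}$ between $\alpha$ and $s_t$. Composing $M_{\alpha, s_t}$ with $M_{\beta\gamma}$ through the intermediate points $\sigma_{b,c}(t)$ produces induced, typically non-optimal, matchings $M_{\alpha\beta}$ and $M_{\alpha\gamma}$. For each resulting triple $(a, b, c)$ the curvature-$\geq 0$ condition on $X$ gives exactly
\[
d(a, \sigma_{b,c}(t))^2 \geq (1-t)\, d(a,b)^2 + t\, d(a,c)^2 - t(1-t)\, d(b,c)^2,
\]
and pairs containing a point of $A$ on either side satisfy an analogous inequality by picking a nearest point of $A$ (then using $d(\cdot, a_A) \geq d(\cdot, A)$ to pass from the chosen nearest point back to the quotient). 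Summing over all matched pairs, using that $M_{\beta\gamma}$ is optimal (so the $d(b,c)^2$ terms sum exactly to $W_2(\beta,\gamma)^2$) while $M_{\alpha\beta}$ and $M_{\alpha\gamma}$ have cost at least $W_2(\alpha,\beta)^2$ and $W_2(\alpha,\gamma)^2$, yields the desired inequality.

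The main obstacle I anticipate is the handling of countable formal sums: one must justify termwise application of the Alexandrov inequality and convergence of the resulting series, which should follow from the essentially $2$-finite condition defining $\overline{D}_2(X,A)$, particularly in controlling mass accumulating near $A$. A secondary bookkeeping issue is checking that the composite matchings through $A$ (when a point of $\alpha$ is matched to $A$ via $M_{\alpha,s_t}$, or when some $\sigma_{b,c}(t)$ is matched to $A$ so that $b$ and $c$ end up paired with $A$ in the induced matchings) form valid matchings in the paper's sense and satisfy the required pointwise bound; these cases all reduce to trivial or easily verified instances of the inequality.
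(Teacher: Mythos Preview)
Your overall strategy---reduce to pointwise Alexandrov inequalities in $X$ along optimal matchings and sum---matches the paper's approach. However, there is a genuine gap: you assume that \emph{every} geodesic $\mu$ in $(\overline{D}_2(X,A),W_2)$ arises from an optimal matching $M_{\beta\gamma}$ together with a choice of geodesics in $X$ between matched pairs. The proof of \cref{thm:geodesic} only \emph{constructs} geodesics of this form; it does not show that all geodesics have this displacement-interpolation structure. Since the Alexandrov condition must hold for an \emph{arbitrary} geodesic, you cannot assume this.

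The paper circumvents the problem by never assuming any structure for the given geodesic $\gamma$ from $\alpha$ to $\beta$. Instead, it fixes $t$, selects optimal matchings between $\gamma(t)$ and each of $\alpha$, $\beta$, $\xi$ (written on a common index set with $\gamma(t)=\sum_i w_i$), and then \emph{derives} the interpolation structure a posteriori. From
\[
\norm{(d(x_i,y_i))_i}_2 \le \norm{(d(x_i,w_i))_i}_2 + \norm{(d(w_i,y_i))_i}_2 = W_2(\alpha,\gamma(t))+W_2(\gamma(t),\beta) = W_2(\alpha,\beta)
\]
together with $W_2(\alpha,\beta)\le \norm{(d(x_i,y_i))_i}_2$, one gets equality throughout. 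The termwise triangle equality gives $d(x_i,y_i)=d(x_i,w_i)+d(w_i,y_i)$, while the equality case of Minkowski, combined with $W_2(\alpha,\gamma(t))=t\,W_2(\alpha,\beta)$, forces $d(x_i,w_i)=t\,d(x_i,y_i)$. Hence each $w_i$ sits at the $t$-point of a constant-speed geodesic in $X$ from $x_i$ to $y_i$, and the pointwise inequality \eqref{eq:alexandrov} applies. From there your summing argument (with the composed matchings between $\xi$ and $\alpha$, $\xi$ and $\beta$ being merely admissible rather than optimal) goes through exactly as you described.
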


We show that our spaces of persistence diagrams are often infinite dimensional.

\begin{theorem*}[\cref{thm:top_dimension}]
  Under mild conditions, $(D(X,A),W_p)$ has infinite Lebesgue covering dimension for $p \in  [1, \infty]$.
\end{theorem*}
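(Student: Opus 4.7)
The plan is to show that for every $n \in \N$ the space $(D(X,A),W_p)$ contains an isometric (or at least bi-Lipschitz) copy of the $n$-cube $[0,1]^n$ with a suitable metric, and then invoke the standard fact that the covering dimension of a metric space is at least the covering dimension of any subspace. Since $[0,1]^n$ has covering dimension $n$, this forces $\dim D(X,A) \geq n$ for every $n$, whence the dimension is infinite.

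To build the embedding, I would exploit the "mild conditions" (presumably the existence of a nontrivial path in $X$ whose image stays at positive distance from $A$, or more weakly, the existence of a sequence of pairwise distinct points in $X\setminus A$ admitting small nontrivial paths through them). Fix $n$ and choose $n$ short nontrivial paths $\gamma_1,\dots,\gamma_n \colon [0,1]\to X$ with pairwise well-separated images: explicitly, pick $r>0$ such that each $\gamma_i([0,1])$ has diameter at most $r$, satisfies $d(\gamma_i(t),A) \geq 100 r$, and $d(\gamma_i([0,1]),\gamma_j([0,1])) \geq 100 r$ for $i\neq j$. Such paths can be produced by rescaling a single path to be short and then translating along long geodesic-like segments, or by picking $n$ distant points with small local paths through each. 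Define
\[
\Phi \colon [0,1]^n \longrightarrow D(X,A), \qquad \Phi(t_1,\dots,t_n) = \tsum_{i=1}^n [\gamma_i(t_i)].
\]

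The key step is to show that $\Phi$ is a bi-Lipschitz embedding. The separation hypothesis forces the optimal (in fact, unique $W_p$-competitive) matching between $\Phi(\vec{s})$ and $\Phi(\vec{t})$ to pair $\gamma_i(s_i)$ with $\gamma_i(t_i)$: any alternative matching either pairs a diagram point with $A$ at cost at least $100r$, or swaps two points across the separation at cost at least $100r$, whereas the diagonal matching has per-coordinate cost at most $r$. Consequently
\[
W_p\bigl(\Phi(\vec{s}),\Phi(\vec{t})\bigr) = \Bigl(\tsum_{i=1}^n d(\gamma_i(s_i),\gamma_i(t_i))^p\Bigr)^{1/p}
\]
(with the usual $\max$ for $p=\infty$), which is bi-Lipschitz equivalent to the $\ell_p$-metric on $[0,1]^n$ since each $\gamma_i$ is a nontrivial continuous path on a compact interval. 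Thus $\Phi([0,1]^n)$ is homeomorphic to $[0,1]^n$ and embeds in $D(X,A)$.

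Finally, invoking the monotonicity of Lebesgue covering dimension for (separable, or more generally) metric subspaces, and $\dim [0,1]^n = n$, we conclude $\dim D(X,A) \geq n$ for every $n$, and hence $\dim D(X,A) = \infty$. The main technical obstacle is verifying the matching claim, i.e.\ that the quantitative separation actually forces the diagonal matching to be optimal; this is the place where one has to commit to precise constants and also account for the $p=\infty$ case separately. Once that combinatorial fact is in hand, the rest is a routine monotonicity argument.
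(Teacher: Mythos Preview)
Your approach is genuinely different from the paper's, and it is worth comparing them. The paper fixes a \emph{single} compact ball $Y=\overline{B}_\eps(x)$ with $\dim Y\ge 2$ and $Y\cap A^{3\eps}=\emptyset$, observes that $(Y^n/S_n,\overline{d_p^n})$ embeds isometrically into $(D^n(X,A),W_p)$, and then uses classical dimension theory (product formula plus the Hurewicz Mapping Theorem applied to the finite-to-one quotient $Y^n\to Y^n/S_n$) to get $\dim(Y^n/S_n)\ge n$. Your route sidesteps the symmetric-group quotient entirely by placing the $n$ ``coordinates'' in pairwise separated regions of $X$, so that the optimal matching is forced to be diagonal and $\Phi$ lands directly on a copy of $[0,1]^n$. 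This is more elementary (no Hurewicz), but it trades a clean local hypothesis on $X$ for a global one: you need room in $X$ to spread out $n$ arcs, for every $n$.

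Two gaps need attention. First, the sentence ``such paths can be produced by rescaling a single path to be short and then translating along long geodesic-like segments'' is not an argument in a general metric pair: there is no translation, and a single injective arc can wind back on itself so that disjoint subintervals have images that are not metrically separated. You should state precisely what hypothesis on $(X,d,A)$ you are using (e.g.\ a single closed ball $\overline{B}_\eps(x)$ disjoint from $A^{3\eps}$ that contains, for every $n$, points $y_1,\dots,y_n$ with pairwise distance $\ge 100r$ and injective arcs of diameter $\le r$ through each $y_i$) and verify it holds in the cases you care about. Second, ``bi-Lipschitz equivalent to the $\ell_p$-metric since each $\gamma_i$ is a nontrivial continuous path'' is false as written: a nontrivial continuous path need not be injective, let alone bi-Lipschitz. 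What you actually need, and what suffices for the dimension conclusion, is that each $\gamma_i$ be an \emph{injective} continuous map; then $\Phi$ is a continuous injection from the compact $[0,1]^n$ into a Hausdorff space, hence a homeomorphism onto its image, and monotonicity of covering dimension under subspaces (which in the paper is stated for non-extended separated metric spaces) finishes the argument.
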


\begin{theorem*}[\cref{cor:infinite-hausdorff-dimension}]
 If $(X,d,A)$ is a geodesic Alexandrov space with non-negative curvature  and $A$ is distance minimizing and not isolated then $(\overline{D}_2(X,A),W_2)$ has infinite Hausdorff dimension.
\end{theorem*}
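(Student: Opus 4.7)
The plan is to show that $(\overline{D}_2(X,A),W_2)$ contains isometric copies of arbitrarily high-dimensional Euclidean cubes. Since Hausdorff dimension is monotone under isometric (in fact bi-Lipschitz) embeddings and $\dim_H([0,\epsilon]^N) = N$, this forces $\dim_H \overline{D}_2(X,A) \ge N$ for every $N$, hence infinite.

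First I would extract from the hypotheses a useful unit-speed geodesic leaving $A$. Because $A$ is not isolated and $X$ is a geodesic space, $X\setminus A$ is nonempty (otherwise $A=X$ would vacuously satisfy the definition of isolated). Pick $x\in X\setminus A$, and by the distance-minimizing hypothesis pick $a\in A$ with $d(x,a)=d(x,A)=:r>0$. Let $\gamma:[0,r]\to X$ be a unit-speed geodesic from $a$ to $x$. A short triangle-inequality argument gives $d(\gamma(t),A)=t$ for all $t\in[0,r]$ (the upper bound is $d(\gamma(t),a)=t$; the lower bound uses $r=d(x,A)\le d(x,\gamma(t))+d(\gamma(t),A)=(r-t)+d(\gamma(t),A)$), and since $\gamma$ is a unit-speed geodesic $d(\gamma(s),\gamma(t))=|s-t|$ for all $s,t$.

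Next, for each $N\ge 1$, choose $\epsilon_N>0$ small (precisely $\epsilon_N<r/(\sqrt{N}\cdot 2^{N+2})$) and define
\[
\Psi_N: [0,\epsilon_N]^N \to \overline{D}_2(X,A), \qquad \Psi_N(s_1,\ldots,s_N)=\sum_{i=1}^N \bigl[\gamma(r/2^i+s_i)\bigr].
\]
The ``diagonal'' matching pairing $\gamma(r/2^i+s_i)$ with $\gamma(r/2^i+t_i)$ has squared cost $\sum_i|s_i-t_i|^2=\|s-t\|_2^2$, giving the upper bound $W_2(\Psi_N(s),\Psi_N(t))\le\|s-t\|_2$.

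The main technical step, and the main obstacle, is to prove the diagonal matching is in fact optimal, so that equality holds. Every ``wrong'' edge in an alternative matching falls into one of two cases: it either sends some $\gamma(r/2^i+s_i)$ or $\gamma(r/2^i+t_i)$ to $A$, costing at least $r/2^i\ge r/2^N$; or it pairs $\gamma(r/2^i+s_i)$ with $\gamma(r/2^j+t_j)$ for $i\neq j$, in which case (WLOG $i<j$) the cost is $|r/2^i-r/2^j+s_i-t_j|\ge r(1/2^i-1/2^j)-2\epsilon_N\ge r/2^N-2\epsilon_N\ge r/2^{N+1}$, using the geometric spacing $r/2^i$ to force a uniform lower bound. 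Hence any non-diagonal matching has squared cost at least $(r/2^{N+1})^2>N\epsilon_N^2$, while the diagonal matching has squared cost at most $N\epsilon_N^2$; this forces the diagonal matching to be strictly optimal, so $W_2(\Psi_N(s),\Psi_N(t))=\|s-t\|_2$. The careful case analysis here—ruling out any combination of swaps and projections to $A$ by bounding every individual wrong edge—is the only real subtlety. I note that the Alexandrov non-negative-curvature hypothesis on $X$ is not strictly needed for this direct construction; alternatively, the result can be obtained as a corollary of \cref{prop:curvature} and \cref{thm:top_dimension} via the Burago--Gromov--Perelman identification of Hausdorff dimension with topological dimension on Alexandrov spaces of curvature bounded below.
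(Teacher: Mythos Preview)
Your argument is correct and takes a genuinely different route from the paper. The paper's proof is a three-line corollary: by \cref{prop:curvature}, $(\overline{D}_2(X,A),W_2)$ is an Alexandrov space of non-negative curvature; by \cref{thm:not_loc_compactness_D_p}, it is not locally compact; and by \cite[Theorem 10.8.1]{burago2001course}, an Alexandrov space of curvature bounded below with finite Hausdorff dimension is locally compact. So the paper leverages the structural theory of Alexandrov spaces, whereas you build the conclusion from scratch by embedding Euclidean cubes of arbitrary dimension along a single geodesic ray issuing from $A$. Your approach is longer but entirely elementary, and, as you correctly observe, it never uses the non-negative curvature hypothesis on $X$: you only need $(X,d)$ geodesic, $A$ distance minimizing, and $A$ not isolated. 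So your argument actually proves a strictly stronger statement. Your closing remark about an alternative corollary-style proof is essentially the paper's approach, though the paper invokes the failure of local compactness rather than \cref{thm:top_dimension}; both are consequences of the same BBI structure theorem.
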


Since kernel methods in statistical learning and machine learning use a mapping to a Hilbert space~\cite{Scholkopf:2001,Steinwart:2008}, it is important to know to what extent such maps can preserve a given metric structure.
% For statistics and machine learning it is highly desirable to have a Hilbert space.
Since our spaces of persistence diagrams often have distinct geodesics, they cannot be isometrically embedded into a Hilbert space.
However, if a metric space has only finite distances and has finite asymptotic dimension then it coarsely embeds into a Hilbert space.

\begin{proposition*}[\cref{prop:coarse_emb_D^n_pointed_case}]
  Assume that $(X,d)$ is proper, that all distances are finite, and that $(X,d)$ has finite asymptotic dimension. Let $x_0 \in X$ and $p \in  [1, \infty]$. Then $(D^n(X,x_0),W_p)$ can be coarsely embedded into a Hilbert space.
\end{proposition*}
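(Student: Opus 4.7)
The plan is to deduce the conclusion from the theorem of Higson--Roe that any metric space of finite asymptotic dimension admits a coarse embedding into a Hilbert space. It therefore suffices to show that $(D^n(X,x_0), W_p)$ is proper and has finite asymptotic dimension.

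First I would identify $(D^n(X,x_0), W_p)$ isometrically with the symmetric power $(X^n/S_n, \bar d_p)$, where
$\bar d_p([\tilde x],[\tilde y]) = \min_{\sigma \in S_n}\bigl(\sum_i d(x_i, y_{\sigma(i)})^p\bigr)^{1/p}$
is the quotient of the $\ell^p$ product metric on $X^n$ (with the usual modification for $p=\infty$). The identification sends a diagram $\sum_{i=1}^k [x_i]$ with $k \le n$ to the $S_n$-orbit of the padded tuple $(x_1, \ldots, x_k, x_0, \ldots, x_0)$. Padding is well defined on diagram classes because $x_0 \in A = \{x_0\}$, and the two metrics coincide because, on padded tuples of equal length, every valid matching in the definition of $W_p$ (including matchings that send unmatched points to the basepoint) is realized by some permutation in $S_n$ with the same $p$-cost.

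Next I would verify that $(X^n/S_n, \bar d_p)$ is proper and has finite asymptotic dimension. Properness of $X$ passes to $X^n$ (closed balls are contained in products of closed balls), and descends to $X^n/S_n$ via the 1-Lipschitz surjection $\pi: X^n \to X^n/S_n$, since bounded sets in the quotient pull back to bounded, hence precompact, subsets of $X^n$. The product formula for asymptotic dimension (Bell--Dranishnikov) yields $\asdim(X^n, \ell^p) \le n \cdot \asdim(X) < \infty$. For the quotient, I would argue $\asdim(X^n/S_n) \le \asdim(X^n)$ by starting with a cover $\mathcal W$ of $X^n$ of bounded multiplicity, uniformly bounded diameter, and $R$-disjoint same-color families, replacing it with its $S_n$-invariant refinement $\{\sigma W : \sigma \in S_n,\ W \in \mathcal W\}$, and coloring each projected set $\pi(W)$ by the $S_n$-orbit of the original color of $W$; with this coloring, bounded multiplicity, uniformly bounded diameter, and same-color $R$-disjointness all survive the passage to $X^n/S_n$, since $\bar d_p([\tilde x],[\tilde y]) = \min_{\sigma} d_{\ell^p}(\tilde x, \sigma \tilde y)$ ranges precisely over pairs within the $S_n$-invariant color class.

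Finally, Higson--Roe's theorem (see, e.g., Nowak--Yu, \emph{Large Scale Geometry}) provides a coarse embedding $(X^n/S_n, \bar d_p) \hookrightarrow H$ into a Hilbert space; composing with the isometry of the first step yields the desired coarse embedding of $(D^n(X,x_0), W_p)$. The main obstacle in this plan is the quotient step: since $S_n$-orbits in $X^n$ do not have uniformly bounded diameter, preserving $R$-disjointness after the projection is delicate, and the $S_n$-equivariant refinement together with the orbit-coloring is the crucial bookkeeping device that makes the bound on $\asdim(X^n/S_n)$ go through.
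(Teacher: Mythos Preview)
Your overall strategy---show $\asdim D^n(X,x_0) < \infty$ and invoke Roe's theorem---is the same as the paper's, but there are two genuine errors in the execution.

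\textbf{The isometry claim is false.} Your map $D^n(X,x_0)\to X^n/S_n$ is not an isometry for $p>1$. Take $n=1$, $X=\R$, $x_0=0$, $p=\infty$: then $W_\infty(-1,1)=\min(d(-1,1),\max(d(-1,0),d(1,0)))=1$, while in $X^1/S_1=X$ the distance is $d(-1,1)=2$. The justification ``every valid matching \dots\ is realized by some permutation in $S_n$'' fails precisely when both diagrams have cardinality close to $n$: the matching that sends every point of $\alpha$ to $x_0$ and every point of $\beta$ to $x_0$ requires $|\alpha|+|\beta|$ slots, which can exceed $n$. The paper handles this by embedding (not identifying) $D^n(X,x_0)$ isometrically into $X^{2n}/S_{2n}$ via \cref{lem:D^n_isom_embedding}, padding with $2n$ copies of $x_0$; this matches \cref{lem:optimal-matching-finite-pointed-metric-space}. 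Your map is still bi-Lipschitz (since $d$ and $d_p$ are Lipschitz equivalent, \cref{lem:equiv_metrics}), so this error alone is not fatal, but the stated reason is wrong.

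\textbf{The quotient step does not work as written.} Saturating a colored cover $\mathcal W=\bigcup_i\mathcal W_i$ of $X^n$ under $S_n$ and projecting does \emph{not} preserve $R$-disjointness within colors: if $U,V\in\mathcal W_i$ are $R$-far in $X^n$, there is no reason $U$ and $\sigma V$ are $R$-far, yet $\pi(U)$ and $\pi(V)$ are $R$-close in the quotient exactly when some such pair is $R$-close upstairs. The phrase ``$S_n$-orbit of the original color'' does not resolve this, since $S_n$ does not act on the set of colors. This is the substantive content of Kasprowski's theorem (\cref{thm:asdim_X/F}): for a proper metric space with a finite group acting by isometries, $\asdim(X/F)=\asdim X$. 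The paper simply cites this result---and this is where the properness hypothesis is actually used. Your argument would need to be replaced by that citation (or a correct proof of it).
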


However, if we do not bound the cardinality of our persistence diagrams then we often have infinite asymptotic dimension. 

\begin{proposition*}[\cref{prop:asdim_is_infty}]
  Assume that $(X,d)$ is a geodesic space, $(X/A,d_1)$ is unbounded and proper, and that $A$ is distance minimizing. Let $p \in  [1, \infty]$.
  Then  $(D(X,A), W_p) $ has infinite asymptotic dimension.
\end{proposition*}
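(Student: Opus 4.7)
The plan is to construct, for every $n\geq 1$, a bi-Lipschitz embedding of $(\N^n,\|\cdot\|_p)$ into $(D(X,A),W_p)$. Since $(\N^n,\|\cdot\|_p)$ is coarsely equivalent to $(\Z^n,\|\cdot\|_p)$, whose asymptotic dimension equals $n$, and asymptotic dimension is monotone under coarse embeddings (in particular bi-Lipschitz embeddings), this yields $\asdim(D(X,A),W_p)\geq n$ for all $n$, hence infinite.

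The first step is to manufacture a geodesic ray $\gamma\colon[0,\infty)\to X$ with $\gamma(0)\in A$ and $d(\gamma(t),A)=t$ for all $t\geq 0$. Because $(X,d)$ is geodesic and $A$ is distance minimizing, the quotient $(X/A,d_1)$, with $d_1([x],[y])=\min(d(x,y),d(x,A)+d(y,A))$, is itself a geodesic metric space. Properness and unboundedness then supply, via the standard Arzel\`a--Ascoli argument in proper geodesic spaces, a unit-speed geodesic ray $\tilde\gamma\colon[0,\infty)\to X/A$ based at $[A]$. For $t>0$, $\tilde\gamma(t)\ne[A]$ lifts uniquely to $\gamma(t)\in X\setminus A$; set $\gamma(0)$ to be any closest point in $A$ to $\gamma(1)$. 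Applying the defining formula for $d_1$ together with the strict inequality $s+t>|s-t|$ for $0<s<t$ then forces $d(\gamma(s),\gamma(t))=|s-t|$ and $d(\gamma(t),A)=t$.

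Next, fixing $n\geq 1$ and $M\geq 1$, I would define $\phi_n\colon\N^n\to D(X,A)$ by
\[
\phi_n(k_1,\ldots,k_n)=\bigl\langle\gamma(t_1(k)),\ldots,\gamma(t_n(k))\bigr\rangle,\qquad t_i(k)=iM+\tsum_{j\leq i}k_j,
\]
so that $M\leq t_1(k)<\cdots<t_n(k)$. The key computation is $W_p(\phi_n(k),\phi_n(k'))$, done in two steps. (a) For any $s,t>0$, $|s-t|^p\leq\max(s,t)^p\leq s^p+t^p$ (with the evident $p=\infty$ analogue), so pairing an unmatched point of $\phi_n(k)$ with an unmatched point of $\phi_n(k')$ is never worse than sending both to $A$; iterating forces the optimal matching to be a bijection between the two sorted $n$-tuples with no mass sent to $A$. (b) Since both tuples are sorted, the one-dimensional $\ell^p$ rearrangement inequality (valid for all $p\in[1,\infty]$) identifies the identity bijection as optimal, giving
\[
W_p\bigl(\phi_n(k),\phi_n(k')\bigr)=\bigl\|u(k)-u(k')\bigr\|_p,
\]
where $u\colon\R^n\to\R^n$ is the lower-triangular unipotent partial-sum map $u(k)_i=\sum_{j\leq i}k_j$ (with the $\ell^\infty$ reading when $p=\infty$). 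As $u$ is a linear bijection of $\R^n$ and all norms on $\R^n$ are equivalent, $u$ and $u^{-1}$ are Lipschitz, so $\phi_n$ is a bi-Lipschitz embedding and the result follows.

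The hardest part is the lower bound on $W_p$, namely step (a): ruling out that the optimal matching sends mass to $A$. It rests on the elementary inequality $|s-t|^p\leq s^p+t^p$ together with the choice $M\geq 1$, which forces every $t_i(k)$ to be positive. Step (b) is then the classical one-dimensional rearrangement applied to points on the isometric ray $\gamma$, so pairwise distances along $\gamma$ agree with distances in $\R$. Both ingredients work uniformly in $p\in[1,\infty]$, so the argument is valid throughout the stated range. The only other technical point is the production of $\gamma$, which is handled cleanly by Arzel\`a--Ascoli inside $(X/A,d_1)$ followed by unique lifting of nonzero points to $X\setminus A$.
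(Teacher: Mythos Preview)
Your argument is correct and follows the same core idea as the paper: extract a geodesic ray emanating from $A$ (via the Kuchaiev/Arzel\`a--Ascoli lemma applied to the proper geodesic space $(X/A,d_1)$), then place $n$ points on that ray to produce an embedding of an $n$-dimensional space into $D(X,A)$ whose asymptotic dimension forces $\asdim D(X,A)\geq n$.

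The packaging differs slightly. The paper embeds $D^n(\R_+,0)\cong(\R_+^n/S_n,\overline{\rho^n_p})$ \emph{isometrically} into $D^n(X/A,A)\cong D^n(X,A)$ by pushing the entire half-line along the ray, and then appeals (implicitly) to $\asdim(\R_+^n/S_n)=n$. You instead embed the unsymmetrized lattice $\N^n$ \emph{bi-Lipschitzly}, using the partial-sum map $u$ and the offsets $iM$ to force the image tuple to be sorted; this lets you compute $W_p$ exactly and reduce to the standard fact $\asdim\N^n=n$ rather than having to argue about the symmetric quotient. Your steps (a) and (b) are precisely the content of the paper's Lemma showing $D^n(\R_+,0)\cong\R_+^n/S_n$: step~(a) is the inequality $|s-t|<\max(s,t)$ used there to rule out matchings through the basepoint, and step~(b) is the one-dimensional rearrangement. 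So the two proofs unwind to the same computation; yours is a bit more self-contained in that it avoids quoting $\asdim(\R_+^n/S_n)=n$, at the cost of the extra bookkeeping with $u$ and the bi-Lipschitz constants depending on $n$ (which is harmless here).
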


\subsection*{Related work}

Many of our results are indebted to previous work studying the topological and metric properties of formal sums on the metric pair $(\overline{\R}^2_{\leq},d,\overline{\Delta})$ and its subspaces,
and our proofs often follow these previous arguments.
% consisting of ordered pairs $(x,y)$, where $x,y \in \R$ with $x \leq y$, $d$ a metric on $\R^2$ (often induced by a norm on $\R^2$), and $\Delta$ is the subset given by $x=y$.
%
Mileyko, Mukherjee and Harer~\cite{mileyko2011probability} considered
$(\overline{D}_p(\R^2_{\leq},\Delta),W_p)$
for $p \in [1,\infty)$.
They showed that it
is complete and separable and gave a characterization of its totally bounded sets, which are also its relatively compact sets.
Blumberg et al~\cite{blumberg2014robust} considered
$(\overline{D}_{\infty}(\overline{\R}^2_{\leq},\overline{\Delta}),W_{\infty})$.
They showed that it is complete and separable and that it is the Cauchy completion of $(D(\overline{\R}^2_{\leq},\overline{\Delta}),W_{\infty})$.
Turner et al~\cite{turner2014frechet} considered
$(\overline{D}_2(\overline{\R}^2_{\leq},\overline{\Delta}),W_2)$.
They showed that it has optimal matchings, that it is not a $\CAT(k)$ space for $k>0$, and that it is a non-negatively curved Alexandrov space.
Perea, Munch and Khasawneh~\cite{perea2019approximating} considered
$(\overline{D}_{\infty}([0,\infty)^2_{\leq},\Delta),W_{\infty})$.
They gave a characterization of its relatively compact sets and showed that all of its persistence diagrams do not have a compact neighborhood.
They also showed that the function space
$C((\overline{D}_{\infty}([0,\infty)^2_{\leq},\Delta),W_{\infty}),\R)$ is not metrizable.
K.\ Turner~\cite{turner2013medians} considered
$(\overline{D}_p(\overline{\R}^2_{\leq},\overline{\Delta}),W_p)$, for $p \in [1,\infty]$.
She proved this space is geodesic and is not $\CAT(k)$ for any $k > 0$.
Chowdhury~\cite{Chowdhury:2019a} studied geodesics in $(\overline{D}_p(\R^2_{\leq},\Delta),W_p)$ and whether or not they could be uniquely characterized as convex combinations.

Bubenik and Wagner~\cite{bubenik2020embeddings} showed that $(\overline{D}_{\infty}(\R^2_{\leq},\Delta),W_{\infty})$ does not coarsely embed into Hilbert space.
This result was extended to
$(D(\R^2_{\leq},\Delta),W_{\infty})$ by
Mitra and Virk~\cite{mitra2019space} and to
$(D(\R^2_{\leq},\Delta),W_p)$ for $p \in (2,\infty)$ by
A.\ Wagner~\cite{wagner2019nonembeddability}. 
Bubenik and Vergili~\cite{bubenik2018topological}
studied the topological properties of the space of $\R$-indexed persistence modules equipped with the interleaving distance.
The study of persistence diagrams on metric pairs was initiated by Bubenik and Elchesen~\cite{bubenik2019universality}.
% Instead of only considering persistence diagrams defined in the upper-half plane, \cite{bubenik2019universality} defined persistence diagrams and Wasserstein distances on any metric pair $(X,d,A)$ where $d$ is an extended pseudometric.

Che, Galaz-Garc\'ia, Guijarro, and Mebrillo Solis ~\cite{che2021metric} were the first to study the metric properties of formal sums on metric pairs.
  They consider metric spaces $(X,d)$ in which $d(x,y)< \infty$ for all $x,y \in X$ and for which $d(x,y)>0$ if $x \neq y$ and $p \in [1,\infty)$.
  They show that if $(X,d)$ is complete or separable then so is $(\overline{D}_p(X,A),W_p)$.
  If $X$ is proper, a stronger condition than $A$ being distance minimizing 
  (see Lemma \ref{lem:A_dist_minimizing} and Example \ref{ex:non-proper_and_A_minimizing}),
  they show that if $X$ is geodesic then so is $(\overline{D}_p(X,A),W_p)$ and if $X$ is an Alexandrov space with curvature bounded below by zero then so is $(\overline{D}_2(X,A),W_2)$.
  Assuming that $X$ is proper, they give a criterion for compact subsets of $(\overline{D}_p(X,A),W_p)$. Also, they show that $(\overline{D}_2(\R^2,\Delta),W_2)$ has infinite topological dimension, Hausdorff dimension, and asymptotic dimension. 
  In addition, they study the sequential continuity of the mapping from metric pairs to persistence diagrams and the space of directions of $(\overline{D}_2(X,A),W_2)$ in the case that $X$ is an Alexandrov space with non-negative curvature.

% Recently, the authors of \cite{che2021metric} generalized some of the above-mentioned results for the persistence diagrams on $(X,d, A)$  using standard metric $d$ while we assume that $d$ is an extended pseudometric.
% For example, first statement of Proposition \ref{prop:separability} and Theorem \ref{thm:completeness_p}. Also,  they proved Proposition \ref{prop:curvature} when $X$ is proper. Furthermore, they have a similar result to Theorem \ref{thm:criterion_tot_bd} and Theorem \ref{thm:geodesic} assuming $X$ is proper and $p \in [1, \infty)$. 
% We instead of properness condition  assume that $A$ is minimizing which is a weaker condition since it follows from the properness (see Lemma \ref{lem:A_dist_minimizing} and Example \ref{ex:non-proper_and_A_minimizing}), and we consider all $p \in [1, \infty]$.  Moreover, they proved that under certain conditions $\asdim D_2(X, A)=\infty$. We have a similar result for all $p \in [1, \infty]$, see Proposition \ref{prop:asdim_is_infty}. In particular, they showed that $\asdim D_2(\R^{2n}, \Delta_n)=\infty$ and $\dim D_2(\R^{2n}, \Delta_n)=\infty$. The more general results for all $p \in [1, \infty]$ follows from Proposition \ref{prop:asdim_is_infty} and Theorem \ref{thm:top_dimension}.

\section{Persistence diagrams and Wasserstein distance}

In this section we introduce necessary background and notation.
	
\subsection{Commutative monoids and persistence diagrams.} \label{sec:monoids}
	
We start by defining our main objects of study: certain free commutative monoids whose elements we call persistence diagrams.

Let $\Z_+$ denote the set of non-negative integers.
It is a free commutative monoid under addition. Also let $\N= \Z_+ \setminus \{0\}$.
	Let $X$ be a set.
	Let $\Z_+^X$ denote the set of functions $f:X \to \Z_+$. 
	Then $\Z_+^X$ is a commutative monoid under pointwise addition with neutral element the zero function $0$. Note that $\Z_+^X= \prod_{x \in X} \Z_+$, the product in the category of commutative monoids.
	For $f \in \Z_+^X$ the \emph{support} of $f$ is given by $\supp(f) = \{x \in X \, | \, f(x) > 0\}$.
	There is a canonical inclusion of $X$ in $\Z_+^X$ given by mapping $x \in X$ to its indicator function, $1_x$, given by $1_x(y) = 1$, if $x=y$, and $1_x(y)=0$, otherwise. We denote $1_x$ by $x$. Let $f \in \Z^{X}_+$. It is often convenient to write $f$ as an indexed sum, $f=\sum_{i \in I} x_i$. Define the cardinality of $f$ to be the cardinality of the set $I$. The following cases, where $I$ is ordered, are particularly useful, $f=\sum_{i=1}^n x_i$ and $f=\sum_{i=1}^{\infty} x_i$.
	For $n \in \N$, let $D^n(X)$ be the subset of $\Z_+^X$ consisting of elements of cardinality at most $n$.
	Similarly, let $D(X)$ be the submonoid $\Z_+^X$ of elements of finite cardinality and let $\overline{D}(X)$ be the submonoid $\Z_+^X$ of elements of countable cardinality. Note that $D(X)=\bigoplus_{x \in X} \Z_+$ which is the coproduct in the category of commutative monoids.
	We have inclusions $X \isomto D^1(X) \subset D(X) \subset \overline{D}(X)$.
	
	Let $A \subset X$. Then $f:A \to \Z_+$ has a canonical extension to a function $f:X \to \Z_+$ with the same support. Thus we may identify $\Z_+^A$ as a submonoid of $\Z_+^X$.
\begin{definition}
		Let $f,g:X \to \Z_+$. Define $f \sim g$ if $f|_{X \setminus A} = g|_{X \setminus A}$.
		Then $f \sim g$ and $h \sim k$ implies that $f+h \sim g+k$.
		That is, the relation is a congruence.
		The quotient monoid, $\Z_+^X/{\sim}$, is also denoted $\Z_+^X/\Z_+^A$ and we will usually denote it by $\Z_+^{X,A}$.
		We call the elements of $\Z_+^{X,A}$ \emph{persistence diagrams}  and the element $0$ the \emph{empty diagram}.
	\end{definition}
	There is an isomorphism $\rho: \Z_+^{X,A} \to \Z_+^{X \setminus A}$ given by $\rho([f]) = f|_{X \setminus A}$ and $\rho^{-1}(g) = [\overline{g}]$, where $\overline{g}$ is any extension of $g$ to $X$.
\begin{definition}	For $\alpha \in \Z_+^{X,A}$, we write $\hat{\alpha}=\rho(\alpha) $ and define the cardinality of $\alpha$ to be the cardinality of $\hat{\alpha}$, denoted $\abs{\alpha}$.
\end{definition}
	Note that $D(X) \cap \Z_+^A = D(A)$ and $\overline{D}(X) \cap \Z_+^A = \overline{D}(A)$.
\begin{definition}
	We define $D(X,A)$ to be the quotient monoid $D(X)/D(A)$ and $\overline{D}(X,A)$ to be the quotient monoid $\overline{D}(X)/\overline{D}(A)$.
	The elements of $D(X,A)$ and $\overline{D}(X,A)$ are \emph{finite persistence diagrams} and \emph{countable persistence diagrams} respectively.
\end{definition}
	Notice that the previously defined map $\rho$ gives isomorphisms $D(X,A) \isomto D(X \setminus A)$ and $\overline{D}(X,A) \isomto \overline{D}(X \setminus A)$.
	We have submonoids $D(X,A) \subset \overline{D}(X,A) \subset \Z_+^{X,A}$.
	For $n \in \N$, let $D^n(X,A) = \rho^{-1}D^n(X \setminus A)$.
	Then $D^n(X,A) \subset D(X,A)$ is the subset of persistence diagrams of cardinality at most $n$.
	For $\alpha \in \overline{D}(X,A)$, we have $\hat{\alpha}
        \in \overline{D}(X \setminus A)$, 
        with $\hat{\alpha} = \sum_{i \in I} x_i$, where $x_i \in X \setminus A$ for all $i \in I$, and $I$ is countable.
% and the cardinality of $I$ equals the cardinality of $\alpha$.
        Similarly, for $\alpha \in D(X,A)$, $\hat{\alpha} \in D(X \setminus A)$ and for $\alpha \in D^n(X,A)$, $\hat{\alpha} \in D^n(X \setminus A)$.
        In these two cases, we also have  $\hat{\alpha} = \sum_{i \in I} x_i$, where $x_i \in X \setminus A$ for all $i \in I$, but $I$ is finite in the first case, and has cardinality at most $n$ in the second case.
        If we say \emph{support} of $\alpha$ then we mean the support of $\hat{\alpha}$.
For a persistence diagram $\alpha$, call elements of the support of $\alpha$, \emph{dots} of the persistence diagram.

\begin{example}

Let $\Int(\R)$ denote the set of intervals in $\R$.
Then we have
$D(\Int(\R),\emptyset)$ and $\overline{D}(\Int(\R),\emptyset)$ (where $\emptyset \in \Int(\R)$)
whose underlying sets are the sets
of finite and countable \emph{barcodes}, respectively.
Let $\overline{\R}=[-\infty, \infty]$ and consider $\overline{\R}^2_{\leq} = \{(x,y) \in \overline{\R}^2 \ | \ x \leq y\}$ and $\overline{\Delta} = \{(x,y) \in \overline{\R}^2 \ | \ x = y\}$.
Then we have
$D(\overline{\R}^2_{\leq}, \overline{\Delta})$,
whose underlying set is the set
of \emph{finite classical persistence diagrams}, and
$\overline{D}(\overline{\R}^2_{\leq}, \overline{\Delta})$,
whose underlying set is the set
of \emph{countable classical persistence diagrams}.
If we restrict to $\R^2_{\leq} = \{(x,y) \in \R^2 \ | \ x \leq y\}$ and $\Delta = \{(x,y) \in \R^2 \ | \ x=y\}$,
then we have $D(\R^2_{\leq},\Delta)$
and $\overline{D}(\R^2_{\leq},\Delta)$
whose underlying sets consist of classical persistence diagrams
each of whose dots lies in $\R^2_{\leq}$.
For each of these, we may restrict to rational numbers and or to non-negative numbers.
\end{example}
	
Consider the quotient set $X/A$ and write $A$ for the subset $\{A\} \subset X/A$.
Then $\Z_+^{X/A,A} \isom \Z_+^{X,A}$,
$\overline{D}(X/A,A) \isom \overline{D}(X,A)$,
$D(X/A,A) \isom D(X,A)$, and $D^n(X/A,A) \isom D^n(X,A)$.

\begin{definition} \label{def:matching}
Given $\alpha,\beta \in \overline{D}(X,A)$ define a \emph{matching} of
$\alpha$ and $\beta$ to be an element
$\sigma \in \overline{D}(X \times X , A \times A)$ such that
$(\pi_1)_*(\sigma) = \alpha$ and $(\pi_2)_*(\sigma) = \beta$ where
  $(\pi_{1})_*, (\pi_{2})_*: \overline{D}(X\times X, A \times A) \to \overline{D}(X,A)$
  are induced by the projection maps $\pi_{1}, \pi_2: X\times X \to X$.  That is, for all $x \in X \setminus A$,
%    $\alpha(x) = \sigma \pi_1^{-1}(x)$, or, in more detail,
    $\hat{\alpha}(x) = \sum_{y \in X}\hat{\sigma}(x,y)$, and
    for all $y \in X \setminus A$,
 %   $\beta(y) = \sigma \pi_2^{-1}(y)$, or, in more detail,
    $\hat{\beta}(y) = \sum_{x \in X} \hat{\sigma}(x,y)$. 
  
% \iryna{$\alpha(x) = \sigma \pi_1^{-1}(x)$ doesn't seem right because $\sigma \pi_1^{-1}(x)$ is a set and $\alpha(x)$ is a numerical value is that correct? I think it should be $\alpha(x) = \sum_{z \in \pi_1^{-1}(x)}  \sigma(z)$ instead. Also, since $x \in X\setminus A$ then $\alpha(x)= \hat{\alpha}(x)$ right? I suggest we go back to r05 version of this definition without added details because I ended up understanding $\hat{\alpha}(x) = \sum_{y \in X}\hat{\sigma}(x,y)$ without them.}
%
%  We may write $\sigma$ as an indexed sum,
% $\sigma = \sum_{i \in I}(x_i,y_i)$, where $I$ is countable.
Call $\sigma$ a \emph{finite matching} if $\sigma \in D(X \times X, A \times A)$.
\end{definition}	

\begin{lemma}
  \label{lem:matching}
Let $\alpha,\beta \in \overline{D}(X,A)$,
    where $\hat{\alpha} = \sum_{i \in I} x_i$ and $\hat{\beta} = \sum_{j \in J} y_j$.
    Then $\sigma \in \overline{D}(X \times X, A \times A)$
    is a matching of $\alpha$ and $\beta$ if and only if
    there exists a subset $K \subset I$ and an injection $\varphi:K \to J$ such that
    \begin{equation} \label{eq:matching}
      \hat{\sigma} = \sum_{k \in K} (x_k,y_{\varphi(k)}) + \sum_{i \in I \setminus K} (x_i,z_i) + \sum_{j \in J \setminus \varphi(K)} (w_j,y_j),
    \end{equation}
    for some $\{z_i\}_{i \in I \setminus K} \subset A$ and $\{w_j\}_{j \in J \setminus \varphi(K)} \subset A$.
\end{lemma}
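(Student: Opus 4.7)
The proof will verify both directions of the biconditional. The ``if'' direction reduces to a direct marginal computation, while the ``only if'' direction is the substantive one, requiring us to reconstruct $K$, $\varphi$, and the elements $z_i, w_j \in A$ directly from $\sigma$.

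For the ``only if'' direction, I would first represent $\hat\sigma = \sum_{\ell \in L}(u_\ell, v_\ell)$ as an indexed sum, choosing representatives so that each pair $(u_\ell, v_\ell)$ lies in $(X \times X) \setminus (A \times A)$. Then I would partition $L$ according to which coordinates avoid $A$:
\begin{align*}
L_1 &= \{\ell \in L : u_\ell \notin A,\ v_\ell \notin A\}, \\
L_2 &= \{\ell \in L : u_\ell \notin A,\ v_\ell \in A\}, \\
L_3 &= \{\ell \in L : u_\ell \in A,\ v_\ell \notin A\}.
\end{align*}
For each $x \in X \setminus A$, the first marginal condition reduces to $\abs{\{i \in I : x_i = x\}} = \abs{\{\ell \in L_1 \cup L_2 : u_\ell = x\}}$, since pairs in $L_3$ do not contribute. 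Because any two indexed sum representations of the same element of $\overline{D}(X \setminus A)$ differ by a bijection of index sets preserving values, this produces a bijection $\psi : L_1 \cup L_2 \to I$ with $u_\ell = x_{\psi(\ell)}$, and symmetrically a bijection $\tau : L_1 \cup L_3 \to J$ with $v_\ell = y_{\tau(\ell)}$. The plan is then to set $K := \psi(L_1) \subset I$ and define $\varphi := \tau \circ \psi^{-1}|_K : K \to J$, which is automatically injective. For $i \in I \setminus K$, I would put $z_i := v_{\psi^{-1}(i)}$, which lies in $A$ because $\psi^{-1}(i) \in L_2$; symmetrically, for $j \in J \setminus \varphi(K) = \tau(L_3)$, put $w_j := u_{\tau^{-1}(j)} \in A$. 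Splitting the sum $\sum_{\ell \in L}(u_\ell, v_\ell)$ according to $L = L_1 \sqcup L_2 \sqcup L_3$ should then yield exactly the three sums displayed in the statement.

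For the ``if'' direction, I would assume $\hat\sigma$ has the stated form and check the marginals by inspection. Fix $x \in X \setminus A$ and $y \in X$: the third sum contributes nothing to $\hat\sigma(x,y)$ since every $w_j \in A$, while the first two sums contribute $\abs{\{k \in K : x_k = x,\ y_{\varphi(k)} = y\}}$ and $\abs{\{i \in I \setminus K : x_i = x,\ z_i = y\}}$, respectively. Summing over $y \in X$ gives $\abs{\{i \in I : x_i = x\}} = \hat\alpha(x)$, and an analogous calculation handles the marginal for $\hat\beta$. The main obstacle is not conceptual but notational: indexed sums carry multiplicities, so the entire argument must be phrased in terms of bijections of index sets rather than equalities of supports. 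Once this is handled correctly, the proof is essentially bookkeeping.
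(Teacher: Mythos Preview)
Your proposal is correct and follows essentially the same approach as the paper: both arguments partition $\hat{\sigma}$ into three pieces according to which coordinate lies in $A$ (your $L_1, L_2, L_3$ correspond exactly to the paper's $\hat{\sigma}_0, \hat{\sigma}_1, \hat{\sigma}_2$), then use the marginal conditions to identify these pieces with the appropriate sub-sums of $\hat{\alpha}$ and $\hat{\beta}$. Your version is slightly more explicit about handling multiplicities via bijections of index sets, whereas the paper works directly with restrictions of the function $\hat{\sigma}$, but the underlying argument is the same.
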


\begin{proof}
  Assume that $\sigma$ is matching of $\alpha$ and $\beta$.
    Consider $\hat{\sigma}: (X \times X) \setminus (A \times A) \to \Z_+$.
  Let $\hat{\sigma}_0 = \hat{\sigma}|_{(X \setminus A) \times (X \setminus A)}$,
  $\hat{\sigma}_1 = \hat{\sigma}|_{(X \setminus A) \times A}$, and
  $\hat{\sigma}_2 = \hat{\sigma}|_{A \times (X \setminus A)}$.
  Then $\hat{\sigma} = \hat{\sigma}_0 + \hat{\sigma}_1 + \hat{\sigma}_2$.
  Let $\hat{\alpha}_0 = (\pi_1)_* \hat{\sigma}_0$,
  $\hat{\beta}_0 = (\pi_2)_* \hat{\sigma}_0$,
  $\hat{\alpha}_1 = (\pi_1)_* \hat{\sigma}_1$, and
  $\hat{\beta}_2 = (\pi_2)_* \hat{\sigma}_2$.
  Then $\hat{\alpha} = \hat{\alpha}_0 + \hat{\alpha}_1$,
  $\hat{\beta} = \hat{\beta}_0 + \hat{\beta}_2$, and
  $\abs{\hat{\sigma}_0} = \abs{\hat{\alpha}_0} = \abs{\hat{\beta}_0}$.
  Therefore, there exists $K \subset I$ and injection $\varphi:K \to J$ such that $\hat{\sigma}_0 = \sum_{k \in K} (x_k,y_{\varphi(k)})$.
  Furthermore, $\hat{\alpha}_1 = \sum_{i \in I \setminus K} x_i$ and
  $\hat{\beta}_2 = \sum_{j \in J \setminus \varphi(K)} y_j$.
  Hence, $\hat{\sigma}_1 = \sum_{i \in I \setminus K} (x_i,z_i)$,
  for some $\{z_i\}_{i \in I \setminus K} \subset A$, and
  $\hat{\sigma}_2 = \sum_{j \in J \setminus \varphi(K)} (w_j,y_j)$,
  for some $\{w_j\}_{j \in J \setminus \varphi(K)} \subset A$.
  Therefore, $\hat{\sigma}$ has the desired form \eqref{eq:matching}.

For the converse, assume that $\hat{\sigma}$ has the form \eqref{eq:matching}. Then it is easy to see that $\sigma$ is a matching of $\alpha$ and $\beta$.
\end{proof}

\subsection{Wasserstein distance} \label{sec:wasserstein} 

In the case that the set $X$ has a metric, we will define a metric on the corresponding set of countable persistence diagrams.
To accommodate examples arising in our main applications, such as barcodes and classical persistence diagrams, we use a more general notion of metric than is standard.

\begin{definition} \label{def:metric}
  Let $X$ be a set and $d:X \times X \to [0,\infty]$ such that for all $x \in X$, $d(x,x)=0$, for all $x,y \in X$, $d(x,y)=d(y,x)$, and for all $x,y,z \in X$, $d(x,z) \leq d(x,y) + d(y,z)$. The pair $(X,d)$ is often called an \emph{extended pseudometric space}. For brevity, we will call $d$ a \emph{metric} and $(X,d)$ a \emph{metric space}. If in addition for all $x,y \in X$, $d(x,y)< \infty$ then we say that the metric is \emph{non-extended} and if $d(x,y)=0$ implies that $x=y$ then we say that the metric is \emph{separated}.
\end{definition}

A \emph{metric pair} $(X,d,A)$ consists of a metric space $(X,d)$ and a distinguished subset $A \subset X$. 
For simplicity we will assume throughout that $A$ is closed. For $x_0 \in X$, the special case $(X, d, \{x_0\})$ is called a pointed metric space and denoted $(X, d, x_0)$.

\begin{definition} \label{def:wasserstein}
Let $(X,d,A)$ be a metric pair.
Let $\alpha, \beta \in \overline{D}(X,A)$ and let $\sigma \in \overline{D}(X \times X, A \times A)$ be a matching of $\alpha$ and $\beta$.
Then $\hat{\sigma} = \sum_{i \in I}(x_i,y_i)$, where $I$ is countable.
Let $p \in [1,\infty]$. Define the \emph{$p$-cost} of $\sigma$ to be given by
\begin{equation*}
	\Cost_p[d](\sigma) = \norm{(d(x_i,y_i))_{i \in I}}_p \in [0,\infty].
\end{equation*}
Define the \emph{$p$-Wasserstein distance} from $\alpha$ to $\beta$ to be given by
\begin{equation*}
	W_p[d](\alpha,\beta) = \inf \Cost_p[d](\sigma),
\end{equation*}
where the infimum is taken over all matchings of $\alpha$ and $\beta$. We will usually omit $d$ from this notation.
% Note that if $\abs{\alpha} \leq m$ and $\abs{\beta} \leq n$ then we may restrict to matchings $\sigma$ with $\abs{\sigma} \leq m+n$.
If $W_p(\alpha,\beta) = \Cost_p(\sigma)$ for some $\sigma$ then call $\sigma$ an \emph{optimal matching}.
\end{definition}

Note that each $p$-Wasserstein distance satisfies a universality property \cite{bubenik2019universality}. 

  Let $1 \leq q \leq p \leq \infty$.
  Since for any sequence $a = (a_i)$, $\norm{a}_p \leq \norm{a}_q$, we have the following.

\begin{lemma}
 Let $1 \leq q \leq p \leq \infty$. 
 For any matching $\sigma$ of $\alpha,\beta \in \overline{D}(X,A)$, $\Cost_p[d](\sigma) \leq \Cost_q[d](\sigma)$, and hence $W_p[d](\alpha,\beta) \leq W_q[d](\alpha,\beta)$.
  In particular, for any $1 \leq p \leq \infty$ and any matching $\sigma$ of $\alpha$ and $\beta$ with $\hat{\sigma} = \sum_{i \in I}(x_i,y_i)$, for all $i \in I$, $d(x_i,y_i) \leq \Cost_p[d](\sigma)$.
  
\end{lemma}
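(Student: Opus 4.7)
The plan is to reduce the statement to the standard monotonicity of $\ell^p$-norms on sequences of nonnegative reals, which is precisely the inequality flagged in the sentence immediately preceding the lemma. I expect no real obstacle: the proof is essentially a one-line unpacking of definitions once that inequality is in hand.

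First I would record (or cite as standard) the nesting inequality: for any countable family $a = (a_i)_{i \in I}$ of nonnegative reals and any $1 \leq q \leq p \leq \infty$, one has $\norm{a}_p \leq \norm{a}_q$. The cleanest way to see this is to reduce to the case $\norm{a}_q = 1$ (the zero and infinite cases being trivial), observe that each $a_i \leq 1$, and therefore $a_i^p \leq a_i^q$ when $p < \infty$; summing in $i$ gives $\norm{a}_p^p \leq 1 = \norm{a}_q^p$. The case $p = \infty$ follows from $a_i \leq \norm{a}_q$ for every $i$.

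Second, I would apply this inequality to the sequence $a_i := d(x_i,y_i)$ extracted from a matching $\sigma$ with $\hat{\sigma} = \sum_{i \in I}(x_i,y_i)$. By the definition of $\Cost$ in \cref{def:wasserstein}, this immediately gives $\Cost_p[d](\sigma) \leq \Cost_q[d](\sigma)$. Taking the infimum over all matchings of $\alpha$ and $\beta$ preserves the inequality and hence gives $W_p[d](\alpha,\beta) \leq W_q[d](\alpha,\beta)$.

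Finally, for the \emph{in particular} clause I would fix $i_0 \in I$ and argue directly in two cases: when $p < \infty$, monotonicity of sums of nonnegative terms yields $d(x_{i_0},y_{i_0})^p \leq \sum_{i \in I} d(x_i,y_i)^p$, and taking a $p$-th root gives $d(x_{i_0},y_{i_0}) \leq \Cost_p[d](\sigma)$; when $p = \infty$, the inequality is immediate from the definition of supremum. A more uniform alternative is to apply the first part with the new $p$ taken to be $\infty$ and the new $q$ taken to be $p$, obtaining $\Cost_\infty[d](\sigma) \leq \Cost_p[d](\sigma)$, and then use $d(x_{i_0},y_{i_0}) \leq \Cost_\infty[d](\sigma)$.
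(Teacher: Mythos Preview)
Your proposal is correct and matches the paper's approach exactly: the paper states the lemma with no proof, deriving it directly from the sentence immediately preceding it, namely that $\norm{a}_p \leq \norm{a}_q$ for $1 \leq q \leq p \leq \infty$. Your write-up simply unpacks this in the natural way, and the ``in particular'' clause is handled just as one would expect.
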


From Definition~\ref{def:wasserstein}, we have the following optimal matching for finite persistence diagrams on pointed metric spaces.

\begin{lemma} \label{lem:optimal-matching-finite-pointed-metric-space}
 Let $(X,d,x_0)$ be a pointed metric space.
  Let $\alpha,\beta \in D(X,x_0)$, where $\hat{\alpha} = \sum_{i=1}^m x_i$ and $\hat{\beta} = \sum_{j=1}^n y_j$. Then
\begin{equation*}
  W_p(\alpha,\beta) = \min_{\sigma \in S_{m+n}} \norm{ (d(x_i,y_{\sigma(i)}))_{i=1}^{m+n}}_p,
\end{equation*}
where $S_{m+n}$ denotes the permutation group on $m+n$ letters and $x_{m+1} = \cdots = x_{m+n} = x_0 = y_{n+1} = \cdots = y_{m+n}$.
\end{lemma}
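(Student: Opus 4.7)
My plan is to invoke \cref{lem:matching} to recast matchings in $\overline{D}(X \times X, A \times A)$ in purely combinatorial terms. Since $A=\{x_0\}$, every auxiliary point $z_i$ or $w_j$ appearing in \eqref{eq:matching} must equal $x_0$, so each matching $\sigma$ of $\alpha$ and $\beta$ is completely determined by a subset $K \subset \{1,\dots,m\}$ together with an injection $\varphi \colon K \to \{1,\dots,n\}$, and its $p$-cost reduces to
\begin{equation*}
  \Cost_p(\sigma) = \norm{\bigl(d(x_k,y_{\varphi(k)})\bigr)_{k \in K},\ \bigl(d(x_i,x_0)\bigr)_{i \in \{1,\dots,m\}\setminus K},\ \bigl(d(x_0,y_j)\bigr)_{j \in \{1,\dots,n\}\setminus \varphi(K)}}_p.
\end{equation*}

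Next, for each $\tau \in S_{m+n}$ I would read off such a pair $(K,\varphi)$ by setting $K = \{i \in \{1,\dots,m\} : \tau(i) \leq n\}$ and $\varphi = \tau|_K$. Under the conventions $x_i = x_0$ for $i > m$ and $y_j = x_0$ for $j > n$, the sequence $(d(x_i,y_{\tau(i)}))_{i=1}^{m+n}$ breaks into matched terms $d(x_k,y_{\varphi(k)})$ for $k \in K$, unmatched-$x$ terms $d(x_i,x_0)$ for $i \in \{1,\dots,m\}\setminus K$, unmatched-$y$ terms $d(x_0,y_j)$ coming from the pad indices $i > m$ with $\tau(i) \leq n$, and terms $d(x_0,x_0)=0$ from the pad indices $i > m$ with $\tau(i) > n$. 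Deleting the zero entries does not change the $p$-norm, so $\norm{(d(x_i,y_{\tau(i)}))_{i=1}^{m+n}}_p = \Cost_p(\sigma)$ for the associated matching, which gives the inequality $W_p(\alpha,\beta) \leq \min_{\tau \in S_{m+n}} \norm{(d(x_i,y_{\tau(i)}))_{i=1}^{m+n}}_p$.

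Conversely, given data $(K,\varphi)$, I would construct a $\tau \in S_{m+n}$ realising the same cost: set $\tau(k)=\varphi(k)$ for $k \in K$, send the $m-|K|$ unmatched source indices injectively into the $m$ pad target positions $\{n+1,\dots,m+n\}$, pull the $n-|K|$ unmatched target indices $\{1,\dots,n\}\setminus\varphi(K)$ from the $n$ pad source positions $\{m+1,\dots,m+n\}$, and finish with any bijection between the remaining pad sources and pad targets. The only nontrivial bookkeeping is that the leftover pad-source and pad-target slots are equinumerous, which follows from $|K| + (m-|K|) + (n-|K|) + |K| = m+n$; the added entries in the sequence are all $d(x_0,x_0)=0$, so they do not affect the $p$-norm.

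Putting the two directions together shows that the set of matching costs equals $\{\norm{(d(x_i,y_{\tau(i)}))_{i=1}^{m+n}}_p : \tau \in S_{m+n}\}$, so the infimum in \cref{def:wasserstein} equals the minimum on the right-hand side, and finiteness of $S_{m+n}$ ensures it is attained. The only real obstacle is the combinatorial bookkeeping correlating permutations of the padded index set with matchings; this is dispatched by the cardinality identity displayed above.
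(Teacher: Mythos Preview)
Your proposal is correct and takes essentially the same approach as the paper: invoke \cref{lem:matching} to identify matchings with pairs $(K,\varphi)$, then observe that after padding with copies of $x_0$ these correspond exactly to permutations in $S_{m+n}$ with the extra entries contributing $d(x_0,x_0)=0$. The paper's proof is much terser (two sentences), but your more explicit bookkeeping of the two directions is exactly what that terse argument is implicitly claiming.
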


\begin{proof}

  Let $x_{m+1} = \cdots = x_{m+n} = x_0 = y_{n+1} = \cdots = y_{m+n}$.
  By Lemma~\ref{lem:matching}, any matching $\sigma$ of $\alpha$ and $\beta$ has the form $\sigma = \sum_{i=1}^{m+n} (x_i,y_{\tau(i)})$ for some $\tau \in S_{m+n}$.
\end{proof}
        
\begin{example}
  Let $1 \leq p \leq \infty$. Consider the metric pair $(\R^2_{\leq}, l_q, \Delta)$ where $l_q(x,y)=\|x-y\|_q$ and $1 \leq q \leq \infty$. Then $(D(\R^2_{\leq}, \Delta), W_p)$
is the space of finite classical persistence diagrams consisting of only finite dots together with the $p$-Wasserstein distance with underlying distance on $\R^2$ given by the $q$-norm~\cite{csehm:lipschitz}.
Similarly, $(D(\overline{\R}^2_{\leq}, \overline{\Delta}), W_p)$
is the space of finite classical persistence diagrams together with the $p$-Wasserstein distance with underlying distance on $\R^2$ given by the $q$-norm.
Let $\Int(\R)$ denote the set of intervals in $\R$.
For the metric pair $(\Int(\R),d,\emptyset)$, where $d$ is either the Hausdorff distance or the length of the symmetric difference, we have the metric space $(D(\Int(\R),\emptyset),W_p)$
of finite barcodes~\cite{Collins:2004}.
We also have $(\overline{D}(\R^2_{\leq},\Delta),W_p)$, $(\overline{D}(\overline{\R}^2_{\leq},\overline{\Delta}),W_p)$, and $(\overline{D}(\Int(\R),\emptyset),W_p)$.
\end{example}

\section{Basic properties of spaces of persistence diagrams}

In this section we define spaces of countable persistence diagrams with certain finiteness properties and then discuss the elementary metric and topological properties of our spaces of persistence diagrams.

\subsection{Persistence diagrams with essentially finite persistence}

We start by defining spaces of countable persistence diagrams that satisfy certain finite conditions.

For $x \in X$, let $d(x,A) = \inf_{a \in A} d(x,a)$.

\begin{definition}
	Let $\delta \in (0,\infty]$. The \emph{$\delta$-offset} of $A$ is $A^{\delta} = \{x \in X \ | \ d(x,A) < \delta\}$. Note that the $\delta$-offset is sometimes called the $\delta$-neighborhood.
\end{definition}

\begin{definition}
		Let $\alpha \in \overline{D}(X,A)$.
		The \emph{$\delta$-upper part} of $\alpha$ is $u_{\delta}(\alpha) = \hat{\alpha}|_{X \setminus A^{\delta}}$. The \emph{$\delta$-lower part} of $\alpha$ is $\ell_{\delta}(\alpha) = \hat{\alpha}|_{A^{\delta} \setminus A}$. Note that $\hat{\alpha}=u_{\delta}(\alpha)+\ell_{\delta}(\alpha)$.
\end{definition}

%Let $[f] \in \overline{D}(X,A)$.
%The \emph{$\delta$-upper part} of $[f]$ is $u_{\delta}([f]) = f|_{X \setminus A^{\delta}}$.
%The \emph{$\delta$-lower part} of $[f]$ is $\ell_{\delta}([f]) = f|_{A^{\delta} \setminus A}$. Note that $\hat{f}=u_{\delta}([f])+\ell_{\delta}([f])$.
%
We define the following sets of \emph{essentially $p$-finite persistence diagrams.}

\begin{definition}\label{def:D_p}
  Let $(X, d, A)$ be a metric pair. For $p \in [1, \infty)$, define
  \begin{align*}
    \overline{D}_p(X,A)=\big\{\alpha \in \overline{D}(X,A) \ \big| \
    \abs{u_{\infty}(\alpha)} < \infty \text{ and } W_p(\ell_{\infty}(\alpha), 0) < \infty \big\}. 
  \end{align*}
  For $p=\infty$, let
  \[\overline{D}_{\infty}(X,A)=\big\{\alpha \in \overline{D}(X,A) \
    \big| \ \text{for any } \ \delta>0 \text{ we have }
    |u_{\delta}(\alpha)| < \infty \big\}.\]
\end{definition}
	
\begin{example}
  For $p \in [1,\infty)$,
  $(\overline{D}_p (\R^2_{\leq}, \Delta), W_p)$ is the space of
  classical persistence diagrams with finite $p$-persistence~\cite{mileyko2011probability} and
  $(\overline{D}_{\infty}(\Int(\R),\emptyset),W_{\infty})$ is the space of barcodes introduced by Blumberg et al~\cite{blumberg2014robust}.
  Similarly we have
  $(\overline{D}_{\infty} (\R^2_{\leq}, \Delta), W_{\infty})$,
  $(\overline{D}_p(\Int(\R),\emptyset),W_p)$ for $p \in [1,\infty)$,
  and
  $(\overline{D}_p (\overline{\R}^2_{\leq}, \overline{\Delta}), W_p)$ for $p \in [1,\infty]$.

  % is the space of
  % persistence diagrams defined in \cite{blumberg2014robust}.
  % Similarly, for $p \in [1,\infty]$, we have
  % $(\overline{D}_p (\overline{\R}^2_{\leq}, \overline{\Delta}), W_p)$
  % and $(\overline{D}_p (\Int(\R), \emptyset), W_p)$.
\end{example}

\begin{lemma} \label{lem:overlineDp}
 
  Let $p \in [1,\infty)$.
  Let $\alpha \in \overline{D}(X,A)$.
  The following are equivalent.
  \begin{enumerate}
  \item \label{it:Wp1} $\alpha \in \overline{D}_p(X,A)$.
  \item \label{it:Wp2} There is an $\eps > 0$ such that $\abs{u_{\eps}(\alpha)} < \infty$ and $W_p(\ell_{\eps}(\alpha),0) < \infty$.
  \item \label{it:Wp3} For all $\eps > 0$, $\abs{u_{\eps}(\alpha)} < \infty$ and $W_p(\ell_{\eps}(\alpha),0) < \infty$.
  \end{enumerate}
  
\end{lemma}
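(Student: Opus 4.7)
The plan is to close the cycle (1) $\Rightarrow$ (3) $\Rightarrow$ (2) $\Rightarrow$ (1), with (3) $\Rightarrow$ (2) being immediate (pick any positive $\eps$). The central device throughout will be a three-way decomposition of $\hat\alpha$ according to distance from $A$. For $\eps \in (0,\infty)$, define $m_{\eps}(\alpha) := \hat\alpha|_{A^\infty \setminus A^\eps}$, the dots at distance in $[\eps,\infty)$ from $A$. Then pointwise $u_\eps(\alpha) = u_\infty(\alpha) + m_\eps(\alpha)$ and $\ell_\infty(\alpha) = m_\eps(\alpha) + \ell_\eps(\alpha)$. Every step below is a matter of pushing matchings across these splittings.

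For (1) $\Rightarrow$ (3), fix $\eps > 0$. Since $W_p(\ell_\infty(\alpha),0) < \infty$, choose any matching $\sigma$ of $\ell_\infty(\alpha)$ to $0$ with finite $p$-cost $C$. By \cref{lem:matching}, $\hat\sigma$ pairs each dot $x$ of $\ell_\infty(\alpha)$ with some $a_x \in A$; in particular each dot $x$ of $m_\eps(\alpha)$ contributes $d(x,a_x)^p \geq d(x,A)^p \geq \eps^p$ to $\Cost_p(\sigma)^p$. Thus $|m_\eps(\alpha)| \leq C^p/\eps^p < \infty$, so $|u_\eps(\alpha)| = |u_\infty(\alpha)| + |m_\eps(\alpha)| < \infty$. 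For the second condition, restrict $\sigma$ to dots in $\ell_\eps(\alpha)$: this yields a matching of $\ell_\eps(\alpha)$ to $0$ whose $p$-cost is bounded above by $C$, so $W_p(\ell_\eps(\alpha),0) \leq W_p(\ell_\infty(\alpha),0) < \infty$.

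For (2) $\Rightarrow$ (1), suppose the conditions hold at some $\eps > 0$. Since $u_\infty(\alpha) \leq u_\eps(\alpha)$ and $m_\eps(\alpha) \leq u_\eps(\alpha)$ as sub-sums, we get $|u_\infty(\alpha)| < \infty$ and $|m_\eps(\alpha)| < \infty$. For every dot $x$ of $m_\eps(\alpha)$ we have $d(x,A) < \infty$, so we may select $a_x \in A$ with $d(x,a_x) < d(x,A) + 1 < \infty$; by finiteness of $m_\eps(\alpha)$ this assembles into a matching of $m_\eps(\alpha)$ to $0$ of finite $p$-cost. Combining with any matching of $\ell_\eps(\alpha)$ to $0$ with cost at most $W_p(\ell_\eps(\alpha),0) + 1$ and summing via \cref{lem:matching} produces a matching of $\ell_\infty(\alpha) = m_\eps(\alpha) + \ell_\eps(\alpha)$ to $0$ with finite $p$-cost, which bounds $W_p(\ell_\infty(\alpha),0) < \infty$.

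The argument is essentially a direct unpacking of definitions, and I do not expect a substantive obstacle. The mild bookkeeping point to keep straight is that although $A$ itself is excluded from all three pieces ($u_\infty$, $m_\eps$, $\ell_\eps$), dots at infinite distance from $A$ still belong to $u_\infty$ rather than disappearing; this is why the $p$-finiteness of $\ell_\infty$ controls exactly $m_\eps$ and $\ell_\eps$ but not $u_\infty$, and why $|u_\infty|<\infty$ must be enforced separately in every characterization.
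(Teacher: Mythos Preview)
Your proof is correct and follows essentially the same route as the paper: the same cycle (1) $\Rightarrow$ (3) $\Rightarrow$ (2) $\Rightarrow$ (1), the same three-way splitting of $\hat\alpha$ (the paper writes your $m_\eps(\alpha)$ as $\ell_\infty(u_\eps(\alpha))$), and the same idea that finitely many dots at finite distance from $A$ always contribute finite $p$-cost. The only cosmetic difference is that in (1) $\Rightarrow$ (3) the paper argues $|u_\eps(\alpha)| < \infty$ by contradiction while you give the explicit bound $|m_\eps(\alpha)| \leq C^p/\eps^p$; both are immediate.
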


\begin{proof}
  \eqref{it:Wp3} $\Rightarrow$ \eqref{it:Wp2} is immediate.

 \eqref{it:Wp2} $\Rightarrow$ \eqref{it:Wp1}. First, $\abs{u_{\infty}(\alpha)} \leq \abs{u_{\eps}(\alpha)} < \infty$.
  Second, $W_p(\ell_{\infty}(\alpha),0)^p = W_p(\ell_{\eps}(\alpha),0)^p + W_p(\ell_{\infty}(u_{\eps}(\alpha)),0)^p$. The first term is finite by assumption and since $\abs{u_{\eps}(\alpha)} < \infty$, so is the second term.

  \eqref{it:Wp1} $\Rightarrow$ \eqref{it:Wp3}. Let $\eps > 0$. Suppose $\abs{u_{\eps}(\alpha)} = \infty$. Since $\abs{u_{\infty}(\alpha)} < \infty$, there is an infinite number of dots in $\alpha$ with $\eps \leq d(x_i,0)< \infty$ and thus $W_p(\ell_{\infty}(\alpha),0) = \infty$. Hence $\abs{u_{\eps}(\alpha)} < \infty$.
  %If $\abs{u_{\eps}(\alpha)} = \infty$ then either $\abs{u_{\infty}(\alpha)} = \infty$ or $W_p(\ell_{\infty}(\alpha),0) = \infty$.
  %Therefore, $\abs{u_{\eps}(\alpha)} < \infty$.
  Finally, $W_p(\ell_{\eps}(\alpha),0) \leq W_p(\ell_{\infty}(\alpha),0) < \infty$.
\end{proof}

\subsection{Basic metric properties}
\label{sec:basic-metric}

Next we consider elementary metric properties of our spaces of persistence diagrams.

\begin{definition}[{\cite[Lemmas 3.13, 3.17]{bubenik2019universality}}]
 \label{def:dp}
Let $(X,d,A)$ be a metric pair. For $p \in [1, \infty]$, we have a metric $d_p$
    on $X$ and $X/A$
    defined by
    \[d_p(x, y)= \min \big(d(x, y), \big\|\big(d(x, A), d(y, A)\big)\big\|_p\big).
    \]
  \end{definition}

  Note that $(X/A, d_p, A)$ is a pointed metric space. 
    Furthermore, the quotient map $(X, d) \to (X/A, d_{p})$ is $1$-Lipschitz and if $1\leq q \leq p \leq \infty$ then $d_p \leq d_q$ and the identity maps $(X, d_q) \to (X, d_p)$ and $(X/A, d_q) \to (X/A, d_p)$ are $1$-Lipschitz.
    Also if $A=\{x_0\}$ then $X/A \cong X$ and $d_1=d$.
     Furthermore, since all $p$-norms on $\R^2$ are equivalent, $d_p$ and $d_q$ are Lipschitz equivalent. We have the following.
  
    \begin{lemma}
      \label{lem:equiv_metrics}
  	Let $(X,d)$ be a metric space and let $A \subset X$. Then for all $p , q \in [1, \infty]$ the quotient metrics $d_p$ and $d_{q}$ on $X/A$ are Lipschitz equivalent. It follows that for all $p \in [1, \infty]$, $d_p$ metrizes the quotient topology on $X/A$.
  \end{lemma}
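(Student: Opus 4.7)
The plan is to reduce Lipschitz equivalence of the quotient metrics $d_p$ and $d_q$ to the standard Lipschitz equivalence of the $p$- and $q$-norms on the two-dimensional space $\R^2$, and then obtain the topological statement as a formal consequence.

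First I would record the elementary inequalities $\|v\|_{\infty} \leq \|v\|_p \leq 2^{1/p}\|v\|_{\infty}$ for $v \in \R^2$ and $p \in [1,\infty]$, which immediately give a constant $C = C(p,q) \geq 1$ with $C^{-1}\|v\|_q \leq \|v\|_p \leq C\|v\|_q$ for every $v \in \R^2$.

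Next, for $x, y \in X$, set $v = (d(x,A), d(y,A)) \in \R^2$, so that $d_p(x,y) = \min(d(x,y), \|v\|_p)$ and analogously for $q$. A short case split on which branch realizes $d_q(x,y)$ produces $d_p(x,y) \leq C \cdot d_q(x,y)$: if $d_q(x,y) = d(x,y)$, then directly $d_p(x,y) \leq d(x,y) = d_q(x,y)$; if $d_q(x,y) = \|v\|_q$, then $d_p(x,y) \leq \|v\|_p \leq C\|v\|_q = C d_q(x,y)$. The symmetric estimate gives Lipschitz equivalence of $d_p$ and $d_q$ on $X$, and since both metrics depend only on $d$ and on distances to $A$, they descend to Lipschitz equivalent metrics on $X/A$.

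For the second sentence of the lemma, I would invoke the general fact that Lipschitz equivalent metrics induce the same topology. Combining this with the fact, recorded in the referenced Lemmas 3.13 and 3.17 of \cite{bubenik2019universality}, that $d_1$ — which coincides with the standard quotient metric $\min(d(x,y), d(x,A)+d(y,A))$ obtained by collapsing the closed subset $A$ to a point — metrizes the quotient topology on $X/A$, shows that every $d_p$ metrizes the quotient topology as well. The only technical input is the finite-dimensional norm equivalence on $\R^2$; the case analysis is routine, so I do not anticipate a serious obstacle.
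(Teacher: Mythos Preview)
Your argument for the Lipschitz equivalence of $d_p$ and $d_q$ is correct and is precisely the approach the paper indicates in the sentence preceding the lemma (reduction to the equivalence of $p$-norms on $\R^2$); the paper gives no further proof, so your case split is a faithful expansion of that one-line remark.

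There is, however, a point to watch in the second sentence. Lemmas~3.13 and~3.17 of \cite{bubenik2019universality} are cited in the present paper as the source of the \emph{definition} of $d_p$, and you should verify that they actually establish that $d_1$ metrizes the quotient topology, rather than merely that $d_1$ is a metric on $X/A$. This matters because the quotient pseudometric does not in general induce the quotient topology: with $X=\R$ and $A=\Z$, the set $U$ with $q^{-1}(U)=\bigcup_{n\in\Z}(n-2^{-|n|-1},\,n+2^{-|n|-1})$ is open in the quotient topology, yet no $d_1$-ball about the point $[A]$ is contained in $U$. So either an additional hypothesis is implicitly in force, or ``quotient topology'' is being used informally to mean the topology induced by the quotient metric---in which case the second assertion is an immediate consequence of the first and your reduction is complete. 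Your overall strategy (Lipschitz equivalence plus one base case) is the right shape; it is the base case that deserves scrutiny.
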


  \begin{lemma}[\cite{bubenik2019universality,bubenik2020virtual}]
    \label{lem:isomorphism}
   	Let $(X,d, A)$ be a metric pair and $1 \leq q \leq p \leq \infty$. 
   	\begin{enumerate}[label=(\alph*)]
   		\item $(d_q)_p=d_p$.
   		\item $W_p[d_q]=W_p[d_p]=W_p[d]$.
   		\item We have the following isometric isomorphisms: $(\overline{D}_p(X,A), W_p) \cong (\overline{D}_p(X/A, A), W_p)$, $(D(X,A), W_p) \cong (D(X/A, A), W_p)$ and $(D^n(X,A), W_p) \cong (D^n(X/A, A), W_p)$.
   		\item \label{it:isomorphism-d} The canonical map $(X,d) \to (D(X,A), W_p)$ is $1$-Lipschitz and induces the isometric isomorphism $ (X/A, d_p) \isomto (D^1(X, A), W_p)$.
                \item  \label{it:subadditivie} $W_p[d]$ is \emph{$q$-subadditive}: $W_p[d](\alpha'+\alpha'',\beta'+\beta'') \leq \norm{(W_p[d](\alpha',\beta'),W_p[d](\alpha'',\beta''))}_q$.
   	\end{enumerate}
   \end{lemma}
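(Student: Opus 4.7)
The plan is to establish (a)--(e) in order, reducing everything to two observations: (i) the function $x \mapsto d(x,A)$ is unchanged on replacing $d$ by $d_p$, and (ii) matchings that ``shortcut'' through $A$ under $d_p$ can be converted to genuine matchings under $d$ that split dots via approximate minimizers in $A$.

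For (a), I would unfold $(d_q)_p$ and first compute $d_q(x,A)$: for any $a \in A$, since $d(a,A)=0$ and $d(x,A) \leq d(x,a)$, the defining minimum collapses to $d_q(x,a) = d(x,A)$, hence $d_q(x,A) = d(x,A)$. Therefore $(d_q)_p(x,y) = \min(d_q(x,y), \|(d(x,A), d(y,A))\|_p)$, and since $q \leq p$ gives $\|\cdot\|_p \leq \|\cdot\|_q$ on $\R^2$, the outer minimum reduces to $d_p(x,y)$. For (b), the bound $W_p[d_p] \leq W_p[d]$ follows pointwise from $d_p \leq d$. For the reverse, given a matching $\sigma$ under $d_p$ with $\Cost_p[d_p](\sigma)$ near $W_p[d_p](\alpha,\beta)$, I would construct $\sigma'$ under $d$ by processing each pair $(x_i,y_i)$ of $\sigma$: if $d_p(x_i,y_i) = d(x_i,y_i)$ keep the pair; otherwise replace $(x_i,y_i)$ with $(x_i,a_i)$ and $(b_i,y_i)$, where $a_i,b_i \in A$ are chosen within $\varepsilon 2^{-i}$ of realizing $d(x_i,A)$ and $d(y_i,A)$. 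A careful $p$-norm bookkeeping gives $\Cost_p[d](\sigma') \leq \Cost_p[d_p](\sigma) + O(\varepsilon)$, hence $W_p[d] \leq W_p[d_p]$. Combining with (a) also yields $W_p[d_q] = W_p[d]$.

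For (c), the monoidal isomorphisms $D(X,A) \cong D(X/A, A)$ (and the $D^n$ and $\overline{D}_p$ versions) are already in place, so it suffices to verify that the costs of matchings agree. Since $d_p$ is constant on $A$ and thus descends to $X/A$, and since $W_p[d] = W_p[d_p]$ by (b), the identification is isometric on each of the three subspaces. For (d), any matching of the singleton diagrams $[x]$ and $[y]$ is either the direct pair $(x,y)$ of cost $d(x,y)$ or a split $(x,a), (b,y)$ with $a,b \in A$ of cost $\|(d(x,a), d(b,y))\|_p$; taking the infimum recovers $d_p(x,y)$, giving both the $1$-Lipschitz property for $(X,d)$ and, since $d_p$ is constant on $A$, the isometric isomorphism $(X/A, d_p) \isomto (D^1(X,A), W_p)$. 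For (e), concatenating matchings $\sigma'$ of $(\alpha',\beta')$ and $\sigma''$ of $(\alpha'',\beta'')$ gives a matching $\sigma' + \sigma''$ of the sums satisfying $\Cost_p(\sigma' + \sigma'')^p = \Cost_p(\sigma')^p + \Cost_p(\sigma'')^p$, i.e.\ the $p$-norm of $(\Cost_p(\sigma'), \Cost_p(\sigma''))$; since $q \leq p$ implies $\|\cdot\|_p \leq \|\cdot\|_q$ on $\R^2$, taking infima over $\sigma', \sigma''$ delivers the $q$-subadditivity bound.

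The main obstacle is (b). The direction $W_p[d_p] \leq W_p[d]$ is an immediate termwise comparison, but the reverse requires converting a ``shortcut'' pair into two genuine pairs through $A$ whose joint $p$-cost under $d$ approximates $\|(d(x_i,A), d(y_i,A))\|_p$, while simultaneously controlling the accumulated approximation error over possibly countably many conversions. The $\varepsilon 2^{-i}$ choice of approximate minimizers is dictated by the need for the error series to be summable in the $p$-norm; the essentially $p$-finite hypothesis then guarantees the modified matching still has finite $p$-cost.
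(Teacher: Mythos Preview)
The paper does not prove this lemma; it is quoted with citation to \cite{bubenik2019universality,bubenik2020virtual} and used as background. Your argument is the natural one and is correct: the key step is indeed (b), and the ``split-through-$A$'' construction with $\varepsilon 2^{-i}$ approximate minimizers is exactly how one reverses the easy inequality $W_p[d_p]\le W_p[d]$. One small remark: in your discussion of (b) you invoke the ``essentially $p$-finite hypothesis'' to control the modified matching, but that hypothesis is not needed and not available here (the identity $W_p[d_p]=W_p[d]$ is asserted on all of $\overline{D}(X,A)$); what actually makes the bookkeeping work is that when a pair $(x_i,y_i)$ is split, the two new $d$-costs satisfy $d(x_i,a_i)^p + d(b_i,y_i)^p \le (d(x_i,A)+\varepsilon 2^{-i})^p + (d(y_i,A)+\varepsilon 2^{-i})^p$, and the right-hand side is within an $O(\varepsilon)$ correction of $d_p(x_i,y_i)^p$ by Minkowski, so finiteness of $\Cost_p[d_p](\sigma)$ alone suffices.
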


   \begin{lemma} \label{lem:DpDq}
     Let $1 \leq q \leq p \leq \infty$.
  $\overline{D}_{q}(X, A)$ is a submonoid of $\overline{D}_{p}(X,A)$ and, therefore, $\bigcup_{q=1}^{\infty} \overline{D}_{q}(X,A) \subset \overline{D}_{\infty}(X,A)$. %However, 
   % $\bigcup_{q=1}^{\infty}  \overline{D}_{q}(X,A) \neq \overline{D}_{\infty}(X,A)$.
  \end{lemma}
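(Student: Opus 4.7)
The plan is to verify separately for each $p$ that $\overline{D}_p(X,A)$ is closed under the monoid operation of $\overline{D}(X,A)$ (it obviously contains the empty diagram $0$), and then to establish the inclusion $\overline{D}_q(X,A) \subset \overline{D}_p(X,A)$ in two regimes: $1 \leq q \leq p < \infty$, and the boundary case $p = \infty$. The main obstacle will be the second case, since the defining conditions for $\overline{D}_q(X,A)$ and $\overline{D}_\infty(X,A)$ look structurally different and the transfer has to be done by hand.

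For $1 \leq q \leq p < \infty$, the inclusion is almost immediate from the preceding comparison lemma in the excerpt: given $\alpha \in \overline{D}_q(X,A)$, the condition $|u_\infty(\alpha)| < \infty$ is the same for both monoids, and since $W_p \leq W_q$ we obtain $W_p(\ell_\infty(\alpha), 0) \leq W_q(\ell_\infty(\alpha), 0) < \infty$. Closure under addition holds because $u_\infty(\alpha+\beta) = u_\infty(\alpha) + u_\infty(\beta)$ and $\ell_\infty(\alpha+\beta) = \ell_\infty(\alpha) + \ell_\infty(\beta)$, so the cardinality condition adds and the $W_p$-condition follows from the triangle inequality (a special case of the $1$-subadditivity in Lemma~\ref{lem:isomorphism}\ref{it:subadditivie}).

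For $p = \infty$ and $q \in [1, \infty)$, fix $\alpha \in \overline{D}_q(X,A)$ and $\delta > 0$. I would decompose $u_\delta(\alpha) = u_\infty(\alpha) + v$, where $v$ collects the dots $x$ of $\ell_\infty(\alpha)$ with $d(x,A) \geq \delta$; the first summand is finite by hypothesis, so the task reduces to bounding $|v|$. By Lemma~\ref{lem:matching}, every matching $\sigma$ of $\ell_\infty(\alpha)$ with $0$ pairs each dot $x_i$ of $\ell_\infty(\alpha)$ with some $a_i \in A$, and hence $\Cost_q(\sigma) = \bigl(\sum_i d(x_i, a_i)^q\bigr)^{1/q} \geq \bigl(\sum_i d(x_i, A)^q\bigr)^{1/q}$. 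Taking the infimum over $\sigma$ yields
\[
W_q(\ell_\infty(\alpha), 0)^q \;\geq\; \sum_i d(x_i, A)^q \;\geq\; |v|\,\delta^q,
\]
so $|v| \leq W_q(\ell_\infty(\alpha),0)^q / \delta^q < \infty$. This proves $\alpha \in \overline{D}_\infty(X,A)$, and therefore $\bigcup_{q=1}^\infty \overline{D}_q(X,A) \subset \overline{D}_\infty(X,A)$. Closure of $\overline{D}_\infty(X,A)$ under addition is immediate from $u_\delta(\alpha+\beta) = u_\delta(\alpha) + u_\delta(\beta)$ for every $\delta > 0$, completing the verification.
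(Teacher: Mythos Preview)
Your proof is correct and follows essentially the same route as the paper, but is considerably more explicit. The paper's proof is one line: from $\|\cdot\|_p \leq \|\cdot\|_q$ deduce $W_p \leq W_q$, and assert that the inclusion follows. For $1 \leq q \leq p < \infty$ this matches your argument exactly. For the case $p = \infty$, the paper's ``it follows'' is really leaning on the earlier \cref{lem:overlineDp}, whose implication (1)$\Rightarrow$(3) gives $|u_\delta(\alpha)| < \infty$ for all $\delta > 0$ whenever $\alpha \in \overline{D}_q(X,A)$; your counting bound $|v| \leq W_q(\ell_\infty(\alpha),0)^q / \delta^q$ is precisely a direct reproof of that implication. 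You also spell out the closure of each $\overline{D}_p(X,A)$ under addition, which the paper leaves implicit. So nothing is genuinely different---you have simply unpacked what the paper compresses into ``it follows,'' and in doing so you could equally well have just cited \cref{lem:overlineDp} for the $p=\infty$ case.
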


  \begin{proof}
  For $1 \leq q \leq p \leq \infty$, $\|\cdot\|_p \leq \|\cdot \|_q$ and hence $W_p \leq W_q$. It follows that $\overline{D}_q(X, A)$ is a submonoid of $\overline{D}_p(X,A)$. Therefore, $\bigcup_{q=1}^{\infty} \overline{D}_q(X,A) \subset \overline{D}_{\infty}(X,A)$. 
    %For classical persistence diagrams, consider $\hat{\alpha}=\sum_{n=1}^{\infty} \sum_{k=1}^{2^n} (0, \frac{1}{n})$. It is easy to check that $\alpha \in \overline{D}_{\infty}(X,A)$ but $\alpha \notin \overline{D}_q(X,A)$ for any $ \in [1, \infty)$.
  \end{proof}

The next example shows that the converse inclusion does not always hold.

\begin{example}
	For classical persistence diagrams, consider $\hat{\alpha}=\sum_{n=1}^{\infty} \sum_{k=1}^{2^n} (0, \frac{1}{n})$, i.e. $\hat{\alpha}$ consists of dots $(0, \frac{1}{n})$ with multiplicity $2^n$. It is easy to check that $\alpha \in \overline{D}_{\infty}(X,A)$ but $\alpha \notin \overline{D}_q(X,A)$ for any $ \in [1, \infty)$.
\end{example}

    %For a sequence $(a_i)$, let $\max_k a_i$ denote the $k$th largest term.
    
    \begin{lemma}\label{lem:W_p_ineq}
    	Let $\alpha, \beta \in D(X, A)$, $|\beta| < |\alpha|$ and $p \in [1, \infty]$. Let $\hat{\alpha}=\sum_{i =1}^{|\alpha|} x_i$ where for every $i$ $d(x_i, A) \geq d(x_{i+1}, A)$. Then $W_p(\alpha, \beta) \geq W_p(\sum_{i=|\beta|+1}^{|\alpha|} x_i, 0)$.
    \end{lemma}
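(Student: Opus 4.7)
The plan is to lower-bound the cost of an arbitrary matching by the cost of mapping the $|\alpha|-|\beta|$ ``smallest'' dots of $\alpha$ to $A$. Let $k = |\alpha|-|\beta| > 0$ and fix any matching $\sigma$ of $\alpha$ and $\beta$. By \cref{lem:matching}, $\sigma$ matches a subset of the $x_i$ injectively to dots of $\beta$ and sends the remaining dots of $\alpha$ to points of $A$; since $|\beta| < |\alpha|$, at least $k$ of the $x_i$ must be sent to $A$. Let $i_1 < i_2 < \cdots < i_k$ be indices of any $k$ such dots, paired with points $a_{i_j} \in A$.

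Using that the $p$-norm of a non-negative sequence is monotone in each coordinate and that $d(x_{i_j}, a_{i_j}) \geq d(x_{i_j}, A)$, I would bound
\[
\Cost_p(\sigma) \; \geq \; \bigl\lVert (d(x_{i_j},a_{i_j}))_{j=1}^{k} \bigr\rVert_p \; \geq \; \bigl\lVert (d(x_{i_j},A))_{j=1}^{k} \bigr\rVert_p.
\]
The key combinatorial observation is that since $i_1 < \cdots < i_k$ are distinct indices in $\{1,\dots,|\alpha|\}$, we have $i_j \leq |\alpha| - (k-j) = |\beta| + j$ for each $j$. Because the sequence $d(x_i,A)$ is non-increasing in $i$, this yields $d(x_{i_j},A) \geq d(x_{|\beta|+j},A)$ for every $j \in \{1,\dots,k\}$, and hence
\[
\bigl\lVert (d(x_{i_j},A))_{j=1}^{k} \bigr\rVert_p \; \geq \; \bigl\lVert (d(x_{|\beta|+j},A))_{j=1}^{k} \bigr\rVert_p.
\]

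Finally I would note that, directly from \cref{def:wasserstein}, for any $\gamma \in D(X,A)$ with $\hat{\gamma} = \sum_i y_i$ the matchings of $\gamma$ to the empty diagram are exactly the choices $\sum_i (y_i,b_i)$ with $b_i \in A$; taking the infimum coordinate-wise gives $W_p(\gamma,0) = \lVert (d(y_i,A))_i \rVert_p$ (using monotonicity of the $p$-norm in each coordinate, together with $\lim$ for $p=\infty$). Applied to $\gamma = \sum_{i=|\beta|+1}^{|\alpha|} x_i$, this identifies the right-hand side of the previous display with $W_p\bigl(\sum_{i=|\beta|+1}^{|\alpha|} x_i, 0\bigr)$, so $\Cost_p(\sigma) \geq W_p\bigl(\sum_{i=|\beta|+1}^{|\alpha|} x_i, 0\bigr)$; taking the infimum over $\sigma$ gives the claim. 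The only mild subtlety is the $p=\infty$ case, where the $p$-norm is a supremum rather than a sum, but the same monotonicity argument applies verbatim, so I do not expect any real obstacle beyond bookkeeping.
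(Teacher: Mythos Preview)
Your proof is correct and follows essentially the same approach as the paper: both arguments observe that in any matching at least $|\alpha|-|\beta|$ of the $x_i$ must be paired with points of $A$, and then compare the resulting $p$-norm to $\|(d(x_{|\beta|+j},A))_{j}\|_p$. Your version spells out the combinatorial step $i_j \leq |\beta|+j$ explicitly, whereas the paper compresses this into the single line $\Cost_p(\sigma) \geq \Cost_p(\sigma'') \geq W_p(\alpha'',0) \geq W_p(\sum_{i=|\beta|+1}^{|\alpha|} x_i,0)$ after splitting $\sigma=\sigma'+\sigma''$.
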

    
    \begin{proof}
    For each matching $\sigma$ for $\alpha$ and $\beta$, we can write $\sigma=\sigma'+\sigma''$, where $\sigma'$ matches $\alpha'$ and $\beta$, $\sigma''$ matches $\alpha''$ and $0$, $\alpha=\alpha'+\alpha''$, and  $|\sigma'|=|\beta|$. Therefore, 
    \[\Cost_p (\sigma) \geq \Cost_p (\sigma'')=W_p(\alpha'', 0) \geq  W_p(\sum_{i=|\beta|+1}^{|\alpha|} x_i, 0). \qedhere\]
    \end{proof}

    \begin{lemma}\label{lem:u_bounded}
    	Let $ \alpha \in \overline{D}_{\infty}(X, A)$ and $p \in [1, \infty]$. Then $|u_{\delta}(\alpha)| < \infty$ for every $\delta >0$.
    \end{lemma}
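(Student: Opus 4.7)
The conclusion $|u_\delta(\alpha)| < \infty$ for every $\delta > 0$ is precisely the defining condition for membership in $\overline{D}_\infty(X, A)$ given in Definition \ref{def:D_p}. So for the hypothesis as stated, the plan is simply to unwind the definition: since $\alpha \in \overline{D}_\infty(X,A)$, for any $\delta > 0$ the restriction of $\hat{\alpha}$ to $X \setminus A^\delta$ has finite support, which is exactly $|u_\delta(\alpha)| < \infty$. The parameter $p$ plays no role in the hypothesis and is only stated for consistency with the surrounding statements in the section.

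If one wants to supply the statement a slightly wider reading (which is the only way the presence of $p$ becomes meaningful), namely that the same conclusion holds whenever $\alpha \in \overline{D}_p(X, A)$ for some $p \in [1, \infty]$, then I would split into the two cases. For $p = \infty$ the argument is as above. For $p \in [1, \infty)$, by Lemma \ref{lem:DpDq} we have $\overline{D}_p(X, A) \subset \overline{D}_\infty(X, A)$, so again the conclusion follows from the definition; alternatively one may invoke the equivalence \eqref{it:Wp3} in Lemma \ref{lem:overlineDp} directly, which gives $|u_\delta(\alpha)| < \infty$ for every $\delta > 0$.

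There is no substantive obstacle to overcome; the result is a definitional restatement. The only "step" is recognizing that the definition of $\overline{D}_\infty(X, A)$ is already formulated as a uniform-in-$\delta$ finiteness condition, so no limiting argument or passage from a single $\delta$ to all $\delta$ is required. I would therefore write the proof as a single sentence pointing to Definition \ref{def:D_p}.
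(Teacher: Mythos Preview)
Your proposal is correct and matches the paper's proof essentially verbatim: the paper also splits on $p$, pointing to Definition~\ref{def:D_p} for $p=\infty$ and to Lemma~\ref{lem:overlineDp} for $p\in[1,\infty)$. Your observation that the presence of $p$ only makes sense under the reading $\alpha\in\overline{D}_p(X,A)$ is exactly right---the paper's own proof treats the hypothesis that way---and your alternative route via Lemma~\ref{lem:DpDq} is an equally valid one-liner.
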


    \begin{proof}
     The result follows from Definition \ref{def:D_p} for $p=\infty$ and from \cref{lem:overlineDp} for $p \in [1, \infty)$.
    \end{proof}

%\change{Note that \cref{lem:u_bounded} also holds for $p \in [1, \infty)$ which follows from \cref{lem:overlineDp}.}

   \begin{lemma}\label{lem:dist_between_lower_part_and_0_sum_case}
   	Let $p \in [1, \infty]$, $ \alpha \in \overline{D}_p(X, A)$ and $\hat{\alpha}=\sum_{i=1}^{\infty} x_i $. Then for every $\eps >0$ there is $n \in \N$ such that $W_p(\sum_{i= n}^{\infty} x_i, 0) < \eps$.
   \end{lemma}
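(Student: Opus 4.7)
The plan is to split into the cases $p \in [1, \infty)$ and $p = \infty$, and in each case to bound the Wasserstein cost of the tail by explicitly constructing a matching that sends each dot $x_i$ (for $i \geq n$) to a nearly‑optimal basepoint $a_i \in A$. The key ingredient is the elementary identity $W_p(\gamma, 0)^p = \sum_i d(x_i, A)^p$ (resp.\ $W_\infty(\gamma,0) = \sup_i d(x_i,A)$) for any diagram $\gamma = \sum_i x_i$ consisting of dots with finite distance to $A$; since $A$ need not be distance minimizing, this is proved by choosing, for each $\eta > 0$, points $a_i \in A$ with $d(x_i,a_i)^p < d(x_i,A)^p + \eta \cdot 2^{-i}$ (possible because $d(x_i,A) = \inf_{a \in A} d(x_i,a)$), summing, and letting $\eta \to 0$.

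For $p \in [1,\infty)$: by \cref{lem:overlineDp}, $|u_\infty(\alpha)| < \infty$ and $W_p(\ell_\infty(\alpha),0) < \infty$. Combined with the identity above, the series $\sum_{i : d(x_i,A) < \infty} d(x_i,A)^p$ converges. Since only finitely many indices have $d(x_i,A)=\infty$, there is an $N \in \N$ such that $d(x_i, A) < \infty$ for all $i \geq N$. The tail of a convergent series of non‑negative terms tends to zero, so given $\eps > 0$ we may pick $n \geq N$ with $\sum_{i \geq n} d(x_i, A)^p < \eps^p$. Applying the near‑optimal matching argument to $\sum_{i \geq n} x_i$ yields $W_p(\sum_{i \geq n} x_i, 0) \leq \bigl(\sum_{i \geq n} d(x_i,A)^p\bigr)^{1/p} < \eps$.

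For $p = \infty$: given $\eps > 0$, the definition of $\overline{D}_\infty(X,A)$ ensures $|u_{\eps/2}(\alpha)| < \infty$, so only finitely many indices $i$ satisfy $d(x_i, A) \geq \eps/2$. Choose $n$ strictly greater than all such indices; then $d(x_i, A) < \eps/2$ for every $i \geq n$. For each such $i$ pick $a_i \in A$ with $d(x_i, a_i) < \eps/2$, and the resulting matching shows $W_\infty(\sum_{i \geq n} x_i, 0) \leq \sup_{i \geq n} d(x_i, a_i) \leq \eps/2 < \eps$. The only real subtlety anywhere in the argument is the justification of the inequality $W_p(\gamma, 0) \leq \|(d(x_i,A))_i\|_p$ without assuming $A$ is distance minimizing; this is handled uniformly in both cases by the $\eta$-perturbation trick described above, and the rest is bookkeeping with tails of series (or finite sets of ``bad'' indices when $p=\infty$).
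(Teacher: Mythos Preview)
Your proof is correct and follows essentially the same approach as the paper's: both split on $p=\infty$ versus $p<\infty$, use the finiteness of $|u_\delta(\alpha)|$ for the former, and for the latter reduce to the vanishing tail of the convergent series $\sum_i d(x_i,A)^p$. You are slightly more explicit than the paper in justifying the identity $W_p(\gamma,0)=\|(d(x_i,A))_i\|_p$ via the $\eta$-perturbation when $A$ is not distance minimizing, which the paper simply asserts.
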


  \begin{proof}
  	For $p=\infty$, the proof follows from  Lemma \ref{lem:u_bounded}. Assume $p \in [1, \infty)$. Since $|u_{\infty}(\alpha)| < \infty$ and $W_p(\ell_{\infty}(\alpha), 0) < \infty$ it follows that there is an $m \in \N$ such that $W_p(\sum_{i=m}^{\infty} x_i, 0)^p =
  	\sum_{i=m}^{\infty} d(x_i, A)^p < \infty$. Then for any $\eps > 0$ there is an $n \in \N$ such that \[W_p(\sum_{i=n}^{\infty} x_i, 0)^p=\sum_{i = n}^{\infty} d(x_i, A)^p < \eps.  \qedhere\]
  \end{proof}

\begin{corollary}\label{cor:seq_convergent}
	Let $p \in [1, \infty]$, $ \alpha \in \overline{D}_p(X, A)$ and $\hat{\alpha}=\sum_{i=1}^{\infty} x_i $. Let $\alpha_n=\sum_{i=1}^n x_i$. Then $(\alpha_n) \to \alpha$.
\end{corollary}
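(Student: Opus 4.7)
The plan is to bound $W_p(\alpha_n,\alpha)$ above by the Wasserstein distance of the tail $\sum_{i=n+1}^{\infty} x_i$ to the empty diagram, and then quote Lemma~\ref{lem:dist_between_lower_part_and_0_sum_case} to send this tail to zero as $n \to \infty$.

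First I would decompose $\alpha = \alpha_n + \sum_{i=n+1}^{\infty}x_i$ and write $\alpha_n = \alpha_n + 0$. Applying the $q$-subadditivity of $W_p$ from Lemma~\ref{lem:isomorphism}\ref{it:subadditivie} (any $q \leq p$ works; take $q = p$) and using $W_p(\alpha_n,\alpha_n)=0$, I get
\[
W_p(\alpha_n,\alpha) \;\leq\; \Big\| \big(W_p(\alpha_n,\alpha_n),\; W_p(0,\tsum_{i=n+1}^{\infty}x_i)\big)\Big\|_p \;=\; W_p\Big(\tsum_{i=n+1}^{\infty}x_i,\;0\Big).
\]
This reduces the problem to controlling the tail distance to the empty diagram.

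Next, given $\eps>0$, Lemma~\ref{lem:dist_between_lower_part_and_0_sum_case} supplies some $N$ with $W_p(\sum_{i=N+1}^{\infty}x_i,0)<\eps$. To conclude that the same bound holds for every $n \geq N$, I note that the tail distance is monotone non-increasing in the starting index: any matching of $\sum_{i=m}^{\infty}x_i$ to $0$ must send every $x_i$ to $A$, so for $p \in [1,\infty)$ one has $W_p(\sum_{i \geq m} x_i, 0)^p = \sum_{i \geq m} d(x_i,A)^p$, and for $p=\infty$ the analogous identity with a supremum holds; in either case the quantity decreases with $m$. Hence $W_p(\alpha_n,\alpha)<\eps$ for all $n \geq N$.

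I don't foresee a real obstacle here: the corollary is essentially an immediate packaging of subadditivity plus the previous lemma. The only step that is not entirely mechanical is the monotonicity remark, but this is genuinely routine once one uses Lemma~\ref{lem:matching} to describe matchings to $0$ explicitly.
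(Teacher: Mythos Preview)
Your proof is correct and follows essentially the same route as the paper: bound $W_p(\alpha_n,\alpha)$ by the tail $W_p(\sum_{i>n}x_i,0)$ and invoke Lemma~\ref{lem:dist_between_lower_part_and_0_sum_case}. You are actually a bit more careful than the paper, which does not spell out the monotonicity-in-$n$ step needed to pass from ``some $n$'' to ``all $n\geq N$''.
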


\begin{proof}
	Let $\eps >0$. By Lemma \ref{lem:dist_between_lower_part_and_0_sum_case}, there is an $n$ such that $W_p(\alpha, \alpha_n) \leq W_p(\sum_{i={n+1}}^{\infty} x_i, 0) < \eps.$
\end{proof}
    
    \begin{lemma}\label{lem:dist_between_lower_part_and_0}
    	Let $ \alpha \in \overline{D}_p(X, A)$ and $p \in [1, \infty]$. Then %for any $\eps>0$ there is $\delta >0$ such that
%    	\[W_p\big(\ell_{\delta}(\alpha), 0\big) < \eps. \]
$\lim_{\delta \to 0} W_p\big(\ell_{\delta}(\alpha), 0\big) = 0$.
    \end{lemma}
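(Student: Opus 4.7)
The strategy is to reduce to \cref{lem:dist_between_lower_part_and_0_sum_case} by enumerating the dots of $\alpha$ in non-increasing order of distance to $A$, so that the lower part $\ell_\delta(\alpha)$ becomes a sub-diagram of a tail of this enumeration as soon as $\delta$ is small enough. The case $\alpha \in D(X,A)$ is essentially immediate since $\ell_\delta(\alpha) = 0$ for $\delta$ smaller than the minimum value of $d(x_i,A)$, so I focus on the countable case.

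First I would invoke \cref{lem:u_bounded} to obtain an enumeration $\hat\alpha = \sum_{i=1}^{\infty} x_i$ with $d(x_1,A) \geq d(x_2,A) \geq \cdots$; such an enumeration exists because $|u_\delta(\alpha)| < \infty$ for every $\delta > 0$ means only finitely many dots have $d(x_i, A) \geq \delta$. Given $\eps > 0$, \cref{lem:dist_between_lower_part_and_0_sum_case} applied to this enumeration yields an $n \in \N$ with $W_p(\sum_{i \geq n} x_i, 0) < \eps$. I would observe that $n$ must lie beyond any infinite-distance dots, for otherwise the tail contains a dot at infinite distance from $A$ (forcing any matching to $0$ to have infinite $p$-cost), contradicting $W_p(\sum_{i \geq n} x_i, 0) < \eps$; hence $d(x_n, A) < \infty$. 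Since $x_n \in X \setminus A$ and $A$ is closed, also $d(x_n, A) > 0$, so $\delta^* := d(x_n, A) \in (0,\infty)$.

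For any $\delta \in (0, \delta^*]$, a dot $x_i$ in the support of $\ell_\delta(\alpha)$ satisfies $d(x_i,A) < \delta \leq d(x_n,A)$, which by the non-increasing enumeration forces $i > n$. Hence $\ell_\delta(\alpha)$ is a sub-diagram of $\sum_{i \geq n+1} x_i$: any matching of the latter to $0$ restricts to a matching of $\ell_\delta(\alpha)$ to $0$ of no greater $p$-cost (this is clear from the description of matchings to $0$ in \cref{lem:matching}). Therefore
\[
  W_p(\ell_\delta(\alpha), 0) \leq W_p\Big(\tsum_{i \geq n+1} x_i, 0\Big) \leq W_p\Big(\tsum_{i \geq n} x_i, 0\Big) < \eps.
\]
Since $\eps$ was arbitrary, $\lim_{\delta \to 0} W_p(\ell_\delta(\alpha), 0) = 0$.

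There is no real obstacle; the only subtlety is arranging the enumeration so that small $\delta$ corresponds to a large tail index, which is exactly what enumerating by non-increasing distance to $A$ achieves, and this in turn relies on \cref{lem:u_bounded}.
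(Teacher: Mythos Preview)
Your proof is correct and follows essentially the same route as the paper's: apply \cref{lem:dist_between_lower_part_and_0_sum_case} to an enumeration of $\hat\alpha$ and choose $\delta$ so that $\ell_\delta(\alpha)$ is contained in the resulting tail. The paper's version is marginally simpler in that it does not order the enumeration---it just sets $\delta = \min_{1 \leq i \leq n} d(x_i,A)$ for an arbitrary enumeration---and it dispatches the case $p=\infty$ directly from \cref{def:wasserstein} (since trivially $W_\infty(\ell_\delta(\alpha),0) \leq \delta$).
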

    
    \begin{proof}
      For $p=\infty$, the result follows from
      % Lemma \ref{lem:u_bounded}.
\cref{def:wasserstein}.
      Let $\hat{\alpha}=\sum_{i=1}^{\infty} x_i $, $p \in [1, \infty)$ and $\eps >0$. By Lemma \ref{lem:dist_between_lower_part_and_0_sum_case}, there is an $n$ such that $W_p(\sum_{i= n}^{\infty} x_i, 0) < \eps$. Let $\delta=\min_{1\leq i\leq n} (d(x_i, A))$. Then \[W_p(\ell_{\delta}(\alpha), 0) \leq W_p(\sum_{i= n}^{\infty} x_i, 0) < \eps. \qedhere\]
    \end{proof}

    \begin{lemma}\label{lem:lower_part_is_zero}
    	Let $\alpha \in \overline{D}_p(X,A)$. If there is $\delta >0$ such that $\ell_{\delta}(\alpha)=0$ then $\alpha \in D^n(X,A)$ for some $n \in \N$.
    \end{lemma}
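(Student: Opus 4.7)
The plan is to observe that the hypothesis $\ell_\delta(\alpha) = 0$ forces $\hat{\alpha}$ to equal its $\delta$-upper part, and then invoke \cref{lem:u_bounded} to conclude that this upper part is finite.

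First I would unpack the definitions. Since $\hat{\alpha} = u_\delta(\alpha) + \ell_\delta(\alpha)$ and $\ell_\delta(\alpha) = 0$, we get $\hat{\alpha} = u_\delta(\alpha)$. In particular, every dot in the support of $\alpha$ has distance at least $\delta$ from $A$.

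Next I would apply \cref{lem:u_bounded}, which states that for any $\alpha \in \overline{D}_p(X,A)$ and any $\delta > 0$, we have $|u_\delta(\alpha)| < \infty$. This handles both the case $p = \infty$ (directly from \cref{def:D_p}) and the case $p \in [1,\infty)$ (via \cref{lem:overlineDp}), so no case split is needed at this level.

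Finally, setting $n = |u_\delta(\alpha)| = |\hat{\alpha}|$ gives $n \in \N$ and $\alpha \in D^n(X,A)$ by the definition of $D^n(X,A)$ as persistence diagrams of cardinality at most $n$. There is essentially no obstacle here; the lemma is a two-line corollary of \cref{lem:u_bounded} and serves mostly as a convenient packaging of that fact for use in subsequent arguments.
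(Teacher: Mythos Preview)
Your proof is correct and follows exactly the same approach as the paper: observe that $\hat{\alpha} = u_\delta(\alpha)$ and then invoke \cref{lem:u_bounded}. The paper's version is just the two-line summary you yourself anticipated.
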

    
    \begin{proof}
    	Suppose there exists $\delta >0$ such that $\ell_{\delta}(\alpha)=0$. Then $\hat{\alpha}=u_{\delta}(\alpha)$ and Lemma \ref{lem:u_bounded} concludes the proof.
    \end{proof}

Consider a pointed metric space $(X,d,x_0)$. 
Let $\sim$ be an equivalence relation on $X$.
%and let $X \to X/{\sim}$ be the quotient map. 
Recall that a quotient metric on $X/{\sim}$ is defined by
\begin{equation} \label{eq:quotient-metric}
  \overline{d}([x], [x'])=\inf\big\{ d(p^1, q^1) + d(p^2, q^2)+ \dots +d(p^n, q^n)\big\},
\end{equation}
where the infimum is taken over all finite sequences $(p^1, \dots, p^n)$ and $(q^1, \dots, q^n)$ with $[x]=[p_1]$, $[q^i]=[p^{i+1}]$ for all $1 \leq i \leq n-1$ and $[x']=[q^n]$. 
Now let $n \geq 1$ and $p \in [1, \infty]$.
The product $X^n$ has the \emph{$p$-product metric} $d_p^n((x_1,\ldots,x_n),(y_1,\ldots,y_n)) = \norm{(d(x_i,y_i))_{i=1}^n}_p$ \cite{bubenik2019universality}.
Furthermore the symmetric group $S_n$ acts on $X^n$ by permuting the indices and we have the quotient metric space $(X^n/S_n,\overline{d_p^n})$.
Using the invertibility of the action of the symmetric group and the triangle inequality, the quotient metric~\eqref{eq:quotient-metric} simplifies to
  \begin{equation} \label{eq:quotient-metric-symmetric}
    \overline{d_p^n}([x],[x']) = \min_{\tau \in S_n} \norm{(d(x_i,x'_{\tau^{-1}(i)}))_{i=1}^n}_p.
  \end{equation}

%It is an exercise to verify the following.

\begin{lemma}\label{lem:D^n_isom_embedding}
  The map $\phi: (D^n(X,x_0),W_p) \to (X^{2n}/S_{2n},\overline{d_p^{2n}})$
  sending $\sum_{i=1}^{m} x_i$, where $0 \leq m \leq n$, to $[x_1,\ldots, x_{m}, \underbrace{x_0,\ldots,x_0}_{2n-m \text{ copies}}]$ is well defined and is an isometric embedding.
\end{lemma}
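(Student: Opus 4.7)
The plan is to verify well-definedness by bookkeeping, and then prove the isometry by combining Lemma~\ref{lem:optimal-matching-finite-pointed-metric-space} with the simplified formula~\eqref{eq:quotient-metric-symmetric}, exhibiting a two-sided cost-preserving correspondence between the permutations that compute the two distances.

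For well-definedness, any two representations $\sum_{i=1}^{m} x_i$ and $\sum_{i=1}^{m'} x_i'$ of the same class in $D^n(X,x_0)$ differ by a permutation of indices together with insertion or deletion of copies of the basepoint $x_0$. After padding to length $2n$ with the appropriate number of extra copies of $x_0$, the resulting tuples in $X^{2n}$ are related by an element of $S_{2n}$ (reordering the non-basepoint entries, and moving an $x_0$ between the ``real'' and padded portions, are both coordinate permutations), so they descend to the same class in $X^{2n}/S_{2n}$.

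For the isometry, fix $\alpha, \beta \in D^n(X,x_0)$ with representatives $\hat{\alpha} = \sum_{i=1}^m x_i$, $\hat{\beta} = \sum_{j=1}^k y_j$, $m,k \leq n$, and let $z, w \in X^{2n}$ denote the $x_0$-padded representatives. By Lemma~\ref{lem:optimal-matching-finite-pointed-metric-space},
\[
W_p(\alpha,\beta) = \min_{\sigma \in S_{m+k}} \bigl\| \bigl(d(x_i, y_{\sigma(i)})\bigr)_{i=1}^{m+k} \bigr\|_p,
\]
with $x_{m+1}=\cdots=x_{m+k}=y_{k+1}=\cdots=y_{m+k}=x_0$, while by~\eqref{eq:quotient-metric-symmetric},
\[
\overline{d_p^{2n}}\bigl(\phi(\alpha),\phi(\beta)\bigr) = \min_{\tau \in S_{2n}} \bigl\| \bigl(d(z_i, w_{\tau^{-1}(i)})\bigr)_{i=1}^{2n} \bigr\|_p.
\]
I will show these two minima coincide by converting optimal $\sigma$'s into $\tau$'s and vice versa at no change in cost.

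Given $\sigma \in S_{m+k}$, extend it to $\tau \in S_{2n}$ by pairing the $2n-(m+k)$ remaining padded $x_0$-entries on either side bijectively in any way; since $d(x_0,x_0)=0$, the added pairs contribute nothing to the $p$-norm, so the cost is preserved. Conversely, given $\tau \in S_{2n}$, each pair $(z_i, w_{\tau^{-1}(i)})$ is of one of four types (real-real, real-$x_0$, $x_0$-real, $x_0$-$x_0$); the $x_0$-$x_0$ pairs contribute zero, while the remaining pairs can be repackaged into a $\sigma \in S_{m+k}$ by using the padded $x_0$-slots at indices $\{k+1,\ldots,m+k\}$ to absorb the real-$x_0$ pairings and the padded $x_0$-slots at indices $\{m+1,\ldots,m+k\}$ to absorb the $x_0$-real pairings. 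A short counting argument (if $\ell$ real-real pairs appear, then $m-\ell$ slots on one side and $k-\ell$ slots on the other must be filled, and the remaining $\ell$ padded slots pair up among themselves) confirms that the assignment is a valid permutation in $S_{m+k}$ with cost exactly $\mathrm{Cost}(\tau)$. The only technical delicacy is this index accounting when converting $\tau$ back into $\sigma$, but it goes through cleanly because $m+k \leq 2n$ leaves enough slack on each side.
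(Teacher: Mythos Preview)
Your proof is correct and follows the same approach as the paper: both invoke Lemma~\ref{lem:optimal-matching-finite-pointed-metric-space} and the simplified quotient metric formula~\eqref{eq:quotient-metric-symmetric} and identify the two minima. The paper simply asserts that comparing these two formulas shows $\phi$ is an isometry, while you spell out the bijection between permutations explicitly; your index-accounting argument is exactly the content the paper leaves implicit.
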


\begin{proof}
  It is easy to see that $\phi$ is a well defined injective map.
  It remains to show that it $\phi$ is an isometry.
  Let $\alpha=\sum_{i=1}^{n_1} x_i \in D^n(X, x_0)$ and $\beta=\sum_{i=1}^{n_2} y_i \in D^n(X, x_0)$.
  % Let $\sigma \in S_{2n}$ be a permutation that induces an optimal matching between $\alpha$ and $\beta$. 
% Note that any element $(x_1, \dots, x_{n_1}, x_0, \dots, x_0)$ can be written as $(x_1, \dots, x_{n_1}, x_{n_1+1}, \dots, x_{2n})$ where $x_{n_1+1}= \dots=x_{2n}=x_0$. 
  Let $x = (x_1,\ldots,x_{n_1},x_0,\ldots,x_0)$ and $x' = (y_1,\ldots,y_{n_2},x_0,\ldots,x_0)$.
    Comparing \eqref{eq:quotient-metric-symmetric}
    and Lemma~\ref{lem:optimal-matching-finite-pointed-metric-space}, we see that $\phi$ is an isometry.

% Now let $(z_1, \dots, z_{2n}) \in X^{2n}$ and $\tau \in S_{2n}$.
% Since $\sigma$ is an optimal \change{matching},
% \begin{align*}
% 	&d_p^{2n}((x_1, \dots, x_{n_1}, x_0, \dots, x_0), (z_1, \dots, z_{2n}))+d_p^{2n}((z_{\tau(1)}, \dots, z_{\tau(2n)}), (y_1, \dots, y_{n_2}, x_0, \dots, x_0)) \\
% 	&=d_p^{2n}((x_1, \dots, x_{2n}), (z_1, \dots, z_{2n}))+d_p^{2n}( (z_1, \dots, z_{2n}), (y_{\tau^{-1}(1)}, \dots, y_{\tau^{-1}(2n)})) \\
% 	&\geq d_p^{2n}((x_1, \dots, x_{2n}), (y_{\tau^{-1}(1)}, \dots, y_{\tau^{-1}(2n)})) \geq d_p^{2n}((x_1,\dots, x_{2n}), (y_{\sigma(1)},\ldots,y_{\sigma(2n)})).
% \end{align*}
% \change{This implies that distance between  $[x_1, \dots, x_{n_1}, x_0, \dots, x_0]$ and $[y_1, \dots, y_{n_2}, x_0, \dots, x_0]$ in $X^{2n}/S_{2n}$ is always less or equal than a sum of distances computed for any sequence of points between them.  } 
% Therefore, 
% \begin{align*}
% 	\overline{d_p^{2n}}(\phi(\alpha), \phi(\beta)) & =\overline{d_p^{2n}}([x_1, \dots, x_{n_1}, x_0, \dots, x_0], [y_1, \dots, y_{n_2}, x_0, \dots, x_0]) \\
% 	& =d_p^{2n}((x_1,\dots, x_{2n}), (y_{\sigma(1)},\dots,y_{\sigma(2n)})) =\norm{(d(x_i,y_{\sigma(i)}))_{i=1}^{2n}}_p=W_p(\alpha, 
% 	\beta).
% \end{align*}
% Hence $\phi$ is an isometric embedding.
 \end{proof}

\subsection{Basic topological properties}
\label{sec:basic-topological}

Finally we consider some elementary topological properties of our spaces of persistence diagrams.
Recall that we assume that $A$ is a closed subset of $X$.
Assume that $p \in [1,\infty]$.
    
    \begin{proposition}\label{prop:D^n_closed} $D^n(X, A)$ is closed in $(D^m(X, A), W_p)$ for integers $1 \leq n < m \leq \infty$, where $D^{\infty}(X,A) = \overline{D}(X,A)$.
    \end{proposition}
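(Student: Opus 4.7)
The plan is to prove that $D^n(X,A)$ is sequentially closed in $(D^m(X,A), W_p)$, which suffices since we are in a metric space. I will argue by contradiction: suppose a sequence $(\alpha_k) \subset D^n(X,A)$ converges in $W_p$ to some $\alpha \in D^m(X,A)$ with $\abs{\alpha} \geq n+1$, and then show that $W_p(\alpha_k,\alpha)$ is bounded below by a positive constant independent of $k$.

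First I will extract a geometric ``gap.'' Since $\abs{\alpha} \geq n+1$, write $\hat{\alpha} = \sum_{j \in J} y_j$ with $\abs{J} \geq n+1$ (the index set $J$ is at most countable) and pick any $n+1$ indices $j_1,\ldots,j_{n+1} \in J$. Because each $y_{j_i} \in X \setminus A$ and $A$ is assumed to be closed, $d(y_{j_i},A) > 0$ for each $i$, so
\[ \delta := \min_{1 \leq i \leq n+1} d(y_{j_i},A) > 0. \]

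Next I feed this gap into the explicit description of matchings. Fix $k$ and let $\sigma$ be any matching of $\alpha_k$ and $\alpha$. Writing $\hat{\alpha}_k = \sum_{i \in I} x_i$ with $\abs{I} \leq n$, \cref{lem:matching} provides a subset $K \subset I$ and an injection $\varphi \colon K \to J$ such that $\hat{\sigma}$ has the canonical form \eqref{eq:matching}. Since $\abs{\varphi(K)} \leq \abs{I} \leq n < n+1$, a pigeonhole argument yields some $i^* \in \{1,\ldots,n+1\}$ with $j_{i^*} \notin \varphi(K)$; by \eqref{eq:matching} the dot $y_{j_{i^*}}$ is then paired in $\sigma$ with some $w \in A$. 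The elementary bound $d(x_i,y_i) \leq \Cost_p(\sigma)$ established just before \cref{lem:optimal-matching-finite-pointed-metric-space} gives
\[ \Cost_p(\sigma) \geq d(w, y_{j_{i^*}}) \geq d(y_{j_{i^*}},A) \geq \delta. \]
Taking the infimum over all matchings yields $W_p(\alpha_k,\alpha) \geq \delta$ for every $k$, contradicting $\alpha_k \to \alpha$.

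The only mild obstacle is extracting the pigeonhole cleanly from the canonical decomposition \eqref{eq:matching}: once one recognizes that the ``bijective part'' of $\hat{\sigma}$ has cardinality at most $\abs{\alpha_k} \leq n$, at least one of the $n+1$ selected dots of $\alpha$ must lie outside $\varphi(K)$ and hence be paired with a point of $A$ at distance at least $\delta$. The argument works uniformly in $p \in [1,\infty]$ thanks to the pointwise inequality $d(x_i,y_i) \leq \Cost_p(\sigma)$, and it is also insensitive to whether $m$ is finite or $m=\infty$, since only the lower bound $\abs{\alpha} \geq n+1$ is used.
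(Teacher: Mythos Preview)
Your proof is correct and follows essentially the same idea as the paper: both hinge on the observation that if $\abs{\alpha} > \abs{\beta}$, then in any matching at least one dot of $\alpha$ must be paired with a point of $A$, yielding a uniform positive lower bound on $W_p(\alpha,\beta)$. The paper packages this pigeonhole/matching argument as \cref{lem:W_p_ineq} and then shows the complement of $D^n(X,A)$ is open in one line, whereas you unfold the same argument directly via \cref{lem:matching} and phrase the conclusion as sequential closedness; these are equivalent presentations of the same proof.
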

    
    \begin{proof}
    	We will show that the complement of $D^n(X,A)$ is open. Let $\alpha \in D^m(X,A) \setminus D^n(X,A)$. By Lemma \ref{lem:W_p_ineq}, $\alpha$ is in the interior of $D^m(X,A) \setminus D^n(X,A)$.
    \end{proof}
    
\begin{lemma}\label{lem:S_is_cl_and_op}
  Let $(X, d, A)$ be a metric pair and let $S$ be a subset of $X$
  containing $A$.
   If $S$ is closed in $X$ then $D(S, A)$ is closed in $(D(X, A), W_p)$.
   If $S$ is open in $X$ and for some $\delta >0$, $A^{\delta} \subset S$
   then $D(S, A)$ is open in $(D(X, A), W_p)$.
\end{lemma}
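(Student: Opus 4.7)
The plan is to prove both statements by describing explicit open balls around a given persistence diagram, using the fact (from the Lemma following Definition~\ref{def:wasserstein}) that for any matching $\sigma$ of $\alpha$ and $\beta$ and any matched pair $(x,y)$ appearing in $\hat{\sigma}$, we have $d(x,y) \leq \mathrm{Cost}_p(\sigma)$. This will let me translate a lower bound on a single mismatched pair into a lower bound on $W_p(\alpha,\beta)$.

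For the closed case, I would show $D(X,A)\setminus D(S,A)$ is open. Pick $\alpha \notin D(S,A)$ and write $\hat{\alpha}=\sum_{i=1}^n x_i$. By assumption some dot $x_i$ lies in $X\setminus S$, and since $S$ is closed, $r:=d(x_i,S)>0$. For any $\beta\in D(S,A)$ and any matching $\sigma$ of $\alpha,\beta$, the characterization of matchings in \cref{lem:matching} forces $x_i$ to be paired either with a dot $y\in S\setminus A$ of $\beta$ or with a point $a\in A\subset S$, so in either case the distance of that pair is at least $d(x_i,S)=r$. The lemma above then gives $\mathrm{Cost}_p(\sigma)\geq r$, hence $W_p(\alpha,\beta)\geq r$; thus the open ball of radius $r$ around $\alpha$ misses $D(S,A)$.

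For the open case, let $\alpha\in D(S,A)$ with dots $x_1,\ldots,x_n\in S\setminus A$ (the case $X=S$ being trivial). Since $S$ is open, $d(x_i,X\setminus S)>0$ for each $i$, and I can set
\[
r:=\min\!\Bigl(\delta,\ \min_{1\leq i\leq n} d(x_i,X\setminus S)\Bigr)>0,
\]
with the convention that the second minimum is $+\infty$ if $n=0$. Suppose $\beta\in D(X,A)\setminus D(S,A)$; then $\beta$ has a dot $y\in X\setminus S$. In any matching $\sigma$ of $\alpha,\beta$, by \cref{lem:matching} the dot $y$ is paired either with some $x_i$ or with some $a\in A$. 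In the first case the pair's distance is at least $d(x_i,X\setminus S)\geq r$. In the second case I use the hypothesis $A^{\delta}\subset S$: since $y\notin S$, then $y\notin A^{\delta}$, so $d(y,a)\geq d(y,A)\geq \delta\geq r$. Either way, $\mathrm{Cost}_p(\sigma)\geq r$, so $W_p(\alpha,\beta)\geq r$, and the open ball of radius $r$ around $\alpha$ is contained in $D(S,A)$.

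The main conceptual point (and the reason both hypotheses in the open case are needed) is the dichotomy in how a dot of $\beta$ lying outside $S$ can be matched: matching to a dot of $\alpha$ is controlled by openness of $S$, while matching to the distinguished set $A$ is controlled by $A^{\delta}\subset S$. Neither bound alone suffices, but together they yield the uniform lower bound $r>0$. No routine subtlety beyond this — everything else is bookkeeping via \cref{lem:matching}.
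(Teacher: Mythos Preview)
Your proof is correct and takes essentially the same approach as the paper: both arguments exhibit an explicit radius by using that a single matched pair's distance lower-bounds $\Cost_p$, with the same dichotomy (dot matched to a dot of the other diagram vs.\ matched to $A$). The only cosmetic difference is that the paper argues in the direct form (take $\beta$ in the ball and locate the relevant dot) while you argue contrapositively (take $\beta$ on the wrong side and bound $W_p(\alpha,\beta)$ below), but the content is identical.
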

    
\begin{proof}
   Suppose $S$ is closed in $X$.
   Then $X \setminus S$ is open in $X$.
   Let $\alpha  \in D(X,A) \setminus D(S, A)$, where $\hat{\alpha}= x_1+\dots + x_n$.
   Then there exists $1 \leq i \leq n$ such that $x_i \in X \setminus S$.
   Since $X \setminus S$ is open, there is an $\eps > 0$ such that $B_{\eps}(x_i) \subset X \setminus S$.
 Suppose $\beta \in B_{\eps}(\alpha)$ and $\hat{\beta}=y_1+\dots+y_m$.
 Then $W_{\infty}(\alpha,\beta) \leq W_p(\alpha, \beta) < \eps$.
 Therefore, there exists $1 \leq j \leq m$ such that $d(x_i, y_j) < \eps$.
 Thus $y_j \in B_{\eps}(x_{i}) \subset X\setminus S$.
 Hence, $\beta \in D(X,A) \setminus D(S, A)$ and hence $B_{\eps}(\alpha) \subset D(X, A) \setminus D(S, A)$. So $\alpha$ is in the interior of $D(X,A) \setminus D(S, A)$ and thus $D(S,A)$ is closed in $D(X,A)$.
    	
 Suppose $S$ is open in $X$.
 Let $\alpha  \in D(S, A)$ and $\hat{\alpha}=x_1+\dots+x_n$.
 Since $S$ is open in $X$, for any $1 \leq i \leq n$, there is an $\eps_i >0$ such that $B_{\eps_i}(x_i) \subset S$.
 Let $\eps=\min\{\delta, \eps_1, \dots, \eps_n\}$.
 Suppose $\beta \in B_{\eps}(\alpha)$ and $\hat{\beta}=y_1+\dots+y_m$.
 Then for each $1 \leq j \leq m$, either $d(y_j, A) < \eps$ or $d(y_j, x_i) < \eps$ for some $x_i \in \alpha$.
 This implies that either $y_j \in B_{\eps}(A) \subset S$ or  $y_j \in B_{\eps}(x_i) \subset S$.
 So $y_j \in S$ for all $1 \leq j \leq m$.
 Therefore, $\beta \in D(S, A)$ and $B_{\eps}(\alpha) \subset D(S, A)$.
 Hence $D(S, A)$ is open in $D(X, A)$.
    \end{proof}

\begin{definition} \label{def:isolated}
		Let $(X, d, A)$ be a metric pair. Say that $A$ is \emph{isolated} if there is $\delta>0$ such that $A^{\delta}=A$.
\end{definition}

    \begin{proposition}\label{prop:D_equal_D_p}
      % Let $p \in [1, \infty]$.
      $A$ is isolated if and only if $\overline{D}_p(X,A)=D(X, A)$.
    \end{proposition}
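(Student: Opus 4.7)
The plan is to prove both directions directly, using \cref{lem:lower_part_is_zero} for the forward direction and an explicit construction of a counterexample in the reverse direction.

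For the forward direction, I would suppose $A$ is isolated, so that there exists some $\delta > 0$ with $A^{\delta} = A$. Then for any $\alpha \in \overline{D}_p(X,A)$, the set $A^{\delta} \setminus A$ is empty, so the $\delta$-lower part $\ell_{\delta}(\alpha) = \hat{\alpha}|_{A^{\delta} \setminus A}$ is zero. Applying \cref{lem:lower_part_is_zero} gives $\alpha \in D^n(X,A) \subset D(X,A)$. The reverse inclusion $D(X,A) \subset \overline{D}_p(X,A)$ is immediate from the definitions, since any finite persistence diagram has bounded upper part and zero lower part outside a finite region.

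For the reverse direction, I would argue the contrapositive. Assuming $A$ is not isolated, for each $n \in \N$ I can choose $x_n \in X \setminus A$ with $d(x_n, A) < 2^{-n}$. Let $\alpha$ be the persistence diagram with $\hat{\alpha} = \sum_{n=1}^{\infty} x_n$. Its cardinality is infinite, so $\alpha \notin D(X,A)$. It remains to verify $\alpha \in \overline{D}_p(X,A)$. For $p = \infty$, the condition $|u_{\delta}(\alpha)| < \infty$ for every $\delta > 0$ is immediate since $d(x_n, A) < 2^{-n}$ forces only finitely many indices to satisfy $d(x_n, A) \geq \delta$. For $p \in [1,\infty)$, every $x_n$ lies in $A^{\infty}$ so $u_{\infty}(\alpha) = 0$, and to bound $W_p(\ell_{\infty}(\alpha), 0)$ I would pick, for each $n$, an element $a_n \in A$ with $d(x_n, a_n) < d(x_n, A) + 2^{-n} < 2^{1-n}$, so that the matching $\sum_n (x_n, a_n)$ has $p$-cost controlled by a convergent geometric series.

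The only subtlety is that I cannot assume $A$ is distance minimizing, so $d(x_n, A)$ need not be attained; the approximation $d(x_n, a_n) < d(x_n, A) + 2^{-n}$ is what lets me bypass this and produce an admissible matching realizing a finite cost. Everything else reduces to routine estimates comparing the cardinalities and costs with the defining conditions of $\overline{D}_p(X,A)$ from \cref{def:D_p} and \cref{lem:overlineDp}.
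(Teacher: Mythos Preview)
Your proposal is correct and follows essentially the same approach as the paper: the forward direction via \cref{lem:lower_part_is_zero}, and the reverse direction by constructing $\hat{\alpha}=\sum_{n=1}^\infty x_n$ with $0<d(x_n,A)<2^{-n}$. Your treatment is slightly more detailed than the paper's, in that you spell out the verification that $\alpha\in\overline{D}_p(X,A)$ (including the approximation $d(x_n,a_n)<d(x_n,A)+2^{-n}$ to avoid assuming $A$ is distance minimizing); the paper simply asserts this. One small point worth making explicit: since $A$ is closed, each $x_n\in X\setminus A$ has $d(x_n,A)>0$, so any fixed value can occur as $x_n$ for only finitely many $n$, which guarantees both that $\sum_n x_n$ is a well-defined element of $\Z_+^{X\setminus A}$ and that its cardinality is infinite.
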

    
    \begin{proof} 
    	Suppose $A^{\delta}=A$ for some $\delta >0$. Let $\alpha \in \overline{D}_p(X,A)$. Then by assumption $\ell_{\delta}(\alpha)=0$. By Lemma \ref{lem:lower_part_is_zero}, $\alpha \in D(X,A)$. 
    	
    	Conversely, suppose $\overline{D}_p(X,A)=D(X, A)$ and for all $\delta>0$, $A^{\delta} \neq A$. Then there is a sequence $(x_n)$ of distinct elements of $X$ such that $0< d(x_n, A) < \frac{1}{2^n}$. Let $\hat{\alpha}= \sum_{i=1}^{\infty} x_i$. Note that $\alpha \in \overline{D}_p(X,A)$. However, $\alpha \notin D(X,A)$ which gives us a contradiction. 
      \end{proof}

\begin{lemma}
  Suppose $(\alpha_n)$ is a sequence in $(\overline{D}_p(X,A),W_p)$ such that $\alpha_n \to \alpha$ for some $\alpha \in \overline{D}_p(X,A)$. Let $x \in \supp(\alpha)$.
  Then there exists a sequence $(x_n)$ in $(X,d)$ such that $x_n \in \supp(\alpha_n)$ and $x_n \to x$.
\end{lemma}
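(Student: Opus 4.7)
The plan is to exploit near-optimal matchings: once the matching cost falls below $d(x,A)$, the dot $x$ can no longer be paired with a point of $A$, so it must instead be paired with a dot of $\alpha_n$ that is close to it.

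First I would establish that $r := d(x,A) > 0$. Since $x \in \supp(\alpha) \subset X \setminus A$ and, by the standing hypothesis, $A$ is closed in $(X,d)$, the complement $X \setminus A$ is open, which in the present extended-pseudometric setting is equivalent to the condition $d(y,A) > 0$ for every $y \notin A$; applying this at $y = x$ gives $r > 0$.

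Next, using $W_p(\alpha_n,\alpha) \to 0$, I would choose for each $n$ a matching $\sigma_n$ of $\alpha$ and $\alpha_n$ with $\Cost_p(\sigma_n) < W_p(\alpha_n,\alpha) + 1/n$, so that $\Cost_p(\sigma_n) \to 0$. By \cref{lem:matching}, fixing one index in the decomposition of $\hat{\alpha}$ that represents the chosen dot $x$, the corresponding pair in $\hat{\sigma}_n$ has first coordinate $x$ and second coordinate $y_n$ lying either in $\supp(\alpha_n)$ or in $A$. Since every individual distance in a matching is bounded above by its $p$-cost, $d(x,y_n) \leq \Cost_p(\sigma_n)$. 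Once $\Cost_p(\sigma_n) < r$, the alternative $y_n \in A$ is ruled out because it would force $d(x,A) \leq d(x,y_n) < r$. Hence for all $n$ sufficiently large, $y_n \in \supp(\alpha_n)$ and $d(x,y_n) \leq \Cost_p(\sigma_n) \to 0$; setting $x_n := y_n$ yields the desired sequence. (For the finitely many small $n$ one picks $x_n \in \supp(\alpha_n)$ arbitrarily; the support is eventually nonempty since $W_p(\alpha_n,0) \to W_p(\alpha,0) \geq d(x,A) > 0$ forces $\alpha_n \neq 0$ for $n$ large.)

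There is no substantive obstacle. The only fine point is justifying $d(x,A) > 0$ from the closedness of $A$, which is immediate from the definition of the topology of a pseudometric space.
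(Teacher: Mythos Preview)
Your proof is correct and follows the same approach as the paper: take a near-optimal matching between $\alpha$ and $\alpha_n$ and extract the element paired with $x$. Your version is in fact more careful than the paper's two-line sketch, explicitly justifying that the paired element lies in $\supp(\alpha_n)$ rather than $A$ once the cost drops below $d(x,A)>0$, and addressing the finitely many initial terms.
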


\begin{proof}
  Let $\eps > 0$. Fix $n$. Then there exists $x_n \in \supp(\alpha_n)$ such that $d(x_n,x) \leq W_p(\alpha_n,\alpha) + \eps$.
  Since $\alpha_n \to \alpha$, $x_n \to x$.
\end{proof}

% \subsubsection{Contractibility}

% A topological space $X$ is \emph{contractible} if the identity map on
% $X$ is null-homotopic, i.e. if it is homotopic to some constant map.
% A continuous map $r:X \to A$ is a \emph{retraction} if
% $r\big|_A=Id_A$.  A \emph{strong deformation retraction} of a space
% $X$ onto a subset $A$ is a homotopy between a retraction $r:X \to A$
% and the identity map on $X$ that leaves points in $A$ fixed throughout
% the homotopy. The set $A$ is called a \emph{strong deformation
% retract} of $X$.

A \emph{strong deformation retraction} of a topological space $X$ to a subset $A \subset X$ is a continuous map $H:X \times I \to X$ such that
  \begin{enumerate*}
  \item for all $x \in X$, $H(x,0) = x$,
  \item for all $x \in X$, $H(x,1) \in A$, and
  \item \label{it:sdr3} for all $a \in A$ and for all $t \in I$, $H(a,t) = a$.
  \end{enumerate*}

\begin{proposition} \label{prop:contractible-bottleneck}
  Let $(X, d, x_0)$ be a pointed metric space.
  If there is a strong deformation retraction of $(X,d)$ to $x_0$
  then there is
  a strong deformation retract of $(\overline{D}_{\infty}(X, x_0), W_{\infty})$ to $0$,
  which restricts to
  a strong deformation retract of $(D(X, x_0), W_{\infty})$ to $0$,
  and for all $n$,
  a strong deformation retract of $(D^n(X, x_0), W_{\infty})$ to $0$.
\end{proposition}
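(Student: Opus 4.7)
The plan is to define a homotopy $\tilde H$ on persistence diagrams by applying $H$ to each dot: for $\alpha \in \overline{D}_\infty(X, x_0)$ with $\hat\alpha = \sum_{i} x_i$, set
\[
\tilde H(\alpha, t) = \sum_{i} H(x_i, t),
\]
interpreted in $\overline{D}_\infty(X, x_0) \cong \overline{D}(X \setminus \{x_0\})$ so that dots landing at $x_0$ are absorbed into the basepoint. Using $H(x, 0) = x$, $H(x, 1) = x_0$, and $H(x_0, t) = x_0$, the three strong-deformation-retract conditions for $\tilde H$ follow formally: $\tilde H(\alpha, 0) = \alpha$, $\tilde H(\alpha, 1) = 0$, and $\tilde H(0, t) = 0$.

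The first check is that $\tilde H(\alpha, t)$ really lies in $\overline{D}_\infty(X, x_0)$. Since $H$ is continuous and $H(x_0, \cdot) \equiv x_0$, a standard tube-lemma argument using compactness of $I$ shows that for every $\delta > 0$ there is an $\eps > 0$ such that $d(x, x_0) < \eps$ implies $\sup_{t \in I} d(H(x, t), x_0) < \delta$. Hence the only dots of $\hat\alpha$ whose images can escape $B_\delta(x_0)$ lie in $u_\eps(\alpha)$, which is finite by \cref{lem:u_bounded}, so $\abs{u_\delta(\tilde H(\alpha, t))} < \infty$.

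The main obstacle is continuity of $\tilde H$. Fix $(\alpha, t)$ and $\eps > 0$. Using the tube observation, pick $\delta_0 > 0$ so that $d(y, x_0) < \delta_0$ implies $d(H(y, t'), x_0) < \eps/2$ for every $t' \in I$, and split $\hat\alpha = u_{\delta_0/2}(\alpha) + \ell_{\delta_0/2}(\alpha)$ with $u_{\delta_0/2}(\alpha) = x_1 + \cdots + x_N$ finite. Continuity of $H$ at each $(x_i, t)$ gives open sets $U_i \ni x_i$ and $V_i \ni t$ with $H(U_i \times V_i) \subset B_{\eps/2}(H(x_i, t))$; choose a common $\eta \in (0, \delta_0/2)$ such that $B_\eta(x_i) \subset U_i$ for every $i$. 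I claim that $B_{\eta/2}(\alpha) \times \bigcap_i V_i$ is a suitable neighborhood. Given $(\alpha', t')$ there, pick a matching $\sigma$ of $\alpha$ and $\alpha'$ with $\Cost_\infty(\sigma) < \eta/2$; by \cref{lem:matching}, each $x_i$ (whose distance to $x_0$ exceeds $\eta/2$) must pair with some $y_{\varphi(i)} \in B_{\eta/2}(x_i) \subset U_i$, while every remaining dot appearing in $\sigma$ is within $\eta/2 + \delta_0/2 < \delta_0$ of $x_0$. Applying $H(\cdot, t)$ to the first coordinates and $H(\cdot, t')$ to the second coordinates of $\sigma$ yields a matching of $\tilde H(\alpha, t)$ and $\tilde H(\alpha', t')$: on the $N$ finite-part pairs it has cost at most $\eps/2$ by the choice of $U_i, V_i$, and on all other pairs it has cost at most $\eps/2 + \eps/2$ by the triangle inequality through $x_0$. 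Hence $W_\infty(\tilde H(\alpha, t), \tilde H(\alpha', t')) \leq \eps$.

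The restrictions to $D(X, x_0)$ and $D^n(X, x_0)$ are automatic: the dotwise pushforward never increases cardinality (it can only shrink diagrams by absorbing dots at $x_0$), and continuity is inherited as a subspace property.
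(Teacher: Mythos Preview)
Your proof is correct and follows essentially the same route as the paper's: define $\tilde H$ dotwise, use the tube-lemma argument (which the paper states as \cref{lem:contractible}) both to show $\tilde H(\alpha,t)\in\overline{D}_\infty(X,x_0)$ and to control the lower part in the continuity argument, and use pointwise continuity of $H$ at the finitely many dots of $u_{\delta_0/2}(\alpha)$ to control the upper part. The only differences are cosmetic bookkeeping (you work with $\eps/2$ and an explicit $\eta$, the paper with $\eps$ and a single $\delta_0=\min\{\delta/2,\delta_1,\dots,\delta_n\}$), and your invocation of \cref{lem:matching} to justify the pairing structure is slightly more explicit than the paper's.
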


For the unit interval, we use the standard metric $d(s,t) = \abs{s-t}$ and for the product of two metric spaces $(X,d_X)$ and $(Y,d_Y)$ we will use the product metric $d((x,y),(x',y')) = \max(d_X(x,x'),d_Y(y,y'))$.
The proof of this proposition will use the following lemma.

\begin{lemma} \label{lem:contractible}
  Let $(X,d,x_0)$ be a pointed metric space.
    Given a continuous map $H:(X,d) \times I \to (X,d)$ such that for all $t$, $H(x_0,t)=x_0$ and $\eps > 0$, there exists a $\delta > 0$ such that whenever $x \in X$ and $s \in I$ with $d(x,x_0) < \delta$ it follows that $d(H(x,s),x_0) < \eps$.
\end{lemma}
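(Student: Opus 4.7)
The plan is to deduce this from the continuity of $H$ together with the compactness of $I = [0,1]$. Morally, this is a uniform continuity statement along the slice $\{x_0\} \times I$; since $H$ vanishes (in the sense of landing at $x_0$) on all of this slice, continuity gives local control near each point, and compactness of $I$ upgrades local to uniform.

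First, I would fix $\eps > 0$ and, for each $s \in I$, apply the continuity of $H$ at the point $(x_0,s)$. Since $H(x_0,s) = x_0$, there exists $\eta_s > 0$ such that whenever $(x,t) \in X \times I$ lies in the open ball of radius $\eta_s$ around $(x_0,s)$ in the product metric, we have $d(H(x,t), x_0) < \eps$. Because the product metric is the max of the two coordinate distances, this open ball is exactly the set of pairs $(x,t)$ with $d(x,x_0) < \eta_s$ and $|t - s| < \eta_s$.

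Next, I would observe that the intervals $(s - \eta_s, s + \eta_s) \cap I$ form an open cover of the compact space $I$. Extract a finite subcover coming from points $s_1, \dots, s_n \in I$, and set
\[
\delta = \min\{\eta_{s_1}, \dots, \eta_{s_n}\} > 0.
\]
Then for any $x \in X$ with $d(x,x_0) < \delta$ and any $s \in I$, choose $i$ with $s \in (s_i - \eta_{s_i}, s_i + \eta_{s_i})$; since also $d(x, x_0) < \delta \leq \eta_{s_i}$, the pair $(x,s)$ lies in the neighborhood chosen above for $(x_0, s_i)$, so $d(H(x,s), x_0) < \eps$, as required.

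There is no real obstacle here; the only subtlety is remembering that the product metric used in the paper is the max metric, which makes the product-neighborhood decomposition immediate. The argument is the standard tube-lemma/uniform-continuity trick applied to the compact factor $I$, and it avoids any use of the pointed structure beyond the assumption $H(x_0, t) = x_0$ for all $t$.
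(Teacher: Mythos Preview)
Your proof is correct and follows essentially the same approach as the paper: use continuity of $H$ at each point $(x_0,t)$ to get a local $\delta_t$, cover the compact set $\{x_0\}\times I$ (equivalently, cover $I$ by the projected intervals), extract a finite subcover, and take $\delta$ to be the minimum of the finitely many radii. The paper phrases the cover as balls in $X\times I$ around points of $\{x_0\}\times I$, whereas you project to $I$ and cover with intervals, but this is only a cosmetic difference.
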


\begin{proof}
   Consider $(x_0, t) \in X \times I$. By the continuity of $H$, there exists a $\delta_t>0$ such that for all $(x,s) \in X \times I$ with $d(x,x_0) < \delta_t$ and $\abs{s-t}< \delta_t$ it follows that $d(H(x,s),x_0) = d(H(x,s),H(x_0,t)) < \eps$.
    The open balls $B_{\delta_t}((x_0,t))$ cover $\{x_0\} \times I$, which is compact.
    So there is a finite subcover given by $t_1,\ldots,t_n$.
    Let $\delta = \min\{\delta_{t_1},\ldots,\delta_{t_n}\}$.
    The result follows.
\end{proof}

\begin{proof}[Proof of Proposition~\ref{prop:contractible-bottleneck}]
 Let $H$ be a strong deformation retraction of $(X,d)$ to $x_0$.
  Define $\overline{D}(X,x_0) \times I \to \overline{D}(X,x_0)$ by
  $\overline{H}(\alpha,t) = \sum_i H(x_i,t)$ where $\hat{\alpha} = \sum_i x_i$.
  By item \eqref{it:sdr3} in the definition of a strong deformation retraction, this map is well-defined.

Let $\alpha \in \overline{D}_{\infty}(X,x_0)$.
 By Lemma~\ref{lem:contractible}, for each $\eps > 0$, there is a $\delta > 0$ such that for all $x \in X$ and $s \in I$, if $d(x,x_0) < \delta$ then $d(H(x,s) , x_0) < \eps$.
  It follows that $\abs{u_{\eps}(\overline{H}(\alpha,s))} \leq \abs{u_{\delta}(\alpha)}$.
  Since $\alpha \in \overline{D}_{\infty}(X, x_0)$, $|u_{\delta}(\alpha)| < \infty$ and thus $|u_{\eps}(\overline{H}(\alpha, s))| < \infty$. Hence $\overline{H}(\alpha, s) \in \overline{D}_{\infty}(X, x_0)$.
  Therefore, $\overline{H}$ restricts to a map
  $\overline{H}: (\overline{D}_{\infty}(X,x_0),W_{\infty}) \times I \to (\overline{D}_{\infty}(X,x_0),W_{\infty})$.
  It remains to show that this map is continuous.

  Let $\alpha \in \overline{D}_{\infty}(X,x_0)$ and let $t \in I$.
  Let $\eps > 0$. By Lemma \ref{lem:contractible}, there is a $\delta>0$ such that for all $x \in X$ and $s \in I$, if $d(x, x_0) < \delta$ then $d(H(x, s), x_0) < \eps$.
  By definition, there exist $x_1,\ldots,x_n \in X$ such that $u_{\frac{\delta}{2}}(\alpha) = \sum_{i=1}^n x_i$.
  By the continuity of $H$, for $1 \leq i \leq n$, there exists a $\delta_i$ such that for $x \in X$ and $s \in I$ with $d(x,x_i) < \delta_i$ and $\abs{s-t} < \delta_i$, it follows that $d(H(x,s),H(x_i,t)) < \eps$.
  Let $\delta_0 = \min \{ \frac{\delta}{2}, \delta_1, \ldots, \delta_n \}$.
  Let $\beta \in \overline{D}_{\infty}(X,x_0)$ such that $W_{\infty}(\alpha,\beta) < \delta_0$, and $s \in I$ such that $\abs{s-t} < \delta_0$.
  By definition, there exist $y_1,\ldots,y_n \in X$ such that for all $1 \leq i \leq n$, $d(x_i,y_i) < \delta_i$, $\beta = \sum_{i=1}^n y_i + \beta''$ and $W_{\infty}(\beta'',0) < \delta_0 \leq \frac{\delta}{2}$.
  Therefore, $W_{\infty}(\overline{H}(\beta,s),\overline{H}(\alpha,t)) < \eps$.
  Thus 
  $\overline{H}: (\overline{D}_{\infty}(X,x_0),W_{\infty}) \times I \to (\overline{D}_{\infty}(X,x_0),W_{\infty})$ 
is continuous.

 By definition,  if $\alpha \in D^n(X,x_0)$ then $\overline{H}(\alpha,s) \in D^n(X,x_0)$.
It follows that $\overline{H}$ restricts to continuous maps
  $(D(X,x_0),W_{\infty}) \times I \to (D(X,x_0),W_{\infty})$ 
and   
$(D^n(X,x_0),W_{\infty}) \times I \to (D^{n}(X,x_0),W_{\infty})$.
\end{proof}

To extend this result to $p \in [1,\infty)$ we need an additional hypothesis.
Say that a strong deformation retract of $(X,d)$ to $x_0$ is \emph{locally Lipschitz at $x_0$} if there exist $\epsilon_0>0$ and a constant $L$ such that for all $x \in X$ with $d(x, x_0) < \eps_0$ and all $t \in I$, $d(H(x,t),x_0) < L d(x,x_0)$.

\begin{proposition} \label{prop:contractible-wasserstein}
   Let $(X, d, x_0)$ be a pointed metric space.
    Let $p \in [1,\infty)$.
    If there is a strong deformation retraction of $(X,d)$ to $x_0$
    that is locally Lipschitz at $x_0$
  then there is
  a strong deformation retract of $(\overline{D}_{p}(X, x_0), W_{p})$ to $0$,
  which restricts to
  a strong deformation retract of $(D(X, x_0), W_{p})$ to $0$,
  and for all $n$,
  a strong deformation retract of $(D^n(X, x_0), W_{p})$ to $0$.
\end{proposition}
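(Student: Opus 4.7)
The plan is to follow the structure of \cref{prop:contractible-bottleneck}, defining $\overline{H}(\alpha,t) = \sum_i H(x_i, t)$ where $\hat{\alpha} = \sum_i x_i$, with the local Lipschitz hypothesis used precisely to control the contribution of the (possibly infinite) lower part to the $W_p$ distance. First I will verify that $\overline{H}(\alpha,t) \in \overline{D}_p(X,x_0)$ whenever $\alpha \in \overline{D}_p(X,x_0)$ by splitting the dots of $\alpha$ into the finitely many with $d(x_i, x_0) \geq \varepsilon_0$ and the remainder, for which the local Lipschitz bound gives
\[\sum_{i:\,d(x_i,x_0)<\varepsilon_0} d(H(x_i,t),x_0)^p \leq L^p\,W_p(\ell_{\infty}(\alpha),0)^p < \infty;\]
then \cref{lem:overlineDp} finishes the membership check. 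The deformation retraction identities and the preservation of $D(X,x_0)$ and $D^n(X,x_0)$ under $\overline{H}$ are routine, exactly as in \cref{prop:contractible-bottleneck}.

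The essential step will be continuity of $\overline{H}$ at an arbitrary $(\alpha,t)$. Given $\varepsilon > 0$, I will use \cref{lem:dist_between_lower_part_and_0} to pick $\delta_1 \in (0, \varepsilon_0/2)$ with $L\,W_p(\ell_{\delta_1}(\alpha),0)$ much smaller than $\varepsilon$, and use \cref{lem:u_bounded} to write $u_{\delta_1}(\alpha) = x_1 + \cdots + x_n$ as a finite sum. Continuity of $H$ at each $(x_i, t)$ yields a common $\delta_2 > 0$ such that $d(y, x_i) < \delta_2$ and $|s-t| < \delta_2$ imply $d(H(y,s), H(x_i,t)) < \varepsilon/(3n^{1/p})$. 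I will set $\delta = \min(\delta_1/2, \delta_2)$ and take any $\beta \in \overline{D}_p(X,x_0)$ and $s \in I$ with $W_p(\alpha,\beta) < \delta$ and $|s-t| < \delta$. Choosing a matching $\sigma$ with $\Cost_p(\sigma) < \delta$, the key observation via \cref{lem:matching} is that since $d(x_i, x_0) \geq \delta_1 > \delta$, no $x_i$ can be paired with $x_0$ at cost under $\delta$, so each $x_i$ must be matched to some dot $y_{\varphi(i)}$ of $\beta$ with $d(x_i, y_{\varphi(i)}) < \delta_2$.

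Writing $\beta' = \beta - \sum_i y_{\varphi(i)}$, the residual matching $\sigma''$ between $\ell_{\delta_1}(\alpha)$ and $\beta'$ has $\Cost_p(\sigma'') \leq \Cost_p(\sigma) < \delta$, and every dot of $\beta'$ lies within distance $\delta + \delta_1 < \varepsilon_0$ of $x_0$. Combining via $p$-subadditivity (\cref{lem:isomorphism}(e) with $q=p$) gives
\[W_p(\overline{H}(\alpha,t), \overline{H}(\beta,s))^p \leq \sum_{i=1}^{n} d(H(x_i,t), H(y_{\varphi(i)},s))^p + W_p(\overline{H}(\ell_{\delta_1}(\alpha),t), \overline{H}(\beta',s))^p.\]
The first sum is controlled by the choice of $\delta_2$, and the second is bounded using the triangle inequality through $0$ together with the local Lipschitz estimates $W_p(\overline{H}(\ell_{\delta_1}(\alpha),t),0) \leq L\,W_p(\ell_{\delta_1}(\alpha),0)$ and $W_p(\overline{H}(\beta',s),0) \leq L\,W_p(\beta',0) \leq L(\delta + W_p(\ell_{\delta_1}(\alpha),0))$, both applicable because all relevant dots lie in the $\varepsilon_0$-neighborhood of $x_0$.

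The hard part is this last combination: the infinitely many small dots cannot be handled pointwise by continuity of $H$, so one must invoke the local Lipschitz hypothesis precisely to bound the aggregate $W_p$ contribution by a multiple of $W_p(\ell_{\delta_1}(\alpha),0)$, which can be made arbitrarily small by \cref{lem:dist_between_lower_part_and_0}. The restrictions to $(D(X,x_0), W_p)$ and $(D^n(X,x_0), W_p)$ are continuous by the same argument, and are in fact simpler since the upper/lower split becomes trivial for finite diagrams.
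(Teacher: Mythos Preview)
Your proposal is correct and follows essentially the same approach as the paper: define $\overline{H}(\alpha,t)=\sum_i H(x_i,t)$, verify membership in $\overline{D}_p(X,x_0)$ via \cref{lem:overlineDp}, and for continuity split $\alpha$ into a finite part handled by pointwise continuity of $H$ and a tail whose $W_p$-contribution is controlled by the local Lipschitz bound. The only cosmetic differences are that the paper splits $\alpha$ by index via \cref{lem:dist_between_lower_part_and_0_sum_case} rather than by the distance threshold $\delta_1$, and uses $1$-subadditivity rather than $p$-subadditivity in the final estimate; neither change affects the substance of the argument.
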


\begin{proof}
  Let $H$ be a strong deformation retraction of $(X,d)$ to $x_0$ that is locally Lipschitz at $x_0$.
  Thus there exist $\epsilon_0>0$ and a constant $L$ such that for all $x \in X$  with $d(x, x_0) < \eps_0$  and all $t \in I$, $d(H(x,t),x_0) < L d(x,x_0)  < L \eps_0$.  
  Define $\overline{D}(X,x_0) \times I \to \overline{D}(X,x_0)$ by
  $\overline{H}(\alpha,t) = \sum_i H(x_i,t)$ where $\hat{\alpha} = \sum_i x_i$. Observe that if $\alpha \in \overline{D}_p(X, x_0)$ and $\hat{\alpha}=\sum_i x_i$ where for all $i$, $d(x_i, x_0) < \eps_0$ then for all $t \in I$ 
  	\[W_p(\overline{H}(\alpha, t), 0)=\norm{(d(H(x_i, t), x_0))_i}_p <L \norm{(d(x_i, x_0))_i}_p=L W_p(\alpha, 0).\]
        % By item \eqref{it:sdr3} in the definition of a strong deformation retraction, this map is well-defined.

Let $\alpha \in \overline{D}_p(X,x_0)$ and $t \in I$.
By Lemma~\ref{lem:overlineDp}, $\abs{u_{\eps_0}(\alpha)} < \infty$ and $W_p(\ell_{\eps_0}(\alpha),0) < \infty$.
By our assumption on $\eps_0$,
$\abs{u_{L\eps_0}(\overline{H}(\alpha,t))} < \abs{u_{\eps_0}(\alpha)} < \infty$
and
$W_p(\ell_{L\eps_0}(\overline{H}(\alpha,t)),0) \leq L W_p(\ell_{\eps_0}(\alpha),0) < \infty$.
Therefore, by Lemma~\ref{lem:overlineDp}, $\overline{H}(\alpha,t) \in \overline{D}_p(X,x_0)$.

Let $\alpha \in \overline{D}_p(X,x_0)$ and $t \in I$.
Let $\eps > 0$. Let $\delta_0 = \min(\frac{\eps}{6L},\frac{\eps_0}{2})$.
Then for all $x \in X$ and for all $s \in I$, whenever $d(x,x_0) < 2\delta_0$, it follows that
$d(H(x,s),x_0) < L d(X,x_0) < \eps$.
By Lemma~\ref{lem:dist_between_lower_part_and_0_sum_case}, $\alpha = \alpha' + \alpha''$, where $\alpha' = \sum_{i=1}^n x_i$ and $W_p(\alpha'',0) < \delta_0  \leq  \frac{\eps}{6L}$.
By the continuity of $H$, for $1 \leq i \leq n$ there exists $\delta_i$ such that for all $x \in X$ and $s \in I$, whenever $d(x,x_i) < \delta_i$ and $\abs{s-t} < \delta_i$ it follows that $d(H(x,s),H(x_i,t)) < \frac{\eps}{2n}$.
Let $\delta = \min\{\delta_0,\delta_1,\ldots,\delta_n\}$.

Let $\beta \in \overline{D}_p(X,x_0)$ and let $s \in I$ such that $W_p(\alpha,\beta) < \delta$ and $\abs{s-t} < \delta$.
Then there exists a matching $\sigma$ of $\alpha$ and $\beta$ such that $\sigma = \sigma' + \sigma''$ where $\sigma' = \sum_{i=1}^n (x_i,y_i)$, $\beta = \beta' + \beta''$ where $\beta' = \sum_{i=1}^n y_i$, $\sigma''$ is a matching of $\alpha''$ and $\beta''$, and $\Cost_p(\sigma) = \norm{(\Cost_p(\sigma'),\Cost_p(\sigma''))}_p < \delta$.
Then $d(x_i, y_i) < \delta_i$ and \[W_p(\beta'',0) \leq W_p(\beta'',\alpha'') + W_p(\alpha'',0) < \delta + \delta_0 \leq 2\delta_0 \leq \textstyle\frac{\eps}{3L}.\]
Furthermore, by Lemma \ref{lem:isomorphism}\ref{it:subadditivie},
\begin{align*}
W_p(\overline{H}(\alpha,t), \overline{H}(\beta,s)) &\leq
W_p( \overline{H}(\alpha',t),\overline{H}(\beta',s) )+ 
                                                     W_p( \overline{H}(\alpha'',t),\overline{H}(\beta'',s))\\
                                                   & \leq \sum_{i=1}^n d((H(\alpha_i, t), H(\beta_i,s))) + W_p(\overline{H}(\alpha'',t),0) + W_p(\overline{H}(\beta'',s),0)\\
                                                 &< n(\textstyle\frac{\eps}{2n}) + L W_p(\alpha'',0) + L W_p(\beta'',0)\\
  &< \textstyle\frac{\eps}{2} + \frac{\eps}{6}+\frac{\eps}{3} = \eps.
\end{align*}
  Thus 
  $\overline{H}: (\overline{D}_{p}(X,x_0),W_{p}) \times I \to (\overline{D}_{p}(X,x_0),W_{p})$ 
is continuous.
\end{proof}

\section{Optimal matchings}

In this section, we give sufficient conditions for the existence of optimal matchings between persistence diagrams. Let $p \in [1,\infty]$.
To start, recall from \cref{lem:optimal-matching-finite-pointed-metric-space} that finite persistence diagrams on a pointed metric space have an optimal matching.
The following example shows that this result does not hold in general for metric pairs.

\begin{example}
 Consider the metric space given by the following wedge sum of pointed metric spaces where we identify the base points. For $k \in \N$, let $X_k$ be the closed interval $[0,1 + \frac{1}{k}]$ with the base point $x_k=0$ and $a_k = 1 + \frac{1}{k}$. Let $X = \bigvee_{k=1}^{\infty} (X_k,x_k)$.
  Let $x$ be the point of $X$ obtained by identifying the $x_k$.
  Let $A = \{a_1,a_2,a_3,\ldots\}$.
  Note that $A$ is a closed subset of $X$.
  Furthermore $d(x,A) = 1$ but $d(x,a) > 1$ for all $a \in A$.
  Consider $\alpha,\beta \in D(X,A)$ with
 $\hat{\alpha} = x$ and $\hat{\beta} = 0$. 
 Then for any $p \in [1,\infty]$, $W_p(\alpha,\beta) = d(x,A) = 1$.
 A matching $\sigma$ of $\alpha$ and $\beta$ has $\hat{\sigma} = (x,a_k)$ for some $k$ and thus $\Cost_p(\sigma) = 1 + \frac{1}{k} > 1$.
  Therefore, there does not exist an optimal matching of $\alpha$ and $\beta$ in $(D(X,A),W_p)$.
\end{example}

In order to generalize \cref{lem:optimal-matching-finite-pointed-metric-space} to optimal matchings for finite persistence diagrams on metric pairs, we will need the following definition.

\begin{definition} \label{def:distance-minimizing}
For a metric space $(X,d)$, say that a subset $A \subset X$ is \emph{distance minimizing} if for each $x \in X$ with $d(x,A) < \infty$ there exists an $a \in A$ such that $d(x,A) = d(x,a)$.
\end{definition}

	Recall that $d(x, A)= \inf_{a \in A} d(x, a)$. 
	
\begin{lemma}\label{lem:A_compact}
  If $(X, d,A)$ is a metric pair where $A$ is compact then
$A$ is distance minimizing.
\end{lemma}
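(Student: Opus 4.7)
The plan is to fix an arbitrary $x \in X$ with $d(x,A) < \infty$ and produce a minimizer in $A$ via a standard minimizing-sequence plus compactness argument.

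First, by definition of $d(x,A)$ as an infimum, choose a sequence $(a_n)$ in $A$ with $d(x,a_n) \to d(x,A)$. Since $d(x,A) < \infty$, we may (after discarding finitely many terms) assume $d(x,a_n) \leq d(x,A) + 1 < \infty$ for all $n$. Now use compactness of $A$: since $A$ is a compact subset of the (extended pseudo)metric space $(X,d)$, in particular $A$ with the induced topology is sequentially compact, so there exist a subsequence $(a_{n_k})$ and a point $a \in A$ with $a_{n_k} \to a$ in $A$.

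Next, observe that the function $f : X \to [0,\infty]$ given by $f(y) = d(x,y)$ is $1$-Lipschitz on the sublevel set $\{y : d(x,y) < \infty\}$ by the triangle inequality, and hence continuous there. Since eventually $d(x,a_{n_k})$ is finite and $a_{n_k} \to a$, continuity of $f$ yields $d(x,a_{n_k}) \to d(x,a)$. Combined with $d(x,a_{n_k}) \to d(x,A)$, this gives $d(x,a) = d(x,A)$, which is exactly the distance-minimizing condition in \cref{def:distance-minimizing}.

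The only mild subtlety, and the one to be careful about, is that $(X,d)$ is only an extended pseudometric space, so I should note that the compactness-implies-sequential-compactness step and the continuity of $d(x,\cdot)$ still apply: sequential compactness in pseudometric spaces follows from the same proof as in metric spaces (limits need not be unique, but existence of a convergent subsequence is what is used), and $1$-Lipschitz continuity of $d(x,\cdot)$ follows directly from the triangle inequality, which is part of \cref{def:metric}. No step requires separation or finiteness of $d$.
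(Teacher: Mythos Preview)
Your proof is correct and follows essentially the same approach as the paper: pick a minimizing sequence in $A$, extract a convergent subsequence using (sequential) compactness of $A$, and conclude via continuity of $d(x,\cdot)$. The paper's version is slightly terser (it asserts continuity of $d(x,-)$ on all of $X$ and cites its \cref{thm:equiv_types_compts} for the compactness equivalence), but the argument is the same.
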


\begin{proof}
  Let $x \in X$ with $d(x, A) < \infty$.
  It is easy to check that the map $d(x, -): X \to \R_+$ is continuous.
  By definition, there is a sequence $(a_n) \in A$ such that
  \begin{equation} \label{eq:d(x,A)} 
  d(x,A) \leq d(x, a_n) \leq d(x, A)+ \frac{1}{n}.
  \end{equation}
Since $A$ is compact then by Theorem \ref{thm:equiv_types_compts}, the sequence $(a_n)$ has a convergent subsequence $(a_{n_k}) \to a \in A$.
  % Then by passing to a subsequence and taking a limit when $k \to \infty$, we have $d(x,A)=d(x, a)$.
  By the continuity of $d(x,-)$, $d(x,a) = d(x,A)$.
\end{proof}

A metric space $(X, d)$ is \emph{proper} if every closed ball in $X$ is compact.

\begin{lemma}\label{lem:A_dist_minimizing}
	If $(X, d,A)$ is a metric pair where $(X, d)$ is proper then
	$A$ is distance minimizing.
\end{lemma}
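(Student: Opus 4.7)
The plan is to mimic the proof of \cref{lem:A_compact}, replacing the compactness of $A$ by the compactness of a suitable closed ball that is guaranteed by properness. Let $x \in X$ with $d(x, A) < \infty$. As before, by the definition of $d(x,A)$, there is a sequence $(a_n)$ in $A$ satisfying
\[ d(x,A) \leq d(x,a_n) \leq d(x,A) + \tfrac{1}{n}. \]

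Next I would observe that the entire sequence $(a_n)$ lies in the closed ball $\overline{B}(x, d(x,A)+1)$, which is compact because $(X,d)$ is proper. Hence $(a_n)$ has a subsequence $(a_{n_k})$ converging to some point $a \in X$. Since we assume throughout (see the paragraph after \cref{def:metric}) that $A$ is closed, we get $a \in A$. The continuity of $d(x, -):X \to [0,\infty]$ then yields $d(x,a) = \lim_k d(x,a_{n_k}) = d(x,A)$, as desired.

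The only mild subtlety is that the paper allows extended pseudometrics, so I should make sure that the ball $\overline{B}(x, d(x,A)+1)$ is still well defined and compact in this setting; but since $d(x,A) < \infty$, this ball contains only points within finite distance of $x$, and properness still applies verbatim. No other obstacle arises — the argument is essentially the standard "minimizing-sequence-in-a-compact-set" trick, and the only change from \cref{lem:A_compact} is the source of compactness.
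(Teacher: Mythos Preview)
Your proof is correct and uses essentially the same idea as the paper: exploit properness to get a compact closed ball around $x$, then extract the minimizer from $A$ intersected with that ball. The only cosmetic difference is that the paper uses the finite intersection property of the nested family $\overline{B}_{d(x,A)+1/n}(x)\cap A$ of nonempty compact sets rather than a sequential-compactness argument, but this is a matter of taste, not substance.
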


\begin{proof}
  Let $x \in X$ with $d(x, A) < \infty$.
  Let $r = d(x,A)$.
  For every $n \in \N$, let
  % $Y_n=\{ y \in X \ | \ d(x, y) \leq d(x, A)+\frac{1}{n}\}$.
  $Y_n = \overline{B}_{r + \frac{1}{n}}(x)$.
  By assumption, $Y$ is compact. Since
% Since $Y_n$ is a closed ball in a proper metric space then $Y_n$ is compact in $X$.
% By definition, there is a sequence $(a_n) \in A$ that satisfies \eqref{eq:d(x,A)}. Then $a_n \in Y_n \cap A$.
% By assumption,
$A$ is closed, % thus
$Y_n \cap A$ is also compact.
% Note that for every $n \in \N$, $Y_{n+1}\cap A \subseteq Y_{n} \cap A$.
By definition, for all $n$, $Y_n \cap A \neq \emptyset$ and for $n \leq m$, $Y_m \subset Y_n$.
Therefore, there exists $a \in \cap_{n=1}^{\infty} (Y_n \cap A)$.
% Since $a$ satisfies \eqref{eq:d(x,A)} then
It follows that
$d(x, A)=d(x, a)$.
\end{proof}

In the  next example we have a metric pair $(X, d, A)$ where $(X, d)$ is not proper and $A$ is distance minimizing.

\begin{example}\label{ex:non-proper_and_A_minimizing}		
  Let $X=\R \setminus \{0\}$ with the Euclidean distance $d$. Let $A=\overline{B}_2(1)=[-1, 0)\cup (0,3]$. Note that $A$ is closed in $X$ and is distance minimizing. However, the closed ball $[-1, 0)\cup (0,3]$ is not compact in $X$. Indeed, its open cover $\cup_{n=1}^{\infty} [-1, -\frac{1}{n}) \cup (\frac{1}{n}, 3]$ does not have a finite subcover. Therefore, $(X,d)$ is not proper.
\end{example}

We have the following generalization of \cref{lem:optimal-matching-finite-pointed-metric-space}.

\begin{lemma} \label{lem:optimal-matching-finite}
 Let $(X,d,A)$ be a metric pair such that $A$ is distance minimizing.
 Let $\alpha,\beta \in D(X,A)$, where $\hat{\alpha} = \sum_{i=1}^m x_i$ and $\hat{\beta} = \sum_{j=1}^n y_j$.
For $1 \leq j \leq n$, let $x_{m+j} = a_j$, where $a_j \in A$ such that $d(y_j,A) = d(y_j,a_j)$.
For $1 \leq i \leq m$, let $y_{n+i} = b_i$, where $b_i \in A$ such that $d(x_i,A) = d(x_i,b_i)$. 
 Then
\begin{equation*}
  W_p(\alpha,\beta) = \min_{\sigma \in S_{m+n}} \norm{ (d(x_i,y_{\sigma(i)}))_{i=1}^{m+n}}_p,
\end{equation*}
where $S_{m+n}$ denotes the permutation group on $m+n$ letters.
\end{lemma}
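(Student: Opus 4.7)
The plan is to mimic the proof of \cref{lem:optimal-matching-finite-pointed-metric-space} using \cref{lem:matching} to parametrize arbitrary matchings via data $(K, \varphi, \{z_i\}, \{w_j\})$. The distance-minimizing hypothesis supplies the anchors $a_j, b_i \in A$, and lets us \emph{normalize} any matching by replacing each $z_i$ with $b_i$ and each $w_j$ with $a_j$; since $d(x_i, b_i) = d(x_i, A) \le d(x_i, z_i)$ and similarly for the $w_j$, this normalization can only decrease $\Cost_p$.

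For the upper bound $W_p(\alpha,\beta) \le \min_\tau \norm{(d(x_i,y_{\tau(i)}))_{i=1}^{m+n}}_p$, I would take any $\tau \in S_{m+n}$ and form $\sigma_\tau \in \overline{D}(X\times X, A\times A)$ as the class of $\sum_{i=1}^{m+n}(x_i,y_{\tau(i)})$. Setting $K = \{k \le m : \tau(k) \le n\}$ and $\varphi = \tau|_K$, \cref{lem:matching} confirms that $\sigma_\tau$ is a matching of $\alpha$ and $\beta$, and $\Cost_p(\sigma_\tau) \le \norm{(d(x_i,y_{\tau(i)}))_{i=1}^{m+n}}_p$ since any pair whose coordinates both lie in $A$ is identified to $0$ in the quotient and therefore suppressed from $\Cost_p$.

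For the reverse inequality, I would start from an arbitrary matching $\sigma$, normalize it as above, and build the permutation $\tau \in S_{m+n}$ by $\tau(k) = \varphi(k)$ for $k \in K$, $\tau(i) = n+i$ for $i \in \{1,\ldots,m\} \setminus K$, $\tau(m+j) = j$ for $j \notin \varphi(K)$, and $\tau(m+\varphi(k)) = n+k$ for $k \in K$. Checking bijectivity is a direct counting argument. The first three blocks contribute exactly the cost of the normalized matching (namely $d(x_k, y_{\varphi(k)})$, $d(x_i,A)$, and $d(y_j,A)$), while the fourth block consists of pairs $(a_{\varphi(k)}, b_k) \in A\times A$.

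The main obstacle I expect is precisely this last block: in the ambient metric $d$ the points $a_{\varphi(k)}$ and $b_k$ need not coincide, so $d(a_{\varphi(k)}, b_k)$ need not vanish, and to obtain the claimed equality these entries must drop out of the norm. I would resolve this by reading the formula inside $\overline{D}(X \times X, A\times A)$, where coordinates in $A \times A$ are identified to $0$; equivalently, by transferring via \cref{lem:isomorphism} to the pointed metric space $(X/A, d_p, \{A\})$, where all anchors are identified with the single distinguished point and the fourth block contributes $0$ automatically, so that \cref{lem:optimal-matching-finite-pointed-metric-space} applies directly. Under this reading, $\norm{(d(x_i, y_{\tau(i)}))_{i=1}^{m+n}}_p \le \Cost_p(\sigma)$ for the normalized $\sigma$, and taking the infimum over $\sigma$ completes the proof.
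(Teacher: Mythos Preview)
Your approach --- parametrize matchings via \cref{lem:matching}, normalize the anchors $z_i \to b_i$ and $w_j \to a_j$ using the distance-minimizing hypothesis, then realize the result by a permutation --- is exactly the paper's, though the paper's proof is two sentences and does not construct $\tau$ explicitly. You go further in spotting a genuine obstacle that the paper glosses over: the fourth block $\{(a_{\varphi(k)}, b_k) : k \in K\}$ lies in $A \times A$ but can have nonzero $d$-distance, so your $\tau$ may satisfy $\norm{(d(x_i, y_{\tau(i)}))_{i=1}^{m+n}}_p > \Cost_p(\sigma)$. This is not a wrinkle to be smoothed away; the displayed identity, read literally with the ambient metric $d$ and all $m+n$ entries, is \emph{false}. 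Take $X = \R$, $A = \{0, 10\}$, $m=n=1$, $x_1 = 4$, $y_1 = 6$: then $a_1 = 10$, $b_1 = 0$, and $W_p(\alpha,\beta) = d(x_1,y_1) = 2$, while the two permutations in $S_2$ yield $\norm{(2,10)}_p$ and $\norm{(4,4)}_p$, both strictly larger for every $p$.

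Your resolution via \cref{lem:isomorphism} is the correct fix, but be clear that it proves a \emph{different} formula: passing to $(X/A, d_p, A)$ collapses all anchors to the base point, so \cref{lem:optimal-matching-finite-pointed-metric-space} yields the identity with $d$ replaced by $d_p$ (equivalently, with $\Cost_p$ of the matching $\sum_{i}(x_i,y_{\tau(i)}) \in D(X\times X, A\times A)$ in place of the full $\ell^p$-norm, since $A\times A$ pairs are suppressed). Your upper-bound direction already shows $W_p(\alpha,\beta) \le \min_\tau \norm{(d(x_i,y_{\tau(i)}))}_p$, and the counterexample shows this can be strict. What both you and the paper actually establish is the $\Cost_p$-version, which is all that is ever used downstream (existence of an optimal matching for finite diagrams in \cref{thm:geodesic} and \cref{prop:X/A_geodesic}); the paper silently conflates the two, and your more careful reading is what exposed it.
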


\begin{proof}
    By Lemma~\ref{lem:matching}, a matching $\gamma$ of $\alpha$ and $\beta$ has the form $\sum_{i=1}^{m+n} (x_i,y_{\sigma(i)})$, where
    $\sigma \in S_{m+n}$ and
for $1 \leq j \leq n$, $x_{m+j} \in A$ and
for $1 \leq i \leq m$, $y_{n+i} \in A$.
  Since $A$ is distance minimizing, in Definition~\ref{def:wasserstein} we can restrict to matchings as in the statement of the lemma.
\end{proof}

\begin{lemma} \label{lem:sequence-of-matched-dots}
 Let $(X,d,A)$ be a metric pair.
  Given $\alpha \in \overline{D}_p(X,A)$, let $S = \supp(\hat{\alpha}) \cup A$.
  Then any sequence $(y_k)$ in $S$ has a subsequence $(y_{n_k})$ that is either constant or for which the sequence $(d(y_{n_k},A))$ converges to $0$.
\end{lemma}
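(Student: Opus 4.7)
The plan is to convert the statement into a pigeonhole argument powered by the defining finiteness of $\overline{D}_p(X,A)$: by Lemma~\ref{lem:u_bounded} (applied via the inclusion $\overline{D}_p(X,A) \subset \overline{D}_\infty(X,A)$ from Lemma~\ref{lem:DpDq}), the support of $\hat\alpha$ contains only finitely many dots at distance $\geq \delta$ from $A$, for every $\delta > 0$. So dots of $\hat\alpha$ either lie in one of finitely many ``far'' buckets or accumulate arbitrarily close to $A$; combined with the observation that $d(a,A)=0$ for every $a \in A$, this will produce the two alternatives in the conclusion.

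First I would dispose of terms landing in $A$: if infinitely many $y_k$ lie in $A$, pass to that subsequence, which satisfies $d(y_{n_k},A)=0$ and hence converges to $0$ trivially. Otherwise only finitely many terms lie in $A$, and after discarding them I may assume every $y_k \in \supp(\hat\alpha)$.

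The main dichotomy would then be carried out on $\liminf_k d(y_k, A)$. If this liminf equals $0$, then by the definition of $\liminf$ there is a subsequence with $d(y_{n_k}, A) \to 0$, and the second alternative holds. If instead the liminf is some $c > 0$, I would pick $\delta$ with $0 < \delta < c$; then for all sufficiently large $k$ we have $d(y_k, A) \geq \delta$, i.e.\ $y_k \in \supp(u_\delta(\alpha))$. Since this set is finite, the ordinary pigeonhole principle gives a single element $x$ that equals $y_k$ for infinitely many $k$, and the corresponding constant subsequence is the first alternative.

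The only real obstacle is invoking the correct form of the $\overline{D}_p$ hypothesis: what is needed is $|u_\delta(\alpha)| < \infty$ for \emph{every} $\delta > 0$ (not merely for $\delta = \infty$ as stated in Definition~\ref{def:D_p} for $p < \infty$), and this is exactly the content of Lemma~\ref{lem:u_bounded}, which holds uniformly for $p \in [1,\infty]$. Beyond that, the argument is a routine two-level case split with no measure-theoretic or geometric subtleties, so I do not expect any significant difficulty.
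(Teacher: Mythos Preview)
Your proof is correct and takes essentially the same approach as the paper: both reduce to the pigeonhole principle on the finite set $\supp(u_\delta(\alpha))$. The paper organizes it contrapositively (no constant subsequence $\Rightarrow$ a subsequence of distinct elements $\Rightarrow$ distances to $A$ tend to $0$, citing Lemma~\ref{lem:dist_between_lower_part_and_0_sum_case}), whereas you do a direct case split on $\liminf_k d(y_k,A)$ and cite Lemma~\ref{lem:u_bounded}; these are cosmetic differences only.
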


\begin{proof}
Let $(y_n)$ be a sequence in $S$. Suppose that $(y_n)$ does not have a constant subsequence. Then it has a subsequence $(y_{n_k})$ whose elements are all distinct. By Lemma \ref{lem:dist_between_lower_part_and_0_sum_case}, the sequence $(d(y_{n_k},A))$ converges to $0$.
\end{proof}

\begin{theorem} \label{thm:optimal-matching}

  Let $(X, d, A)$ be a metric pair.  
 There exists an optimal matching $\sigma \in \overline{D}(X\times X, A\times A)$ between every $\alpha, \beta \in \overline{D}_p(X, A)$ with $W_p(\alpha,\beta) < \infty$ if and only if $A$ is distance minimizing.
\end{theorem}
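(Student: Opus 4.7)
My plan is as follows. For the forward implication, I would prove the contrapositive: if $A$ is not distance minimizing, then there exists $x \in X \setminus A$ with $d(x, A) < \infty$ but $d(x, a) > d(x, A)$ for every $a \in A$. Setting $\alpha = [1_x]$ and $\beta = 0$, \cref{lem:matching} forces every matching $\sigma$ to have cost $d(x, a) > d(x, A) = W_p(\alpha, \beta)$ for some $a \in A$, so no optimal matching exists. This mirrors the example appearing just before \cref{def:distance-minimizing}.

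For the reverse implication, assume $A$ is distance minimizing and fix $\alpha, \beta \in \overline{D}_p(X, A)$ with $W_p(\alpha, \beta) < \infty$, writing $\hat{\alpha} = \sum_{i \in I} x_i$ and $\hat{\beta} = \sum_{j \in J} y_j$. Choose matchings $\sigma_n$ with $\Cost_p(\sigma_n) \to W_p(\alpha, \beta)$; by \cref{lem:matching} each $\sigma_n$ determines partner assignments $q_i^n \in \supp(\hat{\beta}) \cup A$ for every $i \in I$ and $r_j^n \in \supp(\hat{\alpha}) \cup A$ for every $j \in J$. Apply \cref{lem:sequence-of-matched-dots} to $\beta$ for the sequences $(q_i^n)_n$ and to $\alpha$ for the sequences $(r_j^n)_n$, and use a diagonal extraction over the countable set $I \sqcup J$ to pass to a single subsequence along which each $(q_i^n)$ and each $(r_j^n)$ is either constant or satisfies $d(\cdot, A) \to 0$.

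Let $K \subseteq I$ consist of those $i$ for which $q_i^n$ is eventually constantly some $y_{\varphi(i)} \in \supp(\hat{\beta})$. The map $\varphi: K \to J$ is injective because in any single $\sigma_n$ distinct dots of $\hat{\alpha}$ are matched to distinct dots of $\hat{\beta}$. A parallel analysis on the $J$-side shows that for $j \in J \setminus \varphi(K)$ one has $d(r_j^n, A) \to 0$: otherwise $r_j^n$ would be constantly $x_i \in \supp(\hat{\alpha})$, forcing $i \in K$ with $\varphi(i) = j$, contradicting $j \notin \varphi(K)$. Using distance minimization, choose $a_i \in A$ with $d(x_i, a_i) = d(x_i, A)$ for $i \in I \setminus K$ and $b_j \in A$ with $d(y_j, b_j) = d(y_j, A)$ for $j \in J \setminus \varphi(K)$, and set
\[\hat{\sigma} = \sum_{i \in K}(x_i, y_{\varphi(i)}) + \sum_{i \in I \setminus K}(x_i, a_i) + \sum_{j \in J \setminus \varphi(K)}(b_j, y_j),\]
which is a valid matching of $\alpha$ and $\beta$ by \cref{lem:matching}.

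The final step is to verify $\Cost_p(\sigma) \leq \liminf_n \Cost_p(\sigma_n) = W_p(\alpha, \beta)$. For each $i \in I$ one has $d(x_i, q_i) = \lim_n d(x_i, q_i^n)$: the constant case is immediate, and in the other case the triangle inequality combined with $d(q_i^n, A) \to 0$ gives $d(x_i, q_i^n) \to d(x_i, A) = d(x_i, a_i)$. An analogous identity holds for the $j \in J \setminus \varphi(K)$ contributions. For $p < \infty$ I would apply Fatou's lemma to these termwise limits, and for $p = \infty$ the lower semicontinuity reduces to taking suprema. The main obstacle, and where the most care is needed, is matching the terms of $\Cost_p(\sigma)^p$ with terms of $\Cost_p(\sigma_n)^p$ without double counting; this is resolved by noting that for fixed $i \notin K$ and $j \notin \varphi(K)$ the event $q_i^n = y_j$ occurs for only finitely many $n$ along our subsequence (otherwise $(q_i^n)$ would neither stabilize at $y_j$ nor approach $A$), so the decompositions induced by $K, \varphi(K)$ and by $K_n, \varphi_n(K_n)$ become eventually compatible.
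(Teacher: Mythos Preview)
Your forward implication and the extraction of a limit matching via a diagonal subsequence coincide with the paper's approach. The real divergence is in the cost estimate: the paper proves $\Cost_p(\tau)\le W_p(\alpha,\beta)+\eps$ for every $\eps>0$ by a finite-truncation argument based on \cref{lem:dist_between_lower_part_and_0_sum_case}, while you propose Fatou's lemma.

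The Fatou step, as written, has a gap. Your map $\varphi\colon K\to J$ is determined only up to the \emph{value} $y_{\varphi(i)}=q_i$, so when $\hat\beta$ has repeated values the index $\varphi(i)$ is not canonical. Your claim that $j\in J\setminus\varphi(K)$ forces $d(r_j^n,A)\to 0$ rests on the implication ``$r_j^n$ constantly $x_i$ $\Rightarrow$ $\varphi(i)=j$'', but what the argument actually yields is only that some $i^*\in K$ has $q_{i^*}$ equal to the \emph{value} $y_j$, hence $\varphi(i^*)\in\{j':y_{j'}=y_j\}$, not necessarily $\varphi(i^*)=j$. For such a $j$ one may then have $j\in\varphi_n(K_n)$ for all large $n$: in the decomposition of $\Cost_p(\sigma_n)^p$ over $I\sqcup(J\setminus\varphi_n(K_n))$ the $j$-indexed summand is absent, while in $\Cost_p(\sigma)^p$ it contributes $d(y_j,A)^p>0$, so the termwise $\liminf$ inequality required for Fatou fails. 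Your ``eventual compatibility'' remark treats only pairs with $i\notin K$ and does not cover this case. The paper sidesteps the issue by never analyzing the $r_j^n$: it matches every $y_j$ with $j\notin\varphi(K)$ to a nearest point of $A$ regardless, bounds a finite initial block of the limit matching against a single $\sigma_{\tilde\ell}$, and controls the remaining contribution as tails of $\alpha$ and $\beta$ via \cref{lem:dist_between_lower_part_and_0_sum_case}.
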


\begin{proof}
Suppose there exists an optimal matching $\sigma \in \overline{D}(X\times X, A\times A)$ between every $\alpha, \beta \in \overline{D}_p(X, A)$ with $W_p(\alpha,\beta) < \infty$. Let $x \in X$ with $d(x,A)< \infty$. Set $\alpha=x $ and $\beta=0$. Note that $W_p(\alpha, \beta)=d(x,A)$. Let $\sigma$ be an optimal matching of $\alpha$ and $\beta$. Thus $\hat{\sigma} =  (x, a)$ for some $a \in A$. Therefore, $\Cost_p(\sigma)=d(x,a)$. Hence $d(x,A)=d(x,a)$.

Conversely, suppose $A$ is distance minimizing.
  Let $\alpha,\beta \in \overline{D}_p(X,A)$ with $W_p(\alpha,\beta) = T < \infty$.
  If $\alpha,\beta \in D(X,A)$ then apply Lemma~\ref{lem:optimal-matching-finite} and we are done.
If not, then write $\hat{\alpha} = \sum_{i=1}^{\infty} x_i$, and $\hat{\beta} = \sum_{j \in J} y_j$.
Let $(\sigma_{\ell})$ be a sequence of matchings of $\alpha$ and $\beta$ such that $\lim_{\ell \to \infty} \Cost_p(\sigma_{\ell})=T$.
Without loss of generality, assume that for each $\ell$ and each $(a,b) \in \supp(\sigma_{\ell})$, $d(a,b) \leq d(a,A) + d(A,b)$.
If not, then replace $(a,b)$ with $(a,z) + (w,b)$, where $d(a,A) = d(a,z)$ and $d(A,b) = d(w,b)$, to obtain a matching of $\alpha$ and $\beta$ with lower cost.

Here is the strategy for the proof.
Restrict $(\sigma_{\ell})$ to a subsequence of matchings in which $x_1$ is matched to some $z_1$.
Further restrict to a subsequence of matchings in which $x_2$ is matched to some $z_2$, and so on. 
Match each $x_i$ to the corresponding $z_i$ and match the remaining unmatched $y_j$ to their nearest points in $A$.
Unfortunately, this doesn't quite work.
There may be $x_i$ for which there does not exists a subsequence of matchings in which $x_i$ is matched to a single $z_i$.
However, in this case there is a subsequence of matchings in which $x_i$ is  matched to a sequence of terms that converge to $A$.
In this case, match $x_i$ to the nearest point in $A$.
Now let us return to the proof.

  Let $L_0 = \N$.
  By Lemma~\ref{lem:matching}, for each $\ell \in L_0$ there exists $K_{\ell} \subset \N$ and injection $\varphi_{\ell}: K_{\ell} \to J$ such that
  \[\sigma_{\ell} = \sum_{i=1}^{\infty}(x_i,u_i^{(\ell)}) + \sum_{j \in J \setminus \varphi_{\ell}(K_{\ell})} (w_j^{(\ell)},y_j),\]
  where $u_i^{(\ell)} \in \supp(\hat{\beta}) \cup A$ and $w_j^{(\ell)} \in A$.
Consider the sequence $(u_1^{(\ell)})_{\ell \in L_0} \in \supp(\hat{\beta}) \cup A$.
By Lemma~\ref{lem:sequence-of-matched-dots}, we may restrict to a subsequence $L_1 \subset L_0$ such that $(u_1^{(\ell)})_{\ell \in L_1}$ is constant or $d(u_1^{(\ell)},A)_{\ell \in L_1} \to 0$ as $\ell \to \infty$.
Consider the sequence $(u_2^{(\ell)})_{\ell \in L_1} \in \supp(\hat{\beta}) \cup A$.
By Lemma~\ref{lem:sequence-of-matched-dots}, we may restrict to a subsequence $L_2 \subset L_1$ such that $(u_2^{(\ell)})_{\ell \in L_2}$ is constant or $d(u_2^{(\ell)},A)_{\ell \in L_2} \to 0$ as $\ell \to \infty$.
By induction, for each $i \in \N$, we may restrict to a subsequence $L_i \subset L_{i-1}$ such that $(u_i^{(\ell)})_{\ell \in L_i}$ is constant or $d(u_i^{(\ell)},A)_{\ell \in L_i} \to 0$ as $\ell \to \infty$.

  For each $i \in \N$,
  if the sequence $(u_i^{(\ell)})_{\ell \in L_i}$ is constant, then let $z_i$ denote this constant term.
  If not, then choose $z_i \in A$ such that $d(x_i,A) = d(x_i,z_i)$.
  In the first case,
  \begin{equation}
    \label{eq:equals}
    \text{for each } \ell \in L_i, \quad d(x_i,u_i^{(\ell)}) = d(x_i,z_i).
  \end{equation}
  In the second case, by the triangle inequality,
  \begin{equation} \label{eq:converging}
    \text{as } \ell \to \infty, \quad
(d(x_i,u_i^{(\ell)}))_{\ell \in L_i} \to d(x_i,A) = d(x_i,z_i).
  \end{equation}

  Let $K = \{i \in \N \ | \ z_i \in \supp(\hat{\beta})\}$.
  Then for $i \in \N \setminus K$, $z_i \in A$.
  By construction, $\sum_{k \in K} z_k \leq \hat{\beta}$ as elements of $\Z_+^{X \setminus A}$.
  Therefore, there exists an injection $\varphi:K \to J$
  such that for all $k \in K$, $z_k = y_{\varphi(k)}$.
  For $j \in J \setminus \varphi(K)$, choose $w_j \in A$ such that $d(y_j,A) = d(y_j,w_j)$.
  Let
  \[\tau = \sum_{i=1}^{\infty} (x_i,z_i) + 
\sum_{j \in J \setminus \varphi(K)} (w_j,y_j) =
    \sum_{k \in K}(x_k,y_{\varphi(k)}) +
  \sum_{i \in I \setminus K} (x_i,z_i) +
  \sum_{j \in J \setminus \varphi(K)} (w_j,y_j).\]
  Then by Lemma~\ref{lem:matching}, $\tau$ is a matching of $\alpha$ and $\beta$.

%   Let $\xi =\sum_{k=1}^{\infty} y_k$. Note that $\xi \leq \beta$ (as elements of $\Z_{+}^{X,A}$).
%   Write $\hat{\beta} - \hat{\xi} = \sum_{j \in J} z_j$.
%   For each $j \in J$, choose $w_j$ such that $d(z_j,A) = d(z_j,w_j)$.
% Let $\tau=\sum_{k=1}^{\infty} (x_k, y_k)+ \sum_{j \in J}(w_j,z_j)$. Then $\tau$ is a matching of $\alpha$ and $\beta$. 

We will now show that $\tau$ is an optimal matching.
Let $\eps>0$.
Write %$\tau$ as %an ordered indexed set,
$\tau=\sum_{j=1}^{\infty} (a_j, b_j)$.
By Lemma \ref{lem:dist_between_lower_part_and_0_sum_case}, there are $m, n \in \N$ such that $W_p(\sum_{j=m}^{\infty} a_j, 0) < \frac{\eps}{4}$ and $W_p(\sum_{j=n}^{\infty} b_j, 0) < \frac{\eps}{4}$. Let $N=\max(m, n)$. Choose $M \in \N$ such that $\sum_{j=1}^{N-1} a_j \leq \sum_{i=1}^M x_i$.
Then $\sum_{j=1}^{N-1} (a_j, b_j) \leq \sum_{i=1}^M (x_i, z_i)$.
Now consider the sequence of matchings $(\sigma_{\ell})_{\ell \in L_M}$. %
Choose $\ell_0 \in L_M$ such that $\Cost_p(\sigma_{\ell}) < T+\frac{\eps}{4}$ for all $\ell \in L_M$ such that $\ell \geq \ell_0$.

For $1 \leq i \leq M$, by \eqref{eq:equals} and \eqref{eq:converging}, we may choose $\ell_i \in L_i$ such that $d(x_i, z_i) < d(x_i, u_i^{(\ell)}) + \frac{\eps}{4M}$ for all $\ell \in L_i$ such that $\ell \geq \ell_i$.
Let $\tilde{\ell} \in L_M$ such that $\tilde{\ell} \geq \max_{0 \leq i \leq M} \ell_i$.
Then
\begin{align*}
  \Cost_p(\tau) &= \norm{(d(a_j, b_j))_{j=1}^{\infty}}_p 
                          \leq
 \norm{(d(a_j, b_j))_{j=1}^{N-1}}_p +  \norm{(d(a_j, b_j))_{j=N}^{\infty}}_p\\
&\leq          
  \norm{(d(x_i, z_i))_{i=1}^{M}}_p +
  \norm{(d(a_j, A))_{j=N}^{\infty} +
  (d(A, b_j))_{j=N}^{\infty}}_p\\
% & \iryna{Why $\norm{(d(\change{a_j},\change{b_j}))_{j=N}^{\infty}}_p \leq   \norm{(d(\change{a_j},A))_{j=N}^{\infty} +
% 		(d(A,\change{b_j}))_{j=N}^{\infty}}_p$?} \\
&\leq          
  \norm{(d(x_i, z_i))_{i=1}^{M}}_p +
  \norm{(d(a_j,A))_{j=N}^{\infty}}_p +
  \norm{(d(A, b_j))_{j=N}^{\infty}}_p\\
&<
 \norm{(d(x_i, u_i^{(\tilde{\ell})}+\textstyle\frac{\eps}{4M})_{i=1}^{M}}_p +
 \norm{(d(a_j,A))_{j=m}^{\infty}}_p +
 \norm{(d(A, b_j))_{j=n}^{\infty}}_p\\
&<
 \norm{(d(x_i,u_i^{(\tilde{\ell})})_{i=1}^M}_p + \norm{(\textstyle\frac{\eps}{4M})_{i=1}^{M}}_p + \textstyle\frac{\eps}{4} + \textstyle\frac{\eps}{4}\\
&\leq          
\Cost_p (\sigma_{\tilde{\ell}}) + \norm{(\textstyle\frac{\eps}{4M})_{i=1}^{M}}_1 + \textstyle\frac{\eps}{2}
< T + \textstyle\frac{\eps}{4} + \textstyle\frac{\eps}{4} + \textstyle\frac{\eps}{2} = T + \eps          
\end{align*}
Therefore, $\Cost_p(\tau)=T$ and hence $\tau$ is an optimal matching of $\alpha$ and $\beta$.
\end{proof}

\section{Paths in spaces of persistence diagrams}

In this section we study paths in spaces of persistence diagrams.
Throughout this section, we will assume that $p \in [1,\infty]$.

\subsection{Connectedness}\label{sec:connected}

We start with an easy observation on connectedness.

% A topological space $X$ is \emph{connected} if its only subsets that are both closed and open are $X$ and $\emptyset$.
	
\begin{proposition}\label{prop:connectness}
  Let $(X,d, A)$ be a metric pair. If $(X, d)$ is disconnected and there is $\delta >0$ such that $A^{\delta}$ is contained in one of the connected components of $X$ then $(D(X,A), W_p)$ is disconnected.
\end{proposition}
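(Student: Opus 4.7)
The plan is to construct a locally constant function $n_2 \colon D(X,A) \to \Z_+$ that is not constant; this immediately forces $(D(X,A),W_p)$ to be disconnected.

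Since $(X,d)$ is disconnected, I first pick a separation $X = X_1 \sqcup X_2$ into nonempty clopen pieces. Every connected component of $X$ lies entirely in one of $X_1$ or $X_2$, so using the hypothesis I may arrange $A^{\delta} \subset X_1$. In particular, $d(y,A) \geq \delta$ for every $y \in X_2$. I then define $n_2(\alpha)$ to be the number of dots of $\alpha$, counted with multiplicity, lying in $X_2$. The empty diagram has $n_2 = 0$, while any single-dot diagram built from a point of the nonempty set $X_2$ has $n_2 = 1$, so $n_2$ is nonconstant on $D(X,A)$.

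The heart of the argument is showing $n_2$ is locally constant. Fix $\alpha$ with dots $x_1, \dots, x_n$. Because $X_1$ and $X_2$ are open, I choose radii $r_i > 0$ so that $B_{r_i}(x_i)$ lies in whichever clopen piece contains $x_i$, and set $\eps = \min(\delta, r_1, \dots, r_n)$. Given $\beta$ with $W_p(\alpha, \beta) < \eps$, I pick a matching $\sigma$ of cost less than $\eps$; since individual pair distances are bounded by $\Cost_p(\sigma)$, every matched pair has distance less than $\eps$. Using Lemma~\ref{lem:matching} to decompose $\sigma$ into $\alpha$-to-$\beta$ pairs together with $\alpha$-to-$A$ and $A$-to-$\beta$ pairs, I argue that a dot of $\alpha$ lying in $X_2$ cannot be sent to $A \subset X_1$, as that would force a distance $\geq \delta > \eps$; so it must pair with a dot of $\beta$, which then lies in $B_{r_i}(x_i) \subset X_2$. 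Symmetrically, any dot of $\beta$ in $X_2$ cannot arise from $A$, and it must be matched to some dot $x_k$ of $\alpha$ with $d(x_k,\cdot) < r_k$, forcing $x_k \in X_2$. The matching thus restricts to a bijection between $X_2$-dots of $\alpha$ and $X_2$-dots of $\beta$, giving $n_2(\alpha) = n_2(\beta)$.

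The main obstacle is that the global distance $d(X_1, X_2)$ may well vanish (for instance when $X$ is a subset of $\R$ separated along an irrational), so $\eps$ cannot be chosen uniformly across all of $X$. The workaround is to exploit the openness of $X_1$ and $X_2$ and take a separate local radius $r_i$ around each dot of the finite diagram $\alpha$; the hypothesis on $A^{\delta}$ then supplies the uniform lower bound $\delta$ on $d(X_2, A)$ that prevents dots in $X_2$ from being matched to the diagonal.
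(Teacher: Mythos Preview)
Your proof is correct and essentially the same as the paper's: the paper shows directly (via Lemma~\ref{lem:S_is_cl_and_op}) that $D(X_1,A)$ is a proper nonempty clopen subset of $D(X,A)$, and your function $n_2$ has precisely this set as its zero level set. The metric argument you give for local constancy---choosing $\eps = \min(\delta, r_1, \ldots, r_n)$ and tracking where matched dots land---is exactly the content of Lemma~\ref{lem:S_is_cl_and_op} specialized to $S = X_1$.
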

	
\begin{proof}
Suppose there exist nonempty open subsets $S, T \subset X$ such that $X=S \cup T$ and $S \cap T = \emptyset$. Furthermore, suppose there is $\delta >0$ such that $B_{\delta}(A) \subset S$. By Lemma \ref{lem:S_is_cl_and_op}, $D(S, A)$ is closed and open in $D(X, A)$. Also note that $D(S, A) \neq D(X, A)$ and $D(S, A) \neq \emptyset$.  Thus $D(X, A)$ is not connected.
\end{proof}

\subsection{Path connectedness}
\label{subsection:connectness}

Next we consider path connectedness.
For a topological space $X$ and $x, y \in X$, a \emph{path} from $x$ to $y$ is a continuous map
$\gamma: [0,1] \to X$ where $\gamma(0)=x$ and $\gamma(1)=y$.
% Space $X$ is \emph{path connected} if there is a path
% between any two points $x, y \in X$.
Let $(X,d)$ be a metric space.
Then every path in $X$ is uniformly continuous.

Let $(X,d, A)$ be a metric pair. Suppose $(X,d)$ is path connected. Since the image of a path-connected space under a continuous map is path connected, $(X/A, d_{p}) $ is also path connected.

%Since the continuous image of a path connected space is path connected, we have the following.

%\begin{lemma}\label{lem:X/A_path_conn}
 % Let $(X,d, A)$ be a metric pair. If $(X,d)$ is path connected, then $(X/A, d_{p}) $ is path connected.
%\end{lemma}
	
%\begin{proof}
%  Let $[x], [y] \in X/A$. Let $\gamma: [0,1] \to X$ be a path between
%  $x$ and $y$. Define a composition $[0,1] \xto{\gamma} X \xto{q} X/A$
%  by $\overline{\gamma}(t)=q \circ \gamma(t)=[\gamma(t)]$. Note that
%  $\overline{\gamma}(0)=[x]$ and $\overline{\gamma}(1)=[y]$. Since
%  both quotient map $q$ and $\gamma$ are continuous then so is map
%  $\overline{\gamma}$. Hence $\overline{\gamma}$ is a path between
%  $[x]$ and $[y]$.
%\end{proof}

\begin{theorem} \label{thm:path-connected}
Let $(X,d,A)$ be a metric pair.
  Assume that $(X,d)$ is path connected. Let $n \in \N$.
  Then $(D(X,A),W_p)$ and $(D^n(X,A),W_p)$, are path connected.
 % It need not be the case that $(\overline{D}_p(X,A),W_p)$ is path connected.
\end{theorem}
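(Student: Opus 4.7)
The plan is to reduce everything to showing that each diagram in $D(X,A)$ is path-connected to the zero diagram $0$; transitivity then does the rest. This requires $A \neq \emptyset$, since otherwise diagrams of differing cardinality sit at Wasserstein distance $\infty$ and cannot be joined by a continuous path in $(D(X,A),W_p)$. So fix a basepoint $a \in A$.

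Given $\alpha \in D(X,A)$ with $\hat{\alpha} = \sum_{i=1}^{m} x_i$, I would use the path connectedness of $X$ to pick, for each $i$, a continuous $\gamma_i : [0,1] \to X$ with $\gamma_i(0) = x_i$ and $\gamma_i(1) = a$, and then set
\[
  \gamma(t) = \sum_{i=1}^{m} \gamma_i(t) \in D(X,A).
\]
Then $\gamma(0) = \alpha$ and $\gamma(1) = m\cdot a$, which represents $0$ in $D(X,A) = D(X)/D(A)$. Since cardinality in $D(X,A)$ counts only the dots in $X \setminus A$, one has $\abs{\gamma(t)} \le m$ for all $t$; in particular, if $\alpha \in D^n(X,A)$ then the entire path stays in $D^n(X,A)$, which handles the second half of the statement in parallel.

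The main step is continuity of $\gamma$. By \cref{lem:matching}, $\sum_{i=1}^{m}(\gamma_i(t),\gamma_i(s))$ is a matching of $\gamma(t)$ and $\gamma(s)$ in $D(X\times X, A\times A)$, so by \cref{def:wasserstein}
\[
  W_p(\gamma(t),\gamma(s)) \le \bigl\|\bigl(d(\gamma_i(t),\gamma_i(s))\bigr)_{i=1}^{m}\bigr\|_p.
\]
Each $\gamma_i$ is uniformly continuous on the compact interval $[0,1]$, so given $\eps>0$ a single $\delta>0$ makes every $d(\gamma_i(t),\gamma_i(s)) < \eps\, m^{-1/p}$ (or $< \eps$ when $p=\infty$) whenever $|s-t|<\delta$, giving $W_p(\gamma(s),\gamma(t)) < \eps$. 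Hence $\gamma$ is continuous, completing the proof.

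I do not anticipate any major obstacle: the only mildly fiddly point is the purely formal verification that $\sum_i(\gamma_i(t),\gamma_i(s))$ really is an admissible matching with projections $\gamma(t)$ and $\gamma(s)$ in the quotient $D(X,A)$, which is handled by \cref{lem:matching}. Everything else reduces to path connectedness of $X$ combined with uniform continuity on $[0,1]$ and the subadditivity of $W_p$ from \cref{lem:isomorphism}\ref{it:subadditivie}.
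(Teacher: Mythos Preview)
Your proof is correct and follows essentially the same route as the paper: both pick a basepoint in $A$, push each dot along a path in $X$ to that basepoint, and bound $W_p$ by the $p$-norm of the componentwise distances using uniform continuity of the $\gamma_i$. Your explicit remark that $A \neq \emptyset$ is needed is a valid observation that the paper silently assumes when it writes ``Let $x_0 \in A$.''
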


\begin{proof}
  Suppose $(X, d)$ is path connected.
  Let $x_0 \in A$.
  % For each $x \in X$ there exists a path $\gamma_x: [0,1] \to X$ from $x$ to $x_0$. According to Lemma \ref{lem:isomorphism},  we can identify points in $X$ with their images in $D(X,x_0)$.
  Let $\alpha \in D(X, A)$, where $\hat{\alpha} = \sum_{i=1}^n x_i$.
  Define $\gamma:[0,1] \to D(X, A)$ by $\gamma(t)=\sum_{i=1}^n \gamma_{i}(t)$, where $\gamma_{i}$ is the image of a path in $X$ from $x_i$ to $x_0$ in $D(X,A)$.
  Note that
  % $\gamma(0)=\gamma_{1}(0)+\dots+\gamma_{N}(0)=\alpha$ and $\gamma(1)=\gamma_{1}(1)+\dots+\gamma_{N}(1)=0$.
  $\gamma$ is a path from $\alpha$ to $0$.
  Let %$t \in [0,1]$ and
  $\eps >0$.
  Since $\gamma_{i}$ is uniformly continuous there is $\delta_i >0$ such that for any $t, t' \in [0,1]$ with $|t-t'|<\delta_i$, we have $d(\gamma_{i}(t), \gamma_{i}(t')) < \frac{\eps}{N}$.
  Let $\delta=\min \{\delta_1, \dots, \delta_N\}$.
  Then for any $t, t' \in [0,1]$ such that $|t-t'|<\delta$,
\begin{equation*}
  W_p(\gamma(t), \gamma(t')) \leq \norm{(d(\gamma_{i}(t), \gamma_{i}(t'))\big)_{i=1}^N}_p < \frac{\eps}{N} \norm{\underbrace{(1, \dots, 1)}_{N\ \text{times}}}_p \leq \eps.
\end{equation*}
Therefore, $(D(X,A),W_p)$ and $(D^n(X,A),W_p)$ are path connected.
%The second statement follows from \cref{ex:circles} below.
\end{proof}

 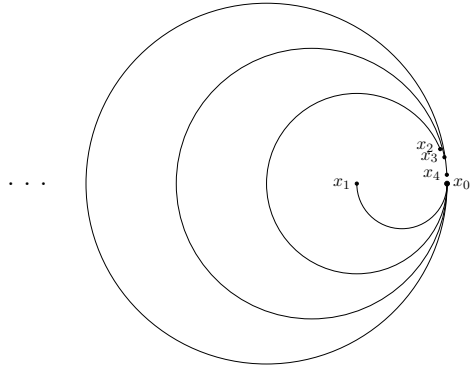
\begin{figure}[t!]
	\begin{center}
		\scalebox{0.6}{
		\begin{tikzpicture}
			%	\filldraw (1,0) coordinate (A);
			\draw (1,0) coordinate (A) arc [radius=1, start angle=360, end angle=180] coordinate (x_1) ;
			\draw  (1,0) coordinate (A) arc [radius=2, start angle=360, end angle=180/8] coordinate (x_2);
			\draw  (1,0) coordinate (A) arc [radius=3, start angle=360, end angle=180/16] coordinate (x_3);
			\draw  (1,0) coordinate (A) arc [radius=4, start angle=360, end angle=180/64] coordinate (x_4);
			\filldraw (A) circle (1.5pt);
			\filldraw (x_1) circle (1pt);
			\filldraw (x_2) circle (1pt);
			\filldraw (x_3) circle (1pt);
			\filldraw (x_4) circle (1pt);
			\node at (A) [right] {$x_0$};
			\node at (x_1) [left] {$x_1$};
			\node at (x_2) [left, yshift=0.5mm] {$x_2$};
			\node at (x_3) [left] {$x_3$};
			\node at (x_4) [left] {$x_4$};
			\node at (x_1) [left, xshift=-6.5cm]  {$\textbf{\mydots}$};
		\end{tikzpicture}}
	\end{center}
	\caption{A path connected pointed metric space $(X,d,x_0)$ whose space of persistence diagrams does not have a path from $\sum_{n=1}^{\infty} x_n$ to $0$. See Example~\ref{ex:circles}.} \label{fig:circles} 
\end{figure}
   The example below shows that it need not be the case that $(\overline{D}_p(X,A),W_p)$ is path connected.
\begin{example}\label{ex:circles}
   	For $n \in \N$, consider the circle of radius $n$ centered at $0$ with the metric given by arc length. Let $C_n$ denote the subspace given by $\{ne^{i\theta} \ | \ \frac{\pi}{n^3} \leq \theta \leq 2\pi\}$. We have a pointed metric space $(C_n, c_n)$ where $c_n = ne^{2\pi i} = (n,0)$. Let $X = \bigvee_{n=1}^{\infty} C_n = \coprod_{n=1}^{\infty} C_n / (c_n \sim c_m)_{n,m}$.
   	Denote the equivalence class $[c_n]$ by $x_0$.
   	See Figure \ref{fig:circles} for an illustration.
   	We have the pointed metric space $(X,d,x_0)$, where
   	\begin{equation*}
   		d(n e^{i\theta},m e^{i\varphi}) =
   		\begin{cases} \min ( n \abs{\theta -\varphi},
   			n(2\pi - \abs{\theta -\varphi}) ), & \text{ if } n=m\\
   			\min ( n \theta, n(2\pi-\theta)) + \min(m \varphi,
   			m(2\pi-\varphi)), & \text{ if } n \neq m.
   		\end{cases}
   	\end{equation*}
   	Note that if $n \neq m$ then
   	$d(n e^{i \theta}, m e^{i \varphi}) = d(n e^{i\theta},x_0) + d(m e^{i\varphi},x_0)$.
   	
   	Observe that $X$ is path connected. Let $p \in [1,\infty]$.
   	We will show that $(\overline{D}(X,x_0),W_p)$ is not path connected.
   	Let
   	$\alpha=\sum_{n=1}^{\infty} x_n$, where
   	$x_n=n e^{\pi / n^3}$. Then
   	$W_p(\alpha, 0) =\|(d(x_n, x_0))_{n})\|_p = \|(\frac{\pi}{n^2})_{n})\|_p = \frac{\pi^3}{6}$.
   	Hence $\alpha \in \overline{D}_p(X, x_0)$.
   	Assume that there exists a path $\gamma:[0,1] \to (\overline{D}(X,x_0),W_p)$ from $\alpha$ to $0$.
   	Since $[0,1]$ is compact, so is the image of $\gamma$. Thus the diameter of the image of $\gamma$ is finite. Choose $N$ to be greater that this diameter.
   	Consider the subspace of $C_N$ given by $C'_N = \{ N e^{i\theta} \ | \ \frac{\pi}{N^3} \leq \theta \leq \frac{\pi}{N^3} + 1\}$.
   	Since $N > \diam \supp (\im \gamma)$,  if $x \in C'_N$ and $y$ is in the support of the image of $\gamma$ and $d(x,y) < d(x_N,x_0)$ then $y \in C'_N$.
   	
   	Let $S = \{t \in [0,1] \ | \ \supp (\gamma(t)) \cap C'_N \neq \emptyset\}$.
   	By assumption $0 \in S$. Thus $S \neq \emptyset$.
   	We will show that $S$ is open and closed and thus equal to $[0,1]$, which contradicts that $\gamma(1) = 0$.
   	First we show that $S$ is open.
   	Assume $t_0 \in S$.
   	Then by definition there exists $x \in \supp (\gamma(t_0)) \cap C'_N$.
   	Since $\gamma$ is continuous, there exists $\delta > 0$ such that for all $t \in [0,1]$ with $0 < \abs{t-t_0} < \delta$, $W_p(\gamma(t_0),\gamma(t)) < d(x_N,x_0)$.
   	Consider such a $t$.
   	Then by the definition of $W_p$ there exists $y \in \supp (\gamma(t))$ with $d(x,y) < d(x_N,x_0)$.
   	As observed above, this implies that $y \in C'_N$.
   	Thus $t \in S$ and hence $S$ is open.
   	Next we show that $S$ is closed.
   	Assume $t \in \overline{S}$.
   	Then there exists a sequence $(t_k)$ in $S$ converging to $t$.
   	Since $\gamma$ is continuous, $\gamma(t_k)$ converges to $\gamma(t)$.
   	Choose $K$ such that $W_p(\gamma(t_K),\gamma(t)) < d(x_N,x_0)$.
   	By definition, there exists $x \in \supp (\gamma(t_K)) \cap C'_N$.
   	By the definition of $W_p$, there exists $y \in \supp (\gamma(t))$ such that $d(x,y) < d(x_N,x_0)$.
   	Again, by the observation above, $y \in C'_N$.
   	Therefore, $t \in S$ and hence $S$ is closed.
   \end{example}

\subsection{Rectifiable paths}
\label{sec:rectifiable-paths}

As preparation for the remainder of the section, we study paths with finite length.

The \emph{length} of a path $\gamma$ in $(X,d)$ is defined by
\[\Len_{d}(\gamma)=\sup \sum_{n=1}^{N} d(\gamma(t_{n-1}),
  \gamma(t_n))\] where the supremum is taken over all $N \in \N$ and
all partitions $0=t_0 < t_1 < \dots < t_N=1$ of $[0,1]$. The path
$\gamma$ is \emph{rectifiable} if $\Len_{d}(\gamma) < \infty$.
For a path $\gamma$ and $0 \leq a \leq b \leq 1$, we have the \emph{sub-path} $\gamma^{a,b}$, given by $\gamma^{a,b}(t) = \gamma(a + t(b-a))$.
If the path $\gamma$ is rectifiable then so is $\gamma^{a,b}$.

A path $\gamma$ has \emph{constant speed}  if it is rectifiable and for all $0 \leq a \leq b \leq 1$, $\Len_d(\gamma^{a,b}) = (b-a) \Len_d(\gamma)$.

\begin{proposition}
	Let $(X, d)$ be a metric space. Then every rectifiable path $\gamma:[0,1] \to X$ can be represented in the form $\gamma=\overline{\gamma} \circ \phi$ where $\overline{\gamma}: [0, 1] \to X$ is a path with  constant speed and $\phi:[0,1] \to [0, 1]$ is a continuous nondecreasing surjective map.
\end{proposition}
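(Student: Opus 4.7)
The plan is to reparametrize $\gamma$ by normalized arc length. Let $L = \Len_d(\gamma)$. If $L=0$, then $\gamma$ is constant and we may take $\overline{\gamma} = \gamma$ and $\phi = \mathrm{id}_{[0,1]}$. So assume $L > 0$, and define the arc-length function $s:[0,1] \to [0,L]$ by
\[
s(t) = \Len_d(\gamma^{0,t}),
\]
and set $\phi(t) = s(t)/L$. Clearly $\phi(0)=0$, $\phi(1)=1$, and $\phi$ is nondecreasing by the additivity of length over concatenations of sub-paths (for $a\leq b$, $s(b) = s(a) + \Len_d(\gamma^{a,b})$).

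The first technical step is to show $\phi$ is continuous on $[0,1]$. The argument I would run: fix $t_0 \in [0,1]$ and $\eps > 0$. Choose a partition $0 = t_0^{(0)} < t_1^{(0)} < \cdots < t_N^{(0)} = 1$ witnessing $\sum d(\gamma(t_{i-1}^{(0)}),\gamma(t_i^{(0)})) > L - \eps/2$; inserting $t_0$ into this partition only increases the sum. Using uniform continuity of $\gamma$ on the compact interval $[0,1]$, I can find $\delta > 0$ so small that on any interval of length $<\delta$ the oscillation of $\gamma$ in $d$ is at most $\eps/(2N)$; then for $|t-t_0| < \delta$ and $t \geq t_0$, the length $\Len_d(\gamma^{t_0,t})$ is bounded by $\eps$ by comparing refinements of the fixed partition. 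Hence $s$ (and thus $\phi$) is continuous.

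Next, I would define $\overline{\gamma}:[0,1] \to X$ by $\overline{\gamma}(u) = \gamma(t)$ for any $t \in \phi^{-1}(u)$. The well-definedness is the second key point: if $\phi(t_1)=\phi(t_2)$ with $t_1<t_2$, then $\Len_d(\gamma^{t_1,t_2}) = 0$, which forces $\gamma$ to be constant on $[t_1,t_2]$ (since $d(\gamma(t_1),\gamma(r)) \leq \Len_d(\gamma^{t_1,r}) \leq \Len_d(\gamma^{t_1,t_2}) = 0$ for each $r \in [t_1,t_2]$). By construction $\gamma = \overline{\gamma} \circ \phi$, and $\overline{\gamma}$ is surjective because $\phi$ is.

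The last step is verifying constant speed of $\overline{\gamma}$. For $0 \leq u \leq v \leq 1$, pick $t,t' \in [0,1]$ with $\phi(t) = u$, $\phi(t') = v$. By additivity, $\Len_d(\overline{\gamma}^{u,v}) = \Len_d(\gamma^{t,t'}) = L\,(v-u)$, where the first equality follows because $\overline{\gamma}$ restricted to $[u,v]$ is (after the obvious affine reparametrization of $[0,1]$) a reparametrization of $\gamma^{t,t'}$ through a nondecreasing continuous surjection, and such a reparametrization preserves length. Continuity of $\overline{\gamma}$ then follows from continuity of $\gamma$ together with this length identity (or directly from the constancy-on-fibers argument). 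The main obstacle is the continuity of $s$, for which one must exploit the uniform continuity of $\gamma$ and a uniform refinement-of-partition bound; everything else is bookkeeping.
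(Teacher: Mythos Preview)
Your approach is exactly the paper's: reparametrize by normalized arc length, setting $\phi(t)=\Len_d(\gamma^{0,t})/L$ and $\overline{\gamma}(\phi(t))=\gamma(t)$ (the paper simply cites \cite[Proposition~2.5.9]{burago2001course} for the details you supply). One small caveat in this paper's setting: since $d$ is allowed to be a pseudometric (Definition~\ref{def:metric}), the conclusion $d(\gamma(t_1),\gamma(r))=0$ does not literally force $\gamma$ to be constant on $[t_1,t_2]$; the easy fix is to set $\overline{\gamma}(u)=\gamma(\min\phi^{-1}(u))$, after which your continuity and constant-speed arguments go through unchanged.
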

 
\begin{proof}
  If $\Len_d(\gamma) = 0$ then we may take $\phi$ to be identity and $\bar{\gamma} = \gamma$. Assume $\Len_d(\gamma) > 0$.
     Following the proof of \cite[Proposition 2.5.9]{burago2001course}, take $\phi(t)=\frac{\Len_d(\gamma^{0,t})}{\Len_d(\gamma)}$
     and $\overline{\gamma}(\tau)=\gamma(t)$ where $\tau= \phi(t)$.
\end{proof}

% Composing $\overline{\gamma}$ with the map $f:[0,1] \to [0, v \Len_d(\gamma)]$, $f(t)=t v \Len_d(\gamma)$ gives us a path $ \overline{\gamma} \circ f: [0,1] \to X$ with a constant speed $v$ . Note that $\Len_d(\gamma)=\Len_d(\overline{\gamma})=\Len_d(\overline{\gamma} \circ f)$.

%Throughout this section if a path $\gamma$ has a constant speed $v$ we assume that $v=\Len_d(\gamma)$.

\begin{proposition} \label{prop:path}
Let $(X,d,A)$ be a metric pair and let $\alpha,\beta \in \overline{D}_p(X,A)$ with $W_p(\alpha,\beta) < \infty$.
  Assume that $\alpha = \sum_{i \in I} x_i$, $\beta = \sum_{i \in I} y_i$ and that for each $i \in I$,  $\gamma_i$ is a path in $X$ from $x_i$ to $y_i$\ with a constant speed and   $\norm{(\Len_d(\gamma_i))_{i \in I}}_p < \infty$.
  Let $\gamma: [0,1] \to \overline{D}(X,A)$ be given by $\gamma(t) = \sum_{i \in I} \gamma_i(t)$. Then
  \begin{enumerate}[label=(\alph*)]
  \item \label{it:path-1}
    $\gamma$ is a path in $(\overline{D}_p(X,A),W_p)$ from $\alpha$ to $\beta$, and 
  \item \label{it:path-2}
    $\Len_{W_p}(\gamma) \leq \norm{(\Len_d(\gamma_i))_{i \in I}}_p$.
  \end{enumerate}
\end{proposition}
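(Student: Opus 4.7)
The entire proof rests on a single estimate. Consider the natural \emph{diagonal matching}
\[
  \sigma_{s,t} = \sum_{i \in I}(\gamma_i(s), \gamma_i(t)),
\]
which is a matching of $\gamma(s)$ and $\gamma(t)$ by Lemma~\ref{lem:matching}. Since each $\gamma_i$ has constant speed, $d(\gamma_i(s),\gamma_i(t)) \leq \Len_d(\gamma_i^{s,t}) = |s-t|\Len_d(\gamma_i)$, and therefore
\[
  W_p(\gamma(s), \gamma(t)) \leq \Cost_p(\sigma_{s,t}) = \bigl\|\bigl(d(\gamma_i(s),\gamma_i(t))\bigr)_{i \in I}\bigr\|_p \leq |s-t|\,L,
\]
where $L := \|(\Len_d(\gamma_i))_{i \in I}\|_p < \infty$.

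From this single estimate everything follows. Specializing to $s=0$ gives $W_p(\alpha,\gamma(t)) \leq tL < \infty$; combined with $W_p(\alpha,0) < \infty$ (which holds because $\alpha \in \overline{D}_p(X,A)$, using the matching of $\alpha$ with $0$ that sends each $x_i$ to a point of $A$ nearly realizing $d(x_i,A)$), the triangle inequality yields $W_p(\gamma(t),0) < \infty$. For $p \in [1,\infty)$ this is equivalent to $\gamma(t) \in \overline{D}_p(X,A)$ by unpacking Lemma~\ref{lem:overlineDp}: finiteness of the $p$-cost forces $\sum_j d(\gamma_j(t),A)^p < \infty$, from which both $|u_\infty(\gamma(t))|<\infty$ and $W_p(\ell_\infty(\gamma(t)),0)<\infty$ are read off. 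For $p = \infty$ one needs the auxiliary observation that whenever $\Len_d(\gamma_i) < \delta/2$, the condition $d(\gamma_i(t), A) \geq \delta$ forces $d(x_i, A) \geq \delta/2$, so such indices inject into the finite set $u_{\delta/2}(\alpha)$, while the summability hypothesis controls the remaining indices. Next, the estimate says precisely that $\gamma$ is $L$-Lipschitz, hence continuous, as a map $[0,1] \to (\overline{D}_p(X,A), W_p)$, and the endpoint identifications $\gamma(0) = \sum_i x_i = \alpha$ and $\gamma(1) = \sum_i y_i = \beta$ are immediate. This proves \ref{it:path-1}.

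For \ref{it:path-2}, applying the estimate to each subinterval of an arbitrary partition $0 = t_0 < t_1 < \cdots < t_N = 1$ yields
\[
  \sum_{n=1}^{N} W_p(\gamma(t_{n-1}),\gamma(t_n)) \leq L \sum_{n=1}^{N}(t_n - t_{n-1}) = L,
\]
so $\Len_{W_p}(\gamma) \leq L = \|(\Len_d(\gamma_i))_{i \in I}\|_p$.

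The only delicate step is verifying membership $\gamma(t) \in \overline{D}_p(X,A)$; the Lipschitz continuity and the length bound are formal consequences of the diagonal-matching estimate, and the endpoint identifications are literal.
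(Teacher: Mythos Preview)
Your diagonal-matching Lipschitz estimate $W_p(\gamma(s),\gamma(t)) \leq |s-t|\,L$ is a cleaner route than the paper's. The paper proves continuity by an $\eps$--$\delta$ argument that splits $I$ into a finite head (handled by uniform continuity of the individual $\gamma_i$) and a tail of small total length; that argument never uses the constant-speed hypothesis, which the paper only invokes later for the length bound. You instead exploit constant speed from the outset to get global Lipschitz continuity, which dispatches continuity and the length bound in one stroke. This is more economical, at the price of leaning on a hypothesis the paper's continuity argument does not need.

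The membership step is where care is required. You assert $W_p(\alpha,0) < \infty$ because $\alpha \in \overline{D}_p(X,A)$; this fails when $|u_\infty(\alpha)| > 0$, i.e.\ when some $x_i$ lies at infinite distance from $A$. (The paper commits the same slip, writing ``the second term is finite by assumption.'') For $p \in [1,\infty)$ the fix is easy: the finitely many indices $i$ with $d(x_i,A) = \infty$ contribute at most $|u_\infty(\alpha)|$ points to $u_\infty(\gamma(t))$ since each $\Len_d(\gamma_i) < \infty$, and on the remaining indices your triangle-inequality argument runs verbatim.

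For $p = \infty$ your side argument has a genuine gap. The hypothesis $\|(\Len_d(\gamma_i))_i\|_\infty < \infty$ only bounds the lengths uniformly; it says nothing about how many indices satisfy $\Len_d(\gamma_i) \geq \delta/2$, so ``the summability hypothesis controls the remaining indices'' is unjustified. In fact the statement as written fails for $p = \infty$: take $X = \R$, $A = \{0\}$, $\alpha = \beta = \sum_{n \geq 1} \tfrac{1}{n} \in \overline{D}_\infty(\R,0)$, and let $\gamma_n$ be the constant-speed path from $\tfrac{1}{n}$ out to $1$ and back. Then $\sup_n \Len_d(\gamma_n) = 2$, yet $\gamma(\tfrac{1}{2}) = \sum_n 1$ has $|u_{1/2}(\gamma(\tfrac{1}{2}))| = \infty$, so $\gamma(\tfrac{1}{2}) \notin \overline{D}_\infty(\R,0)$. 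The paper's argument has the same lacuna here: it only establishes $W_\infty(\gamma(t),0) < \infty$, which does not imply membership in $\overline{D}_\infty$.
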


\begin{proof}
First we show that for each $t \in [0,1]$, $\gamma(t) \in \overline{D}_p(X,A)$.
  By the triangle inequality and the Minkowski inequality, we have that 
\begin{align*}
W_p(\gamma(t),0) &= \norm{(d(\gamma_i(t),0))_{i \in I}}_p
\leq \norm{(d(\gamma_i(t),\gamma_i(0)) + d(\gamma_i(0),A))_{i \in I}}_p \\
&= \norm{(d(\gamma_i(t),\gamma_i(0)))_{i \in I} + (d(\gamma_i(0),A))_{i \in I}}_p 
 \leq \norm{(d(\gamma_i(t),\gamma_i(0)))_{i \in I}}_p + W_p(\alpha,0).
  \end{align*}
  The second term is finite by assumption. The first term is bounded by $\norm{(\Len_d(\gamma_i^{0,t}))_{i \in I}}_p$, which equals $t \norm{(\Len_d(\gamma_i))_{i \in I}}_p$, which is also finite by assumption.

  Second we show that $\gamma$ is continuous.
  Assume without loss of generality that $I = \N$ or $I =\{1,\ldots,n\}$ for some $n \in \N$.
  Let $\eps > 0$.
  Choose $N$ such that $\norm{(\Len_d(\gamma_i))_{i > N}}_p <  \frac{\eps}{2}$.
  For $1 \leq i \leq N$, there exists $\delta_i$ such that for all $s,t \in [0,1]$ with $\abs{s-t}< \delta_i$, $d(\gamma_i(s),\gamma_i(t)) < \frac{\eps}{2N}$.
  Let $\delta = \min \{\delta_1,\ldots,\delta_N\}$.
  Then for all $s,t \in [0,1]$ with $\abs{s-t} < \delta$,
  \begin{multline*}
    W_p(\gamma(s),\gamma(t)) \leq \norm{(d(\gamma_i(s),\gamma_i(t)))_{i \in I}}_p
    \leq \norm{(d(\gamma_i(s),\gamma_i(t)))_{i=1}^{N}}_p + 
      \norm{(d(\gamma_i(s),\gamma_i(t)))_{i > N}}_p \\
      <  \textstyle\norm{(\frac{\eps}{2N})_{i=1}^N}_p+\norm{(\Len_d(\gamma_i))_{i>N}}_p
      \leq
      % \textstyle\norm{(\frac{\eps}{2N})_{i=1}^N}_1 +\textstyle\frac{\eps}{2} =
      \textstyle\frac{\eps}{2}+\textstyle\frac{\eps}{2} = \eps. 
    \end{multline*}
    
 Third we show that $\Len_{W_p}(\gamma) \leq \norm{(\Len_d(\gamma_i))_{i \in I}}_p$.
  Let $0 \leq s \leq t \leq 1$.
  Then 
\[W_p(\gamma(s),\gamma(t)) \leq \norm{(d(\gamma_i(s),\gamma_i(t)))_{i \in I}}_p \leq \norm{(\Len_d^{s,t}(\gamma_i))_{i \in I}}_p = (t-s)\norm{(\Len_d(\gamma_i))_{i \in I}}_p.\]
  Therefore, for any $N \in \N$ and $0 = t_0 < t_1 < \cdots < t_N = 1$,
  $\sum_{n=1}^N W_p(\gamma(t_{n-1}),\gamma(t_n)) \leq \norm{(\Len_d(\gamma_i))_{i \in I}}_p$.
  Hence $\Len_{W_p}(\gamma) \leq \norm{(\Len_d(\gamma_i))_{i \in I}}_p$.
\end{proof}

\subsection{Length spaces}\label{subsection:length_space}

Here we consider metric spaces for which distances may be obtained from the lengths of paths.
A metric space $(X,d)$ is called a \emph{length space} or an \emph{inner metric space} if for any $x, y \in X$ with $d(x,y) < \infty$ 
\[d(x,y)= \inf \Len_{d}(\gamma)\]
where the infimum is taken over all rectifiable paths $\gamma$ from $x$ to $y$. 

\begin{theorem} \label{thm:length}
Let $(X,d,A)$ be a metric pair and assume that $(X,d)$ is a length space. Let $n \in \N$. Then
$(\overline{D}_p(X,A),W_p)$, and $(D(X,A),W_p)$ are also length spaces.
%However, for $p \neq 1$, it need not be the case that $(D^n(X,A),W_p)$ is a length space.
\end{theorem}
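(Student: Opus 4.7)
The plan is to show directly that for any $\alpha,\beta \in \overline{D}_p(X,A)$ with $W_p(\alpha,\beta) < \infty$ and any $\eps > 0$, there is a rectifiable path from $\alpha$ to $\beta$ in $(\overline{D}_p(X,A),W_p)$ of length less than $W_p(\alpha,\beta) + \eps$, and that this path lies in $(D(X,A),W_p)$ when $\alpha,\beta$ do. The idea is to produce paths between matched dots in $X$ (using that $X$ is a length space) and then lift the collection to a path of persistence diagrams via Proposition~\ref{prop:path}.

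First I would use the definition of $W_p$ to pick a matching $\sigma$ of $\alpha$ and $\beta$ with $\Cost_p(\sigma) < W_p(\alpha,\beta) + \eps/2$. Writing $\hat\sigma = \sum_{i \in I}(a_i,b_i)$ with $I$ countable, we get $\alpha = \sum_{i \in I} a_i$ and $\beta = \sum_{i \in I} b_i$ in $\overline D(X,A)$, where the pairs with $a_i \in A$ or $b_i \in A$ simply contribute zero to the quotient. Because $d(a_i,b_i) \le \Cost_p(\sigma) < \infty$, for each $i$ the length-space property gives a rectifiable path $\tilde\gamma_i$ in $X$ from $a_i$ to $b_i$ with $\Len_d(\tilde\gamma_i) < d(a_i,b_i) + \eps_i$, where I choose the $\eps_i > 0$ so that $\|(\eps_i)_{i\in I}\|_p < \eps/2$: for $p<\infty$ take $\eps_i = (\eps/3)\cdot 2^{-i/p}$, and for $p=\infty$ take $\eps_i = \eps/3$ (or $\eps_i = \eps/(2|I|^{1/p})$ when $I$ is finite). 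Each $\tilde\gamma_i$ may be reparametrized to a constant-speed path $\gamma_i$ with the same length using the reparametrization proposition proved earlier.

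Next, applying Minkowski's inequality,
\[
\|(\Len_d(\gamma_i))_{i\in I}\|_p \le \|(d(a_i,b_i))_{i\in I}\|_p + \|(\eps_i)_{i\in I}\|_p < \Cost_p(\sigma) + \eps/2 < W_p(\alpha,\beta) + \eps < \infty,
\]
so the hypothesis of Proposition~\ref{prop:path} is satisfied. That proposition then gives a path $\gamma(t) = \sum_{i\in I}\gamma_i(t)$ from $\alpha$ to $\beta$ in $(\overline D_p(X,A),W_p)$ with $\Len_{W_p}(\gamma) \le \|(\Len_d(\gamma_i))_{i\in I}\|_p < W_p(\alpha,\beta) + \eps$. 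Since any path from $\alpha$ to $\beta$ has length at least $W_p(\alpha,\beta)$ and $\eps$ is arbitrary, $W_p(\alpha,\beta)$ equals the infimum of lengths of rectifiable paths. This proves $(\overline D_p(X,A),W_p)$ is a length space. For $(D(X,A),W_p)$, I note that when $\alpha,\beta\in D(X,A)$ the index set $I$ is automatically finite (any matching of finite diagrams is finite by Lemma~\ref{lem:matching}), so each $\gamma(t)$ remains in $D(X,A)$, and the same estimate applies.

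The main obstacle is controlling the error in the countable case: we pay a little slack $\eps_i$ at each matched pair, and we need the aggregate slack $\|(\eps_i)\|_p$ to stay below $\eps/2$. The summable choice of $\eps_i$ described above resolves this for $p \in [1,\infty)$, and the uniform choice handles $p=\infty$; all other steps are routine applications of Proposition~\ref{prop:path} and the triangle/Minkowski inequalities.
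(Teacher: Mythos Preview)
Your proposal is correct and follows essentially the same route as the paper: choose a near-optimal matching, use the length-space hypothesis to get constant-speed paths between matched dots with controlled slack, and invoke Proposition~\ref{prop:path} together with Minkowski to bound $\Len_{W_p}(\gamma)$. The only cosmetic difference is the specific choice of the $\eps_i$ (the paper uses $\eps/2^i$), and the paper handles the $D(X,A)$ case the same way you do, by noting that $I$ is finite when $\alpha,\beta$ are.
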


\begin{proof}
  Let $\alpha,\beta \in (\overline{D}_p(X,A),W_p)$ with $W_p(\alpha, \beta) < \infty$.
    Let $\eps > 0$.
  By \cref{def:wasserstein} there exists a matching $\sigma$ of $\alpha$ and $\beta$ with $\hat{\sigma} = \sum_{i \in I}(x_i,y_i)$ where $I = \N$ or $I = \{1,\ldots, N\}$ for some $N \in \N$ and $\abs{I} \leq \abs{\alpha + \beta}$,
  with $\Cost_p(\sigma) = \norm{(d(x_i,y_i))_{i \in I}}_p \leq W_p(\alpha,\beta) +\frac{\eps}{2}$.

  Since $(X,d)$ is a length space, for all $i \in I$ there is a constant speed path $\gamma_i$ in $X$ from $x_i$ to $y_i$ with $\Len_d(\gamma_i) < d(x_i,y_i) + \frac{\eps}{2^{i}}$.
  Let $\gamma = \sum_{i \in I} \gamma_i$.
  Then 
\begin{align*}
   \norm{(\Len_d(\gamma_i))_{i \in I}}_p
  & < \norm{(d(x_i,y_i) + \frac{\eps}{2^{i+1}})_{i \in I}}_p
  = \norm{(d(x_i,y_i))_{i \in I} + (\frac{\eps}{2^{i+1}})_{i \in I}}_p \\
  & \leq \norm{(d(x_i,y_i))_{i \in I}}_p + \norm{(\frac{\eps}{2^{i+1}})_{i \in I}}_p
  \leq W_p(\alpha,\beta) + \eps < \infty.
  \end{align*}
  Therefore, by \cref{prop:path}\ref{it:path-2},
\[\Len_{W_p}(\gamma) \leq \norm{(\Len_d(\gamma_i))_{i \in I}}_p
  < W_p(\alpha,\beta) + \eps.\]
Hence $(\overline{D}_p(X,A),W_p)$ and $(D(X,A),W_p)$ are length spaces.
   % The second statement follows from \cref{ex:non-length-space}.
\end{proof}

The following example shows that for $p \in (1,\infty]$, if $(X,d,A)$ is a metric pair and $(X,d)$ is a length space then it need not be the case that $(X/A,d_p)$ is a length space and, more generally, that for $n \geq 1$, it need not be the case that $(D^n(X,A),W_p)$ is a length space.

\begin{example} \label{ex:non-length-space}
  Consider $(\R_{\leq}^2,d,\Delta)$ where $d$ is the metric induced by the $1$-norm. Then $(\R^2_{\leq}, d)$ is a length space.
  Let $x = (0,1)$ and $y=(10,11)$. Then in $\R_{\leq}^2/\Delta$, $d_p(x,y) = 2^{\frac{1}{p}}$.
  However, since the open balls in $(\R_{\leq}^2/\Delta,d_p)$ of radius $1$ centered at $x$ and $y$ are disjoint, for any path $\gamma$ in $\R^2_{\leq}/\Delta$ from $x$ to $y$, $L_{d_p}(\gamma) \geq 2$.
  Note that this distance is achieved by the path $\gamma$ in $(D^2(\R^2_{\leq},\Delta),W_p)$ from $x$ to $y$ given by $\gamma(t) = (0,1-t) + (10,10+t)$, which has length $2^{\frac{1}{p}}$.
  For $n \geq 1$, let $\alpha = x + (n-1) z$ and $\beta = y + (n-1) w$, where $z = (0,10)$ and $w=(0,11)$.
  Then $W_p(\alpha,\beta) = (n+1)^{\frac{1}{p}}$.
  This distance is realized by the path $\gamma$ in $(D^{n+1}(\R_{\leq}^2,\Delta),W_p)$ given by $\gamma(t) = (0,1-t) + (10,10+t) + (n-1)(0,10+t)$, but not by any path in $(D^n(\R_{\leq}^2,\Delta),W_p)$.
\end{example}

\begin{proposition} \label{prop:length}
If $(X,d, A)$ is a metric pair where $(X, d)$ is a length space and $n \geq 1$ then $(D^n(X,A),W_1)$ is a length space. As a special case, $(X/A,d_1)$ is a length space.
\end{proposition}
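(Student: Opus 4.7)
The plan is to show that for any $\alpha,\beta\in D^n(X,A)$ with $W_1(\alpha,\beta)<\infty$ and any $\eps>0$, there is a path $\Gamma\colon[0,1]\to D^n(X,A)$ from $\alpha$ to $\beta$ with $\Len_{W_1}(\Gamma)<W_1(\alpha,\beta)+\eps$; the standard inequality $W_1\leq\Len_{W_1}$ then yields equality. By \cref{def:wasserstein} I pick a matching $\sigma$ of $\alpha$ and $\beta$ with $\Cost_1(\sigma)<W_1(\alpha,\beta)+\eps/2$, and by \cref{lem:matching} I decompose it as
\[\hat\sigma=\sum_{k\in K}(x_k,y_{\varphi(k)})+\sum_{i\in I\setminus K}(x_i,z_i)+\sum_{j\in J\setminus\varphi(K)}(w_j,y_j),\]
with $z_i,w_j\in A$. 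Since $(X,d)$ is a length space, for each of the $m=|I|+|J|-|K|\leq 2n$ pairs I choose a constant-speed path in $X$ between its two endpoints whose $d$-length exceeds the corresponding $d$-distance by less than $\eps/(4n)$.

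I construct $\Gamma$ in three phases on $[0,\tfrac13]$, $[\tfrac13,\tfrac23]$, $[\tfrac23,1]$. In Phase 1, I move the unmatched $\alpha$-dots $x_i$ (for $i\in I\setminus K$) into $A$ \emph{sequentially}, one after another, along their chosen paths, with every other dot held fixed. In Phase 2, I move the matched dots $x_k$ to $y_{\varphi(k)}$ in \emph{parallel}, using the construction of \cref{prop:path}. In Phase 3, I launch the unmatched $\beta$-dots $y_j$ from $A$ along their chosen paths \emph{sequentially}. Continuity within each phase follows from \cref{prop:path}\ref{it:path-1}, and continuity at the phase boundaries $t=\tfrac13,\tfrac23$ holds because dots that have reached $A$ are identified with $0$ in $D(X,A)$, so the end configuration of one phase matches the starting configuration of the next.

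The crucial verification is that $\Gamma(t)\in D^n(X,A)$ at every time. In Phase 1, while the dot $x_{i_\ell}$ is moving, the diagram contains $\{x_k\}_{k\in K}$, the single current position $x_{i_\ell}(s)$, and the unmoved dots $\{x_{i_{\ell'}}\}_{\ell'>\ell}$, since the previously moved dots already lie in $A$; its cardinality is thus at most $|K|+1+(|I\setminus K|-\ell)\leq|I|\leq n$. An identical count gives $|\Gamma(t)|\leq|J|\leq n$ in Phase 3, while Phase 2 has at most $|K|\leq n$ dots. For the length, \cref{prop:path}\ref{it:path-2} with $p=1$ bounds the Phase 2 contribution by $\sum_{k\in K}\Len_d(\gamma_k)$, and additivity of $\Len_{W_1}$ over subintervals yields the Phase 1 and Phase 3 contributions as the analogous sums. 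Adding across phases,
\[\Len_{W_1}(\Gamma)\leq\sum_{\text{all pairs}}\Len_d(\gamma)<\Cost_1(\sigma)+m\cdot\tfrac{\eps}{4n}\leq W_1(\alpha,\beta)+\tfrac{\eps}{2}+\tfrac{\eps}{2}=W_1(\alpha,\beta)+\eps.\]
The special case that $(X/A,d_1)$ is a length space is then immediate from the isometric isomorphism $(X/A,d_1)\cong(D^1(X,A),W_1)$ of \cref{lem:isomorphism}\ref{it:isomorphism-d}.

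The main obstacle, which is exactly what \cref{ex:non-length-space} exploits to defeat the analogous statement for $p\neq 1$, is the tension between respecting the cardinality bound $n$ and achieving a short path. Parallel motion of all matched and unmatched pairs would minimise $W_p$-length for $p>1$ but may require up to $|I|+|J\setminus\varphi(K)|\leq 2n$ slots simultaneously, exceeding $n$, while sequential motion respects the cardinality bound yet costs the plain sum of path lengths rather than their $p$-norm. Only at $p=1$ is the $1$-subadditivity of \cref{lem:isomorphism}\ref{it:subadditivie} genuine additivity, which is precisely why the sequential-parallel-sequential path described above realises $W_1(\alpha,\beta)$ up to $\eps$.
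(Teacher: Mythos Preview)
Your proof is correct and follows essentially the same approach as the paper: pick a near-optimal matching with at most $2n$ pairs, choose short paths in $X$ for each pair, then construct a path in $D^n(X,A)$ by moving dots sequentially so that the cardinality bound is respected, and conclude via the fact that for $p=1$ the $W_1$-length is bounded by the plain sum of the individual $d$-lengths. The paper's construction is slightly simpler---it moves \emph{all} dots one at a time (first those with $x_i\in X\setminus A$, then those with $x_i\in A$) rather than your three-phase sequential/parallel/sequential scheme---but your Phase~2 parallelism is harmless at $p=1$, and your cardinality checks and length estimate are correct.
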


\begin{proof}
 Let $\alpha,\beta \in D^n(X,A)$ with $W_1(\alpha, \beta) < \infty$.
  Let $\eps > 0$.
  By \cref{def:wasserstein}, there exists a matching $\sigma$ of $\alpha$ and $\beta$ with $\hat{\sigma} = \sum_{i \in I}(x_i,y_i)$ where $\abs{I} \leq 2n$ and $\Cost_1(\sigma) = \sum_{i \in I}d(x_i,y_i) < W_1(\alpha,\beta) + \frac{\eps}{2}$.
  Since $(X,d)$ is a length space, for all $i \in I$ there is a constant speed path $\gamma_i$ in $X$ from $x_i$ to $y_i$ with $\Len_d(\gamma_i) < d(x_i,y_i) + \frac{\eps}{2n}$.
  Let $\gamma$ be the path obtained by moving the $x_i$ to $y_i$ one at a time, starting with the $x_i$ in $X \setminus A$.
  Then $\gamma \in D^n(X,A)$ and $\Len_{W_1}(\gamma) = \sum_{i \in I} \Len_d(\gamma_i) < W_1(\alpha,\beta) + \eps$.
  Therefore, $(D^n(X,A),W_1)$ is a length space.
  The special case follows from taking $n=1$.
\end{proof}

\subsection{Geodesic spaces}\label{subsection:geodesics}

Next we consider metric spaces in which each finite distance is obtained from a path.
Let $(X,d)$ be a metric space and $x, y \in X$ with $d(x,y) < \infty$. A \emph{geodesic} between $x, y \in X$ is a rectifiable path $\gamma$ from $x$ to $y$ with  $\Len_d(\gamma) = d(x,y)$.
  Say that $(X,d)$ is a \emph{geodesic space} if there exists a geodesic between any $x, y \in X$ with $d(x,y) < \infty$.
  
\begin{theorem} \label{thm:geodesic}
 Let $(X,d,A)$ be a metric pair in which $(X,d)$ is a geodesic space and $A$ is distance minimizing. Let $n \in \N$. Then
 $(\overline{D}_p(X,A),W_p)$ and $(D(X,A),W_p)$ %and $(D^n(X,A),W_p)$
 are also geodesic spaces.
%However, for $p \neq 1$, it need not be the case that $(D^n(X,A),W_p)$ is a geodesic space.
\end{theorem}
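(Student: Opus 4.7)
The plan is to combine the existence of optimal matchings (Theorem~\ref{thm:optimal-matching}), which requires exactly the hypothesis that $A$ is distance minimizing, with geodesics in $(X,d)$ lifted through the path-construction of Proposition~\ref{prop:path}. The resulting path in $\overline{D}_p(X,A)$ will have length bounded above by $W_p(\alpha,\beta)$, while any path's length is automatically bounded below by $W_p$ of its endpoints, giving equality.

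More precisely, let $\alpha,\beta \in \overline{D}_p(X,A)$ with $W_p(\alpha,\beta) < \infty$ (there is nothing to prove otherwise). First I would invoke Theorem~\ref{thm:optimal-matching} to obtain an optimal matching $\sigma$ with $\hat{\sigma} = \sum_{i \in I}(x_i,y_i)$, so that
\[
W_p(\alpha,\beta) = \Cost_p(\sigma) = \norm{(d(x_i,y_i))_{i \in I}}_p.
\]
Since pairs in $A \times A$ are identified with $0$ in $\overline{D}(X \times X, A \times A)$, we may assume every pair has at least one entry in $X \setminus A$, and in particular $d(x_i,y_i) \leq \Cost_p(\sigma) = W_p(\alpha,\beta) < \infty$ for each $i \in I$.

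Next, because $(X,d)$ is geodesic and every $d(x_i,y_i)$ is finite, for each $i \in I$ there is a constant-speed geodesic $\gamma_i$ in $X$ from $x_i$ to $y_i$ with $\Len_d(\gamma_i) = d(x_i,y_i)$. Then
\[
\norm{(\Len_d(\gamma_i))_{i \in I}}_p = \norm{(d(x_i,y_i))_{i \in I}}_p = W_p(\alpha,\beta) < \infty,
\]
so the hypotheses of Proposition~\ref{prop:path} are met. Define $\gamma(t) = \sum_{i \in I} \gamma_i(t)$. By that proposition, $\gamma$ is a path from $\alpha$ to $\beta$ in $(\overline{D}_p(X,A),W_p)$ and
\[
\Len_{W_p}(\gamma) \leq \norm{(\Len_d(\gamma_i))_{i \in I}}_p = W_p(\alpha,\beta).
\]
Conversely, taking the trivial partition $0 < 1$ in the definition of length gives $\Len_{W_p}(\gamma) \geq W_p(\gamma(0),\gamma(1)) = W_p(\alpha,\beta)$, so $\gamma$ is a geodesic. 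For the $(D(X,A),W_p)$ statement, note that if $\alpha,\beta \in D(X,A)$ then the optimal matching can be taken with $|I| < \infty$ (Lemma~\ref{lem:optimal-matching-finite}), so $\gamma(t) \in D(X,A)$ for all $t$.

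The only potential obstacles are bookkeeping: ensuring that the indexing scheme from Lemma~\ref{lem:matching} for the optimal matching fits the hypothesis $\alpha = \sum_i x_i$, $\beta = \sum_i y_i$ of Proposition~\ref{prop:path} (which it does once we include the auxiliary points in $A$ supplied by $A$ being distance minimizing), and confirming the argument works uniformly for all $p \in [1,\infty]$ (Proposition~\ref{prop:path} is stated for all such $p$, and the bound $\norm{\cdot}_p \leq \norm{\cdot}_q$ is not needed here). No substantial difficulty is expected beyond these checks.
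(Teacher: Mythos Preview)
Your proposal is correct and follows essentially the same route as the paper: obtain an optimal matching via Theorem~\ref{thm:optimal-matching} (or Lemma~\ref{lem:optimal-matching-finite} in the finite case), replace each matched pair by a constant-speed geodesic in $X$, and apply Proposition~\ref{prop:path} to bound $\Len_{W_p}(\gamma)$ by $W_p(\alpha,\beta)$. The paper's proof is slightly terser (it omits the explicit lower bound and the finiteness check for each $d(x_i,y_i)$), but the argument is the same.
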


\begin{proof}
  Let $\alpha,\beta \in (\overline{D}_p(X,A),W_p)$ with $W_p(\alpha, \beta) < \infty$.
  Let $\sigma$ be an optimal matching of $\alpha$ and $\beta$ with $\hat{\sigma} = \sum_{i \in I}(x_i,y_i)$, where $I = \N$ or $I = \{1,\ldots, N\}$ for some $N \in \N$ and $\abs{I} = \abs{\alpha + \beta}$ (\cref{lem:optimal-matching-finite}, \cref{thm:optimal-matching}).
Thus $\Cost_p(\sigma) = \norm{(d(x_i,y_i))_{i \in I}}_p = W_p(\alpha,\beta)$.
  Since $(X,d)$ is a geodesic space, for all $i \in I$ there is a constant speed path $\gamma_i$ in $X$ from $x_i$ to $y_i$ with $\Len_d(\gamma_i) = d(x_i,y_i)$.
  Let $\gamma = \sum_{i \in I} \gamma_i$.
  Then $\norm{(\Len_d(\gamma_i))_{i \in I}}_p
  = \norm{(d(x_i,y_i))_{i \in I}}_p
  = W_p(\alpha,\beta) < \infty$. Therefore, 
  by \cref{prop:path}\ref{it:path-2},
  $\Len_{W_p}(\gamma) \leq \norm{(\Len_d(\gamma_i))_{i \in I}}_p
  = W_p(\alpha,\beta)$.
  Therefore, $(\overline{D}_p(X,A),W_p)$ and $(D(X,A),W_p)$ are geodesic spaces.
%   The second statement follows from \cref{ex:non-length-space}, since $(\R_{\leq}^2,d)$ is a geodesic space.
\end{proof}

%   \begin{remark}
% \change{
%      Turner showed that $(\overline{D}_p(\overline{\R}^2_\leq, \Delta), W_p)$ is a geodesic space for all $p \in [1, \infty]$~\cite{turner2013medians}.
%     She first gave an optimal matching between two persistence diagrams using the same argument that we use in our proof of Theorem~\ref{thm:optimal-matching}. She then used line segments between matched points to give a geodesic between the persistence diagrams.}
% \end{remark}
% 	% \begin{lemma}\label{lem:_on_geodesics}
% 	% 	Let $(X, d)$ be a metric space. Let $x, y \in X$ with $T=d(x, y)$, and $\gamma: [0, T] \to X$ be a path such that $\gamma(0)=x$, $\gamma(T)=y$ and $d(\gamma(s), \gamma(t))  \leq |s-t|$, for all $s, t \in [0, T]$. Then $d(\gamma(s), \gamma(t)) = |s-t|$ for all $s, t \in [0, T]$.
% 	% \end{lemma}
	
% 	% \begin{proof}
% 	% 	Suppose $d(\gamma(s), \gamma(t))  < |s-t|$ for some $s, t \in [0, T]$. Assume without loss of generality that $s \leq t$. Then by the triangle inequality,
% 	% 	\[T=d(\gamma(0), \gamma(T)) \leq d(\gamma(0), \gamma(s))+d(\gamma(s), \gamma(t))+d(\gamma(t), \gamma(T)) < s+t-s+T-t=T.\] We have reached a contradiction. Therefore  $d(\gamma(s), \gamma(t)) = |s-t|$ for all $s, t \in [0, T]$.
% 	% \end{proof}

\begin{proposition}\label{prop:X/A_geodesic}
 Let $(X,d, A)$ be a metric pair for which $(X, d)$ is a geodesic space and $A$ is distance minimizing. For $n \geq 1$, $(D^n(X,A),W_1)$ is a geodesic space. As a special case, $(X/A,d_1)$ is a geodesic space.
\end{proposition}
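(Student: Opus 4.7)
The plan is to adapt the proof of \cref{prop:length}, replacing the $\varepsilon$-approximating paths with genuine geodesics. Since $A$ is distance minimizing, \cref{lem:optimal-matching-finite} supplies an honest optimal matching, and since $(X,d)$ is geodesic, each matched pair can be joined by a length-realizing geodesic. The feature of $p=1$ that makes everything work at bounded cardinality $n$ is additivity of the 1-norm: moving one dot at a time, the $W_1$-length of the concatenated path equals the sum of the $d$-lengths of the individual dot-moves.

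First I would fix $\alpha,\beta \in D^n(X,A)$ with $W_1(\alpha,\beta)<\infty$, write $\hat{\alpha}=\sum_{i=1}^m x_i$ and $\hat{\beta}=\sum_{j=1}^k y_j$ with $m,k\leq n$, and apply \cref{lem:optimal-matching-finite} to obtain an optimal matching whose contributing pairs split into three types: (a) pairs $(x_i,y_{\pi(i)})$ with both endpoints in $X\setminus A$, indexed by some $S_1\subseteq\{1,\ldots,m\}$ via an injection $\pi:S_1\to\{1,\ldots,k\}$; (b) pairs $(x_i,a_i)$ with $a_i\in A$, for the remaining $i$; (c) pairs $(a_j',y_j)$ with $a_j'\in A$, for the remaining $j$. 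The sum of the $d$-distances over these pairs is exactly $W_1(\alpha,\beta)$. For each pair, pick a constant-speed geodesic in $X$ realizing its distance.

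Next I would concatenate these into a path $\gamma:[0,1]\to D^n(X,A)$ by moving one dot at a time, processing type (a) first, then (b), then (c), on sub-intervals of $[0,1]$ whose lengths are proportional to the corresponding geodesic lengths. The key bookkeeping: during phase (a) the cardinality stays at $m$; during phase (b) it can only decrease from $m$ to $|S_1|$ as dots are absorbed into $A$; during phase (c) it grows by at most one per added dot, ending at $|S_1|+(k-|S_1|)=k$. Thus the cardinality never exceeds $\max(m,k)\leq n$, so $\gamma$ indeed takes values in $D^n(X,A)$, and continuity of each sub-path follows from \cref{prop:path}\ref{it:path-1}, with the pieces glued at endpoints.

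The length computation is where $p=1$ is essential: because only a single dot moves on each sub-interval, the $W_1$-length of that sub-path equals the $d$-length of the underlying geodesic, so $\Len_{W_1}(\gamma)=\sum d(x_i,y_{\text{matched}})=W_1(\alpha,\beta)$. The reverse inequality is automatic, so $\gamma$ is a geodesic. The special case $(X/A,d_1)$ is then the $n=1$ case transported through the isometric isomorphism $(X/A,d_1)\cong(D^1(X,A),W_1)$ of \cref{lem:isomorphism}\ref{it:isomorphism-d}. The main obstacle is the cardinality bookkeeping in the one-dot-at-a-time construction --- exactly the step that breaks down for $p>1$, as shown by \cref{ex:non-length-space}, where the $W_p$-optimal path requires moving dots in parallel and thereby temporarily exceeds cardinality $n$.
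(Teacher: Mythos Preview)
Your proposal is correct and matches the paper's proof: take an optimal matching via \cref{lem:optimal-matching-finite}, choose geodesics in $X$ for each matched pair, and concatenate one-dot-at-a-time moves, ordering so that pairs with $x_i\in X\setminus A$ come first (your type-(a)-then-(b)-then-(c) ordering refines the paper's coarser ``$x_i\in X\setminus A$ first''). The paper is terser on the cardinality bookkeeping and simply asserts $\Len_{W_1}(\gamma)=\sum_i\Len_d(\gamma_i)=W_1(\alpha,\beta)$, but the argument is the same; note that your per-sub-path equality claim is not needed---the inequality $\Len_{W_1}(\bar\gamma_i)\leq\Len_d(\gamma_i)$ together with the automatic global lower bound $\Len_{W_1}(\gamma)\geq W_1(\alpha,\beta)$ already forces equality of the totals.
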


\begin{proof}
 Let $\alpha,\beta \in D^n(X,A)$ with $W_1(\alpha, \beta) < \infty$.
  Let $\sigma$ be an optimal matching of $\alpha$ and $\beta$ with $\hat{\sigma} =  \sum_{i=1}^m(x_i,y_i)$, where $m \leq 2n$ (\cref{lem:optimal-matching-finite}).
  Thus $\Cost_1(\sigma) = \sum_{i=1}^md(x_i,y_i) = W_1(\alpha,\beta)$.
  Since $(X,d)$ is a geodesic space, for all $i \in \{1,\ldots,m\}$ there is a constant speed path $\gamma_i$ in $X$ from $x_i$ to $y_i$ with $\Len_d(\gamma_i) = d(x_i,y_i)$.
  Let $\gamma$ be the path obtained by moving the $x_i$ to $y_i$ one at a time, starting with the $x_i$ in $X \setminus A$.
% \change{More precisely, path $\gamma$ is defined as follows
%   \[\gamma(t) = \gamma_k(\textstyle\frac{\sum_{j=0}^{|I|} \Len_d(\gamma_j)}{ \Len_d(\gamma_k)}(t-\textstyle\frac{\sum_{j=0}^{k-1} \Len_d(\gamma_j)}{\sum_{j=0}^{|I|}\Len_d(\gamma_j)}) ) + \sum_{i \in I \setminus \{k\} } x_i,\]
%  where $k \in \{1,2, \dots, |I| \}$, $t \in  [\frac{\sum_{j=0}^{k-1} \Len_d(\gamma_{j})}{\sum_{j=0}^{|I|} \Len_d(\gamma_j)}, \frac{\sum_{j=0}^{k} \Len_d(\gamma_j)}{\sum_{j=0}^{|I|} \Len_d(\gamma_j)}]$  and $\Len_d(\gamma_0) =0$.}
  That is, for $i \in \{1,\ldots,m\}$, let $\bar{\gamma}_i$ be the path in $(D^n(X,A),W_1)$ given by $\bar{\gamma}_i(t) = \sum_{j=1}^{i-1} y_j + \gamma_i(t) + \sum_{j=1+1}^m x_j$.
    Let $\gamma$ be the concatenation of the paths $\bar{\gamma}_1,\ldots,\bar{\gamma}_m$.
    
  Then $\gamma \in D^n(X,A)$ and $\Len_{W_1}(\gamma) = \sum_{i \in I} \Len_d(\gamma_i) = W_1(\alpha,\beta)$.
  Therefore, $(D^n(X,A),W_1)$ is a geodesic space.
The special case follows from taking $n=1$.
\end{proof}
	
Let $n \geq 1$.
The following example shows that for a metric pair $(X,d,A)$ in which $X$ is a geodesic space but $A$ is not distance minimizing then %the quotient
$(\overline{D}_p(X,A),W_p)$, $(D(X,A),W_p)$, $(D^n(X,A),W_p)$ and $(X/A,d_{p})$
need not be geodesic spaces.
	
\begin{example}
  Let $X=\big\{(x,y) \in \R^2 \ \big| \ x\geq 1, \ y\geq 0 \big\} \cup \big\{(x,y) \in \R^2 \ \big| \ x < 1, \ y > 0 \big\}$ with the Euclidean distance $d$. Let $A=\big\{(x,y) \in \R^2 \ \big| \ x \leq 0, \ y > 0 \big\} \subset X$. Note that $A$ is closed in $X$.
  %Since for any two points in $X$ the line segment that joins them lies in $X$,
  Note that $X$ is convex and is therefore geodesic. Also note that $d((1,0), A) \neq d((1,0), a)$ for all $a \in A$. Now % \let $\alpha=[(1,0)] \in X/A$ and $\beta=A \in X/A$. Then $d_1(\alpha, \beta)=d_1((1,0), A)=1$.
  $d_p([(1,0)],A) = d((1,0),A) = 1$. However,
    any path $\gamma$ in $X/A$ between $[(1,0)]$ and $A$ has $\Len_{d_p}(\gamma) > 1$.
  % Hence $(X/A, d_{1})$ is not a geodesic space.
Furthermore, any path $\gamma$ in $\overline{D}_p(X,A)$ between $\alpha = (1,0)$ and $\beta = 0$ has $\Len_{W_p}(\gamma) > 1$.
\end{example}

Let $(X,d,x_0)$ be a pointed metric space and $p > 1$. The next example shows that if $(\overline{D}_p(X,x_0),W_p)$ or $(D(X,x_0),W_p)$ is a geodesic space, then it need not be the case that $(X,d_p)$ is a geodesic space.
\begin{example}
First, consider the metric pair $(\R_{\leq}^2,d,\Delta)$ where $d$ is the metric induced by the $1$-norm.
    Then $(\R_{\leq}^2,d)$ is a geodesic space and $\Delta$ is distance minimizing.
    By \cref{thm:geodesic},  $(\overline{D}_p(\R_{\leq}^2,\Delta),W_p)$ and $(D(\R_{\leq}^2,\Delta),W_p)$ are geodesic spaces.
    Second, consider the associated pointed metric space,  $(\R_{\leq}^2/\Delta,d_p,\Delta)$.
    Since
    $(D(\R_{\leq}^2/\Delta,\Delta),W_p) \isom (D(\R_{\leq}^2,\Delta),W_p)$ and
    $(\overline{D}_p(\R_{\leq}^2/\Delta,\Delta),W_p) \isom (\overline{D}_p(\R_{\leq}^2,\Delta),W_p)$, 
    $(D(\R_{\leq}^2/\Delta,\Delta),W_p)$ and
    $(\overline{D}_p(\R_{\leq}^2/\Delta,\Delta),W_p)$ are both geodesic spaces.
    However, by \cref{ex:non-length-space}, $(\R_{\leq}^2/\Delta,d_p)$ is not a length space. Therefore,  $(\R_{\leq}^2/\Delta,d_p)$ is not a geodesic space.
\end{example}

\begin{proposition}
  Let $(X,d,x_0)$ be a pointed metric space. If either $(D(X,x_0),W_1)$ or $(\overline{D}_1(X,x_0),W_1)$ is a geodesic space then so is $(X,d)$.
\end{proposition}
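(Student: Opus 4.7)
The plan is to use the isometric inclusion $(X,d) \hookrightarrow (D(X,x_0),W_1)$ from Lemma~\ref{lem:isomorphism}\ref{it:isomorphism-d}, noting that in the pointed case $d_1 = d$ and hence $W_1(x,y) = d(x,y)$ on the image. Given $x, y \in X$ with $d(x,y) < \infty$, the hypothesis will supply a constant-speed geodesic $\gamma:[0,1] \to (D(X,x_0),W_1)$ (or into $(\overline{D}_1(X,x_0), W_1)$) from $x$ to $y$, so that $\Len_{W_1}(\gamma) = d(x,y)$ and, since sub-paths of geodesics are geodesics, $W_1(\gamma(s),\gamma(t)) = |t-s|\,d(x,y)$ for all $s,t \in [0,1]$. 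The goal will then be to extract from $\gamma$ a rectifiable path $\eta:[0,1] \to X$ from $x$ to $y$ with $\Len_d(\eta) \leq d(x,y)$; the triangle inequality forces $\Len_d(\eta) \geq d(x,y)$, and so $\eta$ is a geodesic in $(X,d)$.

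To build the thread, I will first invoke Theorem~\ref{thm:optimal-matching}: since the singleton $\{x_0\}$ is trivially distance minimizing, optimal matchings exist between any two slices of $\gamma$. For a partition $0 = t_0 < t_1 < \cdots < t_n = 1$, pick optimal matchings $\tau_i$ of $\gamma(t_{i-1})$ and $\gamma(t_i)$, each of cost $(t_i - t_{i-1})\,d(x,y)$. Then I will construct a thread from $x$ to $y$ lying in $X \cup \{x_0\}$ as follows: starting from $x = \gamma(t_0)$, trace forward the matched descendant of $x$ through $\tau_1, \tau_2, \ldots$ until the descendant is first paired with $x_0$; symmetrically, starting from $y = \gamma(t_n)$, trace backward the matched ancestor of $y$ through $\tau_n, \tau_{n-1}, \ldots$ until it is first born from $x_0$; and connect the two pieces through $x_0$ at zero cost. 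The key observation is that, in each $\tau_i$, the forward-thread pair and the backward-thread pair are either identical (the two threads merge and we are essentially done) or disjoint pairs of $\tau_i$, so their combined distance is bounded by $\Cost_1(\tau_i)$. Summing over $i$ produces a polygonal sequence $x = p_0, p_1, \ldots, p_N = y$ in $X$ with $\sum_{j=1}^N d(p_{j-1}, p_j) \leq \sum_i \Cost_1(\tau_i) = d(x,y)$.

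The main obstacle will be promoting these polygonal sequences to an honest continuous path $\eta:[0,1]\to X$. My strategy is to refine the partition and pass to a limit, exploiting the uniform continuity of $\gamma$ on the compact interval $[0,1]$, while selecting the matchings $\tau_i$ coherently across refinements so that the resulting threads are Cauchy in a pointwise sense on a dense subset of $[0,1]$ and extend by continuity to all of $[0,1]$. The delicacy is that optimal matchings are not unique in general, so the forward/backward thread choices must be made consistently; this is the technical heart of the argument, and the essential reason the proof is specific to the exponent $p = 1$ — the cost additivity that justifies the disjointness estimate fails for $p > 1$. Once a continuous $\eta$ with $\Len_d(\eta) \leq d(x,y)$ is obtained, the triangle inequality supplies the reverse bound, and $\eta$ is the desired geodesic in $(X,d)$.
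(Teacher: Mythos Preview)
Your approach diverges from the paper's, and the limiting step you flag as the ``technical heart'' is a genuine gap that you have not closed. Passing from the polygonal data $x=p_0,\ldots,p_N=y$ with $\sum d(p_{j-1},p_j)\le d(x,y)$ to an honest continuous $\eta:[0,1]\to X$ requires coherent choices of optimal matchings across \emph{all} refinements --- but optimal matchings are not unique, threads can bifurcate, and you have no completeness hypothesis on $X$ to guarantee that pointwise limits on a dense set extend continuously. You offer no mechanism to resolve this, and it is far from clear one exists in this generality. What your argument does establish cleanly is that $(X,d)$ is a \emph{length} space, which is strictly weaker than what is claimed.

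The paper sidesteps the issue entirely by showing that the given geodesic $\gamma$ already lies in $D^1(X,x_0)\cong(X,d)$, so no extraction or limit is needed. The key observation (specific to $p=1$) is: for $\alpha\in D^m$, $\beta\in D^n$, and any $\mu$, there is $\nu\in D^{m+n}$ with $\hat\nu\le\hat\mu$ and $W_1(\alpha,\nu)+W_1(\nu,\beta)\le W_1(\alpha,\mu)+W_1(\mu,\beta)$ --- take $\nu$ to be the sub-diagram of points of $\mu$ matched to dots of $\alpha$ or $\beta$ under optimal matchings; each discarded point contributes its distance to $x_0$ to \emph{both} terms on the right, so dropping it strictly decreases the sum. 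Applied with $\alpha=x$, $\beta=y$, $\mu=\gamma(t)$, the geodesic equality $W_1(x,\gamma(t))+W_1(\gamma(t),y)=d(x,y)$ forces $\gamma(t)=\nu\in D^2$. A short case check then finishes: if $d(x,y)=d(x,x_0)+d(x_0,y)$ one concatenates two $D^1$-geodesics through $0$; otherwise, if $\gamma(t)=u+v$ with the cross-matching $x\leftrightarrow u$, $v\leftrightarrow x_0$ and $u\leftrightarrow x_0$, $v\leftrightarrow y$, summing yields $d(x,y)\ge d(x,x_0)+d(x_0,y)$, a contradiction, so $\gamma(t)\in D^1$.
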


\begin{proof}
  First observe by \cref{def:wasserstein}, that if $\alpha \in D^m(X,x_0)$, $\beta \in D^n(X,x_0)$, and $\mu \in \overline{D}(X,x_0)$ then there exists $\nu \in D^{m+n}(X,x_0)$ with $\hat{\nu} \leq \hat{\mu}$ (as elements of $\Z_{+}^{X \setminus x_0}$) and $W_1(\alpha,\nu) + W_1(\nu, \beta) \leq W_1(\alpha,\mu) + W_1(\mu, \beta)$.

 Let $x, y \in (X, d)$ with $d(x,y) < \infty$. There are two cases to consider. In the first case, $d(x,y) = d(x,x_0) + d(x_0,y)$.
 By assumption, there is a geodesic $\gamma$ in $(D(X,x_0),W_1)$ or $(\overline{D}_1(X,x_0),W_1)$ from $x$ to $0$.
  Apply the observation with $\alpha=x$ and $\beta=0$. Then there is a geodesic $\gamma_1$ in $D^1(X,x_0)$ from $x$ to $0$. Similarly, there is a geodesic $\gamma_2$ in $D^1(X,x_0)$ from $0$ to $y$.
  Let $\gamma$ be the concatenation of $\gamma_1$ and $\gamma_2$.
  Then $\gamma$ is a geodesic from $x$ to $y$ that lies in $(D^1(X,x_0),W_1) \isom (X,d)$.

 In the second case, $d(x,y) < d(x,x_0) + d(x_0,y)$.
 Again by the observation above with $\alpha=x$, $\beta=y$, and $\mu = \gamma(t)$ where $\gamma$ is a geodesic between $x$ and $y$ in $(D(X,x_0),W_1)$ or $(\overline{D}_1(X,x_0),W_1)$ and $t \in [0,1]$, $\mu$ has to belong to $(D^2(X,x_0),W_1)$. Hence $\gamma$ is a geodesic in $(D^2(X,x_0),W_1)$ from $x$ to $y$. 
 % there is a geodesic $\gamma$ in $(D^2(X,x_0),W_1)$ from $x$ to $y$. 
  Assume there exists a $t \in [0,1]$ with $\gamma(t) = u + v$, $W_1(x,\gamma(t)) = d(x,u) + d(x_0,v)$, and $W_1(\gamma(t),y) = d(u,x_0) + d(v,y)$.
  Then by \cref{lem:isomorphism}\ref{it:isomorphism-d},  
\begin{align*}
	d(x,y) & = W_1(x,y) = W_1(x,\gamma(t)) + W_1(\gamma(t),y) \\
	& = d(x,u) + d(u,x_0) + d(x_0,v) + d(v,y) \geq d(x,x_0) + d(x_0,y),
\end{align*}
  which is a contradiction.
  Therefore, we have a geodesic from $x$ to $y$ in $(D^1(X,x_0),W_1) \isom (X,d)$.
\end{proof}

\subsection{Uniqueness of geodesics} \label{sec:unique-geodesics}

Finally we show that whenever we have more than one optimal matching, we have non-unique geodesics.

\begin{proposition}\label{prop:unique_geodesic}
 Let $(X,d,A)$ be a metric pair in which $(X,d)$ is geodesic and $A$ is distance minimizing.
  If there exist $\alpha,\beta \in (\overline{D}_p(X,A),W_p)$ with distinct optimal matchings $\sigma,\tau $ then the geodesic space $(\overline{D}_p(X,A),W_p)$ does not have unique geodesics. Similarly if there exist $\alpha,\beta \in (D(X,A),W_p)$ with distinct optimal matchings $\sigma,\tau $ then the geodesic space $(D(X,A),W_p)$ does not have unique geodesics.
\end{proposition}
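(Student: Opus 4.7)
The plan is to build two geodesics from $\alpha$ to $\beta$, one from each of the matchings $\sigma$ and $\tau$, by the recipe in the proof of \cref{thm:geodesic}, and then to verify these two geodesics are distinct. Write $\hat{\sigma} = \sum_i (x_i, y_i)$ in the form of \cref{lem:matching}, using that $A$ is distance minimizing to arrange that in each pair $(x, a)$ or $(a, y)$ with $a \in A$, the point $a$ realizes $d(x, A)$, respectively $d(y, A)$; since $\sigma$ is optimal this is automatic for the minimizing choice. Using the geodesic hypothesis on $(X, d)$, pick for each $i$ a constant-speed geodesic $\eta_i : [0, 1] \to X$ from $x_i$ to $y_i$, and set $\gamma_\sigma(t) = \sum_i \eta_i(t)$. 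By \cref{prop:path} together with the proof of \cref{thm:geodesic}, $\gamma_\sigma$ is a geodesic in $(\overline{D}_p(X, A), W_p)$ from $\alpha$ to $\beta$ realizing $W_p(\alpha, \beta)$. Construct $\gamma_\tau$ analogously; when $\alpha, \beta \in D(X, A)$, both $\gamma_\sigma(t)$ and $\gamma_\tau(t)$ lie in $D(X, A)$ throughout, so the same construction handles the $(D(X, A), W_p)$ case simultaneously.

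The main obstacle is to verify that $\gamma_\sigma \neq \gamma_\tau$ as paths. I would argue by contradiction: suppose $\gamma_\sigma(t) = \gamma_\tau(t)$ for every $t \in [0, 1]$, so that the unordered multisets of positions $\{\eta_i(t)\}_i$ and $\{\eta'_j(t)\}_j$ agree as elements of $\overline{D}_p(X, A)$ at each $t$. A path-lifting argument in the symmetric quotient of $X^{|I|}$ then produces a bijection $\pi$ between the two index sets such that $\eta_i$ and $\eta'_{\pi(i)}$ agree pointwise, at least on the generic open set of times having no collisions, and then by continuity on all of $[0, 1]$. Evaluating at the endpoints $t = 0$ and $t = 1$ yields $x_i = x'_{\pi(i)}$ and $y_i = y'_{\pi(i)}$, which together imply $\hat{\sigma} = \hat{\tau}$ as multisets of ordered pairs in $\overline{D}((X \times X) \setminus (A \times A))$, contradicting the distinctness of $\sigma$ and $\tau$.

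The delicate technical step is handling collision times $t_0$ where two trajectories $\eta_i$ and $\eta_j$ share a location: at such instants the path-lift from the multiset back to an ordered family of trajectories is ambiguous. I would handle this by exploiting the fact that any swap of matched partners across a collision preserves the path $\gamma_\sigma$, so the apparent combinatorial ambiguity in the lift corresponds to an equivalence on matchings that leaves the induced geodesic unchanged; applying this equivalence to $\sigma$ if necessary, one still obtains $\hat{\sigma} = \hat{\tau}$ from the endpoint identifications, closing the contradiction and finishing the proof for both $(\overline{D}_p(X,A), W_p)$ and $(D(X,A), W_p)$.
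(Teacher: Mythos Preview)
Your construction is exactly the paper's: build $\gamma_\sigma$ and $\gamma_\tau$ from constant-speed geodesics along the matched pairs. The paper's own proof is three lines long and simply \emph{asserts} that $\gamma$ and $\gamma'$ are distinct, offering no justification beyond ``Then $\gamma$ and $\gamma'$ are distinct geodesics from $\alpha$ to $\beta$.'' So on the level of construction you match the paper, and you are in fact more scrupulous in recognizing that distinctness needs an argument.

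That said, your distinctness argument has genuine gaps. First, $\gamma_\sigma(t)$ and $\gamma_\tau(t)$ are equal only as elements of $\overline{D}_p(X,A)$, i.e.\ as multisets \emph{modulo} $A$. A trajectory $\eta_i$ whose pair $(x_i,y_i)$ has one coordinate in $A$ starts (or ends) inside $A$, and nothing prevents a geodesic in $X$ from passing through $A$ at intermediate times either. So the number of visible dots in $\gamma_\sigma(t)$ can change with $t$, and you are not working in a symmetric quotient $X^{|I|}/S_{|I|}$ at all; the ``path-lifting in the symmetric quotient'' picture does not directly apply. Second, the index set $I$ may be countably infinite, and you have not said what the symmetric quotient or its covering theory means in that case. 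Third, and most seriously, your collision step is circular. You concede that a swap of matched partners at a collision leaves the geodesic unchanged; that is precisely the mechanism by which two genuinely different matchings could induce the same path in $\overline{D}_p(X,A)$. Saying ``apply this equivalence to $\sigma$ if necessary'' and then concluding $\hat\sigma=\hat\tau$ presumes that some composite of such swaps carries $\sigma$ to $\tau$, but you have given no reason why the swaps you are forced to make recover $\tau$ rather than some third matching $\sigma'$ with $\gamma_{\sigma'}=\gamma_\sigma$.

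In short: your proof follows the paper's route, and the part you add beyond the paper---the justification of distinctness---is where the gap lies.
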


\begin{proof}
 Let $\sigma,\tau \in \overline{D}(X \times X, A \times A)$ be distinct optimal matchings of $\alpha,\beta \in (\overline{D}_p(X,A),W_p)$
  with $\hat{\sigma} = \sum_{i \in I} (x_i,y_i)$ and $\hat{\tau} = \sum_{j \in J} (x'_j,y'_j)$.
  For all $i \in I$, let $\gamma_i$ be a constant speed path in $X$ from $x_i$ to $y_i$ with $\Len_d(\gamma_i) = d(x_i,y_i)$ and
  for all $j \in J$, let $\gamma'_j$ be a constant speed path in $X$ from $x'_j$ to $y'_j$ with $\Len_d(\gamma'_j) = d(x_j,y_j)$.
  Let $\gamma = \sum_{i \in I} \gamma_i$ and $\gamma' = \sum_{j \in J} \gamma'_j$.
  Then $\gamma$ and $\gamma'$ are distinct geodesics from $\alpha$ to $\beta$. If $\alpha, \beta \in (D(X, A), W_p)$ and $\sigma,\tau \in D(X \times X, A \times A)$ then $\gamma, \gamma'$ are distinct geodesics in $D(X, A)$.
\end{proof}

\section{Polish spaces of persistence diagrams}        

In this section we consider conditions under which our spaces are separable and complete, which are useful conditions for measure theory and probability.

\subsection{Separability}

We will show that if we start with a metric pair on a separable metric space, then the corresponding space of persistence diagrams is separable. A topological space $X$ is \emph{separable} if it contains a countable dense subset $M$; that is, there exists a countable subset $M$ whose closure is $X$.

\begin{lemma} \label{lem:separable-subspace}
  Let $(X,d)$ be a metric space and $A \subset X$. If $X$ is separable then so is $A$.
\end{lemma}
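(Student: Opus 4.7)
The plan is to give the standard direct proof that separability is hereditary in (pseudo)metric spaces, adapted to the extended pseudometric setting used in this paper. The idea is to build a countable dense subset of $A$ by choosing, for each element of a countable dense subset of $X$ and each positive rational radius, a witness in $A$ whenever one exists.

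More concretely, I would first fix a countable dense subset $M = \{m_n\}_{n \in \N}$ of $X$, which exists by hypothesis. For each pair $(n,k) \in \N \times \N$, I would consider the (possibly empty) set $A \cap B_{1/k}(m_n)$; whenever this set is nonempty, I would use the axiom of choice to pick a point $a_{n,k} \in A \cap B_{1/k}(m_n)$. Let $N$ be the collection of all such chosen points. Then $N \subset A$ is countable as a subset of a countable indexing.

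To verify that $N$ is dense in $A$, I would fix $a \in A$ and $\eps > 0$, choose $k \in \N$ with $2/k < \eps$, and use density of $M$ in $X$ to find $m_n \in M$ with $d(a, m_n) < 1/k$. Then $a$ itself witnesses that $A \cap B_{1/k}(m_n)$ is nonempty, so $a_{n,k}$ is defined and satisfies $d(m_n, a_{n,k}) < 1/k$. The triangle inequality then gives
\[
d(a, a_{n,k}) \leq d(a, m_n) + d(m_n, a_{n,k}) < \tfrac{2}{k} < \eps,
\]
so $N$ is dense in $A$ with respect to the restricted (pseudo)metric.

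There is no serious obstacle here; the only point worth flagging is that the paper works with extended pseudometrics, but the argument above never requires finiteness of all distances nor the separation axiom $d(x,y)=0 \Rightarrow x=y$. Density of $M$ in $X$ automatically produces finite-distance approximants to every point of $A$, and the open balls $B_{1/k}(m_n)$ still form a neighborhood basis generating the subspace topology on $A$, so the proof goes through verbatim.
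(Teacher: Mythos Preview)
Your proof is correct and works without issue in the extended pseudometric setting, but it takes a different route from the paper. The paper argues indirectly: it cites that for (extended pseudo)metric spaces separability is equivalent to second countability, and then invokes the general topological fact that second countability passes to subspaces. Your argument is the standard direct construction of a countable dense subset of $A$ from one of $X$. The advantage of your approach is that it is entirely self-contained and makes no appeal to outside references, while the paper's approach is a two-line proof that offloads the work to known results; the paper's version has the mild cost that the reader must accept (or check) that the cited equivalence between separability and second countability holds in the extended pseudometric generality used here.
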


\begin{proof}
  For a metric space (as defined in Definition~\ref{def:metric}) separability is equivalent to second countability~\cite[Lemma 17]{bubenik2018topological} and subspaces of second countable spaces are second countable~\cite[Theorem 16.2(b)]{willard2004general}.
\end{proof}

\begin{theorem}\label{prop:separability}
Let $(X,d,A)$ be a metric pair and $p \in [1, \infty]$. If $(X, d)$ is separable  then $(\overline{D}_p(X,A), W_p) $ is separable. Conversely, if $(\overline{D}_p(X,A), W_p)$ is separable then $(X/A, d_{p})$ is separable.
\end{theorem}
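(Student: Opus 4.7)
For the forward direction, suppose $(X,d)$ is separable with a countable dense subset $M \subset X$. I claim the countable set
\[
\mathcal{M} = \Big\{ \tsum_{i=1}^{n} m_i \ \Big| \ n \in \Z_+, \ m_i \in M \Big\} \subset D(X,A) \subset \overline{D}_p(X,A)
\]
is dense in $(\overline{D}_p(X,A),W_p)$. The plan is a two-step approximation: first truncate an arbitrary countable persistence diagram to a finite one, then replace each dot by a nearby element of $M$.

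Fix $\alpha \in \overline{D}_p(X,A)$ with $\hat{\alpha} = \sum_{i \in I} x_i$ and let $\eps > 0$. If $I$ is infinite, then by Corollary~\ref{cor:seq_convergent} the partial sums $\alpha_n = \sum_{i=1}^{n} x_i$ satisfy $\alpha_n \to \alpha$ in $W_p$, so we may pick $n$ with $W_p(\alpha, \alpha_n) < \eps/2$; if $I$ is finite, simply set $\alpha_n = \alpha$. Since $M$ is dense in $X$, for each $1 \leq i \leq n$ choose $m_i \in M$ with $d(x_i, m_i) < \delta$, where $\delta = \eps / (2 n^{1/p})$ when $p < \infty$ and $\delta = \eps / 2$ when $p = \infty$. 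Let $\beta = \sum_{i=1}^n m_i \in \mathcal{M}$. The matching $\sigma = \sum_{i=1}^n (x_i, m_i)$ (which is valid in $D(X \times X, A \times A)$ by Lemma~\ref{lem:matching}, regardless of whether some $m_i$ happen to lie in $A$) gives
\[
W_p(\alpha_n, \beta) \leq \Cost_p(\sigma) = \norm{(d(x_i,m_i))_{i=1}^n}_p \leq n^{1/p}\delta \leq \eps/2
\]
(reading $n^{1/p} = 1$ when $p = \infty$). Combining with the triangle inequality yields $W_p(\alpha,\beta) < \eps$, so $\mathcal{M}$ is dense.

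For the converse, suppose $(\overline{D}_p(X,A),W_p)$ is separable. By Lemma~\ref{lem:isomorphism}\ref{it:isomorphism-d}, the canonical map $(X/A,d_p) \to (D^1(X,A),W_p)$ is an isometric isomorphism, so it suffices to show that $(D^1(X,A),W_p)$ is separable. As in the proof of Lemma~\ref{lem:separable-subspace}, separability is equivalent to second countability for (extended pseudo)metric spaces, and subspaces of second countable spaces are second countable. Hence $(D^1(X,A),W_p)$, being a subspace of the separable space $(\overline{D}_p(X,A),W_p)$, is itself separable, and transporting a countable dense set across the isometric isomorphism gives separability of $(X/A,d_p)$.

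The only mildly delicate point is the forward direction's uniform choice of $\delta$: for $p < \infty$ the bound $n^{1/p}\delta$ depends on $n$, but this is harmless because $n$ is chosen first (to truncate $\alpha$) and $\delta$ afterwards (to approximate each of the finitely many dots). For $p = \infty$, the maximum over a finite set suffices. I do not anticipate any substantial obstacle.
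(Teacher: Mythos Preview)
Your proof is correct and follows essentially the same two-step approach as the paper: truncate to a finite diagram (you cite Corollary~\ref{cor:seq_convergent}, the paper uses Lemma~\ref{lem:dist_between_lower_part_and_0}) and then approximate each dot by an element of a countable dense set, with the converse handled identically via Lemma~\ref{lem:separable-subspace} and the isometry $(X/A,d_p)\cong(D^1(X,A),W_p)$. The only cosmetic differences are that the paper takes $M$ dense in $X\setminus A$ rather than in $X$ and uses the cruder bound $\norm{\cdot}_p\le\norm{\cdot}_1$ with $\delta=\eps/n$ instead of your sharper $\delta=\eps/(2n^{1/p})$.
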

	
\begin{proof}
  Suppose $(X, d)$ is separable.
  By \cref{lem:separable-subspace},
%  Lemma \ref{lem:sep_equiv_second_count} and Theorem \ref{thm:subset_separable},
  $X \setminus A$ is separable. Let $M$ be
  a countable dense subset of $X \setminus A$. Then $D(M)$ is
  countable. Now consider the isomorphism
  $f: D(X \setminus A) \isomto D(X,A)$. Note that $f(D(M))$ is
  countable. Let $\alpha \in D(X, A)$,
  $\hat{\alpha}=\sum_{i=1}^n x_i $ and $\eps >0$. Since $M$ is dense
  in $X \setminus A$ there exist $m_{1}, \dots, m_{{n}} \in M$ such
  that $m_{i} \in B_{\frac{\eps}{n}}(x_i)$ for all $1 \leq i \leq
  n$. Let $m=m_{1}+\dots+m_{n}$. Then
  \begin{align*}
    W_p(\alpha, m) & \leq  \big\| \big(d(x_i, m_{i})\big)_{i=1}^{n} \big\|_p \leq \big\| \big(d(x_i, m_{i})\big)_{i=1}^{n} \big\|_1
                     <\eps.  
  \end{align*}
  So $m \in B_{\eps}(\alpha)$ and hence $f(D(M))$ is dense in
  $D(X, A)$. Now let $\alpha \in \overline{D}_p(X,A)$. By Lemma
  \ref{lem:dist_between_lower_part_and_0}, there is $\delta >0$ such
  that
  \[W_p(\alpha, u_{\delta}(\alpha))\leq W_p(\ell_{\delta}(\alpha), 0)
    < \eps.\] Since $u_{\delta}(\alpha) \in D(X,A)$, $D(X,A)$ is dense
  in $\overline{D}_p(X,A)$.  Hence $\overline{D}_p(X,A)$ is separable.
		
  Conversely, suppose $(\overline{D}_p(X,A), W_p)$ is separable.  By
  %Lemma %\ref{lem:sep_equiv_second_count} and Theorem \ref{thm:subset_separable},
\cref{lem:separable-subspace},
  $(D^1(X,A), W_p)$ is separable. Thus by Lemma \ref{lem:isomorphism}, it follows that $(X /A, d_{p})$ is separable.
\end{proof}

\subsection{Completeness}
\label{subsection:completeness}

Next we investigate conditions under which our spaces of persistence diagrams are complete.
A metric space $X$ is \emph{complete} if every Cauchy sequence is convergent in $X$.
Note that under our definition of metric space, limits need not be unique.
	
First, we generalize the proof of the completeness of the space of barcodes equipped with the bottleneck distance in \cite{blumberg2014robust} to the space of persistence diagrams on a metric pair. 
	
\begin{theorem}\label{thm:completeness_inf}
If $(X, d, A)$ is a metric pair where $(X,d)$ is complete then $(\overline{D}_{\infty}(X, A), W_{\infty})$ is complete. 
\end{theorem}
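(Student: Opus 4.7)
The plan is to pass to a rapidly Cauchy subsequence, track \emph{chains} of matched dots through the subsequence, and assemble the candidate limit from the limits of those chains that stay bounded away from $A$. To begin, I take a Cauchy sequence $(\alpha_n)$ in $(\overline{D}_\infty(X,A),W_\infty)$ and extract a subsequence $\beta_k := \alpha_{n_k}$ with $W_\infty(\beta_k,\beta_{k+1}) < 2^{-k-1}$. Since a Cauchy sequence that has a convergent subsequence converges, it is enough to exhibit a limit for $(\beta_k)$. For each $k$ I fix a matching $\sigma_k$ of $\beta_k$ and $\beta_{k+1}$ with $\Cost_\infty(\sigma_k) < 2^{-k}$, which by \cref{lem:matching} pairs each dot of $\supp(\hat\beta_k)$ with either a dot of $\supp(\hat\beta_{k+1})$ or a point of $A$, and symmetrically. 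A \emph{chain} is a maximal sequence $(x_k^{(i)})_{k \in K_i}$ with $x_k^{(i)} \in \supp(\hat\beta_k)$ and $(x_k^{(i)},x_{k+1}^{(i)}) \in \supp(\hat\sigma_k)$ at consecutive indices; each chain either begins at $k=1$ or first appears at a level $k_0$ where $x_{k_0}^{(i)}$ is matched to a point of $A$ by $\sigma_{k_0-1}$, and it either continues forever or terminates at some $k_1$ where $x_{k_1}^{(i)}$ is matched to a point of $A$ by $\sigma_{k_1}$.

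For any infinite chain, $d(x_k^{(i)},x_{k+1}^{(i)}) < 2^{-k}$, so $(x_k^{(i)})$ is Cauchy in $(X,d)$ and converges to some $x^{(i)} \in X$ by completeness. I will then define $\hat\alpha := \sum_{i \in I} x^{(i)}$, where $I$ indexes those infinite chains with $x^{(i)} \in X \setminus A$. To verify $\alpha \in \overline{D}_\infty(X,A)$, I fix $\delta > 0$ and choose $k$ large enough that $2^{-k+1} < \delta/2$; for any chain contributing to $u_\delta(\alpha)$, the telescoping estimate $d(x_k^{(i)},x^{(i)}) \le 2^{-k+1}$ and the triangle inequality force $d(x_k^{(i)},A) > \delta/2$, yielding an injection from $\supp(u_\delta(\alpha))$ into $\supp(u_{\delta/2}(\beta_k))$, which is finite by \cref{lem:u_bounded}.

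It remains to show $W_\infty(\beta_k,\alpha) \to 0$. For each $k$, I will build a matching $\tau_k \in \overline{D}(X \times X, A \times A)$ of $\beta_k$ and $\alpha$ by the following rule: each $x_k^{(i)} \in \supp(\hat\beta_k)$ lies in a unique chain, and I pair it with $x^{(i)}$ when that chain is infinite with limit outside $A$, otherwise with the point of $A$ at which the chain is absorbed (the partner under $\sigma_{k_1}$ for a finite chain ending at $k_1 \ge k$, or the limit $x^{(i)} \in A$ for an infinite chain whose limit lies in $A$); dually, each dot of $\hat\alpha$ arising from a chain that only begins at some $k_0 > k$ is paired with the point of $A$ that served as its partner under $\sigma_{k_0-1}$. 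Telescoping along the chain gives the bound $\sum_{j \ge k} 2^{-j} = 2^{-k+1}$ on every such pair, so $\Cost_\infty(\tau_k) \le 2^{-k+1} \to 0$. The main obstacle is this bookkeeping: one has to verify that the prescription genuinely yields an element of $\overline{D}(X \times X, A \times A)$ whose two projections recover $\beta_k$ and $\alpha$ exactly once each, and that the three qualitatively different cases (dots of $\beta_k$ absorbed into $A$ above level $k$, infinite chains whose limit lies in $A$, and dots of $\alpha$ whose chains begin strictly after level $k$) all feed into the single telescoping estimate $2^{-k+1}$.
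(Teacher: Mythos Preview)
Your proposal is correct and follows essentially the same approach as the paper: pass to a rapidly Cauchy subsequence, choose near-optimal matchings between consecutive terms, follow chains of matched dots forward, and assemble the limit diagram from the limits of those chains. The paper organizes the bookkeeping a bit differently---it only tracks dots once they enter the region $u_{1/2^{n-1}}(\alpha_n)$ and shows these chains never terminate and never limit to $A$, whereas you track all chains and discard those whose limits lie in $A$---but the core argument and the telescoping cost estimate are the same. One small point to tighten in your finiteness step: a chain contributing to $u_\delta(\alpha)$ must in fact exist at level $k$ (chains born at some $k_0>k$ start within $2^{-(k_0-1)}$ of $A$ and hence have limit within $2^{-k+1}<\delta$ of $A$), which is why the map $i\mapsto x_k^{(i)}$ is defined; you should state this explicitly, and phrase the injection on the level of indexed dots (with multiplicity) rather than supports.
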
 
	
\begin{proof}
  Suppose $(X, d, A)$ is a metric pair where $(X,d)$ is complete.
  Let $(\alpha_n)$ be a Cauchy sequence in $(\overline{D}_{\infty}(X,A), W_p)$. %and let $\eps >0$.
  By restricting to a subsequence if necessary, we may assume that for all $n$, $W_{\infty}(\alpha_{n}, \alpha_{n+1}) < \frac{1}{2^{n+1}}$.
  Let $\sigma_n$ be a matching between $\alpha_n$ and $\alpha_{n+1}$ such that $\Cost_{\infty}(\sigma_n) < \frac{1}{2^{n+1}}$.
  Let $M_n=\big|u_{\frac{1}{2^{n-1}}}(\alpha_n)\big|$, which by assumption is finite.
  Let $\alpha_n = \sum_{i=1}^{\infty} x_i^{(n)}$, where the sequence $(d(x_i^{(n)},A))_{i=1}^{\infty}$ is nonincreasing. 
  It follows that $\sum_{i=1}^{M_n} x_i^{(n)} = u_{\frac{1}{2^{n-1}}}(\alpha_n)$.
  If $i \leq M_n$ and $(x_i^{(n)},x_j^{(n+1)}) \in \supp ( \sigma_n )$ then $d(x_j^{(n+1)},A) \geq d(x_i^{(n)},A) - d(x_i^{(n)},x_j^{(n+1)}) \geq \frac{1}{2^{n-1}} - \frac{1}{2^{n+1}} > \frac{1}{2^n}$.
Therefore, $j \leq M_{n+1}$. Since $\sigma_n$ is a matching, $M_n \leq M_{n+1}$.

For all $n \geq 1$, define an injective map $f_{n}:\{1,\ldots,M_n\} \to \{1,\ldots,M_{n+1}\}$ recursively by
\[ f_n(i) = \min \{j \ | \ (x_i^{(n)},x_j^{(n+1)}) \in \supp(\sigma_n) \text{ and } j \not\in \{f_n(1),\ldots,f_n(i-1)\}\}.
  \]
  Note that if all $x_j^{(n+1)} \in \supp({\alpha}_{n+1})$ have been previously matched, then $x_i^{(n)}$ can be matched to $x_j^{(n+1)} \in A$ and thus $f_n$ is well defined.
  Let $I_n = \{i \in \{1,\ldots,M_n\} \ | \ i \not\in \im(f_{n-1})\}$, where $M_0 = 0$.
  For $i \in I_n$, we have the sequence $x_i^{(n)},x_{f_n(i)}^{(n+1)},x_{f_{n+1}(f_n(i))}^{(n+2)},\ldots$, where the distance between consecutive terms is bounded by the sequence $\frac{1}{2^{n+1}}, \frac{1}{2^{n+2}},\ldots$.
  Thus it is a Cauchy sequence.
  Since $(X,d)$ is a complete, let $x_{n,i}$ be a limit of this Cauchy sequence.
  Note that $d(x_i^{(n)},x_{n,i}) \leq \sum_{k=1}^{\infty} \frac{1}{2^{n+k}} = \frac{1}{2^n}$.
  % (Note that since $d(x_i^{(n)},A) > \frac{1}{2^{n-1}}$,
  It follows that $d(x_{n,i},A) \geq d(x_i^{(n)},A) - d(x_{n,i},x_i^{(n)}) > \frac{1}{2^{n-1}} - \frac{1}{2^n} = \frac{1}{2^n}$.
  Let $\alpha = \sum_{n=1}^{\infty} \sum_{i \in I_n} x_{n,i}$.

  We will show that $\alpha \in \overline{D}_{\infty}(X,A)$.
  Let $n \geq 1$.
Let $i \in I_{n+1}$.
Since $\sigma_n$ is a matching of $\alpha_n$ and $\alpha_{n+1}$ and  $i \not\in \im(f_{n})$, there exists $(x_j^{(n)},x_i^{(n+1)}) \in \supp(\sigma_n)$ with $j > M_n$.
Therefore, $d(x_j^{(n)},A) < \frac{1}{2^{n-1}}$.
% It follows that $d(x_{n,i},A) < \frac{1}{2^{n-1}} + \frac{1}{2^{n+1}} + \frac{1}{2^{n+2}} + \ldots$.
% Thus $d(x_{n,i},A) < \frac{3}{2^n}$.
By the triangle inequality,
\[ d(x_{n+1,i},A) \leq d(x_{n+1,i},x_i^{(n+1)}) + d(x_i^{(n+1)},x_j^{(n)}) + d(x_j^{(n)},A) < \frac{1}{2^{n+1}} + \frac{1}{2^{n+1}} + \frac{1}{2^{n-1}} = \frac{3}{2^n}.
  \]
Therefore, $d(x_{m,i},A) < \frac{3}{2^n}$ for all $m > n$ and all $i \in I_m$.
Hence $\abs{u_{\frac{3}{2^n}}(\alpha)} \leq \sum_{m=1}^n I_m = M_n$.
It follows that for all $\delta > 0$, $\abs{u_{\delta}(\alpha)}$ is finite
and thus $\alpha \in \overline{D}_{\infty}(X,A)$.

  Finally, we claim that for $n \geq 1$, $W_{\infty}(\alpha_n,\alpha) \leq \frac{3}{2^{n}}$.
  For $i > M_n$, $d(x_i^{(n)},A) < \frac{1}{2^{n-1}}$, so there exists $a_{n,i} \in A$ such that $d(x_i^{(n)},a_{n,i}) < \frac{1}{2^{n-1}}$.
  For $m > n$ and $i \in I_m$, $d(x_{m,i},A) < \frac{3}{2^n}$, so there exists $b_{m,i} \in A$ such that $d(x_{m,i},b_{m,i}) < \frac{3}{2^n}$.
  Consider the matching of $\alpha_n$ and $\alpha$ given by
  \[ \sigma = %\sum_{i=1}^{M_n} (x_i^{(n)},x_{n,i})
  \sum_{m=1}^n \sum_{i \in I_m} (x_{f_m \circ \cdots \circ f_{n-1}(i)}^{(n)}, x_{m,i})
  + \sum_{i= M_n + 1}^{\infty} (x_i^{(n)},a_{n,i}) + \sum_{m=n+1}^{\infty} \sum_{i \in I_m} (x_{m,i},b_{m,i}).
\]
% , where $a_{i,n},b_{m,i} \in A$.
  % For $i \leq M_n$, $d(x_i^{(n)},x_{n,i}) < \frac{1}{2^{n+1}} + \frac{1}{2^{n+2}} + \cdots$.
  For $1 \leq m \leq n$ and $i \in I_m$,
    $d(x_{f_m \circ \cdots \circ f_{n-1}(i)}^{(n)}, x_{m,i}) \leq \sum_{k=1}^{\infty} \frac{1}{2^{n+k}} = \frac{1}{2^n}$.
%Therefore  $d(x_i^{(n)},x_{n,i}) < \frac{1}{2^n}$.
Thus $\Cost_{\infty}(\sigma) \leq \frac{3}{2^{n}}$ and 
hence $W_{\infty}(\alpha_n,\alpha) \leq \frac{3}{2^{n}}$.
Therefore, the Cauchy sequence $(\alpha_n)$ converges to $\alpha$.
\end{proof}

Second, we generalize the proof of the completeness of the space of classical persistence diagrams equipped with $p$-Wasserstein distance for $p \in [1, \infty)$~\cite{mileyko2011probability} to the space of persistence diagrams on a metric pair.

Let $(X, d, A)$ be a metric pair where $(X,d)$ is complete and let $p \in [1,\infty)$.
Let $(\alpha_n)$ be a Cauchy sequence in $(\overline{D}_p(X, A), W_p)$.
% Let $\alpha_n=\sum_{i=1}^{\infty}x_i^n$, where sequence $\big(d(x_i^n, A)\big)_{i=1}^{\infty}$ is decreasing.

\begin{lemma}\label{lem:for_completeness_up_part}
  Let $c > 0$.
  Then there exist constants $M_{c}$ and $\delta_{c} \in (0,c)$ such that for any $\delta \in [\delta_{c}, c)$ there is $N_{\delta}$ such that for any $n> N_{\delta}$ we have $|u_{\delta}(\alpha_n)|=M_{c}$.
\end{lemma}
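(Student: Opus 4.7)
The plan is to prove that the integer-valued count function $f_n(\delta) := |u_\delta(\alpha_n)|$ stabilizes in $n$ for every $\delta$ in a suitable interval $[\delta_c, c)$. Each $f_n$ is non-increasing in $\delta$, and I will show that its $\limsup$ and $\liminf$ in $n$ are both non-increasing integer-valued step functions on $[c/2, \infty)$ bounded by a universal constant $K$; choosing $\delta_c$ then reduces to placing $[\delta_c, c)$ inside a single constancy interval of these step functions.

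First I would establish a uniform bound. By the Cauchy property, pick $N_0$ so that $W_p(\alpha_n, \alpha_{N_0}) < c/8$ for all $n \geq N_0$, and fix a matching with $p$-cost below $c/4$. Since $p < \infty$, each individual matched distance is bounded by the $p$-norm, hence below $c/4$. A dot $x \in u_{c/2}(\alpha_n)$ cannot be matched to $A$ (which would cost at least $c/2$), so by \cref{lem:matching} it is matched injectively to a non-$A$ dot $y$ of $\alpha_{N_0}$ with $d(y, A) > c/4$. Thus $|u_{c/2}(\alpha_n)| \leq |u_{c/4}(\alpha_{N_0})| =: K$, and $K$ is finite by \cref{lem:overlineDp}. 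In particular $f_n(\delta) \leq K$ for all $n \geq N_0$ and $\delta \geq c/2$.

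Next, for $\delta \in (c/2, c)$ and $\eta \in (0, \delta - c/2)$, repeating the near-optimal-matching argument with $W_p(\alpha_n, \alpha_m) < \eta/2$ shows that every dot of $u_\delta(\alpha_n)$ is matched injectively to a dot of $u_{\delta - \eta}(\alpha_m)$, giving
\[
f_n(\delta) \leq f_m(\delta - \eta) \quad \text{for all } n, m \geq N(\eta).
\]
Taking $\sup$ over such $n$ and $\inf$ over such $m$ yields $g(\delta) \leq h(\delta - \eta)$, where $g(\delta) := \limsup_n f_n(\delta)$ and $h(\delta) := \liminf_n f_n(\delta)$. Letting $\eta \to 0^+$ and using the monotonicity of $h$ gives $g(\delta) \leq h(\delta^-)$, where $h(\delta^-)$ is the left limit at $\delta$.

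Combined with the trivial $g \geq h$, this yields $g(\delta) = h(\delta)$ at every left-continuity point of $h$. By the uniform bound above, $h \colon [c/2, \infty) \to \{0, 1, \dots, K\}$ is non-increasing and integer-valued, hence a step function with at most $K$ jumps on that interval; in particular at most $K$ jumps lie in $[c/2, c)$. Let $j^*$ denote the largest such jump (or $c/2$ if none exists), and set $\delta_c := (j^* + c)/2 \in (c/2, c)$. Then $h$ is constant on $[\delta_c, c)$, equal to some $M_c$, and left-continuous throughout; hence for every $\delta \in [\delta_c, c)$ one has $f_n(\delta) \to M_c$, and since $f_n(\delta) \in \Z_+$ this forces $f_n(\delta) = M_c$ for all $n \geq N_\delta$. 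The main technical step is the cross-sequence inequality $f_n(\delta) \leq f_m(\delta - \eta)$, which relies crucially on $p < \infty$ so that the $p$-cost dominates each individual matched distance, explaining why \cref{thm:completeness_inf} for $p = \infty$ required a different approach.
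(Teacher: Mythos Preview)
Your proof is correct and follows essentially the same strategy as the paper's: use the Cauchy property and a near-optimal matching to compare $|u_\delta(\alpha_n)|$ across indices, then exploit that $\limsup$ and $\liminf$ of these counts are non-increasing integer-valued step functions in $\delta$ to locate a stable interval. Your organization is slightly cleaner in one respect: the direct inequality $g(\delta)\le h(\delta-\eta)$ (hence $g(\delta)\le h(\delta^-)$) replaces the paper's proof-by-contradiction that $M_{\sup}=M_{\inf}$, and lets you restrict only to a constancy interval of $h$ rather than of both $g$ and $h$ simultaneously.

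One minor quibble: the bound ``each matched distance $\le \Cost_p(\sigma)$'' holds for every $p\in[1,\infty]$, so your closing remark that the cross-sequence inequality relies crucially on $p<\infty$ is not accurate; this lemma would go through verbatim for $p=\infty$. The paper separates the $p=\infty$ case for reasons elsewhere in the completeness proof (tail control of low-persistence dots), not because of anything in this step.
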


\begin{proof}
%  Let $(\alpha_n)$ be a  Cauchy sequence and let $c>0$.
  Let $\delta \in (0,c)$.
  % Then there exists $N$ such that for any $n,m \geq N$ we have $W_p(\alpha_n, \alpha_m) < 1$.
  % Since
By \cref{lem:DpDq}, for all $n$,
  $\alpha_{n} \in \overline{D}_{\infty}(X,A)$.
  Thus $\abs{u_{\delta}(\alpha_{n})} < \infty$.
  % Therefore, $|u_{\infty}(\alpha_n)|=|u_{\infty}(\alpha_m)|=M < \infty$ any $n,m > N$.
%
  Let $M_{\sup}^{\delta}=\limsup_{n \to \infty} \abs{u_{\delta}(\alpha_n)}$ and $M_{\inf}^{\delta}=\liminf_{n \to \infty}\abs{u_{\delta}(\alpha_n)}$.
Since $(\alpha_n)$ is a Cauchy sequence, there is an $N$ such that whenever $m,n > N$ we have that $W_p(\alpha_m,\alpha_n) < \frac{\delta}{2}$.
Suppose $M_{\sup}^{\delta} =\infty$. Then there exists $N'$
% $(\alpha_{n_k})_{k=1}^{\infty}$
such that $\abs{u_{\delta}(\alpha_{N'})}> \abs{u_{\frac{\delta}{2}}(\alpha_N)}$ which implies that 
$W_p(\alpha_{N'}, \alpha_N) \geq W_{\infty}(\alpha_{N'},\alpha_N) \geq \frac{\delta}{2}$,
% for sufficiently large $k$,
which is a contradiction.
  % On the other hand, for $n> N$, by setting $m=N+1$,  we have
  % \[W_p(\ell_{\infty}(\alpha_{n}), 0) \leq W_p(\ell_{\infty}(\alpha_{n}),\ell_{\infty}(\alpha_{N+1}))+W_p(\ell_{\infty}(\alpha_{N+1}),0)< 1 + W_p(\ell_{\infty}(\alpha_{N+1}), 0) < \infty\] which leads us to a contradiction.
Thus $M_{\sup}^{\delta} <\infty$. It follows that $M^{\delta}_{\inf} < \infty$.

  Consider $0 < \delta_1 < \delta_2 < c$.
%  Note that if $\delta_1 > \delta_2$,
  Then $\abs{u_{\delta_2}(\alpha_n)} \leq |u_{\delta_1}(\alpha_n)|$.
  This implies $M_{\sup}^{\delta_2} \leq M_{\sup}^{\delta_1}$ and $M_{\inf}^{\delta_2} \leq M_{\inf}^{\delta_1}$.
  So as $\delta \in (0,c)$ increases, $M_{\sup}^{\delta}$ and $M_{\inf}^{\delta}$ are both decreasing sequences of non-negative integers.
  % So limits $\lim_{\delta \to c} M^{\delta}_{\sup}=M_{\sup}$ and $\lim_{\delta \to c} M^{\delta}_{\inf}=M_{\inf}$ exists. Let $\delta' >0$ then $\big|\{M^{\delta}_{\sup} \ | \ \delta \geq \delta'\}\big| < \infty$  and  $\big|\{M^{\delta}_{\inf} \ | \ \delta \geq \delta'\}\big| < \infty$.
  So there exists $\delta_{c} \in (0,c)$ such that for any $\delta \in [\delta_{c}, c)$, $M_{\sup}^{\delta}$ and $M_{\inf}^{\delta}$ are constant.
  % we have $M_{\sup}=M_{\sup}^{\delta}$ and  $M_{\inf}=M_{\inf}^{\delta}$.
  Let $M_{\sup}$ and $M_{\inf}$ denote these constants.

	Suppose $M_{\inf} < M_{\sup}$. Let $\delta \in (\delta_{c}, c)$ and set $\eps=\delta-\delta_c > 0$. Pick a subsequence $(\alpha_{n_k})_{k}$ such that $|u_{\delta}(\alpha_{n_k})|=M_{\sup}$ and a subsequence $(\alpha_{n_l})_{l}$ such that $|u_{\delta_{c}}(\alpha_{n_l})|=M_{\inf}$. Since $(\alpha_n)$ is Cauchy we have that there is $N \in \N$ that for $k>l > N$ we have
	$W_p(\alpha_{n_k},\alpha_{n_l}) < \frac{\eps}{2}$. On the other hand, since $|u_{\delta}(\alpha_{n_k})|=M_{\sup} > M_{\inf}=|u_{\delta_{c}}(\alpha_{n_l})|$ then for any matching $\sigma=\sum_{i=1}^{\infty} (x^{n_k}_i, x^{n_l}_i) \in \overline{D}(X \times X, A \times A)$ between $\alpha_{n_k}$ and $\alpha_{n_l}$ there exists $x_i^{n_k} \in \supp( u_{\delta}(\alpha_{n_k}) )$ such that  $d(x_i^{n_l}, A) < \delta_c$. Then we have
	\[d(x_i^{n_k}, x_i^{n_l}) \geq d(x_i^{n_k}, A) - d(A, x_i^{n_l})> \delta -\delta_c =\eps,\]
	which gives us a contradiction. Therefore, we set $M_c=M_{\sup}=M_{\inf}$.

        Therefore, for all $\delta \in [\delta_c,c)$,
        $\lim_{n \to \infty}|u_{\delta}(\alpha_n)|=M_{c}$.
        Hence for all $\delta \in [\delta_c,c)$, there exists
        $N_{\delta}$ such that for any $n> N_{\delta}$ we have $|u_{\delta}(\alpha_n)|=M_{c}$. 
\end{proof}

\begin{lemma}\label{lem:for_completeness_up_part_is_Cauchy}
  Let $c > 0$ and $\delta_c$ be as in Lemma \ref{lem:for_completeness_up_part}. Then the sequence $\big(u_{\delta_c}(\alpha_n)\big)_{n=1}^{\infty}$ is Cauchy in $(\overline{D}_p(X,A), W_p)$.
\end{lemma}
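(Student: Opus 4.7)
My plan is to use \cref{lem:for_completeness_up_part} to pin down the cardinality and the geometric location of the upper parts for large $n$, and then to extract a matching of upper parts from any nearly-optimal matching of the full diagrams.

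First, fix $\eps > 0$ (without loss of generality $\eps < c - \delta_c$) and choose $\delta \in (\delta_c + \eps, c)$. Applying \cref{lem:for_completeness_up_part} at both $\delta_c$ and $\delta$, I would obtain $N_1$ such that for all $n > N_1$ we have $\abs{u_{\delta_c}(\alpha_n)} = \abs{u_{\delta}(\alpha_n)} = M_c$. Since $u_{\delta}(\alpha_n) \leq u_{\delta_c}(\alpha_n)$ as elements of $\Z_{+}^{X \setminus A}$, equality of cardinalities forces $u_{\delta_c}(\alpha_n) = u_{\delta}(\alpha_n)$, so every dot of $u_{\delta_c}(\alpha_n)$ sits at distance at least $\delta$ from $A$. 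Next, by the Cauchy property of $(\alpha_n)$, pick $N_2$ such that for $m,n > N_2$ there is a matching $\sigma_{m,n}$ of $\alpha_m$ and $\alpha_n$ with $\Cost_p(\sigma_{m,n}) < \eps$; in particular, every matched pair $(a,b) \in \supp(\hat{\sigma}_{m,n})$ satisfies $d(a,b) < \eps$.

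The crux is to show that, for $m,n > \max(N_1, N_2)$, the matching $\sigma_{m,n}$ restricts to a matching of $u_{\delta_c}(\alpha_m)$ and $u_{\delta_c}(\alpha_n)$. Writing $\hat{\alpha}_m = \sum_{i \in I} x_i$, $\hat{\alpha}_n = \sum_{j \in J} y_j$, and expressing $\hat{\sigma}_{m,n}$ in the canonical form $\sum_{k \in K}(x_k, y_{\varphi(k)}) + \sum_{i \in I \setminus K}(x_i, z_i) + \sum_{j \in J \setminus \varphi(K)}(w_j, y_j)$ from \cref{lem:matching}, I would argue as follows. For each $i$ with $x_i \in \supp(u_{\delta_c}(\alpha_m))$, we have $d(x_i, A) \geq \delta > \eps$, so $x_i$ cannot be paired with any $z_i \in A$; hence $i \in K$, and the triangle inequality gives $d(y_{\varphi(i)}, A) \geq d(x_i, A) - d(x_i, y_{\varphi(i)}) > \delta - \eps > \delta_c$, so $y_{\varphi(i)} \in \supp(u_{\delta_c}(\alpha_n))$. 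The symmetric argument gives the converse; combined with the fact that both upper parts have cardinality $M_c$, this forces $\varphi$ to restrict to a bijection between the index sets of the two upper parts. The restricted matching $\sigma$ then satisfies $\Cost_p(\sigma) \leq \Cost_p(\sigma_{m,n}) < \eps$ by monotonicity of the $p$-norm, so $W_p(u_{\delta_c}(\alpha_m), u_{\delta_c}(\alpha_n)) < \eps$.

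I expect the main obstacle to be precisely this bookkeeping step: showing that $\sigma_{m,n}$ restricts to a bijective matching on the upper parts requires both the exact cardinality control from \cref{lem:for_completeness_up_part} (so that both upper parts have common size $M_c$) and the geometric buffer from the choice $\delta > \delta_c + \eps$ (so that an $\eps$-perturbation cannot move an upper-part dot below $\delta_c$ nor lift a lower-part dot above $\delta_c$). Once this is in hand, the Cauchy condition on $(u_{\delta_c}(\alpha_n))$ follows immediately.
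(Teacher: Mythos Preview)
Your proposal is correct and follows essentially the same approach as the paper: use \cref{lem:for_completeness_up_part} at two levels to force $u_{\delta_c}(\alpha_n)=u_{\delta}(\alpha_n)$ for large $n$, then show by a triangle-inequality buffer argument that any low-cost matching of $\alpha_m$ and $\alpha_n$ restricts to a matching of their $\delta_c$-upper parts. The only cosmetic difference is that the paper fixes $\delta\in(\delta_c,c)$ first and then sets $\tilde{\eps}=\min(\eps,\delta-\delta_c)$, whereas you fix $\eps$ first and then choose $\delta>\delta_c+\eps$; both orderings yield the same bound.
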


\begin{proof}
  Let $\delta \in (\delta_c, c)$. By Lemma \ref{lem:for_completeness_up_part}, there exists $N_{\delta} \in \N$ such that for any $n> N_{\delta}$ we have $|u_{\delta}(\alpha_n)|=M_c$ and there exists $N_{\delta_c} \in \N$ such that for any $n> N_{\delta_c}$ we have $|u_{\delta_c}(\alpha_n)|=M_c$. Let $N=\max(N_{\delta}, N_{\delta_c})$. Then for any $n > N$, $|u_{\delta}(\alpha_n)|=|u_{\delta_c}(\alpha_n)|$.
  Thus $u_{\delta}(\alpha_n) = u_{\delta_c}(\alpha_n)$.
  Let $\eps> 0$ and set $\tilde{\eps} =\min (\eps, \delta-\delta_c)$. Since $(\alpha_n)$ is Cauchy there is $\tilde{N} \in \N$ so that for any $m,n > \tilde{N}$ we have $W_p(\alpha_m, \alpha_n) < \tilde{\eps}$.
  Now let $m,n > \max ( N, \tilde{N} )$.
  Choose a matching $\sigma %= \sum_{i=1}^{\infty}(y^n_i, y^m_i)
  \in \overline{D}(X \times X, A \times A)$ between $\alpha_n$ and $\alpha_m$ such that
  $ \Cost_p(\sigma) < %W_p(\alpha_n, \alpha_m) +
  \tilde{\eps}$.
%  and $y^n_i \in u_{\delta}(\alpha_n)$.
  Let $(x,y) \in \supp(\sigma)$ such that $x \in \supp(u_{\delta_c}(\alpha_n)) = \supp(u_{\delta}(\alpha_n))$.
        Then %$u_{\delta}(\alpha_n)=u_{\delta_c}(\alpha_n)$,
	$d(x, y) \leq \Cost_p(\sigma) <
          \tilde{\eps} \leq \delta -\delta_c$.
	Thus
	\[d(y, A) \geq d(x, A) - d(x, y) \geq \delta - (\delta - \delta_c)=\delta_c.\]
	So $y \in \supp(u_{\delta_c}(\alpha_m)) = \supp(u_{\delta}(\alpha_m))$.
  Therefore, $\sigma$ restricts to a matching of $u_{\delta_c}(\alpha_n)$ and $u_{\delta_c}(\alpha_m)$.
          Hence,
            $W_p(u_{\delta_c}(\alpha_n), u_{\delta_c}(\alpha_m)) \leq 
              \Cost_p(\sigma) <
              \tilde{\eps} \leq \eps$.
\end{proof}

%\new{Note that Lemmas \ref{lem:for_completeness_up_part} and \ref{lem:for_completeness_up_part_is_Cauchy} hold for any metric space $X$ and all $p \in [1, \infty]$.}

\begin{lemma}\label{lem:for_completeness_up_part_is_conver}
  For each $c>0$ there exists  $\alpha_c \in \overline{D}_p(X, A)$ with $|\alpha_c|=M_c$, $u_{\delta_c}(\alpha_c)=\alpha_c$, and $(u_{\delta_c}(\alpha_n)) \to \alpha_c$, where $M_c$ and $\delta_c$ are as in \cref{lem:for_completeness_up_part}.
  Furthermore, given $c_1> c_2$ and $\alpha_{c_1}$ then we may choose $\alpha_{c_2}$ such that $\alpha_{c_1} \leq \alpha_{c_2}$ (as elements of $\Z_+^{X,A}$).
\end{lemma}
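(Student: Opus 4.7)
The plan is to upgrade the Cauchy sequence $\beta_n := u_{\delta_c}(\alpha_n)$ from \cref{lem:for_completeness_up_part_is_Cauchy} to a convergent sequence by exploiting completeness of $(X,d)$ and chasing individual points through a fast subsequence. By \cref{lem:for_completeness_up_part}, first pass to a tail so that $|\beta_n| = M_c$ for all $n$, and then extract $(\beta_{n_k})$ with $W_p(\beta_{n_k}, \beta_{n_{k+1}}) < 2^{-k}$. For each $k$, pick a matching $\sigma_k$ with $\Cost_p(\sigma_k) < \min(2^{-k}, \delta_c)$. Since every support point of $\beta_{n_k}$ lies at distance $\geq \delta_c$ from $A$, by \cref{lem:matching} any term of $\sigma_k$ involving $A$ would contribute at least $\delta_c$ to the cost, contradicting the bound; hence each $\sigma_k$ is a bijection of the $M_c$-point supports. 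Starting from $\beta_{n_1} = \sum_{i=1}^{M_c} x_i^{(1)}$ and relabeling via the $\sigma_k$'s, each sequence $(x_i^{(k)})_{k \geq 1}$ is Cauchy in $(X,d)$ with rate $< 2^{-k}$, so by completeness $x_i^{(k)} \to x_i \in X$. Set $\alpha_c := \sum_{i=1}^{M_c} x_i$.

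For the verification, fix any $\delta \in (\delta_c, c)$: \cref{lem:for_completeness_up_part} gives $|u_\delta(\alpha_n)| = M_c = |u_{\delta_c}(\alpha_n)|$ eventually, so $u_\delta(\alpha_n) = u_{\delta_c}(\alpha_n) = \beta_n$ and every support point of $\beta_n$ satisfies $d(\cdot, A) \geq \delta$ for large $n$. Since $d(\cdot, A)$ is $1$-Lipschitz, $d(x_i, A) \geq \delta$; letting $\delta \nearrow c$ yields $d(x_i, A) \geq c > \delta_c$ for every $i$. Thus $\alpha_c \in D^{M_c}(X,A) \subset \overline{D}_p(X,A)$, $|\alpha_c| = M_c$, and $u_{\delta_c}(\alpha_c) = \alpha_c$. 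The matching $\sum_{i=1}^{M_c}(x_i^{(k)}, x_i)$ of $\beta_{n_k}$ and $\alpha_c$ has $p$-cost $\|(d(x_i^{(k)}, x_i))_{i=1}^{M_c}\|_p \to 0$, so $\beta_{n_k} \to \alpha_c$; since $(\beta_n)$ is Cauchy with a convergent subsequence, the full sequence converges, giving $u_{\delta_c}(\alpha_n) \to \alpha_c$.

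For the monotonicity claim, given $c_1 > c_2$ and $\alpha_{c_1}$, split by the relation of the thresholds. If $\delta_{c_1} < \delta_{c_2}$, then $\delta_{c_2} \in [\delta_{c_1}, c_1)$, and \cref{lem:for_completeness_up_part} applied with $c=c_1$ forces $|u_{\delta_{c_2}}(\alpha_n)| = M_{c_1}$; this must equal $M_{c_2}$, whence $u_{\delta_{c_1}}(\alpha_n) = u_{\delta_{c_2}}(\alpha_n)$ eventually and we may take $\alpha_{c_2} = \alpha_{c_1}$. Otherwise $\delta_{c_1} \geq \delta_{c_2}$, so $u_{\delta_{c_1}}(\alpha_n) \leq u_{\delta_{c_2}}(\alpha_n)$ with $M_{c_1} \leq M_{c_2}$; run the $\alpha_{c_2}$ construction on a further refinement of the $\alpha_{c_1}$-subsequence, labeling the first $M_{c_1}$ points of each $u_{\delta_{c_2}}(\alpha_{n_{k_j}})$ as the $\alpha_{c_1}$-labeled points of $u_{\delta_{c_1}}(\alpha_{n_{k_j}})$, and take each matching as $\sigma_j + \rho_j$, where $\rho_j$ is a low-cost matching of the residual $(u_{\delta_{c_2}} - u_{\delta_{c_1}})$-parts. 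The main obstacle is producing $\rho_j$ with controlled cost: take any low-cost matching $\tilde\tau_j$ of the full $u_{\delta_{c_2}}$-parts and let $\rho_j$ be its restriction to the lower halves. For $j$ large this restriction is indeed well-defined as a bijection on the lower supports because any low-cost matching must respect the upper/lower split: a pairing of an upper point (with $d(\cdot,A) \geq c_1 - o(1)$, by the argument in the previous paragraph) with a lower point (with $d(\cdot,A) < \delta_{c_1}$) has distance at least $c_1 - \delta_{c_1} - o(1) > 0$, so matchings of cost less than $(c_1 - \delta_{c_1})/2$ cannot cross. Chasing Cauchy sequences then yields $\alpha_{c_2}$ whose first $M_{c_1}$ limit points are precisely the $x_i$'s of $\alpha_{c_1}$, so $\alpha_{c_1} \leq \alpha_{c_2}$.
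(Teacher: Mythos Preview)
Your proof is correct and follows essentially the same approach as the paper: both extract a fast Cauchy subsequence of $(u_{\delta_c}(\alpha_n))$, observe that low-cost matchings between consecutive terms must be bijections of the $M_c$-point supports (since any pairing with $A$ would cost at least $\delta_c$), track individual points through these bijections to get Cauchy sequences in $(X,d)$, and pass to limits by completeness. For the monotonicity, both arguments hinge on the same splitting observation: because the ``upper'' points $u_{\delta_{c_1}}$ eventually satisfy $d(\cdot,A)\ge\delta_1$ for a fixed $\delta_1\in(\delta_{c_1},c_1)$ while the residual points satisfy $d(\cdot,A)<\delta_{c_1}$, any matching of cost below $\delta_1-\delta_{c_1}$ respects the split, so the $\alpha_{c_2}$-construction can be run compatibly with the $\alpha_{c_1}$-labeling. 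Your explicit case split on $\delta_{c_1}$ versus $\delta_{c_2}$ and your verification that $d(x_i,A)\ge c$ are slightly more detailed than the paper's treatment, but the substance is the same.
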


\begin{proof}
  
  Let $c > 0$.
  Use \cref{lem:for_completeness_up_part}, to choose $\delta_c$ and $M_c$.
  Using \cref{lem:for_completeness_up_part}, there exists $N$ such that whenever $n > N$, $\abs{u_{\delta_c}(\alpha_n)} = M_c$.
  Let $\eps \in (0, \frac{\delta_{c}}{2})$.
  By \cref{lem:for_completeness_up_part_is_Cauchy}, $(u_{\delta_c}(\alpha_n))_n$ is a Cauchy sequence.
  Choose a subsequence $(\alpha_{n_k})_k$ with $n_1 > N$ such that
  $W_p(u_{\delta_c}(\alpha_{n_k}),u_{\delta_c}(\alpha_m)) < \frac{\eps}{2^k}$ for all $m \geq n_k$.
  Let $\sigma_k$ be a matching of $u_{\delta_c}(\alpha_{n_k})$ and $u_{\delta_c}(\alpha_{n_{k+1}})$ with $\Cost_p(\sigma_k) < \frac{\eps}{2^k}$.
  Since $\eps < \frac{\delta_c}{2}$,
  $\supp(\sigma_k) \subset (X \setminus A) \times (X \setminus A)$.
  So we can write for all $k \geq 1$, $\sigma_k = \sum_{j=1}^{M_c} (x_j^k, x_j^{k+1})$.
  Thus for all $j$, $d(x_j^k,x_j^{k+1}) \leq \Cost_p(\sigma_k) < \frac{\eps}{2^k}$.
  Therefore, $(x_j^k)_k$ is a Cauchy sequence in $(X,d)$.
  Since $(X,d)$ is complete, let $x_j \in X$ be a limit of this sequence.
  Let $\alpha_c = \sum_{j=1}^{M_c} x_j$.
  Then $u_{\delta_c}(\alpha_c) = \alpha_c$ and $\abs{\alpha_c} = M_c$.

  Let $\eps' > 0$.
  For $1 \leq j \leq M_c$, there exists $K_{j}$ such that whenever $k \geq K_{j}$,
  $d(x_j^k,x_j) < \frac{\eps'}{2} \frac{1}{M_c^{\frac{1}{p}}}$.
  Choose $n_{K_0}$ such that whenever $m \geq n_{K_0}$,
  $W_p(u_{\delta_c}(\alpha_{n_{K_0}}),u_{\delta_c}(\alpha_m)) < \frac{\eps'}{2}$
  for all $m \geq n_{K_0}$.
  Let $n_K = \max \{n_{K_0}, n_{K_1},\ldots, n_{K_M}\}$.
  Then for all $m \geq n_K$,
  \[ W_p(u_{\delta_c}(\alpha_m),\alpha_c) \leq W_p(u_{\delta_c}(\alpha_m),u_{\delta_c}(\alpha_{n_K})) + W_p(u_{\delta_c}(\alpha_{n_K}),\alpha_c) < \frac{\eps'}{2} + \frac{\eps'}{2} = \eps'. \]
  Therefore, $u_{\delta_c}(\alpha_n) \to \alpha_c$.

  Given $c_1 < c_2$ and $\alpha_{c_1}$, let $\delta_1 \in (\delta_{c_1},c_1)$.
  Using \cref{lem:for_completeness_up_part} three times, we have that there exists an $N$ such that whenever $n > N$,
  $\abs{u_{\delta_1}(\alpha_n)} = M_{c_1} = \abs{u_{\delta_{c_1}}(\alpha_n)}$
  and $\abs{u_{\delta_{c_2}}(\alpha_n)} = M_{c_2}$.
  Let $\eps \in (0,\min (\frac{\delta_{c_2}}{2},\frac{\delta_1 - \delta_{c_1}}{2}))$.
  Then the support of $\sigma_k$, a matching of $u_{\delta_{c_1}}(\alpha_{n_k})$ and $u_{\delta_{c_1}}(\alpha_{n_{k+1}})$, only contains pairs $(x,y)$ with
  $x \in \supp(u_{\delta_{c_1}}(\alpha_{n_k}))$ and
  $y \in \supp(u_{\delta_{c_1}}(\alpha_{n_{k+1}}))$.
  Therefore, we can choose $\alpha_{c_2}$ to extend $\alpha_{c_1}$.
\end{proof}

Let $\alpha = \cup_{k=1}^{\infty} \alpha_{\frac{1}{k}}$.

\begin{lemma}\label{lem:for_completeness_up_part_and_alpha}
  % For $c>0$, let $\delta_c$ be as in Lemma \ref{lem:for_completeness_up_part}. Then
  $\alpha \in \overline{D}_p(X, A)$ and
  % $\lim_{c \to 0} W_p(u_{\delta_c}(\alpha_n), \alpha)=0$.
$\lim_{k \to \infty} W_p(\alpha_{\frac{1}{k}},\alpha) = 0$.
\end{lemma}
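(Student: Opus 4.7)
The plan is to establish both conclusions simultaneously by constructing an explicit matching of $\alpha_{1/k}$ with $\alpha$ and bounding its cost. Since $\hat\alpha_{1/k} \leq \hat\alpha$ by the construction of $\alpha$ as the union, fixing the common summand and sending the residual dots $\hat\alpha - \hat\alpha_{1/k}$ to (near-)nearest points in $A$ gives a valid matching, so
\begin{equation*}
W_p(\alpha_{1/k}, \alpha) \leq \norm*{(d(x,A))_{x \in \hat\alpha - \hat\alpha_{1/k}}}_p;
\end{equation*}
once this is finite for some $k_0$, the triangle inequality together with $\alpha_{1/k_0} \in D(X,A) \subset \overline{D}_p(X,A)$ forces $W_p(\alpha,0) < \infty$, which in turn yields $|u_\infty(\alpha)| < \infty$ and $W_p(\ell_\infty(\alpha),0) \leq W_p(\alpha,0) < \infty$, so $\alpha \in \overline{D}_p(X,A)$.

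My strategy for bounding the right-hand side has two parts. First, I extract from \cref{lem:for_completeness_up_part} the structural fact that for each $c > 0$ and $n$ large, $\alpha_n$ has no dots at distances in $[\delta_c, c)$ from $A$; combined with continuity of $d(\cdot,A)$ and the consistent nested extensions of \cref{lem:for_completeness_up_part_is_conver}, this forces the dots of $\hat\alpha_{1/l} - \hat\alpha_{1/k}$ for $l > k$ to lie at distances in $[1/l, 1/k)$ from $A$. Second, I relate the $p$-norm of these distances to the lower parts of the Cauchy sequence: by the cardinality-matching argument in the proof of \cref{lem:for_completeness_up_part_is_conver}, $u_{\delta_{1/l}}(\alpha_n) - u_{\delta_{1/k}}(\alpha_n) \to \alpha_{1/l} - \alpha_{1/k}$ in $W_p$ as $n \to \infty$, and this difference is dominated by $\ell_{\delta_{1/k}}(\alpha_n)$, so continuity of $W_p(\cdot,0)$ yields
\begin{equation*}
\norm*{(d(x,A))_{x \in \hat\alpha_{1/l} - \hat\alpha_{1/k}}}_p \leq \liminf_{n \to \infty} W_p(\ell_{\delta_{1/k}}(\alpha_n), 0),
\end{equation*}
and monotone convergence in $l$ over the nested family extends the same bound to $\hat\alpha - \hat\alpha_{1/k}$.

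The main obstacle is driving $\liminf_n W_p(\ell_{\delta_{1/k}}(\alpha_n), 0)$ to $0$ as $k \to \infty$, since \cref{lem:dist_between_lower_part_and_0} supplies this only pointwise in $n$. I will combine the Cauchy property with the matching-splitting used in \cref{lem:for_completeness_up_part_is_Cauchy}: any matching $\sigma$ of $\alpha_n$ and $\alpha_m$ whose cost is smaller than the gap $1/k - \delta_{1/k}$ must pair each dot of $u_{\delta_{1/k}}(\alpha_n)$ with a dot of $u_{\delta_{1/k}}(\alpha_m)$, and the restriction of $\sigma$ to the remaining pairs is a matching of $\ell_{\delta_{1/k}}(\alpha_n)$ with $\ell_{\delta_{1/k}}(\alpha_m)$ of cost at most $\Cost_p(\sigma)$, giving $W_p(\ell_{\delta_{1/k}}(\alpha_n), \ell_{\delta_{1/k}}(\alpha_m)) \leq W_p(\alpha_n, \alpha_m)$ under the gap condition. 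For each prescribed $\eps > 0$, I fix $N$ via the Cauchy property so $W_p(\alpha_n,\alpha_N) < \eps$ for $n \geq N$, and then $k_0$ via \cref{lem:dist_between_lower_part_and_0} applied to $\alpha_N$ so that $W_p(\ell_{\delta_{1/k}}(\alpha_N),0) < \eps$ for $k \geq k_0$; for each such $k$ the Cauchy property additionally supplies an index $N_k \geq N$ making the gap condition hold, and the triangle inequality then delivers $W_p(\ell_{\delta_{1/k}}(\alpha_n),0) \leq 2\eps$ for $n \geq N_k$, hence $\liminf_n W_p(\ell_{\delta_{1/k}}(\alpha_n),0) \leq 2\eps$ for $k \geq k_0$. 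Combining with the earlier reduction gives $W_p(\alpha_{1/k},\alpha) \leq 2\eps$ for $k \geq k_0$, completing both assertions.
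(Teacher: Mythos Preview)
Your approach is sound in its main thrust but differs markedly from the paper's, and there is one genuine gap.

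The gap: your claim that the triangle inequality forces $W_p(\alpha, 0) < \infty$ once $W_p(\alpha_{1/k_0}, \alpha) < \infty$ fails in the paper's generality, because the metric may take the value $\infty$. If $\alpha_{1/k_0}$ has a dot at infinite distance from $A$ (which the paper explicitly allows---see \cref{def:metric} and the examples on $\overline{\R}^2_\leq$), then $W_p(\alpha_{1/k_0}, 0) = \infty$ and the triangle inequality yields nothing. The fix is to bypass $W_p(\alpha,0)$ entirely and verify the two conditions of \cref{def:D_p} directly from the decomposition $\hat\alpha = \hat\alpha_{1/k_0} + (\hat\alpha - \hat\alpha_{1/k_0})$: the first summand is finite, so contributes finitely many dots at infinite distance and a finite contribution to $W_p(\ell_\infty(\cdot),0)$; the second summand has $\norm{(d(x,A))_x}_p < \infty$ by your bound, hence no dots at infinite distance and finite $W_p(\ell_\infty(\cdot),0)$.

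Comparing routes: the paper proves $\alpha \in \overline{D}_p(X,A)$ first, by observing that the Cauchy property gives a uniform bound $K$ on $W_p(\ell_\infty(\alpha_n), 0)$ and passing this bound through each $\alpha_{1/k}$ (via convergence of upper parts) to $\alpha$. Once $\alpha \in \overline{D}_p(X,A)$ is known, the convergence $W_p(\alpha_{1/k}, \alpha) \to 0$ is a one-line consequence of \cref{lem:dist_between_lower_part_and_0} applied to $\alpha$ itself. Your argument instead bounds the tail $W_p(\alpha - \alpha_{1/k}, 0)$ by lower parts of the Cauchy sequence and then drives this to zero by a Cauchy-plus-splitting argument; that is precisely the content of \cref{lem:for_completeness_low_and_up}, stated immediately after the present lemma in the paper. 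So your proof anticipates and partially reproves that lemma, making it self-contained but longer; the paper's separation of concerns is more economical.
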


\begin{proof}

  Since $(\alpha_n)$ is a Cauchy sequence, there is an $M$ such that
  whenever $m,n > M$ it follows that
  $u_{\infty}(\alpha_m) = u_{\infty}(\alpha_n)$ and
  there is a $K$ such that
  for all $n$, $W_p(\ell_{\infty}(\alpha_n),0) \leq K$.
  
 Consider $k \in \N$.
  By Lemma \ref{lem:for_completeness_up_part_is_conver}, $(u_{\delta_{\frac{1}{k}}}(\alpha_n)) \to \alpha_{\frac{1}{k}}$.
  Then for all $n > M$ we have $|u_{\infty}(\alpha_{\frac{1}{k}})|=|u_{\infty}(\alpha_n)|$ and
  there is an $N$ such that whenever $n > N$ it follows that
  $W_p(\alpha_{\frac{1}{k}}, u_{\delta_{\frac{1}{k}}}(\alpha_n)) < 1$.
  So by the triangle inequality, for $n > N$ we have
  \begin{align*}
  	 W_p(\ell_{\infty}(\alpha_{\frac{1}{k}}), 0) & \leq W_p(\ell_{\infty}(\alpha_{\frac{1}{k}}), u_{\delta_{\frac{1}{k}}}(\ell_{\infty}(\alpha_n)))+W_p(u_{\delta_{\frac{1}{k}}}(\ell_{\infty}(\alpha_n)), 0) \\
  	 &\leq W_p(\alpha_{\frac{1}{k}}, u_{\delta_{\frac{1}{k}}}(\alpha_n)) + W_p(\ell_{\infty}(\alpha_n),0) \leq 1+K.
  \end{align*}
  Since this bound is independent of $k$,
  % By Lemma \ref{lem:for_completeness_up_part_is_conver}, if $c_1> c_2$ then
  % $\alpha_{c_1} \subseteq \alpha_{c_2}$ so $|u_{\infty}(\alpha)| =
  % |u_{\infty}(\alpha_{\tilde{N}+1})| < \infty$ and
  $W_p(\ell_{\infty}(\alpha), 0) \leq 1+K$.
  Thus $\alpha \in \overline{D}_p(X, A)$.

  Finally,
  $W_p(\alpha, \alpha_{\frac{1}{k}}) \leq W_p(\ell_{\delta_{\frac{1}{k}}}(\alpha),0) \to 0$ as $k \to \infty$, since $W_p(\ell_{\infty}(\alpha),0) < \infty$.
\end{proof}

% Note that $\ell_{\delta_c}(\alpha_n)=\sum_{i =M_c+1}^{\infty} x_i^n$ where $M_c$ and $\delta_c$ are as in Lemma \ref{lem:for_completeness_up_part}. 

\begin{lemma}\label{lem:for_completeness_low_and_up}
   % Let $\eps>0$. Then there exists $c>0$ such that for any $n \in \N$ we have $W_p(\ell_{\delta_c}(\alpha_n), 0)< \eps$ and hence $W_p(u_{\delta_c}(\alpha_n), \alpha_n) < \eps$.
 For all $\eps > 0$ there exists $c_0 > 0$ such that for all $c \in (0,c_0]$ and for all $n$, $W_p(\ell_c(\alpha_n),0) < \eps$.
    Hence $W_p(u_c(\alpha_n),\alpha_n) < \eps$.
\end{lemma}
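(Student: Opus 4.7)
The plan is to use the Cauchy property to reduce the uniformity-in-$n$ problem to a single ``reference'' diagram and then transfer tail-control along matchings. Given $\eps>0$, since $(\alpha_n)$ is Cauchy I choose $N$ so that for each $n\geq N$ there is a matching $\sigma_n$ of $\alpha_n$ and $\alpha_N$ with $\Cost_p(\sigma_n)<\eps/6$. Applying Lemma~\ref{lem:dist_between_lower_part_and_0} to the finitely many $\alpha_1,\ldots,\alpha_N$, I pick $c_k>0$ with $W_p(\ell_{c_k}(\alpha_k),0)<\eps$ for $k<N$ and with $W_p(\ell_{c_N}(\alpha_N),0)<\eps/2$. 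Setting $c_0=\min(c_1,\ldots,c_{N-1},c_N/2)$, the bound for $n<N$ is immediate from the monotonicity $\ell_c(\alpha_k)\leq\ell_{c_k}(\alpha_k)$ whenever $c\leq c_k$, so the whole statement reduces to proving, for $n\geq N$ and all $c>0$, the key inequality
\begin{equation*}
W_p(\ell_c(\alpha_n),0) \leq 3\,\Cost_p(\sigma_n) + W_p(\ell_{2c}(\alpha_N),0),
\end{equation*}
since then for $c\leq c_0$ one has $2c\leq c_N$ and hence $W_p(\ell_c(\alpha_n),0)<3\cdot\eps/6+\eps/2=\eps$.

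To establish this key inequality, I would restrict $\sigma_n$ to those pairs $(x,y)$ whose first coordinate lies in $\supp(\ell_c(\alpha_n))$. By \cref{lem:matching} this restriction is a matching of $\ell_c(\alpha_n)$ with a sub-multidiagram $\beta_N\leq\alpha_N$ (namely, the multiset of those $y$'s lying in $\supp(\alpha_N)$), so $W_p(\ell_c(\alpha_n),\beta_N)\leq\Cost_p(\sigma_n)$. Next I would split $\beta_N=\beta_N'+\beta_N''$ with $\beta_N'\leq u_{2c}(\alpha_N)$ and $\beta_N''\leq\ell_{2c}(\alpha_N)$. Then $W_p(\beta_N'',0)\leq W_p(\ell_{2c}(\alpha_N),0)$ trivially. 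For $\beta_N'$, every dot $y$ is matched by $\sigma_n$ to some $x$ with $d(x,A)<c$ while $d(y,A)\geq 2c$, forcing $d(x,y)>c$; counting such pairs gives $|\beta_N'|\leq(\Cost_p(\sigma_n)/c)^p$, and the pointwise bound $d(y,A)\leq d(y,x)+c$ combined with Minkowski's inequality yields $W_p(\beta_N',0)\leq\Cost_p(\sigma_n)+c|\beta_N'|^{1/p}\leq 2\,\Cost_p(\sigma_n)$. Finally, $p$-subadditivity (Lemma~\ref{lem:isomorphism}\ref{it:subadditivie}) gives $W_p(\beta_N,0)\leq W_p(\beta_N',0)+W_p(\beta_N'',0)$, and one more triangle inequality $W_p(\ell_c(\alpha_n),0)\leq W_p(\ell_c(\alpha_n),\beta_N)+W_p(\beta_N,0)$ completes the proof.

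The concluding ``hence'' clause is immediate: the matching of $u_c(\alpha_n)$ with $\alpha_n$ that is the identity on $u_c(\alpha_n)$ and sends each dot of $\ell_c(\alpha_n)$ to its nearest point of $A$ has cost exactly $W_p(\ell_c(\alpha_n),0)$, witnessing $W_p(u_c(\alpha_n),\alpha_n)\leq W_p(\ell_c(\alpha_n),0)<\eps$. The main obstacle is the cardinality estimate on $\beta_N'$ and picking the right intermediate threshold: a general threshold $c'>c$ would give the bound $\Cost_p(\sigma_n)\cdot c'/(c'-c)+W_p(\ell_{c'}(\alpha_N),0)$, and the choice $c'=2c$ cleanly yields the constants $3$ and $2c$ used above.
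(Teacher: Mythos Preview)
Your proof is correct and takes a genuinely different, more elementary route than the paper's. The paper argues by contradiction: assuming the conclusion fails, it uses the machinery of Lemma~\ref{lem:for_completeness_up_part} to build nested sequences $(c_k)$, $(\delta_k)$, $(n_k)$ and then derives a contradiction by a careful case analysis of matchings between lower parts. Your argument is direct: fix a single reference index $N$ from the Cauchy condition, control the finitely many $\alpha_1,\ldots,\alpha_N$ individually via Lemma~\ref{lem:dist_between_lower_part_and_0}, and transfer tail control from $\alpha_N$ to $\alpha_n$ through the explicit inequality $W_p(\ell_c(\alpha_n),0)\le 3\Cost_p(\sigma_n)+W_p(\ell_{2c}(\alpha_N),0)$. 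The counting estimate $|\beta_N'|\le(\Cost_p(\sigma_n)/c)^p$ together with Minkowski is exactly the right tool here and makes the transfer step transparent. Your approach avoids Lemma~\ref{lem:for_completeness_up_part} entirely, so the lemma becomes logically independent of the rest of the completeness machinery; the paper's approach reuses structure that is needed elsewhere anyway, but at the cost of a more intricate argument for this particular statement. One minor wording point: in the ``hence'' clause, ``nearest point of $A$'' presumes $A$ is distance minimizing, but the inequality $W_p(u_c(\alpha_n),\alpha_n)\le W_p(\ell_c(\alpha_n),0)$ follows directly from $p$-subadditivity (Lemma~\ref{lem:isomorphism}\ref{it:subadditivie}) without that assumption.
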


\begin{proof}
	To start, we claim that the first statement of the lemma is equivalent to the following statement: for all $\eps > 0$ there exists $c_0 >0$ such that for all $n$, $W_p(\ell_{c_0}(\alpha_n),0) < \eps$. Note that this is the special case of the lemma. And if we assume this statement is true then for all $c \in (0, c_0]$, $\ell_c(\alpha_n) \leq \ell_{c_0}(\alpha_n)$ and hence $W_p(\ell_{c}(\alpha_n),0) \leq W_p(\ell_{c_0}(\alpha_n),0) < \eps$.
	
We prove this equivalent statement by contradiction.
  Suppose there exists an $\eps >0$ such that for all $c>0$ 
  there is an $n$ with
  \[W_p(\ell_{c}(\alpha_{n}), 0) \geq \eps.\]
  Take such $\eps > 0$.
Starting with $c_1 = 1$, construct sequences $(c_k)$, $(\delta_k)$, and $(n_k)$ recursively as follows. For each $k$, apply \cref{lem:for_completeness_up_part} to $c_k$ to obtain $\delta_k \in (0,c_k)$, apply our assumption to $\delta_k$ to obtain an $n_k$ such that $W_p(\ell_{\delta_k}(\alpha_{n_k}),0) \geq \eps$, and let $c_{k+1} = \frac{\delta_k}{2}$. Note that for all $k$, $0 < c_{k+1} < \delta_k < c_k$.

Let $0< \delta < \frac{\eps}{3}$.
Since $(\alpha_{n_{k}})$ is a Cauchy sequence, there is an $N$ such that whenever $k \geq N$  we have $W_p(\alpha_{n_{k}}, \alpha_{n_{N}}) < \delta$. Since the sequence $(c_k)$ decreases to $0$ then, by Lemma \ref{lem:dist_between_lower_part_and_0},  we can pick $m \geq N$ such that $W_p(\ell_{c_m}(\alpha_{n_N}), 0) < \delta$.
Note that for all $k \geq m$, $\delta_{k} < c_m$
and thus
$W_p(\ell_{\delta_{k}}(\alpha_{n_N}), 0) < \delta$.
We showed earlier that
for all $k$, we have
$W_p(\ell_{\delta_{k}}(\alpha_{n_k}), 0) \geq \eps$.
Therefore, for all $k \geq m$, by the triangle inequality, we have 
\[W_p(\ell_{\delta_{k}}(\alpha_{n_k}), \ell_{\delta_{m}}(\alpha_{n_N})) \geq W_p(\ell_{\delta_{k}}(\alpha_{n_k}), 0) -W_p(0, \ell_{\delta_{m}}(\alpha_{n_N})) > \eps - \delta > 2\delta.\]

We will show that this inequality leads to a contradiction.
Let $k > m$. First observe that if $x \in A^{\delta_k} \setminus A$ and $y \in X \setminus A^{\delta_m}$ then
\begin{equation} \label{eq:dxy}
  d(x,y) \geq d(y,A) - d(x,A) > \delta_m - \delta_k > \delta_m - c_k \geq c_k > \delta_k > d(x,A).
\end{equation}

By \cref{def:wasserstein}, there exists a matching $\sigma$ of $\alpha_{n_k}$ and $\alpha_{n_N}$ with $\Cost_p(\sigma) < \delta$.
Next, we use $\sigma$ to obtain a matching of $\ell_{\delta_k}(\alpha_{n_k})$ and $\ell_{\delta_m}(\alpha_{n_N})$ as follows.
Let $\tau$ be the restriction of $\sigma$ to $(A^{\delta_k} \setminus A) \times X$.
Then $\tau$ is a matching of $\ell_{\delta_k}(\alpha_{n_k})$ and $(\pi_2)_*(\tau)$, and $\Cost_p(\tau) \leq \Cost_p(\sigma)$.
  We will now use \eqref{eq:dxy} to change $\tau$ to a matching $\varphi$ with lower cost.
  Let $\theta$ be the restriction of $\tau$ to $(A^{\delta_k} \setminus A) \times (A^{\delta_{m}} \setminus A)$.
  Then $(\pi_1)_*(\theta) \leq (\pi_1)_*(\tau) = \ell_{\delta_k}(\alpha_{n_k})$.
  Let $\varphi = \theta + ((\pi_1)_*(\tau) - (\pi_1)_*(\theta)) \times A$.
  Then $(\pi_1)_*(\varphi) = (\pi_1)_*(\tau) = \ell_{\delta_k}(\alpha_{n_k})$ and $(\pi_2)_*(\varphi) = (\pi_2)_*(\theta) \leq \ell_{\delta_m}(\alpha_{n_N})$.
  By \cref{def:wasserstein} and \eqref{eq:dxy}, $\Cost_p(\varphi) \leq \Cost_p(\tau)$.
  Let $\rho = \varphi + A \times (\ell_{\delta_m}(\alpha_{n_N}) - (\pi_2)_*(\varphi))$.
  Then
  $\rho$ is a matching of $\ell_{\delta_k}(\alpha_{n_k})$ and $\ell_{\delta_m}(\alpha_{n_N})$ and
  \begin{equation*}
    \Cost_p(\rho)^p \leq \Cost_p(\varphi)^p + W_p(\ell_{\delta_m}(\alpha_{n_N}),0)^p \leq \Cost_p(\sigma)^p + W_p(\ell_{\delta_m}(\alpha_{n_N}),0)^p < 2\delta^p.
  \end{equation*}
% keep the next sentence as part of the above paragraph.
Thus $W_p(\ell_{\delta_{k}}(\alpha_{n_k}), \ell_{\delta_{m}}(\alpha_{n_N})) < (2)^{1/p}\delta < 2\delta$, which gives the desired contradiction.
	
For the second statement in the lemma,
$W_p(u_{c}(\alpha_{n}), \alpha_n) \leq W_p(0, \ell_{c}(\alpha_{n})) < \eps.$
\end{proof}

\begin{theorem}\label{thm:completeness_p}
	Let $(X,d,A)$ be a metric pair and $p \in [1, \infty)$. If $(X, d)$ is complete  then $(\overline{D}_p(X, A), W_p)$ is complete.
\end{theorem}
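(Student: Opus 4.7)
The plan is to assemble the four preparatory lemmas (\cref{lem:for_completeness_up_part}, \cref{lem:for_completeness_up_part_is_Cauchy}, \cref{lem:for_completeness_up_part_is_conver}, \cref{lem:for_completeness_up_part_and_alpha}, \cref{lem:for_completeness_low_and_up}) into a short triangle-inequality argument. The candidate limit $\alpha$ of the Cauchy sequence $(\alpha_n)$ is already constructed in \cref{lem:for_completeness_up_part_and_alpha} as $\alpha = \bigcup_{k=1}^{\infty} \alpha_{1/k}$, where each $\alpha_{1/k}$ is a limit of the upper parts $u_{\delta_{1/k}}(\alpha_n)$ obtained by taking Cauchy sequences in $X$ coordinate-wise and using that $(X,d)$ is complete. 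Moreover that lemma already shows $\alpha \in \overline{D}_p(X,A)$ and $W_p(\alpha_{1/k},\alpha) \to 0$ as $k \to \infty$. So the only remaining task is to prove $W_p(\alpha_n,\alpha) \to 0$.

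Given $\eps > 0$, the plan is to split $W_p(\alpha_n,\alpha)$ into three pieces via
\[ W_p(\alpha_n,\alpha) \leq W_p(\alpha_n, u_{\delta_{1/k}}(\alpha_n)) + W_p(u_{\delta_{1/k}}(\alpha_n), \alpha_{1/k}) + W_p(\alpha_{1/k},\alpha), \]
and bound each by $\eps/3$ for suitable $k$ and $n$. First, I would apply \cref{lem:for_completeness_low_and_up} to obtain some $c_0 > 0$ such that $W_p(u_c(\alpha_n),\alpha_n) < \eps/3$ for all $c \in (0,c_0]$ and \emph{all} $n$ simultaneously; this is the uniform-in-$n$ control of the lower part, which was the main technical content of that lemma. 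Second, by \cref{lem:for_completeness_up_part_and_alpha}, choose $k$ large enough that $1/k \leq c_0$ and $W_p(\alpha_{1/k},\alpha) < \eps/3$. Since $\delta_{1/k} \in (0, 1/k] \subset (0,c_0]$, the first summand above is automatically less than $\eps/3$ for every $n$. Third, by \cref{lem:for_completeness_up_part_is_conver} applied with $c = 1/k$, choose $N$ such that $W_p(u_{\delta_{1/k}}(\alpha_n), \alpha_{1/k}) < \eps/3$ for every $n > N$. Combining these three bounds yields $W_p(\alpha_n,\alpha) < \eps$ for all $n > N$, so $\alpha_n \to \alpha$.

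As usual in such Cauchy completion arguments, one may first pass to a subsequence of $(\alpha_n)$ (for instance with $W_p(\alpha_n,\alpha_{n+1}) < 2^{-n-1}$) to simplify bookkeeping, and then use the standard fact that a Cauchy sequence with a convergent subsequence is itself convergent to the same limit. The main obstacle has already been overcome inside \cref{lem:for_completeness_low_and_up}, where the proof-by-contradiction argument forced the lower parts $\ell_c(\alpha_n)$ to be uniformly small as $c \to 0$; without this uniformity in $n$, one could not simultaneously approximate every $\alpha_n$ by its upper part, and the three-term triangle inequality would collapse. The conclusion of the theorem is then a short packaging of the lemmas, and no further analytic estimates are needed.
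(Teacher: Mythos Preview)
Your proposal is correct and follows essentially the same approach as the paper: the same three-term triangle inequality $W_p(\alpha_n,\alpha) \leq W_p(\alpha_n, u_{\delta_{1/k}}(\alpha_n)) + W_p(u_{\delta_{1/k}}(\alpha_n), \alpha_{1/k}) + W_p(\alpha_{1/k},\alpha)$, with the three summands controlled respectively by \cref{lem:for_completeness_low_and_up}, \cref{lem:for_completeness_up_part_is_conver}, and \cref{lem:for_completeness_up_part_and_alpha}. The only cosmetic difference is that the paper phrases the choice of $k$ as $k \geq 1/c_0$ rather than $1/k \leq c_0$, and does not explicitly mention passing to a subsequence (that bookkeeping is already absorbed into the lemmas).
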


\begin{proof}
	% The proof follows from Lemmas \ref{lem:for_completeness_up_part}, \ref{lem:for_completeness_up_part_is_Cauchy}, \ref{lem:for_completeness_up_part_is_conver}, \ref{lem:for_completeness_up_part_and_alpha},  \ref{lem:for_completeness_low_and_up} and the triangle inequality
	% \[W_p(\alpha_n, \alpha) \leq W_p(\alpha_n, u_{\delta_c}(\alpha_n))+W_p(u_{\delta_c}(\alpha_n), \alpha_c)+W_p(\alpha_c, \alpha).\]
 Let $\eps > 0$.
  Use \cref{lem:for_completeness_low_and_up}
  to choose a $c_0$ such that 
  whenever $c \in (0,c_0]$ for all $n$, $W_p(\alpha_n,u_c(\alpha_n)) < \frac{\eps}{3}$.
  Use \cref{lem:for_completeness_up_part_and_alpha} to choose a $k \geq \frac{1}{c_0}$ such that $W_p(\alpha_{\frac{1}{k}},\alpha) < \frac{\eps}{3}$.
  Use \cref{lem:for_completeness_up_part_is_conver} to choose an $N$ such that whenever $n > N$, $W_p(u_{\delta_{\frac{1}{k}}}(\alpha_n),\alpha_{\frac{1}{k}}) < \frac{\eps}{3}$.
  Then whenever $n > N$,
  $W_p(\alpha_n,\alpha) \leq W_p(\alpha_n,u_{\delta_{\frac{1}{k}}}(\alpha_n)) + W_p(u_{\delta_{\frac{1}{k}}}(\alpha_n),\alpha_{\frac{1}{k}}) + W_p(\alpha_{\frac{1}{k}},\alpha) < \eps$.
\end{proof}

\begin{proposition} \label{prop:complete}
	  If $(\overline{D}_p(X,A), W_p)$ is complete for $1 \leq p \leq \infty$ then $(X/A, d_{p})$ is also complete.
\end{proposition}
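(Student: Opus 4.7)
The plan is to exploit the isometric embedding of $(X/A,d_p)$ into $(\overline{D}_p(X,A),W_p)$ and invoke the standard fact that a closed subspace of a complete metric space is complete.

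First I would recall from Lemma~\ref{lem:isomorphism}\ref{it:isomorphism-d} that the canonical map gives an isometric isomorphism $(X/A,d_p) \isomto (D^1(X,A),W_p)$. In particular, the pointed metric space $(X/A,d_p)$ is complete if and only if $(D^1(X,A),W_p)$ is complete. So it suffices to show that $D^1(X,A)$ is a complete subspace of $(\overline{D}_p(X,A),W_p)$.

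Next I would use Proposition~\ref{prop:D^n_closed} with $n=1$ and $m=\infty$ to conclude that $D^1(X,A)$ is closed in $(\overline{D}(X,A),W_p)$. Since $\overline{D}_p(X,A)$ is a subset of $\overline{D}(X,A)$ carrying the restriction of $W_p$, it follows that $D^1(X,A) = D^1(X,A) \cap \overline{D}_p(X,A)$ is closed in $(\overline{D}_p(X,A),W_p)$.

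Finally, given a Cauchy sequence $(\alpha_n)$ in $(D^1(X,A),W_p)$, I would view it as a Cauchy sequence in $(\overline{D}_p(X,A),W_p)$, which converges to some $\alpha \in \overline{D}_p(X,A)$ by the assumed completeness. Because $D^1(X,A)$ is closed, $\alpha \in D^1(X,A)$, so the sequence converges inside $D^1(X,A)$. Transporting back along the isometric isomorphism of Lemma~\ref{lem:isomorphism}\ref{it:isomorphism-d} yields convergence in $(X/A,d_p)$, completing the proof. There is no real obstacle here; the only subtlety worth flagging is that under \cref{def:metric} limits need not be unique, but this does not affect the completeness argument since we only need the existence of a limit.
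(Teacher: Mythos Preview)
Your proof is correct and matches the paper's approach: the paper likewise invokes Proposition~\ref{prop:D^n_closed} to see that $D^1(X,A)$ is closed in $\overline{D}_p(X,A)$, concludes it is complete, and then transfers completeness to $(X/A,d_p)$ via the isometric isomorphism of Lemma~\ref{lem:isomorphism}. Your remark about non-unique limits is a fair aside but, as you note, irrelevant to the argument.
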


\begin{proof}
  By Proposition \ref{prop:D^n_closed}, $D^1(X,A)$ is a closed subset of $\overline{D}_p(X,A)$. Therefore, $D^1(X,A)$ is complete.
  By Lemma~\ref{lem:isomorphism}, $(D^1(X,A),W_p)$ is isometrically isomorphic with $(X/A,d_p)$. % and by Lemma~\ref{lem:equiv_metrics} $(X/A,d_p)$ is Lipschitz equivalent to $(X/A,d_1)$. It follows that $(X/A, d_1)$ is complete.
   Hence $(X/A, d_p)$ is complete.
\end{proof}

   \begin{theorem}
   	Let $(X, d, A)$ be a metric pair where $(X,d)$ is complete and let $p \in [1, \infty]$. Then
   	$(D(X,A), W_p)$ is complete  if and only if $(D(X,A), W_p)=(\overline{D}_p(X,A), W_p)$.
   \end{theorem}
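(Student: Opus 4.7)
My plan is as follows.

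For the $(\Leftarrow)$ direction, observe that by \cref{thm:completeness_inf} (when $p = \infty$) and \cref{thm:completeness_p} (when $p \in [1,\infty)$), the space $(\overline{D}_p(X,A), W_p)$ is complete whenever $(X,d)$ is. Hence if $(D(X,A), W_p) = (\overline{D}_p(X,A), W_p)$ as metric spaces, then $(D(X,A), W_p)$ is complete.

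For the $(\Rightarrow)$ direction, assume $(D(X,A), W_p)$ is complete. Since $D(X,A) \subset \overline{D}_p(X,A)$ holds in general, I argue the reverse inclusion by contradiction. Suppose there exists $\alpha \in \overline{D}_p(X,A) \setminus D(X,A)$; then $|\alpha| = \infty$, and because every multiplicity $\hat{\alpha}(x)$ is finite, the support of $\alpha$ must be infinite. Write $\hat{\alpha} = \sum_{i=1}^{\infty} x_i$ and set $\alpha_n = \sum_{i=1}^{n} x_i \in D(X, A)$. By \cref{cor:seq_convergent}, $\alpha_n \to \alpha$ in $(\overline{D}_p(X,A), W_p)$, so $(\alpha_n)$ is Cauchy in $(D(X,A), W_p)$. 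By the assumed completeness, $\alpha_n \to \beta$ in $(D(X,A), W_p)$ for some $\beta \in D(X,A)$, hence also in $(\overline{D}_p(X,A), W_p)$, and the triangle inequality yields $W_p(\alpha, \beta) = 0$.

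To reach a contradiction, I will show the strict inequality $W_p(\alpha, \beta) > 0$. Re-enumerate $\hat\alpha$ so that $(d(x_i, A))_{i \geq 1}$ is non-increasing, listing the finitely many indices with $d(x_i, A) = \infty$ first; this ordering is possible because $\{i : d(x_i, A) \geq \delta\}$ is finite for every $\delta > 0$ (\cref{lem:u_bounded}). Set $m = |\beta|$. The argument of \cref{lem:W_p_ineq}, which via \cref{lem:matching} decomposes any matching of $\alpha$ and $\beta$ into a part of size at most $m$ pairing some $x_k$ with dots of $\beta$ together with a residual matching of the remaining $x_i$'s to points of $A$, applies verbatim to $\alpha \in \overline{D}_p(X,A)$, giving
\[
  W_p(\alpha, \beta) \;\geq\; W_p\bigl(\tsum_{i > m} x_i,\, 0\bigr) \;=\; \norm{(d(x_i, A))_{i > m}}_p.
\]
Because $A$ is closed and each $x_i \in X \setminus A$, every $d(x_i, A) > 0$. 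For $p < \infty$ the right-hand side equals $\bigl(\sum_{i > m} d(x_i, A)^p\bigr)^{1/p}$, a sum of infinitely many strictly positive terms, hence positive; for $p = \infty$ it equals $d(x_{m+1}, A) > 0$. This contradicts $W_p(\alpha,\beta) = 0$ and forces $D(X,A) = \overline{D}_p(X,A)$.

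The main obstacle is subtle: because $d$ is only an extended pseudometric, the distance $W_p$ on $\overline{D}_p(X,A)$ need not be separated, so ``$W_p(\alpha, \beta) = 0$'' does not automatically give $\alpha = \beta$ and thus does not immediately contradict $|\alpha| = \infty$ with $|\beta| < \infty$. What rescues the argument is that the closedness of $A$ forces $d(x,A) > 0$ for every support point of $\alpha$, and the membership $\alpha \in \overline{D}_p(X,A)$ guarantees a well-ordered tail of positive distances; after peeling off the $m$ largest distances (which is all that $\beta$ can absorb in any matching) an infinite tail of positive contributions remains, yielding the strictly positive lower bound for $W_p(\alpha,\beta)$.
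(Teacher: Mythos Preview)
Your proof is correct and uses the same core mechanism as the paper: build a Cauchy sequence of finite partial sums of an infinite diagram and invoke the cardinality-based lower bound of \cref{lem:W_p_ineq} to rule out any finite limit. The paper packages the forward direction slightly differently---it first shows $A$ must be isolated (constructing the specific sequence $x_n$ with $0<d(x_n,A)<2^{-n}$ when it is not) and then appeals to \cref{prop:D_equal_D_p}---whereas you argue directly from an arbitrary $\alpha\in\overline{D}_p(X,A)\setminus D(X,A)$ using \cref{cor:seq_convergent}. Your route is marginally more direct since it avoids the detour through the ``isolated'' characterization, at the cost of needing to justify the extension of \cref{lem:W_p_ineq} to infinite $\alpha$ (which, as you note, follows immediately from the decomposition in \cref{lem:matching}); the paper's route has the side benefit of isolating the structural reason---non-isolated $A$---for the failure of completeness.
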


\begin{proof}
  Suppose $(D(X,A), W_p)$ is complete. Assume  $A$  is not isolated. %$A^{\delta} \neq A$ for any $\delta >0$. 
  Then there is a sequence $(x_n)$ of distinct elements of $X$ such that $0< d(x_n, A) < \frac{1}{2^{n}}$ and $d(x_{n+1}, A) \leq d(x_n, A)$. Now consider the following sequence in $D(X, A)$:
   	\[\alpha_1=x_1, \ \alpha_2=x_1+x_2, \ \dots, \ \alpha_n=x_1+x_2+ \dots +x_n, \ \dots .\]
   	For $k > n$, we have 
   	\begin{align*}
   		W_p(\alpha_n, 
   		\alpha_k)=\norm{(d(x_{n+1}, A), \dots, d(x_{k}, A))}_p <  \norm{(\textstyle\frac{1}{2^{n+1}}, \dots, \textstyle\frac{1}{2^k})}_p  
   		\leq \norm{(\textstyle\frac{1}{2^{n+1}}, \dots, \textstyle\frac{1}{2^k})}_1 < \frac{1}{2^n}.
   	\end{align*}
   	So $(\alpha_n)$ is a Cauchy sequence. Since $D(X, A)$ is complete, then $(\alpha_n) \to \alpha$ where $\alpha \in D^m(X,A)$ for some $m \in \N$. By Lemma \ref{lem:W_p_ineq}, for any $k>m$ we have 
  $W_p(\alpha_k, \alpha) \geq W_p(\sum_{i=m+1}^k x_i, 0) \geq d(x_{m+1}, A)$
   	which leads us to a contradiction. Hence $A$ is isolated. %there is $\delta>0$ such that $A^{\delta}=A$. 
   	Therefore, by Proposition \ref{prop:D_equal_D_p}, $\overline{D}_p(X,A)=D(X,A)$.
   	
   	The proof in the converse direction follows from Theorems \ref{thm:completeness_p} and \ref{thm:completeness_inf}.
   \end{proof}

	We call a complete metric space $(\overline{X}, \overline{d})$ a \emph{completion} of a metric space $(X,d)$ if $X$ is a dense subset of $\overline{X}$.
	
	\begin{proposition}\label{prop:completion}
		Let $(X, d, A)$ be a metric pair where $(X,d)$ is complete and let $p \in [1,\infty]$. Then $(\overline{D}_p(X, A), W_p)$ is a completion of $(D(X, A), W_p)$.
	\end{proposition}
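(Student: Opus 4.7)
The plan is to verify the two defining properties of a completion: that $(\overline{D}_p(X,A), W_p)$ is complete and that $(D(X,A), W_p)$ sits as a dense subset.

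First, completeness of $(\overline{D}_p(X,A), W_p)$ is essentially already in hand. For $p \in [1,\infty)$, Theorem \ref{thm:completeness_p} gives completeness directly from the assumption that $(X,d)$ is complete. For $p = \infty$, Theorem \ref{thm:completeness_inf} does the same. So this half of the claim requires nothing beyond invoking those results.

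Second, I need to show that every $\alpha \in \overline{D}_p(X,A)$ is a limit of elements in $D(X,A)$. The natural candidates are the upper truncations $u_\delta(\alpha)$ for small $\delta > 0$. By Lemma \ref{lem:u_bounded}, $|u_\delta(\alpha)| < \infty$ for every $\delta > 0$, so $u_\delta(\alpha) \in D(X,A)$. Moreover, since $\hat{\alpha} = u_\delta(\alpha) + \ell_\delta(\alpha)$, the obvious identity matching between $\alpha$ and $u_\delta(\alpha)$ has cost bounded by $W_p(\ell_\delta(\alpha), 0)$, giving
\[
W_p(\alpha, u_\delta(\alpha)) \leq W_p(\ell_\delta(\alpha), 0).
\]
By Lemma \ref{lem:dist_between_lower_part_and_0}, the right-hand side tends to $0$ as $\delta \to 0$, so given $\eps > 0$ we may choose $\delta$ so small that $W_p(\alpha, u_\delta(\alpha)) < \eps$. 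Hence $D(X,A)$ is dense in $\overline{D}_p(X,A)$.

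There is no real obstacle here, since both ingredients have already been established earlier in the paper; the proposition is essentially an assembly of Theorems \ref{thm:completeness_inf} and \ref{thm:completeness_p} with Lemmas \ref{lem:u_bounded} and \ref{lem:dist_between_lower_part_and_0}. The only small point worth noting is that density together with completeness of the ambient space gives the uniqueness of the completion up to isometry, so this justifies calling $\overline{D}_p(X,A)$ \emph{the} completion of $D(X,A)$ (as referenced in the theorem listing in the introduction).
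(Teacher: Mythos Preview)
Your proof is correct and follows essentially the same approach as the paper: invoke Theorems \ref{thm:completeness_inf} and \ref{thm:completeness_p} for completeness, then use the upper truncations $u_\delta(\alpha)$ together with Lemma \ref{lem:dist_between_lower_part_and_0} to establish density. The only cosmetic difference is that the paper cites Lemma \ref{lem:DpDq} (to place $\alpha$ in $\overline{D}_\infty(X,A)$) where you cite Lemma \ref{lem:u_bounded}, but these amount to the same thing.
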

	
	\begin{proof}
		Suppose $(X, d)$ is a complete metric space. Then by Theorems \ref{thm:completeness_p} and \ref{thm:completeness_inf}, $(\overline{D}_p(X, A), W_p)$ is complete. Let $\alpha \in  \overline{D}_p(X, A)$ and $\eps>0$. Then by Lemma \ref{lem:dist_between_lower_part_and_0}, there is $\delta >0$  such that $W_p(\ell_{\delta}(\alpha), 0) < \eps$. Hence,
$W_p(\alpha,  u_{\delta}(\alpha))\leq W_p(\ell_{\delta}(\alpha), 0)<\eps$.
By \cref{lem:DpDq}, $\alpha \in \overline{D}_{\infty}(X,A)$. Thus,
 $u_{\delta}(\alpha) \in D(X,A)$. Therefore, we have that $D(X,A)$ is dense in $\overline{D}_p(X, A)$.
	\end{proof}

%	\vspace{10mm}

\section{Compactness results}
\label{subsection:compactness}

In this section we study various notions of compactness in spaces of persistence diagrams.
Recall that we use a more general definition of metric than in standard (Definition~\ref{def:metric}).

\subsection{Basic definitions and elementary results}

First, we observe that a number of standard results hold for our notion of metric space. In particular, the standard proofs hold verbatim.
A topological space $X$ is \emph{limit point compact} if every infinite subset of $X$ has a limit point.
A topological space $X$ is \emph{sequentially compact} if any sequence in $X$ has a convergent subsequence.

\begin{lemma}[{\cite[Theorem 28.2]{munkres2000topology}}]\label{thm:equiv_types_compts}
  It is equivalent for a metric space to be compact, limit point compact, and sequentially compact.
\end{lemma}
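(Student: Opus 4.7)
The plan is to reduce the generalized-metric setting of \cref{def:metric} to the standard case covered by \cite[Theorem 28.2]{munkres2000topology}, and then invoke that classical result. The reduction involves addressing the two features that distinguish the notion of metric used here from the textbook one, namely that distances may be infinite and that $d(x,y)=0$ does not force $x=y$.

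First, to dispose of infinite distances, I would introduce the equivalence relation $x \approx y$ iff $d(x,y) < \infty$. The equivalence classes are clopen in $X$ since every finite-radius ball is contained in a single class, so $(X,d)$ decomposes as a topological disjoint union of its $\approx$-classes. Each of compactness, sequential compactness, and limit point compactness forces this decomposition to be finite: for instance, picking one representative from infinitely many classes yields a sequence with no convergent subsequence, since every convergent sequence must eventually lie in a single class. Conversely, if there are finitely many classes and each enjoys the property in question, then so does $X$. Thus one may assume throughout that all distances are finite.

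Second, to handle the absence of separation, pass to the quotient $\widetilde{X} = X/{\sim}$, where $x \sim y$ iff $d(x,y)=0$, equipped with the induced (now separated) metric $\widetilde{d}([x],[y])=d(x,y)$. The quotient map $\pi : X \to \widetilde{X}$ is continuous, open, and surjective, and every open subset of $X$ is $\sim$-saturated because any open ball around $x$ contains every $y$ with $d(x,y)=0$. This correspondence lets one transfer each of the three compactness notions between $X$ and $\widetilde{X}$: open covers descend and lift, sequences and their convergent subsequences project and lift, and limit points correspond. Since $\widetilde{X}$ is a metric space in the textbook sense, \cite[Theorem 28.2]{munkres2000topology} applies verbatim and the equivalences transfer back to $X$.

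The main (minor) obstacle is the transfer of limit point compactness when an infinite subset $S \subset X$ projects to a finite subset of $\widetilde{X}$. In that case some $\sim$-class must contain infinitely many elements of $S$, and then any point of that class is automatically a limit point of $S$ in $X$ because every neighborhood of it contains the entire class. So the limit point compactness conclusion holds directly in $X$ even without appeal to $\widetilde{X}$, completing the reduction.
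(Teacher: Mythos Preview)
Your reduction strategy has a genuine gap in the transfer of limit point compactness to the Kolmogorov quotient $\widetilde{X}$, and in fact the gap cannot be repaired because the lemma is false in the extended-pseudometric setting of \cref{def:metric}. Consider $X = \{y_k, a_k : k \in \N\}$ with $d(y_k, a_k) = 0$ for each $k$ and $d(p,q) = 1$ whenever $p,q$ lie in distinct pairs. This $X$ is limit point compact: given any infinite $S \subset X$, some pair $\{y_k, a_k\}$ meets $S$, say $y_k \in S$, and then $a_k$ is a limit point of $S$ since every ball about $a_k$ contains $y_k \neq a_k$. But $X$ is not sequentially compact: the sequence $(y_k)$ has no convergent subsequence, since any limit $z$ would eventually satisfy $d(y_{n_k}, z) = 0$, forcing $z \in \{y_{n_k}, a_{n_k}\}$ for infinitely many distinct $n_k$. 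Correspondingly, $\widetilde{X}$ is a countably infinite discrete space, which is not limit point compact, so your claim that ``limit points correspond'' fails in the direction from $X$ to $\widetilde{X}$. The same phenomenon undermines your first reduction: replacing the inter-pair distance $1$ by $\infty$ gives a limit point compact space with infinitely many $\approx$-classes, because a set of representatives still has each $a_k$ as a limit point.

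The paper itself offers no proof beyond asserting that the standard arguments from \cite{munkres2000topology} apply verbatim, and the same counterexample shows that assertion is incorrect for the implication from limit point compactness to sequential compactness: Munkres's extraction of a convergent subsequence relies on every ball about a limit point meeting the set in infinitely many points, which fails when the only nearby points lie at distance zero. The implications ``compact $\Rightarrow$ limit point compact'' and ``compact $\Leftrightarrow$ sequentially compact'' do survive in the pseudometric setting (the Lebesgue-number and total-boundedness arguments go through unchanged), and these are the directions the paper actually uses; the one apparent exception, in the proof of \cref{lem:hemic_implies_loc_comp}, invokes ``limit point compact $\Rightarrow$ compact'' for a set that is visibly sequentially compact, so that use is easily repaired.
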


\begin{lemma}[{\cite[Lemma 43.1]{munkres2000topology}}]
  A metric space is complete if and only if every Cauchy sequence has a convergent subsequence.
\end{lemma}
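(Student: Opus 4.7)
The proof plan is to establish the two directions of the equivalence separately, noting that the standard argument carries over to the setting of extended pseudometric spaces (\cref{def:metric}), since nothing in the argument uses the separation axiom or the finiteness of distances in an essential way; we only need the triangle inequality and symmetry.

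For the forward direction, suppose $(X,d)$ is complete and let $(x_n)$ be a Cauchy sequence. By completeness, $(x_n)$ itself converges, so in particular it has a convergent subsequence (namely itself). This direction requires no work.

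For the converse, assume every Cauchy sequence in $(X,d)$ has a convergent subsequence, and let $(x_n)$ be an arbitrary Cauchy sequence. By hypothesis, there is a subsequence $(x_{n_k})$ converging to some $x \in X$. The plan is to show that the full sequence $(x_n)$ also converges to $x$ using a standard $\eps/2$ argument. Given $\eps > 0$, use the Cauchy property to choose $N$ such that $d(x_m, x_n) < \eps/2$ whenever $m, n \geq N$, and use the convergence of the subsequence to choose $K$ such that $d(x_{n_k}, x) < \eps/2$ whenever $k \geq K$. Pick any $k$ large enough that both $k \geq K$ and $n_k \geq N$ hold. Then for every $n \geq N$, the triangle inequality gives
\[
d(x_n, x) \leq d(x_n, x_{n_k}) + d(x_{n_k}, x) < \tfrac{\eps}{2} + \tfrac{\eps}{2} = \eps,
\]
so $x_n \to x$ and $(X,d)$ is complete.

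There is essentially no obstacle here; the only subtlety worth flagging is that under \cref{def:metric} the metric may be an extended pseudometric, so limits need not be unique and distances may be infinite. Neither of these affects the argument: the triangle inequality is still available, and the Cauchy condition automatically forces $d(x_m, x_n)$ to be finite for large $m,n$, so the finite estimates above are legitimate. Hence the standard proof from \cite{munkres2000topology} applies verbatim.
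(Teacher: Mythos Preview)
Your proof is correct and is exactly the standard argument the paper has in mind. The paper does not supply its own proof for this lemma; it simply cites Munkres and remarks that ``the standard proofs hold verbatim'' for extended pseudometrics, which is precisely what you have written out and justified.
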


A subset $A$ of a metric space $(X, d)$ is \emph{totally bounded} if and only if for each $\eps >0$, there exists a finite subset $S=\{x_1,\dots, x_n\} \subset A$ such that $A \subset \cup_{i=1}^n B_{\eps}(x_i)$.

\begin{lemma}\label{lem:tot_bd_subset}
	Let $(X, d)$ be a metric space and let $Y \subset X$. If $X$ is totally bounded then $Y$ is totally bounded.
\end{lemma}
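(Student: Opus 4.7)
The plan is to use the standard $\eps/2$-trick: totally boundedness of $X$ gives us a finite $\eps/2$-net for $X$, and from each ball that meets $Y$ we extract a representative in $Y$ to build a finite $\eps$-net for $Y$.

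More precisely, I would fix $\eps > 0$ and invoke total boundedness of $X$ to obtain points $x_1, \dots, x_n \in X$ such that $X \subset \bigcup_{i=1}^{n} B_{\eps/2}(x_i)$. Let $J = \{i \in \{1,\dots,n\} \mid B_{\eps/2}(x_i) \cap Y \neq \emptyset\}$, and for each $i \in J$ pick some $y_i \in B_{\eps/2}(x_i) \cap Y$. I claim $\{y_i\}_{i \in J}$ is the desired finite $\eps$-net in $Y$.

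Indeed, any $y \in Y \subset X$ lies in some $B_{\eps/2}(x_i)$, which forces $i \in J$, and then by the triangle inequality (which holds in our generalized metric sense, \cref{def:metric})
\[
d(y, y_i) \leq d(y, x_i) + d(x_i, y_i) < \tfrac{\eps}{2} + \tfrac{\eps}{2} = \eps,
\]
so $y \in B_{\eps}(y_i)$. Hence $Y \subset \bigcup_{i \in J} B_{\eps}(y_i)$, and $\{y_i\}_{i \in J}$ is a finite subset of $Y$ witnessing that $Y$ is totally bounded. There is essentially no obstacle here; the only minor subtlety is that distances may be infinite in our setting, but the two triangle-inequality summands are each strictly less than $\eps/2$, so they remain finite and the bound is valid.
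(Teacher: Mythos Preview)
Your proof is correct and follows essentially the same argument as the paper: both use an $\eps/2$-net for $X$, restrict to the indices whose balls meet $Y$, choose representatives in $Y$, and conclude via the triangle inequality.
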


\begin{proof}
	Let $\eps >0$. Since $X$ is totally bounded, there exist a finite indexing set $I$ and  a set $\{x_i\}_{i\in I} \subset X$ such that $X \subset \cup_{i \in I} B_{\frac{\eps}{2}}(x_i)$.  Let $J=\{i \in I \ | \ B_{\frac{\eps}{2}}(x_i) \cap Y \neq \emptyset \}$. For every $i \in J$, pick $y_i \in B_{\frac{\eps}{2}}(x_i) \cap Y$.  Let $y \in Y$. Then there is $i \in J$ such that $y \in B_{\frac{\eps}{2}}(x_i)$. Therefore, $d(y, y_i) \leq d(y, x_i)+d(x_i, y_i) < \frac{\eps}{2}+\frac{\eps}{2} =\eps$. Hence $Y \subset \cup_{i \in J} B_{\frac{\eps}{2}}(y_i)$.
\end{proof}
	
\begin{lemma}[{\cite[Theorem 45.1]{munkres2000topology}}] \label{thm:compl_and_tot_bd}
  A metric space is compact if and only if it is complete and totally bounded.
\end{lemma}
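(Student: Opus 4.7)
The plan is to follow the standard argument from \cite{munkres2000topology}, taking care only to check that the weaker notion of metric in \cref{def:metric} (extended and possibly non-separated) causes no issues; essentially it does not, because balls are still well-defined and convergence arguments only use the triangle inequality.

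For the forward direction, assume $(X,d)$ is compact. For total boundedness, fix $\eps > 0$ and note that $\{B_{\eps}(x) : x \in X\}$ is an open cover; extract a finite subcover to obtain the required finite $\eps$-net. For completeness, let $(x_n)$ be a Cauchy sequence in $X$. By \cref{thm:equiv_types_compts}, $X$ is sequentially compact, so $(x_n)$ has a subsequence $(x_{n_k})$ converging to some $x \in X$. A direct $\eps/2$-$\eps/2$ triangle inequality argument shows that any Cauchy sequence with a convergent subsequence itself converges to the same point, so $(x_n) \to x$ and $X$ is complete.

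For the backward direction, assume $(X,d)$ is complete and totally bounded. By \cref{thm:equiv_types_compts}, it suffices to show $(X,d)$ is sequentially compact. Let $(x_n)$ be a sequence in $X$. Applying total boundedness with $\eps = 1$, cover $X$ by finitely many open balls of radius $1$; at least one such ball, call it $B_1$, contains $x_n$ for infinitely many indices $n$. Extract a subsequence $(x_n^{(1)})$ lying in $B_1$. Now apply total boundedness to $B_1$ (which is totally bounded as a subset, by \cref{lem:tot_bd_subset}) with $\eps = 1/2$ to find a ball $B_2$ of radius $1/2$ containing $x_n^{(1)}$ for infinitely many $n$, and extract a further subsequence $(x_n^{(2)})$ lying in $B_2$. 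Iterating gives nested subsequences $(x_n^{(k)})$ with each subsequence contained in a ball of radius $1/2^{k-1}$. The diagonal subsequence $y_k = x_k^{(k)}$ is Cauchy, since for $k,\ell \geq N$ both $y_k$ and $y_\ell$ lie in the ball of radius $1/2^{N-1}$, so $d(y_k, y_\ell) < 1/2^{N-2}$. By completeness of $X$, $(y_k)$ converges to some limit in $X$, giving a convergent subsequence of $(x_n)$.

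The main subtlety, and essentially the only one, is verifying that the extended/non-separated nature of $d$ does not break the standard argument. Allowing infinite distances is harmless because total boundedness with finite $\eps$ already forces $X$ to have finite diameter, and allowing $d(x,y) = 0$ for $x \neq y$ does not affect triangle-inequality based convergence arguments (limits need not be unique, but existence is all we need). Thus the proof is routine and essentially identical to \cite[Theorem 45.1]{munkres2000topology}.
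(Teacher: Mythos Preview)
Your proof is correct and follows essentially the same approach as the paper's proof sketch: in both directions you pass through sequential compactness via \cref{thm:equiv_types_compts}, obtaining total boundedness from a finite subcover of $\eps$-balls and extracting a Cauchy subsequence via nested balls of shrinking radius. One small inaccuracy worth noting: your closing remark that ``total boundedness with finite $\eps$ already forces $X$ to have finite diameter'' is false in the extended setting---the paper itself observes at the start of \cref{sec:rel_compactness} that $\{x,y\}$ with $d(x,y)=\infty$ is totally bounded but unbounded---but this does not affect your argument, since each nested ball $B_k$ has diameter at most $1/2^{k-2}$ regardless.
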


\begin{proof}[Proof sketch.]
  $(\Rightarrow)$ Compact implies sequentially compact implies complete.
  Compact implies that a covering by $\eps$-balls has a finite subcover.
  $(\Leftarrow)$ For a sequence of points, use total boundedness to construct a subsequence that is Cauchy. Therefore, the space is sequentially compact and  hence compact.
\end{proof}

A subset $S$ of a topological space is said to be \emph{relatively compact} if its closure $\overline{S}$ is compact.

\begin{lemma} \label{lem:rel-cpct-then-tot-bdd}
 Let $(X,d)$ be a metric space and let $S \subset X$.
    If $S$ is relatively compact then $S$ is totally bounded.
\end{lemma}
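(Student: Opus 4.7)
The plan is to derive the result immediately from the two preceding lemmas. Since $S$ is relatively compact, $\overline{S}$ is compact by definition. The strategy is to transfer total boundedness from $\overline{S}$ down to its subset $S$.

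First, I would apply \cref{thm:compl_and_tot_bd} to $\overline{S}$, viewed as a metric space under the restriction of $d$. Since $\overline{S}$ is compact, it is in particular totally bounded. (Only the easy direction of \cref{thm:compl_and_tot_bd} is needed here: a standard covering argument using a finite subcover of $\overline{S}$ by $\eps$-balls.) Next, since $S \subset \overline{S}$, \cref{lem:tot_bd_subset} applied to the pair $\overline{S} \supset S$ gives that $S$ is totally bounded. This is the entire argument.

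There is no real obstacle here; the only thing worth double-checking is that \cref{thm:compl_and_tot_bd} and \cref{lem:tot_bd_subset} hold in the generalized metric setting of \cref{def:metric} (i.e.\ allowing $d(x,y) = \infty$ and $d(x,y) = 0$ without $x=y$). Both proofs are covering arguments that do not use separatedness or finiteness of $d$: the implication compact $\Rightarrow$ totally bounded uses only that $\{B_\eps(x)\}_{x \in \overline{S}}$ is an open cover admitting a finite subcover, and \cref{lem:tot_bd_subset}'s proof (given in the excerpt) already works in this generality. Hence the short chain ``relatively compact $\Rightarrow$ $\overline{S}$ compact $\Rightarrow$ $\overline{S}$ totally bounded $\Rightarrow$ $S$ totally bounded'' completes the proof.
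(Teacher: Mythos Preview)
Your proof is correct and follows exactly the same approach as the paper: relatively compact $\Rightarrow$ $\overline{S}$ compact $\Rightarrow$ (by \cref{thm:compl_and_tot_bd}) $\overline{S}$ totally bounded $\Rightarrow$ (by \cref{lem:tot_bd_subset}) $S$ totally bounded. Your additional remarks about the generalized metric setting are a reasonable sanity check but not needed, since the paper already records those lemmas in that generality.
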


\begin{proof}
  If $S$ is relatively compact then $\overline{S}$ is compact.
    By \cref{thm:compl_and_tot_bd}, $\overline{S}$ is totally bounded, and
    by \cref{lem:tot_bd_subset}, $S$ is totally bounded.
\end{proof}

\begin{lemma} \label{lem:complete-rel-cpct-tot-bdd}
Assume $(X,d)$ is complete and $S \subset X$.
  Then $S$ is relatively compact if and only if $S$ is totally bounded.
\end{lemma}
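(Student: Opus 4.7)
The forward direction is already handled by \cref{lem:rel-cpct-then-tot-bdd}, so the plan is to prove the converse: assuming $(X,d)$ is complete and $S \subset X$ is totally bounded, show that $\overline{S}$ is compact. By \cref{thm:compl_and_tot_bd}, it suffices to show that $\overline{S}$ is both complete and totally bounded.

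First I would verify that $\overline{S}$ is complete. Since $\overline{S}$ is a closed subset of the complete metric space $(X,d)$, any Cauchy sequence in $\overline{S}$ converges in $X$, and its limit (or any limit, since the generalized metric need not be separated) lies in $\overline{S}$ because $\overline{S}$ is closed. The generalized metric convention from \cref{def:metric} does not affect this step: the standard proof that closed subsets of complete spaces are complete uses only the triangle inequality and closure under limits.

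Second, I would show that $\overline{S}$ is totally bounded. Given $\eps > 0$, apply total boundedness of $S$ at scale $\tfrac{\eps}{2}$ to obtain finitely many points $x_1, \dots, x_n \in S$ with $S \subset \bigcup_{i=1}^n B_{\eps/2}(x_i)$. For any $y \in \overline{S}$, there exists $s \in S$ with $d(y,s) < \tfrac{\eps}{2}$, and some $i$ with $d(s,x_i) < \tfrac{\eps}{2}$; the triangle inequality gives $d(y,x_i) < \eps$, so $\overline{S} \subset \bigcup_{i=1}^n B_{\eps}(x_i)$.

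Combining these two facts and invoking \cref{thm:compl_and_tot_bd} yields compactness of $\overline{S}$, i.e.\ relative compactness of $S$. I do not anticipate any serious obstacle: both sub-steps are routine, and the only thing worth double-checking is that the proofs of the cited lemmas (in particular \cref{thm:compl_and_tot_bd}) go through under the paper's weakened definition of metric, but the sketch given after that lemma only uses the triangle inequality and sequential arguments, which are insensitive to dropping separation.
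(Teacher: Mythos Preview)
Your proposal is correct and follows essentially the same approach as the paper: use \cref{lem:rel-cpct-then-tot-bdd} for the forward direction, then for the converse show $\overline{S}$ is complete (as a closed subset of a complete space) and totally bounded (from total boundedness of $S$), and conclude via \cref{thm:compl_and_tot_bd}. The paper is simply more terse, asserting ``$\overline{S}$ is totally bounded'' without spelling out the $\eps/2$ argument you provide.
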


\begin{proof}
 By \cref{lem:rel-cpct-then-tot-bdd} if $S$ is relatively compact then $S$ is totally bounded. Assume $S$ is totally bounded.
  Then $\overline{S}$ is totally bounded.
  Since $\overline{S}$ is a closed subset of a complete metric space, $\overline{S}$ is complete.
  By \cref{thm:compl_and_tot_bd}, $\overline{S}$ is compact.
\end{proof}

\begin{lemma}\label{thm:relative_compct}
Let $(X, d)$ be a metric space. A set $S \subset X$ is relatively compact if and only if every sequence in $S$ has a subsequence that converges in $X$.
\end{lemma}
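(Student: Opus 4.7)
The plan is to prove the two directions separately, using \cref{thm:equiv_types_compts} to convert between compactness and sequential compactness of $\overline{S}$. The characterization of the closure in an extended pseudometric space by $y\in\overline{S}\iff d(y,S)=0$ will be the main tool for the non-trivial direction.

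For the forward direction, assume $S$ is relatively compact, so that $\overline{S}$ is compact. Any sequence $(x_n)$ in $S$ is in particular a sequence in $\overline{S}$, and by \cref{thm:equiv_types_compts} $\overline{S}$ is sequentially compact. Therefore $(x_n)$ has a subsequence converging to some point of $\overline{S}\subset X$. This part is essentially immediate.

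For the reverse direction, suppose every sequence in $S$ has a subsequence converging in $X$. By \cref{thm:equiv_types_compts}, it suffices to show that $\overline{S}$ is sequentially compact. Given a sequence $(y_n)$ in $\overline{S}$, the key step is to approximate it by a sequence in $S$: for each $n$, since $d(y_n,S)=0$, choose $x_n\in S$ with $d(x_n,y_n)<\tfrac{1}{n}$. By hypothesis, $(x_n)$ has a subsequence $(x_{n_k})$ converging to some $x\in X$. Since the $x_{n_k}$ lie in $S$, the limit $x$ lies in $\overline{S}$. A triangle inequality bound
\[
d(y_{n_k},x)\leq d(y_{n_k},x_{n_k})+d(x_{n_k},x)<\tfrac{1}{n_k}+d(x_{n_k},x)\to 0
\]
then shows $y_{n_k}\to x\in\overline{S}$, giving sequential compactness of $\overline{S}$.

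The main subtlety, rather than an obstacle, is that metrics here are extended pseudometrics (\cref{def:metric}), so limits of sequences need not be unique and some distances may be infinite. Neither causes trouble: the characterization $\overline{S}=\{y:d(y,S)=0\}$ still holds, the balls $B_{1/n}(y_n)$ are well-defined, and the triangle inequality argument is unaffected. Thus the proof reduces to a short, standard argument once \cref{thm:equiv_types_compts} is invoked.
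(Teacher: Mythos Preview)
Your proof is correct and follows essentially the same approach as the paper: both directions use \cref{thm:equiv_types_compts} to pass between compactness and sequential compactness, and the reverse direction approximates a sequence in $\overline{S}$ by one in $S$ at distance $<\tfrac{1}{n}$, extracts a convergent subsequence, and concludes via the triangle inequality. Your explicit remark about the extended pseudometric setting is a helpful addition but not a departure from the paper's argument.
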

	
\begin{proof}
 Suppose $S \subset X$ is relatively compact.
 Let $(x_n)$ be a sequence in $S$.
 Then $(x_n)$ is a sequence in $\overline{S}$. Since by \cref{thm:equiv_types_compts}, $\overline{S}$ is sequentially compact,  $(x_n)$ has a subsequence $(x_{n_k})$ that converges to $x \in  \overline{S} \subset X$.
		
  Conversely, suppose that every sequence in $S$ has a subsequence that converges in $X$. Let $(x_n)$ be a sequence in $\overline{S}$. For every $n$, there exists $y_n \in S$ such that $W_p(x_n, y_n) <\frac{1}{n}$. Then $(y_n)$ is a sequence in $S$. Therefore, it has a subsequence $(y_{n_k})$ that converges to $y \in X$. Since $(y_{n_k})$ is a sequence in $\overline{S}$, $y \in \overline{S}$. Furthermore, since $(y_{n_k}) \to y$,  $(x_{n_k}) \to y$. Hence $\overline{S}$ is sequentially compact. Finally, by \cref{thm:equiv_types_compts}, $\overline{S}$ is compact.
\end{proof}

\subsection{Compactness}

We show that the space of persistence diagrams of bounded cardinality on a compact metric space is compact.

\begin{proposition}\label{prop:D^n(X)_is_cmpt}
  Let $(X, d, A)$ be a metric pair and
  $p \in [1, \infty]$. If $(X,d)$ is compact then
  $(D^n(X, A), W_p)$ is compact for all $n \in \N$.
\end{proposition}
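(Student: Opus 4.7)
The plan is to prove compactness via sequential compactness, which by \cref{thm:equiv_types_compts} is equivalent to compactness in metric spaces. So I would take an arbitrary sequence $(\alpha_k)$ in $(D^n(X,A), W_p)$ and construct a convergent subsequence.

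First, since each $\alpha_k$ has cardinality $m_k \in \{0, 1, \ldots, n\}$, the pigeonhole principle gives a subsequence (which I will re-index as $(\alpha_k)$) on which the cardinality is constant, say equal to $m$. If $m = 0$, the subsequence is constantly $0$ and we are done; so assume $m \geq 1$ and write $\hat{\alpha}_k = \sum_{i=1}^{m} x_i^{(k)}$ with each $x_i^{(k)} \in X \setminus A$. Since $(X,d)$ is compact, hence sequentially compact by \cref{thm:equiv_types_compts}, I can iteratively extract subsequences: first extract a subsequence along which $x_1^{(k)}$ converges to some $x_1 \in X$, then a further subsequence along which $x_2^{(k)}$ converges to some $x_2 \in X$, and so on for $i = 1, \ldots, m$. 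After $m$ diagonal extractions, I obtain a subsequence (again re-indexed) along which $x_i^{(k)} \to x_i$ in $(X,d)$ for every $i = 1, \ldots, m$. Note that each limit $x_i$ may lie in $A$ (since $A$ is closed but its complement need not be).

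I define the candidate limit $\alpha \in D(X, A)$ to be the class of the formal sum $\sum_{i=1}^{m} x_i$; after quotienting by $D(A)$, this has cardinality at most $m \leq n$, so $\alpha \in D^n(X,A)$. To bound $W_p(\alpha_k, \alpha)$, I use the matching $\sigma_k \in D(X \times X, A \times A)$ represented by $\hat{\sigma}_k = \sum_{i=1}^{m} (x_i^{(k)}, x_i)$. By \cref{lem:matching} this is a valid matching of $\alpha_k$ and $\alpha$ (the pairs $(x_i^{(k)}, x_i)$ with $x_i \in A$ correspond to dots of $\alpha_k$ being matched to $A$, which is permitted). Its cost is
\[
\Cost_p(\sigma_k) = \norm{(d(x_i^{(k)}, x_i))_{i=1}^{m}}_p,
\]
and since each $d(x_i^{(k)}, x_i) \to 0$ as $k \to \infty$ and the $p$-norm on $\R^m$ is continuous, $\Cost_p(\sigma_k) \to 0$. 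Therefore $W_p(\alpha_k, \alpha) \leq \Cost_p(\sigma_k) \to 0$, so the subsequence converges to $\alpha$ in $(D^n(X,A), W_p)$, establishing sequential compactness and hence compactness.

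The only subtle point is the bookkeeping when some limits $x_i$ fall into $A$: one must remember that such dots contribute nothing to $\hat{\alpha}$ (via the quotient $D(X)/D(A)$) but the naive matching $\sigma_k$ still makes sense as an element of $D(X \times X, A \times A)$ and correctly projects to $\alpha_k$ and $\alpha$. This is the only step that requires care; the rest is a straightforward diagonal extraction using sequential compactness of $X$.
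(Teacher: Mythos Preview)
Your proof is correct and takes essentially the same approach as the paper: a diagonal extraction using sequential compactness of $X$, followed by the coordinate-wise matching to show $W_p$-convergence. The only cosmetic difference is that the paper pads each $\alpha_m$ with elements of $A$ to make every diagram have exactly $n$ summands (avoiding your pigeonhole step), but the underlying argument is identical.
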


	\begin{proof}
	Let $n \in \N$. Let $(\alpha_m)$ be a sequence in  $D^n(X, A)$. By adding elements of $A$ if necessary, each element of the sequence $(\alpha_m)$ can be written as $\alpha_m=x^{(m)}_1+ \dots +x^{(m)}_n$. By Theorem \ref{thm:equiv_types_compts}, the sequence $(x_1^{(m)})_m$ has a convergent subsequence $(x_1^{(m_{j_1})})_{j_1}$, where $(x_1^{(m_{j_1})}) \to x_1$ for some $x_1 \in X$. Similarly, $(x_2^{(m_{j_1})})_{j_1}$ has a convergent subsequence $(x_2^{(m_{j_2})})_{j_2}$, where $(x_2^{(m_{j_2})}) \to x_2$ for some $x_2 \in X$. %By the same reasoning, for all $1 \leq k \leq n$, we get that a sequence $(x_k^{n_{j_{k-1}}})_{j_{k-1}}$ has a convergent sequence $(x_k^{n_{j_k}})_{j_k}$, say $(x_k^{n_{j_k}}) \to x_k$ where $x_k \in S$. 
	By picking a convergent subsequence of the previous subsequence $n$ times, in the end we obtain a sequence $(x_n^{(m_{j_n})})_{j_n}$, where $(x_n^{(m_{j_n})}) \to x_n$ for some $x_n \in X$.
	
	Let $\eps >0$. For every $1 \leq k \leq n$, consider the sequence $(x_k^{(m_{j_n})})_{j_n}$. For every $1 \leq k \leq n$, there exists $N_k \in N$ such that for all $j_n>N_k$ we have $d(x_k^{(m_{j_n})}, x_k) < \frac{\eps}{n}$. Set $\alpha = x_1 + \dots + x_n$. Note that $\alpha \in D^n(X, A)$. Now choose $N=\max\{N_1, \dots, N_n\}$. Then for all $j_n > N$  we have
	$W_p(\alpha_{m_{j_n}}, \alpha) \leq \norm{(d(x_k^{(m_{j_n})}, x_k))_{k=1}^{n} }_{p} < \textstyle\frac{\eps}{n} \norm{\underbrace{(1, \dots, 1)}_{n\ \text{times}}}_p \leq \eps.$
	Therefore, $(D^n(X, A), W_p)$ is sequentially compact and, by Theorem \ref{thm:equiv_types_compts}, $D^n(X, A)$ is compact.
\end{proof}

%\begin{proof}
% By \cref{lem:D^n_isom_embedding}, $(D^n(X,x_0),W_p)$ is isometric to a subspace of $(X^{2n}/S_{2n},\overline{d_p^{2n}})$.
%  Note that $d_p^{2n}$ is bi-Lipschitz equivalent to the usual product metric $d_1^{2n}$. Therefore, $d_p^{2n}$ metrizes the product topology of $X^{2n}$ and furthermore, $\overline{d_p^{2n}}$ metrizes the quotient topology of $X^{2n}/S_{2n}$.
%  Since compactness is preserved by taking products and quotients, $X^{2n}/S_{2n}$ is compact.
%  Now the image of $(D^n(X,x_0),W_p)$ in $(X^{2n}/S_{2n},\overline{d_p^{2n}})$ has inverse image in $X^{2n}$ given by
%  \[\bigcup_{1 \leq i_1 < \cdots < i_n \leq 2n} \{ (x_1,\ldots,x_n) \ | \ x_{i_1} = \cdots = x_{i_n} = x_0 \}.\]
%  Since this is a closed subspace of a compact space, it is compact. Therefore, its image under the quotient map is also compact.
%\end{proof}

\subsection{Local compactness}

Next, we show that many spaces of persistence diagrams are not locally compact.
A topological space $X$ is \emph{locally compact} if each point has a compact neighborhood, i.e for each $x \in X$ there is a compact subset $V \in X$ such that $x \in U \subset V$ for some open subset $U \subset X$.
For a metric space $(X,d)$, since a closed subset of a compact space is compact, a point $x \in X$ has a compact neighborhood if and only if there is $\eps> 0$ such that $\overline{B}_{\eps}(x)$ is compact. Note that the second part of the previous statement holds if and only if $\overline{B}_{\eps}(x)$ is compact for all sufficiently small $\eps > 0$.

\begin{proposition} \label{prop:local-compactness}
 Let $(X,d,A)$ be a metric pair such that $A$ is not isolated and let $p \in [1, \infty]$.
  Let $\alpha \in (D(X,A),W_p)$ and let
  $ \eps \in (0,  \min_{y \in \supp(\alpha)} d(y, A))$.
  Then there is a $\delta > 0$ and a sequence $(\beta_n)$ in $D(X,A)$ such that
  for all $n$, $W_p(\beta_n,\alpha) < \eps$ and
  for all $n \neq k$, $W_p(\beta_n,\beta_k) \geq \delta$.
\end{proposition}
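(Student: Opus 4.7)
The plan is to exploit the hypothesis that $A$ is not isolated to build a sequence $(z_n)$ in $X \setminus A$ with $d(z_n, A)$ arbitrarily small, and to form each $\beta_n$ by adding many copies of such points to $\alpha$. The upper bound $W_p(\beta_n, \alpha) < \eps$ will come from the obvious matching (identity on $\alpha$, route the added dots toward nearby points of $A$); the lower bound on $W_p(\beta_n, \beta_k)$ will come from \cref{lem:W_p_ineq}. The key observation is that, since $\eps < d(y, A)$ for every $y \in \supp(\alpha)$, the ordering of $\hat{\beta}_k$ by decreasing distance to $A$ places the $\alpha$-dots first and the added $z_k$-dots last, so the ``excess'' tail picked out by \cref{lem:W_p_ineq} consists entirely of copies of $z_k$.

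For $p = \infty$ this works immediately. Pick $z \in X \setminus A$ with $d(z, A) < \eps$ and set $\beta_n = \alpha + n \cdot z$. The matching above gives $W_\infty(\beta_n, \alpha) \leq d(z, A) < \eps$, and for $n < k$, \cref{lem:W_p_ineq} yields $W_\infty(\beta_k, \beta_n) \geq W_\infty((k-n) \cdot z, 0) = d(z, A)$. Take $\delta = d(z, A)$.

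For $p \in [1, \infty)$ the same choice $\beta_n = \alpha + n \cdot z$ fails, since $W_p(\beta_n, \alpha) = n^{1/p} d(z, A) \to \infty$. Instead I will pick $z_n$ progressively closer to $A$ so that large multiplicities remain permissible. Recursively, with $m_0 := 0$, at step $n \geq 1$ I use that $A$ is not isolated to choose $z_n \in X \setminus A$ with
\[
c_n := d(z_n, A) < \min\!\bigl(\tfrac{\eps}{4},\ \tfrac{\eps}{2 (2 m_{n-1} + 1)^{1/p}}\bigr),
\]
and then set $m_n := \lfloor (\eps/(2 c_n))^p \rfloor$. The first bound on $c_n$ gives $m_n c_n^p \geq (\eps/2)^p - c_n^p \geq (\eps/2)^p (1 - 2^{-p})$; the second forces $m_n \geq 2 m_{n-1} + 1$ (hence $m_k \geq 2 m_n$ whenever $k > n$), while by construction $m_n^{1/p} c_n \leq \eps/2$. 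Setting $\beta_n := \alpha + m_n \cdot z_n$, the identity-on-$\alpha$ matching gives $W_p(\beta_n, \alpha) \leq m_n^{1/p} c_n \leq \eps/2 < \eps$, and \cref{lem:W_p_ineq} together with $m_k - m_n \geq m_k/2$ gives
\[
W_p(\beta_k, \beta_n) \;\geq\; (m_k - m_n)^{1/p} c_k \;\geq\; (m_k/2)^{1/p} c_k \;=\; \frac{(m_k c_k^p)^{1/p}}{2^{1/p}} \;\geq\; \frac{\eps}{2} \cdot \frac{(1 - 2^{-p})^{1/p}}{2^{1/p}},
\]
which I take as $\delta$.

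The main obstacle is the $p < \infty$ construction, where I must balance the upper bound $m_n^{1/p} c_n < \eps$ against a uniform lower bound $(m_k - m_n)^{1/p} c_k \geq \delta$ --- both involving the same pair $(m_n, c_n)$. The resolution is to link $m_n$ to $c_n$ by the near-saturating choice $m_n \approx (\eps/(2 c_n))^p$ and to force $m_n$ to at least double at each step by shrinking $c_n$ sufficiently fast at each iteration; both bounds then hold simultaneously with a $\delta$ independent of $n$.
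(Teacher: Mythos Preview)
Your proof is correct and follows essentially the same approach as the paper: for $p=\infty$ it is identical, and for $p<\infty$ both arguments set $\beta_n=\alpha+m_n\cdot z_n$ with $z_n$ close to $A$ and multiplicities $m_n$ growing fast enough that \cref{lem:W_p_ineq} yields a uniform lower bound on $W_p(\beta_k,\beta_n)$. The only difference is bookkeeping---the paper pins $\eps_n=d(x_n,A)$ inside a dyadic interval $[\eps/2^{(m_n+1)/p},\eps/2^{m_n/p})$ and takes $2^{m_n}$ copies, whereas you define $m_n=\lfloor(\eps/(2c_n))^p\rfloor$ recursively and force doubling by shrinking $c_n$; the resulting $\delta$'s differ only by constants.
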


\begin{proof}
  Suppose $p=\infty$. Since
$A$ is not isolated,
  % for any $\delta >0$, $A^{\delta} \neq A$,
  we can pick $x \in X$ such that
  $0 < d(x,A) < \eps$. Let $\delta = d(x,A)$.
  Now consider the sequence $(\beta_n)$ in  $D(X, A)$ given by 
\[\hat{\beta}_n=\hat{\alpha}+ n x.\]
Then for all $n \in \N$, we have
$W_{\infty}(\beta_n, \alpha)= d(x, A) =\delta < \eps,$
and for any $n, k$ with $n>k$ we have $W_{\infty}(\beta_k, \beta_n)= d(x, A) =\delta$.
		
Now suppose $p \in [1, \infty)$. Since $A$ is not isolated, there is a sequence $(x_n)$ in $X$ such that if $\eps_n=d(x_n, A)$ then $\eps_1 < \frac{\eps}{2^{1/p}}$ and for all $n \geq 1$, $\eps_{n+1} < \frac{\eps_n}{2^{1/p}}$. 
Consider the sequence $(a_n)$ starting at $\frac{\eps}{2^{1/p}}$ and strictly decreasing to $0$ given by $a_n=\frac{\eps}{2^{n/p}}$. For each $a \in (0, \frac{\eps}{2^{1/p}})$ there exists unique $n \geq 1$ such that $a \in [a_{n+1}, a_n)$.
So for each $\eps_n$ there is a unique $m_n \geq 1$ such that
\begin{equation} \label{eq:lc1}
  \frac{\eps}{2^{(m_n+1)/p}} \leq \eps_n < \frac{\eps}{2^{m_n/p}}.
\end{equation}
Using our assumption on the sequence $(x_n)$ and the second inequality in \eqref{eq:lc1}, we have
\begin{equation} \label{eq:lc3}
\eps_{n+1} < \frac{\eps_n}{2^{1/p}} < \frac{\eps}{2^{(m_n+1)/p}}.
\end{equation}
Therefore, by the first half of \eqref{eq:lc1}
  and by \eqref{eq:lc3}, we have that
  $\frac{\eps}{2^{(m_{n+1}+1)/p}} \leq \eps_{n+1} < \frac{\eps}{2^{(m_n+1)/p}}$.
  Thus
  $\frac{\eps}{2^{(m_{n+1}+1)/p}}  < \frac{\eps}{2^{(m_n+1)/p}}$, which implies
\begin{equation} \label{eq:lc2}
  m_{n+1} \geq m_n+1.
\end{equation}
For $n\geq 1$, let $\beta_n=\alpha+2^{m_n}x_n$. By the canonical matching, $W_p(\beta_n, \alpha) \leq W_p(2^{m_n}x_n, 0)$. By Lemma \ref{lem:W_p_ineq}, $W_p(\beta_n, \alpha) \geq W_p(2^{m_n}x_n, 0)$. Therefore, $W_p(\beta_n, \alpha)=W_p(2^{m_n}x_n, 0)=2^{m_n /p} \eps_n < \eps$ by \eqref{eq:lc1}.
Hence $\beta_n \in B_{\eps}(\alpha)$. Let $n > k \geq 1$. By Lemma \ref{lem:W_p_ineq} and \eqref{eq:lc2},
\begin{align*}
  W_p(\beta_n, \beta_k) & \geq W_p((2^{m_n}-2^{m_k})x_n, 0) > W_p(2^{m_n-1}x_n, 0) \\
	& =2^{(m_n-1)/p}\eps_n \geq 2^{(m_n-1)/p}\frac{\eps}{2^{(m_n+1)/p}}=\frac{\eps}{2^{2/p}} \geq \frac{\eps}{4}. \qedhere
\end{align*}
%
%  \noindent  Set $\eps' =  \frac{\eps}{4}$ and consider the following open cover of $K$
% \[K \setminus \overline{(\cup_{n=1}^{\infty}\beta_n)}, \  B_{\eps'}(\beta_{1}) \cap K,  \ \dots, \  B_{\eps'}(\beta_{n}) \cap K,  \ \dots .\]
% For all $n \in \N$, the open set $B_{\eps'}(\beta_{n}) \cap K$ does not contain any elements of $(\beta_k)$ other than $\beta_n$. Therefore, the open cover above does not have a finite subcover. This leads us to a contradiction.
\end{proof}

\begin{theorem}\label{thm:not_loc_compactness}
  Let $(X,d,A)$ be a metric pair such that $A$ is not isolated and let $p \in [1, \infty]$. Then no persistence diagram in $(D(X,A), W_p)$ has a compact neighborhood. Hence  $(D(X,A), W_p)$ is not locally compact. Therefore, any compact set in $D(X,A)$ has an empty interior. %That is, it contains no open subsets of $D(X, A)$. 
\end{theorem}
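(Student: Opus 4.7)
The plan is to argue by contradiction, feeding \cref{prop:local-compactness} into the characterization of compactness as sequential compactness. Suppose some $\alpha \in D(X,A)$ admits a compact neighborhood $V$. Then there exists an open $U$ with $\alpha \in U \subset V$, and hence an $\eps_0 > 0$ such that $B_{\eps_0}(\alpha) \subset V$. I then want to pick an $\eps \in (0,\eps_0)$ satisfying the hypothesis $\eps < \min_{y \in \supp(\alpha)} d(y,A)$ required by \cref{prop:local-compactness}. This minimum is strictly positive: $\supp(\alpha)$ is a finite subset of $X \setminus A$, and since $A$ is closed, each $d(y,A) > 0$. (When $\alpha = 0$ the support is empty and any positive $\eps < \eps_0$ serves, since \cref{prop:local-compactness} only produces a sequence in a ball and imposes no extra condition in that degenerate case.)

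Having fixed such an $\eps$, invoke \cref{prop:local-compactness} to obtain $\delta > 0$ and a sequence $(\beta_n)$ lying in $B_\eps(\alpha) \subset V$ with $W_p(\beta_n,\beta_k) \geq \delta$ whenever $n \neq k$. By \cref{thm:equiv_types_compts}, $V$ is sequentially compact, so $(\beta_n)$ must have a convergent subsequence in $V$. But any convergent subsequence would in particular be Cauchy, contradicting the uniform lower bound $\delta$ on pairwise distances between its terms. This contradiction shows that no persistence diagram in $(D(X,A),W_p)$ has a compact neighborhood, which is exactly the assertion that $(D(X,A),W_p)$ is not locally compact.

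For the final consequence, suppose a compact subset $K \subset D(X,A)$ had non-empty interior. Then any point $\alpha \in \Int(K)$ would satisfy $\alpha \in \Int(K) \subset K$ with $K$ compact, so $K$ would be a compact neighborhood of $\alpha$, contradicting what was just established. Therefore every compact subset of $(D(X,A),W_p)$ has empty interior.

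The only subtle step is the choice of $\eps$: one must verify that $\min_{y \in \supp(\alpha)} d(y,A)$ is strictly positive (or handle the empty-support case separately) so that \cref{prop:local-compactness} applies. After that, the argument is the standard sequential-compactness obstruction, and the empty-interior statement is an immediate consequence.
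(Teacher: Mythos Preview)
Your proof is correct and follows essentially the same approach as the paper: both invoke \cref{prop:local-compactness} to produce a separated sequence inside an arbitrarily small ball around $\alpha$, then use \cref{thm:equiv_types_compts} to conclude that no such ball (and hence no neighborhood) can be compact. Your version is slightly more explicit in justifying why $\min_{y \in \supp(\alpha)} d(y,A) > 0$ and in handling the case $\alpha = 0$, but the argument is the same.
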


\begin{proof}
  Let $\alpha \in (D(X,A),W_p)$ and let
  $ \eps \in (0,  \min_{y \in \supp(\alpha)} d(y, A))$.
  By Proposition~\ref{prop:local-compactness}, there is a $\delta > 0$ and a sequence $(\beta_n)$ in $D(X,A)$ such that
  for all $n$, $W_p(\beta_n,\alpha) < \eps$ and
  for all $n \neq k$, $W_p(\beta_n,\beta_k) \geq \delta$.
  Therefore, the sequence $(\beta_n) \subset \overline{B}_{\eps}(\alpha)$ does not have a limit point.
  Hence $\overline{B}_{\eps}(\alpha)$ is not limit point compact and thus by Theorem~\ref{thm:equiv_types_compts}, $\overline{B}_{\eps}(\alpha)$ is not compact.
\end{proof}

\begin{theorem} \label{thm:not_loc_compactness_D_p}
Let $(X,d,A)$ be a metric pair where $A$ is not isolated. Let $p \in [1, \infty]$. Then no element in $(\overline{D}_{p}(X,A), W_p)$ has a compact neighborhood. Hence $(\overline{D}_{p}(X,A), W_p)$ is not locally compact. Therefore, any compact set in $\overline{D}_{p}(X,A)$ has an empty interior.
\end{theorem}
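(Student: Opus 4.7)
The strategy is to reduce this theorem to Theorem~\ref{thm:not_loc_compactness} via approximation by a finite persistence diagram, using the decomposition $\alpha = u_{\delta_0}(\alpha) + \ell_{\delta_0}(\alpha)$. Following the same skeleton as the proof of Theorem~\ref{thm:not_loc_compactness}, given $\alpha \in \overline{D}_p(X,A)$ and an arbitrary $\eps > 0$, I would exhibit a sequence $(\gamma_n)$ in the closed ball $\overline{B}_\eps(\alpha) \cap \overline{D}_p(X,A)$ that is $\delta$-separated for some $\delta > 0$. Since $(\gamma_n)$ then has no Cauchy subsequence and hence no convergent subsequence, $\overline{B}_\eps(\alpha)$ fails to be sequentially compact, so by Theorem~\ref{thm:equiv_types_compts} it fails to be compact, and $\alpha$ has no compact neighborhood.

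The key observation is that $D(X,A) \subset \overline{D}_p(X,A)$, so any separated sequence from Proposition~\ref{prop:local-compactness} applied to a suitable finite approximation of $\alpha$ will work. Concretely: by Lemma~\ref{lem:dist_between_lower_part_and_0} pick $\delta_0 \in (0,\eps)$ with $W_p(\ell_{\delta_0}(\alpha), 0) < \eps/2$; set $\alpha' = u_{\delta_0}(\alpha)$, which lies in $D(X,A)$ by Lemma~\ref{lem:u_bounded}, and note that the canonical matching gives $W_p(\alpha',\alpha) \leq W_p(\ell_{\delta_0}(\alpha), 0) < \eps/2$. Every dot $y \in \supp(\alpha')$ satisfies $d(y,A) \geq \delta_0$, so Proposition~\ref{prop:local-compactness} can be applied to $\alpha'$ with parameter $\eps_1 = \delta_0/2$ in the case $\alpha'\neq 0$, and with any $\eps_1 \in (0,\eps/2)$ in the degenerate case $\alpha'=0$ (where the hypothesis is vacuous). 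This produces $\delta>0$ and a sequence $(\gamma_n) \subset D(X,A)$ with $W_p(\gamma_n,\alpha') < \eps_1$ and $W_p(\gamma_n,\gamma_k) \geq \delta$ for all $n \neq k$. The triangle inequality then yields $W_p(\gamma_n,\alpha) \leq W_p(\gamma_n,\alpha') + W_p(\alpha',\alpha) < \delta_0/2 + \eps/2 < \eps$, so $(\gamma_n) \subset \overline{B}_\eps(\alpha)$ as required.

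From the failure of every closed ball $\overline{B}_\eps(\alpha)$ to be compact the remaining assertions follow routinely: $(\overline{D}_p(X,A),W_p)$ is not locally compact since no $\alpha$ has a compact neighborhood, and any compact set $K$ with nonempty interior would contain an open ball around some point, giving that point a compact neighborhood, a contradiction. The main delicate step is balancing $\delta_0$ and $\eps_1$ so that the finite approximation $\alpha'$ simultaneously satisfies the distance hypothesis of Proposition~\ref{prop:local-compactness} and lies close enough to $\alpha$; this is clean because we may shrink $\delta_0$ freely and exploit that $W_p(\ell_{\delta_0}(\alpha),0)\to 0$, with the only wrinkle being the separate (but easier) handling of $\alpha'=0$.
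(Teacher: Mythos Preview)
Your proposal is correct and takes essentially the same approach as the paper: approximate $\alpha$ by a finite diagram, invoke Proposition~\ref{prop:local-compactness} to obtain a separated sequence inside a small ball about the approximation, and push the sequence into $\overline{B}_\eps(\alpha)$ via the triangle inequality. The only cosmetic difference is that the paper cites Corollary~\ref{cor:seq_convergent} for the finite approximation whereas you use the upper/lower decomposition via Lemma~\ref{lem:dist_between_lower_part_and_0} and Lemma~\ref{lem:u_bounded}; your route has the mild advantage that taking $\alpha'=u_{\delta_0}(\alpha)$ makes the hypothesis $\eps_1<\min_{y\in\supp(\alpha')}d(y,A)$ of Proposition~\ref{prop:local-compactness} transparent.
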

	
\begin{proof}
  Let $\beta \in \overline{D}_p(X,A)$ and let $\eps > 0$.
  By Corollary \ref{cor:seq_convergent}, there is $\alpha \in D(X,A)$ such that $W_p(\alpha, \beta) < \frac{\eps}{2}$.
  Therefore, $B_{\frac{\eps}{2}}(\alpha) \subset B_{\eps}(\beta)$. 
By Proposition~\ref{prop:local-compactness}, there is a $\delta > 0$ and a sequence $(\beta_n)$ in $D(X,A)$ such that
  for all $n$, $W_p(\beta_n,\alpha) < \frac{\eps}{2}$ and
  for all $n \neq k$, $W_p(\beta_n,\beta_k) \geq \delta$.
  Therefore, the sequence $(\beta_n) \subset \overline{B}_{\frac{\eps}{2}}(\alpha) \subset \overline{B}_{\eps}(\beta)$ does not have a limit point.
  Hence $\overline{B}_{\eps}(\beta)$ is not limit point compact and thus by Theorem~\ref{thm:equiv_types_compts}, $\overline{B}_{\eps}(\beta)$ is not compact.
\end{proof}	

% \change{\begin{remark}
%     Perea, Munch, and Khasawhneh~\cite{perea2019approximating} proved
%     \cref{thm:not_loc_compactness_D_p} for the case $(\overline{D}_\infty([0,\infty)^2_\leq , \Delta), W_\infty)$. They showed that, for a given a compact set $S$, every persistence diagram in $\overline{S}$ belongs to a certain union of boxes in $[0,\infty)^2_\leq $. Then they proved that for a persistence diagram in $\overline{S}$,  any open ball around it contains a different persistence diagram that does not lie in the union of boxes which implied that $\overline{S}$ has empty interior.
% \end{remark}}
		
\begin{theorem}\label{thm:loc_compactness}
Let $(X,d,A)$ be a metric pair with $A$ isolated, $(X,d)$ locally compact and $p \in [1, \infty]$. Then $(D(X,A), W_p)$ is locally compact and $(D(X,A), W_p)=(\overline{D}_p(X,A), W_p)$.
\end{theorem}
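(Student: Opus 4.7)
The second claim follows immediately from \cref{prop:D_equal_D_p}, so the task is to prove local compactness of $(D(X,A),W_p)$. I will exhibit, for every $\alpha \in D(X,A)$, a compact closed ball around $\alpha$. Fix $\alpha$ with $\hat{\alpha} = \sum_{i=1}^n x_i$, where each $x_i \in X \setminus A$. Since $A$ is isolated, choose $\delta > 0$ with $A^{\delta} = A$; then $d(x_i,A) \geq \delta$ for every $i$. Using local compactness, for each $i$ pick $r_i > 0$ such that $\overline{B}_{r_i}(x_i)$ is compact. Choose $\eps > 0$ with $2\eps < \delta$ and $2\eps \leq r_i$ for every $i$, so that $K := \prod_{i=1}^n \overline{B}_{2\eps}(x_i)$ is compact. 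Let $\Phi\colon X^n \to D(X,A)$ be the map $(y_1,\ldots,y_n)\mapsto \sum_{i=1}^n y_i$; it is continuous, since the canonical matching gives $W_p(\sum_i y_i, \sum_i y_i') \leq \norm{(d(y_i,y_i'))_{i=1}^n}_p$.

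The key step is to show $\overline{B}_{\eps}(\alpha) \subset \Phi(K)$. Let $\beta \in \overline{B}_{\eps}(\alpha)$ and fix any $\eps' \in (0,\delta-2\eps)$. By \cref{def:wasserstein} there is a matching $\sigma$ of $\alpha$ and $\beta$ with $\Cost_p(\sigma) \leq \eps + \eps' < \delta$. By \cref{lem:matching}, any $x_i$ matched by $\sigma$ to an element of $A$ would contribute at least $d(x_i,A) \geq \delta > \Cost_p(\sigma)$, which is impossible; hence each $x_i$ is matched to some dot $y_i$ of $\hat{\beta}$ with $d(x_i,y_i) \leq \Cost_p(\sigma) < 2\eps$. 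Conversely, any unmatched dot $y$ of $\hat{\beta}$ would be paired by $\sigma$ with some $w \in A$ satisfying $d(y,w) < \delta$, forcing $y \in A^{\delta} = A$; but $y \in X \setminus A$ by definition of $\hat{\beta}$, a contradiction. Thus $\sigma$ yields a bijection between the dots of $\hat{\alpha}$ and $\hat{\beta}$, and $\hat{\beta} = \sum_{i=1}^n y_i = \Phi(y_1,\ldots,y_n) \in \Phi(K)$.

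Since $\Phi(K)$ is compact as the continuous image of a compact set, and $\overline{B}_{\eps}(\alpha)$ is closed in $(D(X,A),W_p)$, the ball is a closed subset of the compact set $\Phi(K)$ and hence itself compact. As $\overline{B}_{\eps}(\alpha)$ contains the open ball $B_{\eps}(\alpha)$, $\alpha$ has a compact neighborhood. The main subtlety, which is handled by the choice $2\eps < \delta$, is ensuring that the matching interacts correctly with the quotient: no dot of $\alpha$ can be shifted off to $A$ (the cost would exceed $\delta$), and no dot of $\hat{\beta}$ can go unmatched (since then it would have to lie in both $X\setminus A$ and $A^{\delta}=A$).
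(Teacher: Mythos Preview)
Your argument is correct in spirit and takes a genuinely different route from the paper. The paper proves compactness of $\overline{B}_{\eps}(\alpha)$ by directly verifying total boundedness (building finite nets from nets of the individual balls $\overline{B}_{\eps}(x_i)$) and completeness (tracking Cauchy sequences coordinatewise), then invoking \cref{thm:compl_and_tot_bd}. You instead realize $\overline{B}_{\eps}(\alpha)$ as a closed subset of the continuous image $\Phi(K)$ of a compact product. This is shorter and conceptually cleaner: once the matching analysis pins down $\abs{\hat{\beta}} = n$ and $d(x_i,y_i) \leq 2\eps$, compactness of $K$ does all the remaining work, and you never have to revisit completeness or build nets by hand.

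One small slip: your choice ``fix any $\eps' \in (0,\delta-2\eps)$'' does not by itself guarantee $\Cost_p(\sigma) < 2\eps$; you need $\eps' < \eps$ as well, since you only know $\Cost_p(\sigma) \leq \eps + \eps'$. Simply take $\eps' \in (0,\min(\eps,\delta-2\eps))$ and the inequality $d(x_i,y_i) \leq \Cost_p(\sigma) < 2\eps$ goes through as written. Everything else, including the use of \cref{lem:matching} to force a bijection between the dots of $\hat{\alpha}$ and $\hat{\beta}$, is sound.
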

 
\begin{proof} 
		Let $\delta>0$ such that $A^{\delta}=A$. By Proposition \ref{prop:D_equal_D_p}, $\overline{D}_p(X,A)=D(X,A)$. Let $\alpha \in D(X,A)$ with $\hat{\alpha}=x_1+\dots +x_m$. Since $X$ is locally compact then for each $x_i$ where $1\leq i \leq m$ there is an $\eps_i >0$ such that $\overline{ B}_{\eps_i}(x_i)$ is compact. %Recall that a subset of a metric space is compact if it is complete and totally bounded (Theorem \ref{thm:compl_and_tot_bd}). 
		Let $\eps=\min \{\frac{\delta}{2}, \eps_1, \dots, \eps_m\}$. Since for $1 \leq i \leq m$ $\overline{B}_{\frac{\eps}{2}}(x_i)$ is closed in a compact set,  $\overline{B}_{\frac{\eps}{2}}(x_i)$ is compact in $X$. Now consider the closed ball $\overline{B}_{\eps}(\alpha)$ in $D(X, A)$. 
		
		First, we will show that $\overline{B}_{\eps}(\alpha)$ is totally bounded in $D(X, A)$. Given $\eps_0>0$, let $\tilde{\delta}=\frac{\eps_0}{m^{1/p}}$. For $1 \leq i \leq m$, $\overline{B}_{\eps}(x_i)$ is totally bounded in $X$. Thus there exist $k_i \in \N$ and a set $Z_i=\{z_1^i, \dots, z_{k_i}^i\} \subset X$ where $\overline{B}_{\eps}(x_i) \subset \cup_{j=1}^{k_i}B_{\tilde{\delta}}(z_j^i)$. Now let $Z=\{z_1+\dots +z_m | \ z_i \in Z_i\} \subset D(X, A)$. Note that $Z$ is a finite set. Let $\beta \in \overline{B}_{\eps}(\alpha)$. Since $A^{\delta}=A$ and $\eps \leq \frac{\delta}{2}$, we may write $\hat{\beta}=y_1+\dots+y_m$.  Let $\sigma=\sum_{i=1}^m(x_i, y_i)$ be a matching between $\alpha$ and $\beta$ such that $W_p(\alpha, \beta)= \Cost_p(\sigma)$. Then for $1 \leq i \leq m$, we have
		$d(x_i, y_i) \leq W_p(\alpha, \beta) \leq \eps$.
		Therefore, $y_i \in \overline{B}_{\eps}(x_i)$ for all $1 \leq i \leq m$. Thus, for $1 \leq i \leq m$, there exists $z_i \in Z_i$ such that $y_i \in B_{\tilde{\delta}}(z_i)$. Then
		\[W_p(\beta, z_{1}+\dots + z_{m}) \leq \norm{(d(y_1, z_{1}), \dots,d(y_m, z_{m}))}_p 
			< \norm{\underbrace{(\textstyle\frac{\eps_0}{m^{1/p}}, \dots,\frac{\eps_0}{m^{1/p}})}_{ \text{$m$ times}}}_p =\eps_0.\]
		Therefore, $\beta \in B_{\eps_0}(z_{1}+\dots+z_{m})$ which implies that $\overline{B}_{\eps}(\alpha) \subset \cup_{z \in Z}B_{\eps_0}(z)$. Hence $\overline{B}_{\eps}(\alpha) \cup Z$ is totally bounded in $D(X, A)$. By Lemma \ref{lem:tot_bd_subset}, $\overline{B}_{\eps}(\alpha)$ is totally bounded in $D(X, A)$.
		
		Second, we will show that $\overline{B}_{\eps}(\alpha)$ is complete in $D(X, A)$. Let $(\beta_n)$ be a Cauchy sequence in $\overline{B}_{\eps}(\alpha)$. Then we may write  $\hat{\beta}_n=\sum_{i=1}^m y_i^n$ such that for all $1\leq i \leq m$, $(y_i^n)_n$ is a Cauchy sequence in $\overline{B}_{\eps}(x_i)$. Since $\overline{B}_{\eps}(x_i)$ is compact and hence complete, there  is a $y_i \in \overline{B}_{\eps}(x_i)$ such that $(y_i^n)_n \to y_i$. Let $\beta=y_1+\dots+y_n$. 
	    Now let $\eps'>0$.  For every $1 \leq i \leq m$ there exists $N_i \in \N$ such that for any $n > N_i$ we have $d(y_i^n, y_i) < \frac{\eps'}{m^{1/p}}$. This implies that for all $n > \max \{N_1, \dots, N_m\}$ we have
		\begin{align*}
			W_p(y_1^n+\dots+y_m^n, y_1+\dots+y_m) & \leq \norm{(d(y_1^n, y_1), \dots,d(y_m^n, y_m) )}_p 
			 \leq \norm{\underbrace{(\textstyle\frac{\eps'}{m^{1/p}}, \dots,\frac{\eps'}{m^{1/p}} )}_{ \text{$m$ times}}}_p =\eps'.
		\end{align*}
		Thus $(\beta_n) \to y_1+\dots+y_m$. Since $\overline{B}_{\eps}(\alpha)$ is closed,
		$y_1+\dots+y_m \in \overline{B}_{\eps}(\alpha)$. Hence $\overline{B}_{\eps}(\alpha)$ is complete.
		
		 Finally, by Theorem \ref{thm:compl_and_tot_bd}, $\overline{B}_{\eps}(\alpha)$ is a compact neighborhood of $\alpha$.
    \end{proof}

\begin{lemma}[{\cite[Corollary 29.3]{munkres2000topology}}]\label{lem:loc_compt_subset}
    Let $X$ be locally compact topological space and let $C$ be a closed subset of $X$. Then $C$ is locally compact.
\end{lemma}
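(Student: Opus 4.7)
The plan is to unpack the definition of local compactness used earlier in the excerpt (each point has a compact neighborhood, i.e.\ sits inside an open set contained in a compact set) and build a compact neighborhood in $C$ out of one in $X$ by intersecting with $C$. The key observation is that closed subsets of compact sets are compact, so once we restrict to $C$ things stay compact.

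Concretely, I would start by fixing an arbitrary point $c \in C$. Since $X$ is locally compact, there exist an open set $U \subset X$ and a compact set $V \subset X$ with $c \in U \subset V$. Then set $U' = U \cap C$ and $V' = V \cap C$. The set $U'$ is open in the subspace topology on $C$, and $c \in U' \subset V'$.

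It remains to show that $V'$ is compact. Because $C$ is closed in $X$, the intersection $V' = V \cap C$ is a closed subset of $V$; a closed subset of a compact space is compact, so $V'$ is compact as a subspace of $X$, and hence also as a subspace of $C$ (compactness is intrinsic). Therefore $V'$ is a compact neighborhood of $c$ in $C$, and since $c$ was arbitrary, $C$ is locally compact.

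There is essentially no obstacle here; this is a textbook fact, and the only subtlety worth flagging is the (harmless) point that compactness of $V'$ does not depend on whether we view it in $X$ or in $C$, so the neighborhood produced in $C$ really is compact in $C$.
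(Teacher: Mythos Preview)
Your proof is correct and is exactly the standard textbook argument; the paper does not supply its own proof here but simply cites Munkres \cite[Corollary 29.3]{munkres2000topology}, so there is nothing to compare against.
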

		
	\begin{proposition}\label{prop:X/A_loc_compact}
		If $(\overline{D}_p(X,A), W_p)$ is locally compact  then $(X/A, d_{p})$ is locally compact and $(\overline{D}_p(X,A), W_p) = (D(X,A), W_p)$.  
	\end{proposition}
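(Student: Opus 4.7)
The plan is to derive both conclusions by combining previously established results, with the key leverage coming from the contrapositive of \cref{thm:not_loc_compactness_D_p}.

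First, I would establish the equality $\overline{D}_p(X,A) = D(X,A)$. By \cref{thm:not_loc_compactness_D_p}, if $A$ is not isolated then $(\overline{D}_p(X,A), W_p)$ fails to be locally compact. Since our hypothesis is precisely that this space \emph{is} locally compact, we conclude that $A$ must be isolated. Then \cref{prop:D_equal_D_p} gives the desired equality $\overline{D}_p(X,A) = D(X,A)$ immediately.

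Next, I would establish local compactness of $(X/A, d_p)$ by transferring the property through an isometric identification with a closed subspace. By \cref{lem:isomorphism}\ref{it:isomorphism-d}, $(X/A, d_p)$ is isometrically isomorphic to $(D^1(X,A), W_p)$, so it suffices to show that $D^1(X,A)$ is locally compact as a subspace of $\overline{D}_p(X,A)$. For this, I would apply \cref{prop:D^n_closed} with $n=1$, which tells us that $D^1(X,A)$ is closed in $\overline{D}(X,A)$, and hence in the subspace $\overline{D}_p(X,A) \subset \overline{D}(X,A)$ as well. Since $\overline{D}_p(X,A)$ is assumed locally compact, \cref{lem:loc_compt_subset} yields that the closed subset $(D^1(X,A), W_p)$ is locally compact. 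Transporting through the isometry, $(X/A, d_p)$ is locally compact.

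There is no genuine obstacle in this proof — it is essentially bookkeeping, assembling \cref{thm:not_loc_compactness_D_p}, \cref{prop:D_equal_D_p}, \cref{prop:D^n_closed}, \cref{lem:loc_compt_subset}, and \cref{lem:isomorphism}\ref{it:isomorphism-d} in the right order. The one minor point to be careful about is noting that the closedness of $D^1(X,A)$ passes from $\overline{D}(X,A)$ down to $\overline{D}_p(X,A)$, which is automatic since $D^1(X,A) \subset \overline{D}_p(X,A)$.
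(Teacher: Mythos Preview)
Your proposal is correct and follows essentially the same approach as the paper's proof, invoking the same five results (\cref{thm:not_loc_compactness_D_p}, \cref{prop:D_equal_D_p}, \cref{prop:D^n_closed}, \cref{lem:loc_compt_subset}, \cref{lem:isomorphism}) in only a slightly different order. Your explicit remark about passing closedness of $D^1(X,A)$ from $\overline{D}(X,A)$ down to $\overline{D}_p(X,A)$ is a nice point of care that the paper leaves implicit.
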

	
	\begin{proof}
          By Proposition \ref{prop:D^n_closed}, $D^1(X,A)$ is a closed subset of $\overline{D}_p(X,A)$. By Lemma \ref{lem:loc_compt_subset}, $D^1(X,A)$ is locally compact. From Lemma \ref{lem:isomorphism} it follows that $(X/A, d_{p})$ is locally compact.
          Also,
          by Theorem \ref{thm:not_loc_compactness_D_p},
          $A$ is isolated and hence, by Proposition \ref{prop:D_equal_D_p}, $(\overline{D}_p(X,A), W_p)=(D(X,A), W_p)$.
	\end{proof}

\subsection{$\sigma$-compactness}
A topological space $X$ is \emph{$\sigma$-compact} if it is the union of countably many compact subspaces.

    %Now we need to recall different types of compactness. 
   % A topological space $X$ is
    %\begin{itemize}
    %	\item \emph{sequentially compact} if any sequence in $X$ has a convergent subsequence;
    	
    %	\item  \emph{limit point compact} if any infinite subset of $X$ has a limit point;
    	
    %	\item \emph{$\sigma$-compact} if it is the union of countably many compact subspaces;
    	
    %	\item \emph{hemicompact} if it has a sequence of compact subsets such that every compact subset of the space lies inside some compact set in the sequence;
    	
    %	\item \emph{relatively compact} if its closure is compact.
  %  \end{itemize}
	
	\begin{proposition}\label{prop:sigma_comp_pointed_case}
		Let $(X, d, x_0)$ be a pointed metric space such that $X$ is $\sigma$-compact. Then $(D(X, x_0), W_p)$ is $\sigma$-compact.
	\end{proposition}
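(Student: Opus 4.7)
The plan is to realize $(D(X, x_0), W_p)$ as a countable union of compact subsets. Since $X$ is $\sigma$-compact, choose a presentation $X = \bigcup_{k=1}^{\infty} K_k$ with each $K_k$ compact; by replacing $K_k$ with $\bigcup_{i=1}^{k}(K_i \cup \{x_0\})$, I may assume that $x_0 \in K_k$ for every $k$ and that $K_1 \subseteq K_2 \subseteq \cdots$. Then each $(K_k, d|_{K_k}, x_0)$ is a compact pointed metric space, so \cref{prop:D^n(X)_is_cmpt} gives that $(D^n(K_k, x_0), W_p)$ is compact for every $n, k \in \N$.

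Next I will check that the map $\iota_{n,k}: D^n(K_k, x_0) \to D(X, x_0)$ induced by the inclusion $K_k \hookrightarrow X$ is an isometric embedding. For $\alpha, \beta \in D^n(K_k, x_0)$, \cref{lem:optimal-matching-finite-pointed-metric-space} expresses $W_p(\alpha, \beta)$ as a minimum over permutations of $\|(d(x_i, y_{\sigma(i)}))_{i=1}^{m+n}\|_p$, with the missing slots on either side padded by $x_0$. Since the supports of $\alpha$ and $\beta$ together with $x_0$ all lie inside $K_k$, the formula produces the same value whether evaluated in $(D(K_k, x_0), W_p)$ or in $(D(X, x_0), W_p)$. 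Hence $\iota_{n,k}$ is isometric and $\iota_{n,k}(D^n(K_k, x_0))$ is a compact subset of $(D(X, x_0), W_p)$.

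To conclude, given any $\alpha \in D(X, x_0)$ with $\hat{\alpha} = \sum_{i=1}^{m} y_i$, the nesting property furnishes a $K$ with $y_i \in K_K$ for all $i$, so $\alpha \in \iota_{m, K}(D^m(K_K, x_0))$. Therefore
\[
D(X, x_0) = \bigcup_{n, k \in \N} \iota_{n,k}(D^n(K_k, x_0))
\]
exhibits $(D(X,x_0),W_p)$ as a countable union of compact sets.

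The main point requiring care is the isometric embedding step, since a priori enlarging the ambient space from $K_k$ to $X$ might open up cheaper matchings. The pointed structure closes this possibility: the only way a matching can lower its cost on a pointed diagram is by pairing dots to the single basepoint $x_0$, which already belongs to $K_k$.
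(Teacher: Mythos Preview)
Your proof is correct and follows essentially the same approach as the paper: write $X$ as a nested union of compacts containing $x_0$, apply \cref{prop:D^n(X)_is_cmpt} to get compactness of $D^n(K_k,x_0)$, and cover $D(X,x_0)$ by these. The paper streamlines the indexing by taking $n \geq m$ large enough that $K_n$ contains the support of a given $\alpha$ with $|\hat\alpha|=m$, yielding the single-parameter union $\bigcup_{n} D^n(K_n,x_0)$; your double-index version is equivalent. Your explicit verification that $\iota_{n,k}$ is an isometric embedding is a point the paper leaves implicit.
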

	
	\begin{proof}
		Suppose $X=\cup_{i=1}^{\infty}K_i $ where $K_i \subset X$ is compact for all $i$. Since a finite union of compact sets is compact, then without loss of generality, we can assume that $x_0 \in K_1 \subset K_2 \subset \dots $. Let $\alpha \in D(X, x_0)$ such that $\hat{\alpha}=x_1+\dots+x_m$ for some  $m \in \N$. Then there is $n \geq m$ such that  $x_i \in K_n$ for $1 \leq i \leq m$. Thus $\alpha \in D^n(K_n, x_0)$ which implies that  $D(X, x_0)=\cup_{n=1}^{\infty}D^n(K_n, x_0)$. By Proposition \ref{prop:D^n(X)_is_cmpt},  $D^n(K_n, x_0)$ is compact for all $n \in \N$.  Therefore, $D(X, x_0)$ is $\sigma$-compact.
		
		%old proof
		%Since $X$ is $\sigma$-compact, $X=\cup_{i=1}^{\infty}K_i $ where $K_i \subset X$ is compact for all $i$. Without loss of generality, we can assume that $K_1$ contains $x_0$. Let $\alpha \in D(X, x_0)$ such that $\hat{\alpha}=x_1+\dots+x_m$ for some  $m \in \N$. Then for all $1\leq i \leq m$, there is $n_i \in \N$ such that  $x_i \in K_{n_i}$. Let $n=\max \{n_1,\dots, n_m, m\}$. Then  $\alpha \in D^n(\cup_{i=1}^n K_i, x_0)$ which implies that  $D(X, x_0)=\cup_{n=1}^{\infty}D^n(\cup_{i=1}^{n} K_i, x_0)$. Since a finite union of compact sets is compact, $\cup_{i=1}^{n}K_i$ is compact for all $n \in \N$. Thus by Proposition \ref{prop:D^n(X)_is_cmpt},  $D^n(\cup_{i=1}^{n}K_i, x_0)$ is compact for all $n \in \N$.  Therefore, $D(X, x_0)$ is $\sigma$-compact.
	\end{proof}

\begin{proposition} \label{prop:sigma-compact}
Let $(X,d,A)$ be a metric pair where $X$ is $\sigma$-compact, then $(D(X, A), W_p)$ is $\sigma$-compact.
\end{proposition}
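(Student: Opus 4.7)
The plan is to reduce to the pointed case that has already been handled in Proposition~\ref{prop:sigma_comp_pointed_case}. First I would invoke the isometric isomorphism $(D(X,A), W_p) \cong (D(X/A, A), W_p)$ provided by Lemma~\ref{lem:isomorphism}, so that it suffices to show $(D(X/A, A), W_p)$ is $\sigma$-compact. Note that $(X/A, d_p, A)$ is a pointed metric space (the class $A \in X/A$ is a single point), which puts us in exactly the setting of Proposition~\ref{prop:sigma_comp_pointed_case} once we verify that $X/A$ is $\sigma$-compact with respect to $d_p$.

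To verify $\sigma$-compactness of $(X/A, d_p)$, I would use Lemma~\ref{lem:equiv_metrics}, which states that $d_p$ metrizes the quotient topology on $X/A$; in particular, the quotient map $q : (X, d) \to (X/A, d_p)$ is continuous, and therefore sends compact sets to compact sets. Writing $X = \bigcup_{i=1}^{\infty} K_i$ with each $K_i$ compact in $(X,d)$, we obtain $X/A = q(X) = \bigcup_{i=1}^{\infty} q(K_i)$ as a countable union of compact subsets of $(X/A, d_p)$.

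Having established that $(X/A, d_p, A)$ is a pointed metric space whose underlying space is $\sigma$-compact, applying Proposition~\ref{prop:sigma_comp_pointed_case} yields that $(D(X/A, A), W_p)$ is $\sigma$-compact. Pulling back through the isometric isomorphism from Lemma~\ref{lem:isomorphism} then shows that $(D(X,A), W_p)$ is $\sigma$-compact. There is no serious obstacle here; the argument is a short chain of applications of previously established results, with the only mild point being the observation that $\sigma$-compactness passes to the metric quotient $(X/A, d_p)$ via continuity of the quotient map.
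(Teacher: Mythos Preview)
Your proof is correct and follows essentially the same route as the paper: pass to the pointed quotient $(X/A, d_p, A)$, observe it inherits $\sigma$-compactness from $X$ via the continuous quotient map, apply Proposition~\ref{prop:sigma_comp_pointed_case}, and transfer back via Lemma~\ref{lem:isomorphism}. Your justification for the $\sigma$-compactness of $X/A$ (explicitly invoking Lemma~\ref{lem:equiv_metrics}) is slightly more detailed than the paper's one-line ``compactness is preserved by taking quotients,'' but the argument is otherwise identical.
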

	
	\begin{proof}
		Since $(X, d)$ is $\sigma$-compact and compactness is preserved by taking quotients then $(X/A, d_p)$ is also $\sigma$-compact. By Proposition \ref{prop:sigma_comp_pointed_case}, $(D(X/A, A), W_p)$ is $\sigma$-compact. Finally, from Lemma \ref{lem:isomorphism} it follows that $(D(X, A), W_p)$ is $\sigma$-compact.
	\end{proof}
	
	\begin{example}
		The spaces of classical persistence diagrams $D(\R^2_{\leq}, \Delta)$ and $D(\overline{\R}^2_{\leq}, \overline{\Delta})$ are $\sigma$-compact.
	\end{example}

Following \cite{perea2019approximating} we have the next result.

\begin{proposition}\label{prop:D_p_is_not_sigma_compact}
 Let $(X,d,A)$ be a metric pair and $p \in [1, \infty]$. If $(X,d)$ is complete and separated and $A$ is not isolated then $(\overline{D}_p(X,A), W_p)$ is not $\sigma$-compact.
\end{proposition}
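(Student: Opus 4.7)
The plan is to argue by contradiction via the Baire category theorem. The hypotheses are well-matched to this: completeness of $(X,d)$ promotes $(\overline{D}_p(X,A),W_p)$ to a complete metric space (Theorems~\ref{thm:completeness_p} and~\ref{thm:completeness_inf}), separatedness will give Hausdorffness, and the non-isolation of $A$ supplies Theorem~\ref{thm:not_loc_compactness_D_p}, which says every compact subset has empty interior.

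The first step is to verify that $W_p$ is separated, so that compact sets are closed. Let $\alpha \neq \beta$ in $\overline{D}_p(X,A)$, so $\hat\alpha \neq \hat\beta$ as elements of $\Z_+^{X\setminus A}$. By Lemma~\ref{lem:matching}, any matching $\sigma$ with $\Cost_p(\sigma)=0$ forces every matched pair $(x_i,y_j)$ to satisfy $d(x_i,y_j)=0$, i.e.\ $x_i=y_j$ by separatedness of $d$, and every unmatched $x_i \in X\setminus A$ paired with an element of $A$ to satisfy $d(x_i,A)=0$. But $A$ is closed and $d$ is separated, so $d(x_i,A)>0$ whenever $x_i\in X\setminus A$. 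Hence a zero-cost matching forces $\hat\alpha=\hat\beta$, a contradiction, so $W_p(\alpha,\beta)>0$.

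The second step is to run the Baire argument, taking care that the metric is extended. Suppose for contradiction that $\overline{D}_p(X,A)=\bigcup_{n\in\N} K_n$ with each $K_n$ compact. Pass to the finite-distance component $C=\{\alpha \in \overline{D}_p(X,A) : W_p(\alpha,0)<\infty\}$, which is nonempty (it contains $0$), clopen in $\overline{D}_p(X,A)$ (points in different components are at distance $\infty$), and inherits a complete non-extended metric (a Cauchy sequence in $C$ converges in $\overline{D}_p(X,A)$ to a limit that remains at finite distance from $0$ by the triangle inequality). Then $C=\bigcup_{n\in\N}(K_n\cap C)$; each $K_n\cap C$ is compact in the Hausdorff space $\overline{D}_p(X,A)$, hence closed, and has empty interior in $\overline{D}_p(X,A)$ by Theorem~\ref{thm:not_loc_compactness_D_p}. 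Because $C$ is open in $\overline{D}_p(X,A)$, empty interior in $\overline{D}_p(X,A)$ implies empty interior in $C$: if $B_\epsilon^C(\beta)\subset K_n\cap C$ then, choosing $\epsilon'\leq\epsilon$ with $B_{\epsilon'}(\beta)\subset C$, we get $B_{\epsilon'}(\beta)\subset K_n$, a contradiction. Therefore $C$ is meager in itself, contradicting the Baire category theorem for the complete metric space $C$.

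The main obstacle is bookkeeping around the extended metric: one must choose a suitable complete non-extended subspace on which to run Baire, and check that the ``empty interior'' property survives the passage from the ambient space to that subspace. The separatedness of $W_p$, needed for compact-implies-closed, is straightforward given the closedness of $A$ but must be stated carefully.
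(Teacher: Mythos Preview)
Your approach is essentially the same as the paper's: Baire category theorem on the complete space $(\overline{D}_p(X,A),W_p)$, using Theorem~\ref{thm:not_loc_compactness_D_p} to see that compact sets have empty interior and separatedness of $W_p$ to see that compact sets are closed. The paper simply cites a version of Baire valid for extended pseudometrics, whereas you restrict to the finite-distance component $C$ of $0$; your extra care here is valid and arguably cleaner.

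One small gap: in your separatedness argument you show that a matching with $\Cost_p(\sigma)=0$ would force $\hat\alpha=\hat\beta$, and conclude $W_p(\alpha,\beta)>0$. But $W_p$ is an infimum, so the nonexistence of a zero-cost matching does not by itself rule out $W_p(\alpha,\beta)=0$; optimal matchings need not exist since $A$ is not assumed distance minimizing. The fix is easy: if $\hat\alpha(x)>\hat\beta(x)$ for some $x\in X\setminus A$, use that $\abs{u_\delta(\alpha)}$ and $\abs{u_\delta(\beta)}$ are finite (Lemma~\ref{lem:u_bounded}) for $\delta=\tfrac{1}{2}d(x,A)$ to find a positive $\eta\leq\delta$ smaller than every nonzero distance among the finitely many points of $u_\delta(\alpha)\cup u_\delta(\beta)$; then any matching of cost below $\eta$ must pair every copy of $x$ in $\alpha$ with a copy of $x$ in $\beta$, giving $\hat\alpha(x)\leq\hat\beta(x)$, a contradiction. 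The paper itself just asserts separatedness without proof, so your attempt to justify it is welcome, but the argument as written needs this patch.
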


\begin{proof}
	By Theorem \ref{thm:not_loc_compactness_D_p}, every compact set in $\overline{D}_p(X, A)$ has empty interior.
    By Theorems \ref{thm:completeness_p} and \ref{thm:completeness_inf},  $(\overline{D}_p(X,A),W_p)$ is complete. By the Baire Category Theorem \cite[Theorem 4.35]{bubenik2018topological}, a complete metric space cannot be expressed as a countable union of closed sets with empty interior. Since $(X,d)$ is separated, so is $(\overline{D}_p(X,A), W_p)$. Hence $\overline{D}_p(X,A)$ is Hausdorff and thus a compact subset of $\overline{D}_p(X,A)$ is closed. Therefore, $(\overline{D}_p(X,A), W_p)$ is not $\sigma$-compact.
\end{proof}

\begin{example}
	The spaces $\overline{D}_p(\R^2_{\leq}, \Delta)$ and $\overline{D}_p(\overline{\R}^2_{\leq}, \overline{\Delta})$ are not $\sigma$-compact.
\end{example}

 \subsection{Hemicompactness} 

A topological space $X$ is \emph{hemicompact} if it has a sequence of compact subsets such that every compact subset of the space lies inside some compact set in the sequence.

%A topological space $X$ is \emph{first countable} if each point has a countable local base. That is, for each point $x \in X$ there exists a sequence $(U_n)$ of neighborhoods  of $x$ such that for any neighborhood $U$ of $x$ there is $n \in \N$ such that $U_n \subseteq U$.

% \begin{lemma}\label{lem:psmet_sp_is_first_count} \cite{bubenik2018topological}
%	A metric space is first countable.
%\end{lemma}

\begin{lemma}\label{lem:hemic_implies_loc_comp}
	A hemicompact metric space is locally compact.
\end{lemma}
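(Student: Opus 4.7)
The plan is to argue by contradiction. Suppose $(X,d)$ is hemicompact with a witnessing sequence $(K_n)$ of compact subsets, but that some point $x \in X$ has no compact neighborhood. I would first make the following reduction: to produce a compact neighborhood of $x$, it would suffice to find some $n$ and some $\eps > 0$ with $\overline{B}_\eps(x) \subset K_n$, because then $\overline{B}_\eps(x)$ is a closed subset of a compact set and hence itself compact, and it obviously contains the open neighborhood $B_\eps(x)$ of $x$.

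So the assumption that $x$ has no compact neighborhood lets me assume that for every $n$, $\overline{B}_{1/n}(x) \not\subset K_n$. Pick $y_n \in \overline{B}_{1/n}(x) \setminus K_n$ for each $n$. By construction, $d(y_n, x) \leq 1/n$, so $y_n \to x$ in $(X,d)$.

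The key step is then to observe that the set $S = \{x\} \cup \{y_n : n \in \N\}$ is compact, being a convergent sequence together with its limit (any open cover of $S$ contains an open set around $x$, which contains all but finitely many $y_n$). By hemicompactness of $X$, there exists some $N$ such that $S \subset K_N$. But $y_N \in S$ and $y_N \notin K_N$ by construction, a contradiction. Hence every point of $X$ admits a compact neighborhood, so $X$ is locally compact.

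There is no real obstacle here; the only thing to be slightly careful about is to index the chosen points $y_n$ and the compact sets $K_n$ consistently so that the contradiction lands on a single index $N$. The proof uses only that metric balls form a neighborhood base at each point and that a convergent sequence together with its limit is compact, both of which hold under the generalized notion of metric space adopted in \cref{def:metric}.
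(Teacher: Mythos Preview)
Your proof is correct and follows essentially the same approach as the paper's: both pick points $y_n \in U_n \setminus K_n$ from a shrinking neighborhood base at $x$, form the compact set $\{x\} \cup \{y_n\}$, and derive a contradiction from hemicompactness. The only cosmetic differences are that the paper uses an abstract countable local base (made decreasing) rather than the explicit balls $\overline{B}_{1/n}(x)$, and it verifies compactness of $S$ via limit point compactness and \cref{thm:equiv_types_compts} rather than your direct open-cover argument.
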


\begin{proof}
	Let $\{K_n\}$ be a sequence of compact sets such that any compact subset of $X$ is contained in one of the $K_n$. Let $x \in X$ and $\{U_n\}$ be a countable local base at $x$. After replacing each $U_n$ with $\bigcap_{i=1}^n U_i$, we can assume that the sequence $\{U_n\}$ is decreasing.
	
	Suppose $U_n \not\subset K_n$ for all $n \in \N$. Then for all $n \in \N$ there exists $x_n \in U_n \setminus K_n$. Let $S=\{x_n \ | \  n \in \N\} \cup \{x\}$. Since $\{U_n\}$ is a decreasing local base of $x$, $(x_n) \to x$. So $S$ is limit point compact and by Theorem \ref{thm:equiv_types_compts}, it is compact. Thus, by assumption, there is $m \in \N$ such that $S \subset K_m$. However, $x_m \notin K_m$, which is a contradiction. Therefore, there exists $n \in \N$ such that $U_n \subset K_n$. So $x$ has an open neighborhood contained in a compact neighborhood. Hence $X$ is locally compact.
\end{proof}

\begin{theorem}\label{thm:not_hemicompact}
	Let $(X, d, A)$ be a metric pair with $A$ not isolated and $p \in [1, \infty]$. Then $(D(X, A), W_p)$ and $(\overline{D}_p(X, A), W_p)$ are not hemicompact.
\end{theorem}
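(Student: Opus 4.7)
The plan is to deduce this immediately by contrapositive from the two results that are already in place. Lemma \ref{lem:hemic_implies_loc_comp} tells us that any hemicompact metric space must be locally compact, while Theorems \ref{thm:not_loc_compactness} and \ref{thm:not_loc_compactness_D_p} assert that under the hypothesis that $A$ is not isolated, neither $(D(X,A),W_p)$ nor $(\overline{D}_p(X,A),W_p)$ is locally compact. Combining these three facts yields the conclusion directly.

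More concretely, I would argue as follows. Suppose, for contradiction, that $(D(X,A),W_p)$ is hemicompact. Since it is a metric space, Lemma \ref{lem:hemic_implies_loc_comp} applies and gives that $(D(X,A),W_p)$ is locally compact. This contradicts Theorem \ref{thm:not_loc_compactness}, which under the standing hypothesis that $A$ is not isolated asserts that no persistence diagram in $(D(X,A),W_p)$ has a compact neighborhood. The exact same argument, but invoking Theorem \ref{thm:not_loc_compactness_D_p} in place of Theorem \ref{thm:not_loc_compactness}, rules out the hemicompactness of $(\overline{D}_p(X,A),W_p)$.

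There is no real obstacle here: the work has been done in the preceding lemma and theorems. The only point to double-check is that Lemma \ref{lem:hemic_implies_loc_comp} applies to our generalized notion of metric space from Definition \ref{def:metric}; its proof only uses the existence of a countable local base at each point (which holds in any pseudometric space) and Theorem \ref{thm:equiv_types_compts}, so it carries over without change. Hence the proof is a one-line application of the contrapositive.
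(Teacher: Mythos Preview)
Your proposal is correct and follows exactly the same approach as the paper: invoke Theorems \ref{thm:not_loc_compactness} and \ref{thm:not_loc_compactness_D_p} to get non-local-compactness, then apply the contrapositive of Lemma \ref{lem:hemic_implies_loc_comp}.
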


\begin{proof}
	By Theorems \ref{thm:not_loc_compactness} and \ref{thm:not_loc_compactness_D_p}, $(D(X, A), W_p)$ and $(\overline{D}_p(X, A), W_p)$ are not locally compact. By Lemma \ref{lem:hemic_implies_loc_comp}, they are not hemicompact.
\end{proof}

\begin{example}
	The spaces $D(\R^2_{\leq}, \Delta)$ and $D(\overline{\R}^2_{\leq}, \overline{\Delta})$ are not hemicompact.
\end{example}

\begin{proposition}
	If  $(\overline{D}_p(X,A), W_p)$ is hemicompact  then $(X/A, d_p)$ is locally compact and $(\overline{D}_p(X,A), W_p)=(D(X,A), W_p)$.
\end{proposition}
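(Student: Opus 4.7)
The plan is to chain together two previously established results. First I would invoke Lemma~\ref{lem:hemic_implies_loc_comp}, which tells us that a hemicompact metric space is automatically locally compact. Applying this to the metric space $(\overline{D}_p(X,A), W_p)$, we conclude that $(\overline{D}_p(X,A), W_p)$ is locally compact.

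Next, I would feed this conclusion into Proposition~\ref{prop:X/A_loc_compact}, which states that local compactness of $(\overline{D}_p(X,A), W_p)$ implies both that $(X/A, d_p)$ is locally compact and that $(\overline{D}_p(X,A), W_p) = (D(X,A), W_p)$. That delivers both conclusions of the proposition simultaneously.

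There is essentially no obstacle here: the proposition is a direct corollary of \cref{lem:hemic_implies_loc_comp} and \cref{prop:X/A_loc_compact}, since the deeper content, namely that local compactness of $\overline{D}_p(X,A)$ forces $A$ to be isolated (via \cref{thm:not_loc_compactness_D_p}) and then forces $\overline{D}_p(X,A) = D(X,A)$ (via \cref{prop:D_equal_D_p}), has already been carried out inside the proof of \cref{prop:X/A_loc_compact}. So this final proposition is simply a one-line composition of the two prior results and needs no further work beyond citing them.
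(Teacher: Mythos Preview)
Your proposal is correct and matches the paper's proof essentially verbatim: the paper also invokes Lemma~\ref{lem:hemic_implies_loc_comp} to get local compactness of $(\overline{D}_p(X,A), W_p)$ and then applies Proposition~\ref{prop:X/A_loc_compact} to conclude.
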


\begin{proof}
	Suppose $(\overline{D}_p(X,A), W_p)$ is hemicompact. Then by Lemma \ref{lem:hemic_implies_loc_comp} it is also locally compact. Then by Theorem \ref{prop:X/A_loc_compact}, $(X/A, d_p)$ is locally compact and $(\overline{D}_p(X,A), W_p)=(D(X,A), W_p)$.
\end{proof}
	
\begin{proposition}[{\cite[Theorem 1.4]{mccoy1980countability}}] 
  \label{prop:CXY_is_metrizable}
Let $X$ be a completely regular space and let $Y$ be a topological  space that contains a nontrivial path. If the space $C(X,Y)$ with the compact-open topology is first countable then $X$ is hemicompact.
\end{proposition}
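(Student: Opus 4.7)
The plan is to argue the contrapositive by a standard function-space construction: use first countability at a well-chosen constant map to extract a candidate sequence of compact sets, then rule out a witness to non-hemicompactness by a bump-function argument valued along the nontrivial path. Fix a nontrivial path $\gamma:[0,1]\to Y$; by restricting and reparametrizing we may assume $\gamma(0)=y_0$, $\gamma(1)=y_1$ with $y_0\neq y_1$, and (using the ambient separation in $Y$) an open neighborhood $W\subset Y$ of $y_0$ with $y_1\notin W$. Let $f_0\in C(X,Y)$ be the constant map with value $y_0$. Recall the compact-open topology has subbase $[K,U]=\{f\in C(X,Y) : f(K)\subset U\}$ for $K\subset X$ compact and $U\subset Y$ open, so every basic open neighborhood of $f_0$ is a finite intersection $\bigcap_{i=1}^m [K_i,U_i]$ with $y_0\in U_i$ for all $i$.

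Assuming $C(X,Y)$ is first countable, pick a decreasing countable local base $\{V_n\}$ at $f_0$ and, inside each $V_n$, a basic open neighborhood $\bigcap_{i=1}^{m_n}[K_{n,i},U_{n,i}]\ni f_0$. Set
\[
L_n \;:=\; \bigcup_{i=1}^{m_n} K_{n,i},
\]
which is compact as a finite union of compact sets. I claim $\{L_n\}_{n\geq 1}$ witnesses hemicompactness of $X$.

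Suppose for contradiction that some compact $K\subset X$ satisfies $K\not\subset L_n$ for every $n$, and pick $x_n\in K\setminus L_n$. Since $X$ is completely regular (and $L_n$ is closed, as a compact subset in the standard Tychonoff convention), there is a continuous $\phi_n:X\to[0,1]$ with $\phi_n(L_n)=\{0\}$ and $\phi_n(x_n)=1$. Define $g_n:=\gamma\circ\phi_n\in C(X,Y)$. Then $g_n(K_{n,i})\subset g_n(L_n)=\{y_0\}\subset U_{n,i}$ for every $i$, so $g_n\in\bigcap_i[K_{n,i},U_{n,i}]\subset V_n$; since $\{V_n\}$ is a decreasing local base, $g_n\to f_0$ in the compact-open topology. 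On the other hand, $[K,W]$ is an open neighborhood of $f_0$ (since $f_0(K)=\{y_0\}\subset W$), yet $x_n\in K$ and $g_n(x_n)=y_1\notin W$, so $g_n\notin[K,W]$ for any $n$, contradicting $g_n\to f_0$. Hence no such $K$ exists and $X$ is hemicompact.

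The main obstacle is the passage from the bare hypothesis ``$Y$ contains a nontrivial path'' to a usable pair $(y_0,y_1)$ together with an open set $W$ separating them in the stated direction; one has to decide what separation axioms are being tacitly assumed on $Y$ (or on the image of $\gamma$) in order to extract such $W$ from the continuity of $\gamma$ alone. Once that separation is in hand, complete regularity of $X$ provides the bump functions $\phi_n$, and the rest of the proof is an essentially formal manipulation of the compact-open topology; this is the structure of McCoy's original argument in~\cite{mccoy1980countability}.
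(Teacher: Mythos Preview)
The paper does not prove this proposition; it is quoted from McCoy~\cite{mccoy1980countability} and used as a black box in the proof of \cref{thm:not-metrizable}. Your argument is exactly McCoy's and is correct.

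On the separation issue you flag at the end: it is genuine for the bare statement. If $Y$ is a two-point indiscrete space then every map $[0,1]\to Y$ is continuous (so there is a non-constant path), the only compact-open subbasic sets are $\emptyset$ and $C(X,Y)$, hence $C(X,Y)$ is indiscrete and trivially first countable, while $X$ may be, say, $\Q$, which is Tychonoff but not hemicompact. McCoy's paper carries a standing Hausdorff hypothesis on all spaces, which simultaneously supplies the open $W$ with $y_0\in W$ and $y_1\notin W$, and guarantees that the compact set $L_n\subset X$ is closed so that complete regularity yields the bump functions $\phi_n$. In the applications in the present paper the codomain is always at least $T_1$, so the point does not arise.
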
 

\begin{theorem} \label{thm:not-metrizable}
	Let $(X, d, A)$ be a metric pair with $A$ not isolated. Let $Y$ be a topological space that contains a nontrivial path and let $p \in [1, \infty]$. Then $C((D(X, A), W_p), Y)$ and $C((\overline{D}_p(X,A), W_p), Y)$ are not metrizable.
\end{theorem}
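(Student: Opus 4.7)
The plan is to derive both non-metrizability statements from Theorem \ref{thm:not_hemicompact} via the contrapositive of Proposition \ref{prop:CXY_is_metrizable}. The key observation is that metrizability implies first countability, so if $C(Z,Y)$ equipped with the compact-open topology were metrizable (where $Z$ is either $(D(X,A),W_p)$ or $(\overline{D}_p(X,A),W_p)$), then Proposition \ref{prop:CXY_is_metrizable} would force $Z$ to be hemicompact, which Theorem \ref{thm:not_hemicompact} forbids under the assumption that $A$ is not isolated.

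The one hypothesis of Proposition \ref{prop:CXY_is_metrizable} that needs to be checked is that the domain is completely regular. Although $W_p$ is only an extended pseudometric in the sense of Definition \ref{def:metric}, the topology it induces is the standard pseudometric topology, and every pseudometric space is completely regular: given $\alpha \notin F$ with $F$ closed, one chooses $r>0$ with $B_r(\alpha) \cap F = \emptyset$ and uses the bounded continuous function $\beta \mapsto \min(W_p(\alpha,\beta)/r,\,1)$ to separate $\alpha$ from $F$. The same reasoning gives complete regularity of $(\overline{D}_p(X,A),W_p)$.

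With complete regularity in hand, the argument is immediate. Suppose for contradiction that $C((D(X,A),W_p),Y)$ is metrizable. Then it is first countable. Since $(D(X,A),W_p)$ is completely regular and $Y$ contains a nontrivial path, Proposition \ref{prop:CXY_is_metrizable} yields that $(D(X,A),W_p)$ is hemicompact, contradicting Theorem \ref{thm:not_hemicompact}. The identical argument applies verbatim to $C((\overline{D}_p(X,A),W_p),Y)$ using the corresponding half of Theorem \ref{thm:not_hemicompact}.

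I do not anticipate any substantive obstacle: all the heavy lifting has already been done in Theorem \ref{thm:not_hemicompact} and in the cited Proposition \ref{prop:CXY_is_metrizable}. The only minor care required is confirming that the generalized (possibly extended and non-separated) metric setting does not disturb the regularity hypothesis, and this is routine as indicated above.
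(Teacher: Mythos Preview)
Your proposal is correct and follows essentially the same argument as the paper: both verify complete regularity of the domain, then combine Proposition~\ref{prop:CXY_is_metrizable} (metrizable $\Rightarrow$ first countable $\Rightarrow$ hemicompact) with Theorem~\ref{thm:not_hemicompact} to reach a contradiction. Your explicit justification that pseudometric spaces are completely regular is a nice addition, since the paper simply asserts this.
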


\begin{proof}
  % A separated metric space is completely regular. The result follows from Theorem \ref{thm:not_hemicompact} and Proposition \ref{prop:C(X,Y)_is_metrizable}.
Let $Z = (D(X,A),W_p)$ or $(\overline{D}_p(X,A),W_p)$.
  Then $Z$ is a metric space and hence completely regular.
  Assume that $C(Z,Y)$ is metrizable.
  Then $C(Z,Y)$ is first countable.
  Then by \cref{prop:CXY_is_metrizable}, $Z$ is hemicompact.
  But this contradicts \cref{thm:not_hemicompact}.
\end{proof}

% \change{\begin{remark}
% 	\Cref{thm:not-metrizable} has been proven for $(\overline{D}_\infty([0,\infty)^2_\leq  , \Delta), W_\infty)$ when $Y = \R$ in \cite{perea2019approximating}. The authors utilized the Baire Category Theorem which implied that $(\overline{D}_\infty([0,\infty)^2_\leq  , \Delta), W_\infty)$ cannot be written as the countable union of compact sets and thus is not hemicompact. They completed the proof by refering to the following fact: $C(X, \R)$ is metrizable if and only if $X$ is hemicompact.
% \end{remark}}

\subsection{Total boundedness} \label{sec:rel_compactness}

We wish to characterize relatively compact sets of persistence diagrams.
Recall that in complete metric spaces relative compactness is equivalent to
total boundedness (\cref{lem:complete-rel-cpct-tot-bdd}).
In this section we characterize totally bounded sets of persistence diagrams.
Observe that in a metric space $(X,d)$ in which there exist $x,y$ with $d(x,y)=\infty$, the set $\{x,y\}$ is totally bounded but not bounded.
Let $(X,d,A)$ be a metric pair.

\begin{definition}\label{def:unif_upper_finite}%[\textbf{Uniformly upper finite}]
A set $S \subset \overline{D}(X,A)$ is \emph{uniformly upper finite} if for any $\eps >0$ there exists $M_{\eps} \geq 0$ such that $|u_{\eps}(\alpha)| \leq M_{\eps}$ for all $\alpha \in S$.
\end{definition}

\begin{definition}\label{def:upper_tot_bd}%[\textbf{Upper totally bounded}]
  A set $S \subset \overline{D}(X,A)$ is \emph{upper totally bounded} if for any $\eps >0$, the set
   $u_{\eps}(S)=\cup_{\alpha \in S} \supp( u_{\eps}(\alpha) )\subset X$ 
  is totally bounded.
\end{definition}

%Since relative compactness implies total boundedness, if $S$ is upper relatively compact then $S$ is upper totally bounded.

\begin{definition}\label{def:lower_uniform}%[\textbf{Lower uniform}]
  Let $1 \leq p \leq \infty$.
  A set $S \subset (\overline{D}(X,A),W_p)$ is
 \emph{uniformly $p$-vanishing} if for any $\eps >0$ there exists $\delta>0$ such that $W_p(\ell_{\delta}(\alpha), 0) < \eps$ for all $\alpha \in S$.
\end{definition}

Note that for every $\alpha \in \overline{D}(X, A)$ and every $\eps>0$, $W_{\infty}(\ell_{\frac{\eps}{2}}(\alpha), 0) < \eps$. Therefore,  every $S \subset \overline{D}_{\infty}(X,A)$ is uniformly $\infty$-vanishing. For an example of a set $S$ that is not uniformly $p$-vanishing when $ p \in [1, \infty)$ see \cite[Example 19]{mileyko2011probability}.

\begin{theorem}[\textbf{Criterion of totally bounded sets}]\label{thm:criterion_tot_bd}
  A set $S \subset (\overline{D}_{\infty}(X,A),W_{\infty})$ is totally bounded
  if and only if
  $S$ is
  uniformly upper finite and
  upper totally bounded.
  For $p \in [1, \infty)$, a set $S \subset (\overline{D}_p(X,A),W_p)$ is totally bounded
  if and only if
  $S$ is
  uniformly upper finite,
  upper totally bounded,
  and uniformly $p$-vanishing.
\end{theorem}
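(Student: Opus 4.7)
The plan is to establish both directions via matching arguments. For the forward direction, I will deduce each of the three properties from a sufficiently fine finite $\eta$-net for $S$. For the reverse direction, I will combine a finite $\eta$-net in $X$ covering the upper parts (bounded in cardinality by uniform upper finiteness) with the uniform smallness of lower parts to assemble a finite $\eps$-net in $\overline{D}_p(X,A)$.

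\textbf{Forward direction.} Fix $\eps > 0$ and choose a finite $\eta$-net $\{\alpha_1,\ldots,\alpha_n\} \subset S$ for a value $\eta < \eps/2$ to be specified. For any $\alpha \in S$, pick $\alpha_i$ with $W_p(\alpha,\alpha_i) < \eta$ and a matching $\sigma$ of $\alpha$ and $\alpha_i$ with $\Cost_p(\sigma) < \eta$. For \emph{uniform upper finiteness}, each dot $x \in \supp u_\eps(\alpha)$ is matched to some $y$; when $p = \infty$ every matched pair satisfies $d(x,y) < \eta$, so $d(y,A) > \eps-\eta$ and thus $y \in \supp u_{\eps-\eta}(\alpha_i)$, giving $|u_\eps(\alpha)| \leq |u_{\eps-\eta}(\alpha_i)|$; for $p < \infty$ the same bound applies to the pairs with $d(x,y) < \eps/2$, while the remaining pairs number at most $(2\eta/\eps)^p$ by a $W_p$-norm estimate, so we may take $M_\eps = \max_i |u_{\eps/2}(\alpha_i)| + (2\eta/\eps)^p$. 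For \emph{upper total boundedness}, the same matching analysis shows each $x \in u_\eps(\alpha)$ lies within $\eta$ of some point of the finite set $T_\eps = \bigcup_i \supp u_{\eps/2}(\alpha_i)$ (handling the $p < \infty$ stray dots by enlarging $T_\eps$ by a finite ancillary set per $\alpha_i$). For \emph{uniform $p$-vanishing} with $p < \infty$, apply \cref{lem:dist_between_lower_part_and_0} at each $\alpha_i$ to obtain $\delta_i > 0$ with $W_p(\ell_{\delta_i}(\alpha_i), 0) < \eps/2$; take $\delta = \min_i \delta_i$, and for $\alpha \in B_\eta(\alpha_i)$ route the matching of $\ell_\delta(\alpha)$ to $0$ through $\alpha_i$ (separating matched dots with image in $u_\delta(\alpha_i)$ from those with image in $\ell_\delta(\alpha_i) \cup A$), using subadditivity from \cref{lem:isomorphism}\ref{it:subadditivie} to obtain $W_p(\ell_\delta(\alpha), 0) < \eps$ after adjusting $\eta$.

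\textbf{Reverse direction.} Assume the three properties and fix $\eps > 0$. Use uniform $p$-vanishing to choose $\delta > 0$ with $W_p(\ell_\delta(\alpha),0) < \eps/3$ for all $\alpha \in S$; for $p = \infty$ this is automatic with $\delta = \eps/3$. Use uniform upper finiteness to get $M = M_\delta$ with $|u_\delta(\alpha)| \leq M$. Using upper total boundedness, cover $u_\delta(S)$ by finitely many balls of radius $\eta$ centered at a finite set $T \subset X$, where $\eta = \eps/(3 M^{1/p})$ (read as $\eta = \eps/3$ when $p = \infty$). Let
\[ N = \Big\{ \sum_{j=1}^{k} t_j \ \Big| \ t_j \in T,\ 0 \leq k \leq M \Big\} \subset D(X,A), \]
a finite subset. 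Given $\alpha \in S$ with $u_\delta(\alpha) = \sum_{j=1}^k x_j$ where $k \leq M$, choose $t_j \in T$ with $d(x_j,t_j) < \eta$ and set $\nu = \sum_j t_j \in N$. Then $W_p(u_\delta(\alpha), \nu) \leq \|(\eta)_{j=1}^k\|_p \leq M^{1/p}\eta = \eps/3$, and by subadditivity
\[ W_p(\alpha, \nu) \leq W_p(u_\delta(\alpha), \nu) + W_p(\ell_\delta(\alpha), 0) < \eps/3 + \eps/3 < \eps. \]
Thus $N$ is a finite $\eps$-net for $S$, so $S$ is totally bounded.

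\textbf{Main obstacle.} The principal difficulty lies in the forward direction for $p < \infty$: in approximate matchings a few ``stray'' dots with large matched distance must be accounted for separately in both the cardinality and total-boundedness bounds on upper parts, which necessitates the $W_p$-inequality arguments of \cref{lem:W_p_ineq}. The uniform $p$-vanishing derivation is also subtle, since the pointwise vanishing provided by \cref{lem:dist_between_lower_part_and_0} at the finite net points must be transferred uniformly to all of $S$ via a careful matching-splitting argument that isolates the lower part's mass from the upper part's contribution.
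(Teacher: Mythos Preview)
Your overall structure matches the paper's proof: the reverse direction is essentially identical, and the forward direction proves the three properties separately via finite nets and matching arguments, just as in \cref{lem:S_is_unif_upper_finite,lem:S_is_upper_tot_bd,lem:S_is_lower_uniform}. Two points deserve attention.

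First, your ``stray dots'' concern is unnecessary. For any $p\in[1,\infty]$ one has $\Cost_\infty(\sigma)\le\Cost_p(\sigma)$, so $\Cost_p(\sigma)<\eta$ already forces \emph{every} matched pair to satisfy $d(x,y)<\eta$. Thus the $p<\infty$ case of uniform upper finiteness and upper total boundedness goes through exactly as for $p=\infty$, with no $(2\eta/\eps)^p$ correction and no ``ancillary set''. The paper's proofs of \cref{lem:S_is_unif_upper_finite,lem:S_is_upper_tot_bounded} exploit this directly.

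Second, and more seriously, your sketch for uniform $p$-vanishing has a genuine gap. After splitting $\ell_\delta(\alpha)$ into the part matched into $\ell_\delta(\alpha_i)\cup A$ and the part matched into $u_\delta(\alpha_i)$, the first part is indeed controlled by $\eta+W_p(\ell_\delta(\alpha_i),0)$. But for the second part you must bound $\|(d(x,A))_x\|_p$, and nothing in your sketch does this: a dot $x$ with $d(x,A)$ just below $\delta$ can be matched to $y$ with $d(y,A)$ just above $\delta$ at arbitrarily small cost, so ``adjusting $\eta$'' alone does not help. The paper (\cref{lem:S_is_lower_uniform}) resolves this by working with $\ell_{\delta/2}(\alpha)$ rather than $\ell_\delta(\alpha)$: then any $x$ in the lower part matched to $y\in u_\delta(\alpha_i)$ satisfies $d(x,y)\ge\delta-\delta/2>d(x,A)$, so the $W_p$-distance of this part to $0$ is dominated by the matching cost itself. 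An alternative fix is to observe that $|u_\delta(\alpha_i)|\cdot\delta^p\to 0$ as $\delta\to 0$ for each net point (a consequence of $W_p(\ell_\infty(\alpha_i),0)<\infty$) and shrink $\delta$ accordingly, but either way an additional idea is required beyond what your sketch provides.
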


%\subsection{Proofs for \cref{sec:rel_compactness}}

 %Following \cite{mileyko2011probability} and \cite{perea2019approximating} we give a criterion of totally bounded sets in $(\overline{D}_p(X,A), W_p)$.
	
\begin{lemma}\label{lem:S_is_unif_upper_finite}
  Let $p \in [1, \infty]$.
  If $S \subset (\overline{D}_p(X,A),W_p)$ is totally bounded then $S$ is uniformly upper finite.
\end{lemma}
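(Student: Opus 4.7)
The plan is to exploit total boundedness to reduce to a finite collection of centers $\beta_1,\ldots,\beta_n \in \overline{D}_p(X,A)$, each of which is automatically in $\overline{D}_\infty(X,A)$ by \cref{lem:DpDq} and therefore has $|u_\delta(\beta_i)| < \infty$ for every $\delta > 0$ by \cref{lem:u_bounded}. The bound on $|u_\eps(\alpha)|$ for arbitrary $\alpha \in S$ will then come from comparing $\alpha$ to a nearby $\beta_i$ via a near-optimal matching.

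Fix $\eps > 0$. Since $S$ is totally bounded, choose $\beta_1,\ldots,\beta_n \in S$ (or in $\overline{D}_p(X,A)$) so that $S \subset \bigcup_{i=1}^n B_{\eps/2}(\beta_i)$. Set $M_\eps := \max_{1 \leq i \leq n} |u_{\eps/2}(\beta_i)|$, which is finite by the remarks above. Given $\alpha \in S$, pick an index $i$ with $W_p(\alpha,\beta_i) < \eps/2$, and choose a matching $\sigma$ of $\alpha$ and $\beta_i$ with $\Cost_p(\sigma) < \eps/2$; write $\hat{\sigma} = \sum_{k \in K}(x_k,y_k)$ as in \cref{lem:matching}.

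The key step is to restrict $\sigma$ to an injection from the dots of $u_\eps(\alpha)$ into those of $u_{\eps/2}(\beta_i)$. Let $x \in \supp(u_\eps(\alpha))$, and let $y$ be the partner of $x$ in $\sigma$. The lemma following \cref{def:wasserstein} gives $d(x,y) \leq \Cost_p(\sigma) < \eps/2$. If $y \in A$, then $d(x,y) \geq d(x,A) \geq \eps$, contradicting $d(x,y) < \eps/2$. Hence $y \in X \setminus A$, and by the triangle inequality $d(y,A) \geq d(x,A) - d(x,y) > \eps - \eps/2 = \eps/2$, so $y \in \supp(u_{\eps/2}(\beta_i))$. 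The matching structure of \cref{lem:matching} guarantees that distinct dots of $\alpha$ have distinct partners, so this assignment is injective (counted with multiplicity). Therefore $|u_\eps(\alpha)| \leq |u_{\eps/2}(\beta_i)| \leq M_\eps$, which proves that $S$ is uniformly upper finite.

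The only subtlety is the correct choice of radius ($\eps/2$ rather than $\eps$) so that both the comparison $d(x,y) < \eps/2$ and the pushforward into $u_{\eps/2}(\beta_i)$ come out compatibly; no deeper obstacle arises, and the argument is uniform in $p \in [1,\infty]$ since it only uses the inequality $d(x,y) \leq \Cost_p(\sigma)$ on each matched pair.
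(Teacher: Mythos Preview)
Your proof is correct and follows essentially the same approach as the paper's: both cover $S$ by balls of radius $\eps/2$, invoke the finiteness of $|u_{\eps/2}(\beta_i)|$ for each center, and use a matching of cost $< \eps/2$ together with the triangle inequality to force every dot in $u_\eps(\alpha)$ to be paired with a dot in $u_{\eps/2}(\beta_i)$. The only cosmetic difference is that the paper argues by contradiction (assuming $|u_\eps(\beta_n)| \to \infty$ and deriving $W_p \geq \eps/2$), whereas you argue directly by exhibiting the injection; the underlying estimate is identical.
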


\begin{proof}
Suppose $S \subset \overline{D}_{p}(X,A)$ is not uniformly upper finite. Then there exist $\eps>0$ and a sequence $(\beta_n)$ in $ S$ such that for all $n$, $|u_{\eps}(\beta_n)| \geq n$.
Since $S$ is totally bounded, there exist $\alpha_1, \dots, \alpha_n \in S$ such that $S \subset B_{\frac{\eps}{2}}(\alpha_1) \cup \dots \cup B_{\frac{\eps}{2}}(\alpha_n)$. Now, there exists $1 \leq i \leq n$ such that $B_{\eps}(\alpha_i)$ contains a subsequence $(\beta_{n_k})$ of sequence $(\beta_n)$. Therefore, $W_p(\alpha_i, \beta_{n_k}) < \frac{\eps}{2}$.
For sufficiently large $K$,
$|u_{\eps}(\beta_{n_{K}})| > |u_{\frac{\eps}{2}}(\alpha_i)|$. This implies that for any matching between $\beta_{n_{K}}$ and $\alpha_i$ there must be a point in  $ u_{\eps}(\beta_{n_{K}})$ that is matched to a point in $\ell_{\frac{\eps}{2}}(\alpha_{i})$.
Note that for every $y \in \supp(u_{\eps}(\beta_{n_{K}}))$ and $x \in \supp(\ell_{\frac{\eps}{2}}(\alpha_{i}))$, $d(x, y) \geq d(y, A)-d(x, A)> \eps -\frac{\eps}{2}=\frac{\eps}{2}$. Hence $W_p(\beta_{n_{K}}, \alpha_i) >\frac{\eps}{2}$, which is a contradiction. 
\end{proof}	
	
\begin{lemma}\label{lem:S_is_upper_tot_bounded}
  Let $p \in [1, \infty]$.
  If  $S \subset (\overline{D}_p(X,A),W_p)$ is totally bounded then $S$ is upper totally bounded.
\end{lemma}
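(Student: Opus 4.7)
The plan is to mimic the argument of \cref{lem:S_is_unif_upper_finite}, but now track the locations of the upper dots rather than their cardinalities. Fix $\eps > 0$ and $\eta > 0$; we must cover $u_\eps(S)$ by finitely many balls of radius $\eta$. Set $\delta = \min(\eta, \eps/2)$. Since $S$ is totally bounded, choose a finite $\delta$-net $\alpha_1, \dots, \alpha_n \in S$. Each $\alpha_i$ lies in $\overline{D}_p(X,A) \subset \overline{D}_\infty(X,A)$ (\cref{lem:DpDq}), so by \cref{lem:u_bounded} the set $\supp(u_{\eps/2}(\alpha_i))$ is finite, say $\{x_{i,1}, \dots, x_{i,N_i}\}$. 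The proposed finite $\eta$-cover of $u_\eps(S)$ will be the collection $\{B_\eta(x_{i,j})\}_{i,j}$.

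To verify this, let $x \in u_\eps(S)$, so $x \in \supp(u_\eps(\alpha))$ for some $\alpha \in S$. Pick $i$ with $W_p(\alpha, \alpha_i) < \delta$ and choose a matching $\sigma$ of $\alpha$ and $\alpha_i$ with $\Cost_p(\sigma) < \delta$. By \cref{lem:matching}, the dot $x$ is matched under $\sigma$ to some $y$ which is either a dot of $\alpha_i$ or a point of $A$, and by the ``In particular'' clause following the definition of $\Cost_p$ we have $d(x,y) \leq \Cost_p(\sigma) < \delta$. If $y \in A$ then $d(x,A) < \delta \leq \eps/2$, contradicting $x \in \supp(u_\eps(\alpha))$. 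Hence $y \in \supp(\alpha_i)$, and the triangle inequality gives $d(y,A) \geq d(x,A) - d(x,y) > \eps - \delta \geq \eps/2$, so $y \in \supp(u_{\eps/2}(\alpha_i))$, i.e.\ $y = x_{i,j}$ for some $j$. Thus $d(x, x_{i,j}) < \delta \leq \eta$, and $x \in B_\eta(x_{i,j})$, as required.

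I do not expect any serious obstacle: the whole argument is a direct quantitative refinement of the proof of \cref{lem:S_is_unif_upper_finite}, where the key mechanism is again the fact that a dot with $d(x,A) \geq \eps$ cannot be cheaply matched to $A$ or to any dot of $\alpha_i$ lying in $A^{\eps/2}$. The only mild subtlety is bookkeeping the relationship between $\eps$, $\delta$, and $\eta$; choosing $\delta = \min(\eta, \eps/2)$ simultaneously ensures that matched partners stay in the upper part of the net element (needing $\delta < \eps/2$) and that the $\delta$-matching witnesses an $\eta$-approximation (needing $\delta \leq \eta$).
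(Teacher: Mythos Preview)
Your proof is correct and follows essentially the same approach as the paper's: both arguments use a finite $W_p$-net for $S$, observe that each net element has only finitely many dots in its $\eps/2$-upper part, and then show via a matching of cost less than $\eps/2$ that every dot in $u_\eps(S)$ lies near one of these finitely many dots. The paper's version differs only in bookkeeping---it first fixes an auxiliary $\delta < \eps$, covers $u_\delta(S)$, and then passes to $u_\eps(S) \subset u_\delta(S)$ via \cref{lem:tot_bd_subset}---whereas you handle the target radius $\eta$ directly by setting $\delta = \min(\eta,\eps/2)$; the underlying idea is identical.
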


\begin{proof}
Assume  $S \subset \overline{D}_p(X,A)$ is totally bounded and let $\eps>0$. Choose $\delta < \eps$. Then there exist $\alpha_1, \dots, \alpha_n \in S$ such that $S \subset B_{\frac{\delta}{2}}(\alpha_1) \cup \dots \cup B_{\frac{\delta}{2}}(\alpha_n)$. Let $\alpha \in S$. Then there is $1 \leq i \leq n$ such that $W_p(\alpha, \alpha_i) < \frac{\delta}{2}$. By \cref{def:wasserstein}, there is a matching $\sigma$ between $\alpha$ and $\alpha_i$ such that $\Cost_p(\sigma) < \frac{\delta}{2}$. For any $x \in  \supp (u_{\delta}(\alpha))$ and any $y \in \supp(\ell_{\frac{\delta}{2}}(\alpha_i))$, $d(x, y)\geq d(x, A)-d(y, A)> \delta -\frac{\delta}{2}=\frac{\delta}{2}$. Therefore, under the matching $\sigma$ every $x \in  \supp (u_{\delta}(\alpha))$ has to be matched to some $y \in \supp(u_{\frac{\delta}{2}}(\alpha_i))$.  Let $Z=\cup_{1 \leq i \leq n}\supp (u_{\frac{\delta}{2}}(\alpha_i)) \subset X$. Note that by Definition \ref{def:D_p} and Lemma \ref{lem:DpDq}, $Z$ is a finite set. Thus for any $x \in u_{\delta}(S)$ there is $z \in Z$ such that $d(x,z)< \delta$. Hence $u_{\delta}(S) \cup Z $ is totally bounded. Since $\delta < \eps$, $\supp(u_{\eps}(\alpha)) \subset \supp(u_{\delta}(\alpha))$. Thus $u_{\eps}(S) \subset u_{\delta}(S) \cup Z$. Finally, by Lemma \ref{lem:tot_bd_subset},  $u_{\eps}(S)$ is totally bounded.
\end{proof}
	
\begin{lemma}\label{lem:S_is_lower_uniform}
Let $p \in [1, \infty)$. If  $S \subset (\overline{D}_p(X,A),W_p)$ is totally bounded then $S$ is uniformly $p$-vanishing.
\end{lemma}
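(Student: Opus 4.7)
The plan is to prove this by constructing a uniform $\delta$ from the data of a finite net. Fix $\eps > 0$. Since $S$ is totally bounded, I can extract a finite $(\eps/4)$-net $\alpha_1,\ldots,\alpha_n \in S$. For each $i$, \cref{lem:dist_between_lower_part_and_0} yields $\delta_i > 0$ with $W_p(\ell_{\delta_i}(\alpha_i),0) < \eps/4$, and by \cref{lem:overlineDp} the cardinality $M_i := |u_{\delta_i}(\alpha_i)|$ is finite. Set $M = \max_i M_i$ and $\delta_0 = \min_i \delta_i$, and then take $\delta \in (0,\delta_0)$ small enough that $M^{1/p}\delta < \eps/4$. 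The goal is to show $W_p(\ell_\delta(\alpha),0) < \eps$ uniformly for $\alpha \in S$.

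Given $\alpha \in S$, choose $i$ with $W_p(\alpha,\alpha_i) < \eps/4$ and a matching $\sigma$ of $\alpha$ and $\alpha_i$ with $\Cost_p(\sigma) < \eps/4$. Using the explicit form provided by \cref{lem:matching}, every dot of $\alpha$ is either matched to a dot of $\alpha_i$ via an injection or sent to $A$. I partition the dots of $\ell_\delta(\alpha)$ into three sub-diagrams $L_1 + L_2 + L_3 = \ell_\delta(\alpha)$ according to whether the matched target lies in $\supp(\ell_{\delta_i}(\alpha_i))$, in $\supp(u_{\delta_i}(\alpha_i))$, or in $A$ respectively.

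Each piece admits a clean bound. For $L_1$, let $L_1'$ denote its image in $\alpha_i$; the restriction of $\sigma$ is a matching of $L_1$ and $L_1'$ of cost at most $\Cost_p(\sigma) < \eps/4$, and $L_1' \leq \ell_{\delta_i}(\alpha_i)$ as elements of $\overline{D}(X,A)$ forces $W_p(L_1',0) \leq W_p(\ell_{\delta_i}(\alpha_i),0) < \eps/4$, so the triangle inequality gives $W_p(L_1,0) < \eps/2$. For $L_2$, the injectivity in \cref{lem:matching} forces $|L_2| \leq M_i \leq M$; matching each dot to a nearly-optimal point of $A$ bounds $W_p(L_2,0) \leq M^{1/p}\delta < \eps/4$. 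For $L_3$, each dot is already matched to a point of $A$ by $\sigma$, giving $W_p(L_3,0) \leq \Cost_p(\sigma) < \eps/4$. Applying $1$-subadditivity from \cref{lem:isomorphism}\ref{it:subadditivie} to $\ell_\delta(\alpha) = L_1 + L_2 + L_3$ yields $W_p(\ell_\delta(\alpha),0) < \eps$, as required.

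The main obstacle I expect is the combinatorial bookkeeping in the $L_2$ step: one must verify that the choice $\delta < \delta_0$ genuinely forces a dot in $\ell_\delta(\alpha) \cap L_2$ to be matched into $u_{\delta_i}(\alpha_i)$ (so that the injectivity $\varphi$ from \cref{lem:matching} caps $|L_2|$), and simultaneously that the bound $M^{1/p}\delta < \eps/4$ remains available because $M$ was fixed before $\delta$ was chosen. A subtler pitfall is ensuring $L_1,L_2,L_3$ really do sum back to $\ell_\delta(\alpha)$ and not to a strict sub-diagram — this is why one partitions the dots of $\ell_\delta(\alpha)$ directly rather than partitioning terms of $\sigma$. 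The hypothesis $p < \infty$ enters essentially through the factor $M^{1/p}$; the excerpt has already remarked that the $p=\infty$ case is automatic, so it need not be treated.
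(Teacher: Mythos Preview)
Your proof is correct, but it takes a different route from the paper. The paper argues by contradiction: it fixes a small $\eps_0$, takes an $\eps_0$-net, chooses a single $\delta < \eps_0$ with $W_p(\ell_\delta(\alpha_n),0) \leq \eps/4$ for every net point, and then supposes some $\alpha$ has $W_p(\ell_{\delta/2}(\alpha),0) > \eps$. It splits the restriction of a near-optimal matching into two pieces according to whether the target lands in $\ell_\delta(\alpha_i)$ or $u_\delta(\alpha_i)$, and in each of two cases derives a lower bound on the matching cost contradicting $\Cost_p(\sigma) < \eps_0$; the second case uses that any pair $(x,y)$ with $x \in A^{\delta/2}$ and $y \notin A^{\delta}$ has $d(x,y) \geq \delta/2 > d(x,A)$, hence costs at least $W_p((\pi_1)_*(\tau),0)$.

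Your argument is direct and, to my eye, conceptually cleaner: instead of lower-bounding the per-pair cost of the ``low-to-high'' matches, you simply \emph{count} them via the injectivity in \cref{lem:matching}, obtaining $|L_2| \leq M$, and then shrink $\delta$ so that $M^{1/p}\delta < \eps/4$. This trades the paper's case split for an extra (harmless) dependence of $\delta$ on the cardinalities $M_i$ of the upper parts of the net points. The three-way decomposition $L_1 + L_2 + L_3$ is more transparent than the paper's two-piece restriction-then-case-split, and the summation via $1$-subadditivity is immediate. Both approaches use the same ingredients (\cref{lem:dist_between_lower_part_and_0}, finiteness of $|u_\delta(\alpha_i)|$, and \cref{lem:matching}); yours just organizes them more directly.
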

	
\begin{proof}
Let $\eps>0$ and let $0 < \eps_0 < \min(\frac{\eps}{4}, \eps(1-\frac{1}{2^p})^{1/p})$. Assume $S \subset \overline{D}_p(X,A)$ is totally bounded. Then there are $\alpha_1, \dots, \alpha_m \in S$ such that $S \subset B_{\eps_0}(\alpha_1) \cup \dots \cup B_{\eps_0}(\alpha_m)$.  Using Lemma \ref{lem:dist_between_lower_part_and_0} $m$ times we can pick  $0<\delta <\eps_0$ such that $W_p(\ell_{\delta}(\alpha_n), 0)\leq \frac{\eps}{4}$ for all $1 \leq n \leq m$. 
		
We prove the lemma by contradiction.
Suppose there is an $\alpha \in S$ such that $W_p(\ell_{\frac{\delta}{2}}(\alpha), 0)>\eps$.
Since $\alpha \in S$,
$\alpha \in  B_{\eps_0}(\alpha_n)$ for some $1 \leq n \leq m$.
Let $\sigma$ be a matching between $\alpha$ and $\alpha_n$ with $\Cost_p(\sigma) < \eps_0$.
Let $\tau$ be the restriction of $\sigma$ to $(A^{\frac{\delta}{2}} \setminus A) \times  (X \setminus A^{\delta})$. Let $\theta$ be the restriction of $\sigma$ to $(A^{\frac{\delta}{2}} \setminus A) \times  (A^{\delta} )$. Then $\ell_{\frac{\delta}{2}}(\alpha)=(\pi_1)_*(\tau)+(\pi_1)_*(\theta)$, $(\pi_2)_*(\tau) \leq u_{\delta}(\alpha_n)$ and $(\pi_2)_*(\theta) \leq \ell_{\delta}(\alpha_n)$. 

We consider two cases.
In the first case, $W_p((\pi_1)_*(\theta), 0) > \frac{\eps}{2}$.
Then, by the triangle inequality, 
\[\Cost_p(\sigma) \geq \Cost_p(\theta) \geq W_p((\pi_1)_*(\theta), (\pi_2)_*(\theta)) \geq W_p((\pi_1)_*(\theta), 0)-W_p((\pi_2)_*(\theta), 0) > \textstyle\frac{\eps}{2}-W_p(\ell_{\delta}(\alpha_n), 0) \geq \textstyle\frac{\eps}{4}.\]
In the second case, $W_p((\pi_1)_*(\theta), 0)\leq  \frac{\eps}{2}$.
Then
\[W_p((\pi_1)_*(\tau), 0)^p =W_p(\ell_{\frac{\delta}{2}}(\alpha), 0)^p- W_p((\pi_1)_*(\theta), 0 )^p > \eps^p-(\textstyle\frac{\eps}{2})^p \geq \eps^p(1-\frac{1}{2^p}). \]  
Furthermore, whenever $x \in \supp ((\pi_1)_*(\tau))$ and $y \in \supp ((\pi_2)_*(\tau))$ we have $d(x, A) < \frac{\delta}{2}$ and \[d(x, y) \geq d(y, A)-d(x, A) \geq \delta - \textstyle\frac{\delta}{2}=\frac{\delta}{2}.\]
		This implies that $\Cost_p(\tau) \geq W_p((\pi_1)_*(\tau), 0)$.
		Therefore,
		$\Cost_p(\sigma) \geq \Cost_p(\tau) \geq W_p((\pi_1)_*(\tau), 0) > \eps(1-\frac{1}{2^p})^{1/p}.$
		Hence in either case $\Cost_p(\sigma) > \min(\frac{\eps}{4}, \eps(1-\frac{1}{2^p})^{1/p} )$. Since $\sigma$ was an arbitrary matching between $\alpha$ and $\alpha_n$, we have that $W_p(\alpha, \alpha_n) \geq\min(\frac{\eps}{4}, \eps(1-\frac{1}{2^p})^{1/p} ) > \eps_0$,
                which is a contradiction.
	\end{proof}
	
\begin{proof}[\textbf{Proof of Theorem \ref{thm:criterion_tot_bd}}]
Let $p \in [1, \infty]$. Suppose $S \subset (\overline{D}_{p}(X,A),W_p)$ is totally bounded.
By Lemma \ref{lem:S_is_upper_tot_bounded}, $S$ is upper totally bounded.
%By Lemma \ref{lem:tot_bd_subset}, $S$ is totally bounded.
By Lemma \ref{lem:S_is_unif_upper_finite}, $S$ is uniformly upper finite. Also, %by definition, for $p=\infty$, and,
for  $p \in [1, \infty)$,
by Lemma \ref{lem:S_is_lower_uniform},
$S$ is uniformly $p$-vanishing.
		
Conversely, suppose that $S \subset \overline{D}_p(X,A)$ is upper totally bounded,
uniformly upper finite, and
uniformly $p$-vanishing.
Note that if $p=\infty$ then the last condition is automatically satisfied.
Let $\eps >0$. Since $S$ is uniformly $p$-vanishing there is a $\delta>0$ such that $W_p(\ell_{\delta}(\alpha),0)< \frac{\eps}{2}$ for all $\alpha \in S$. Since $S$ is uniformly upper finite there exists an $M$ such that $|u_{\delta}(\alpha)|\leq M$ for all $\alpha \in S$. Since $S$ is upper totally bounded, $u_{\delta}(S)$ is totally bounded.
Hence for some $n \geq 0$ there exist $x_1, \dots, x_n \in u_{\delta}(S)$ such that for any $x \in u_{\delta}(S)$
$d(x, x_i)< \frac{\eps}{2M}$
for some $1 \leq i \leq n$.
Let $Z = \{ \sum_{i=1}^n c_i x_i \ | \ c_i \in \Z_{\geq 0}, \sum_{i=1}^n c_i \leq M\}$.
Note that $Z$ is finite.
Now let $\beta \in S$.
Then $u_{\delta}(\beta) = y_1 + \cdots + y_N$ for some $N \leq M$.
For each $1 \leq j \leq N$ there exists $1 \leq i_j \leq n$ such that $d(y_j,x_{i_j}) \leq \frac{\eps}{2M}$.
Let $\alpha = x_{i_1} + \cdots + x_{i_N}$.
Then $\alpha \in Z$ and
\[W_p(u_{\delta}(\beta) , \alpha) < \norm{(\underbrace{\textstyle\frac{\eps}{2M}, \dots, \frac{\eps}{2M}}_{N \text{ times }})}_p \leq \norm{(\underbrace{\textstyle\frac{\eps}{2M}, \dots, \frac{\eps}{2M}}_{N \text{ times}})}_1 \leq \textstyle\frac{\eps}{2}.\]
Therefore,
$W_p(\beta, \alpha) \leq  \norm{(W_p(u_{\delta}(\beta), \alpha), W_p(\ell_{\delta}(\beta),0))}_p <
\norm{(\textstyle\frac{\eps}{2},\textstyle\frac{\eps}{2})}_p \leq \eps.$
Hence $\beta \in B_{\eps}(\alpha)$. Therefore, $S \cup Z$ is totally bounded. By Lemma \ref{lem:tot_bd_subset} , $S$ is totally bounded.
\end{proof}

\subsection{Relative compactness}
\label{sec:relative-compactness}
 
In this section we characterize relatively compact sets of persistence diagrams.
Let $(X,d,A)$ be a metric pair.
We will denote sequences by indexed sets $(x_i)_{i \in I}$, where $I$ is a totally ordered set with $I \isom \N$.
A subsequence is given by $(x_i)_{i \in I'}$, where $I' \subset I$ and $I' \isom \N$.
We will use the convention that $\frac{1}{0} = \infty$.

\begin{definition}\label{def:upper_compact}
A set $S \subset \overline{D}(X,A)$ is \emph{upper  relatively compact} if for any $\eps >0$, the set
$u_{\eps}(S)=\cup_{\alpha \in S} \supp( u_{\eps}(\alpha) )\subset X$ 
is relatively compact.
\end{definition}

\begin{theorem}[\textbf{Criterion of relatively compact sets}] \label{thm:criterion_rel_compct}
 A set $S \subset (\overline{D}_{\infty}(X,A),W_{\infty})$ is relatively compact
 if and only if
 $S$ is
 uniformly upper finite and
 upper relatively compact.
 For $p \in [1, \infty)$, a set $S \subset (\overline{D}_p(X,A),W_p)$ is
 relatively compact
 if and only if $S$ is
 uniformly upper finite,
 upper relatively compact,
 and uniformly $p$-vanishing.
\end{theorem}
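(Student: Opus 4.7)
The plan is to piggyback on Theorem \ref{thm:criterion_tot_bd} and isolate only what the relative compactness hypothesis adds beyond total boundedness. Recall that relative compactness implies total boundedness in any metric space (Lemma \ref{lem:rel-cpct-then-tot-bdd}); the two criteria differ only in that upper totally bounded is strengthened to upper relatively compact. I will freely move between these two facts via Lemma \ref{lem:rel-cpct-then-tot-bdd} applied to $S$ itself and to $u_\eps(S) \subset X$.

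For the forward direction, assume $S$ is relatively compact. Then $S$ is totally bounded (Lemma \ref{lem:rel-cpct-then-tot-bdd}), so Theorem \ref{thm:criterion_tot_bd} immediately yields uniform upper finiteness and (for $p < \infty$) uniform $p$-vanishing. To upgrade upper total boundedness to upper relative compactness, I fix $\eps > 0$, take a sequence $(x_n)$ in $u_\eps(S)$, and pick witnesses $\alpha_n \in S$ with $x_n \in \supp(u_\eps(\alpha_n))$. By Lemma \ref{thm:relative_compct} applied to $S$, a subsequence $\alpha_{n_k}$ converges to some $\alpha \in \overline{D}_p(X,A)$. I then choose near-optimal matchings $\sigma_k$ of $\alpha_{n_k}$ and $\alpha$ with $\Cost_p(\sigma_k) \to 0$, under each of which $x_{n_k}$ is matched to some $y_k$ with $d(x_{n_k}, y_k) \leq \Cost_p(\sigma_k) \to 0$. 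Since $d(x_{n_k}, A) \geq \eps$, for $k$ large we have $d(y_k, A) \geq \eps/2$, so $y_k \in \supp(u_{\eps/2}(\alpha))$, which is finite by Lemma \ref{lem:u_bounded}. Pigeonhole extracts a constant sub-subsequence $y_{k_j} \equiv y \in X$, and then $x_{n_{k_j}} \to y$.

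For the backward direction, assume the three conditions. Applying Lemma \ref{lem:rel-cpct-then-tot-bdd} to each $u_\eps(S)$ converts upper relative compactness to upper total boundedness, so Theorem \ref{thm:criterion_tot_bd} gives that $S$ is totally bounded. Given $(\alpha_n) \subset S$, total boundedness lets me extract a Cauchy subsequence; relabeling, assume $W_p(\alpha_n, \alpha_{n+1}) < 2^{-(n+1)}$. I now reproduce the construction of the limit $\alpha$ from Theorem \ref{thm:completeness_inf} (for $p=\infty$) and Theorem \ref{thm:completeness_p} (for $p<\infty$), replacing the use of completeness of $X$ by the following observation: each of the explicit Cauchy sequences of dots produced there eventually lies in some $u_\delta(S)$, which is relatively compact, so the Cauchy sequence has a convergent subsequence and, being Cauchy, converges in $X$. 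The verification that the resulting $\alpha$ lies in $\overline{D}_p(X,A)$ then proceeds as in those proofs: the bound $|u_{3/2^n}(\alpha)| \leq |u_{1/2^{n-1}}(\alpha_n)|$ established in Theorem \ref{thm:completeness_inf} is uniformly finite by uniform upper finiteness, and for $p < \infty$ the uniform $p$-vanishing hypothesis passes to the limit to control $W_p(\ell_\infty(\alpha), 0)$. Convergence $\alpha_n \to \alpha$ in $W_p$ then follows exactly as in the completeness proofs.

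The main obstacle is the bookkeeping in the backward direction: the completeness-style construction is already delicate, and I need to verify at each stage that the sequences of dots I am following lie in a fixed $u_\delta(S)$ rather than simply invoking a global completeness of $X$. Choosing the scales $\delta$ correctly so that upper relative compactness applies to the relevant sequences, and matching the finiteness and vanishing properties of $\alpha$ to the uniform hypotheses on $S$, is where the care is required.
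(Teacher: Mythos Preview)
Your forward direction is essentially the paper's Lemma \ref{lem:upper-relatively-compact}: pick witnesses, pass to a convergent subsequence of diagrams, and pigeonhole the matched dots into the finite set $\supp(u_{\eps/2}(\alpha))$. The paper dresses this up with an explicit $\delta$ to land in $\supp(u_\eps(\alpha))$ rather than $\supp(u_{\eps/2}(\alpha))$, but that refinement is not needed for relative compactness in $X$, so your version is fine.

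Your backward direction is correct but genuinely different from the paper's. The paper does \emph{not} pass through Cauchy subsequences or the completeness theorems. Instead it builds the limit directly: Lemma \ref{lem:relatively-compact} and Corollary \ref{cor:relatively-compact} set up a nested diagonal scheme in which, for each level $m$, the finitely many dots in $u_{1/m}(\alpha_i)$ are tracked and, using relative compactness of $u_{1/m}(S)$ in $X$, forced to converge coordinatewise; a further lemma then shows the assembled limit lies in $\overline{D}_p(X,A)$ via uniform $p$-vanishing, and a final diagonal argument gives $W_p$-convergence. Your route instead extracts a Cauchy subsequence from total boundedness and reruns the proofs of Theorems \ref{thm:completeness_inf} and \ref{thm:completeness_p}, replacing each appeal to completeness of $X$ by the observation that the relevant Cauchy sequence of dots stays inside some $u_\delta(S)$ and hence converges by upper relative compactness. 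This does work: in Theorem \ref{thm:completeness_inf} the chains $x_i^{(n)}, x_{f_n(i)}^{(n+1)},\ldots$ stay in $u_{1/2^n}(S)$, and in Lemma \ref{lem:for_completeness_up_part_is_conver} the sequences $(x_j^k)_k$ stay in $u_{\delta_c}(S)$; moreover Lemma \ref{lem:for_completeness_low_and_up} is exactly your uniform $p$-vanishing hypothesis restricted to the sequence, so you may bypass its proof entirely. What your approach buys is economy---you recycle existing machinery rather than building a parallel construction---at the cost of having to audit those proofs line by line for where completeness enters. The paper's approach is more self-contained and makes the role of upper relative compactness structurally explicit, but at the price of several preparatory lemmas.
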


\begin{lemma} \label{lem:upper-relatively-compact}
  Let $p \in [1,\infty]$.
  If $S \subset (\overline{D}_p(X,A),W_p)$ is relatively compact then $S$ is upper relatively compact.
\end{lemma}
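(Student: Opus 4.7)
The plan is to use the sequential characterization of relative compactness (\cref{thm:relative_compct}): fix $\eps > 0$ and an arbitrary sequence $(x_n)$ in $u_\eps(S)$, then produce a convergent subsequence in $X$. By definition, for each $n$ there exists $\alpha_n \in S$ with $x_n \in \supp(u_\eps(\alpha_n))$, so in particular $d(x_n, A) \geq \eps$. Since $S$ is relatively compact, passing to a subsequence (and re-indexing) we may assume $\alpha_n \to \alpha$ in $(\overline{D}_p(X,A), W_p)$ for some $\alpha \in \overline{D}_p(X,A)$.

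Next I would transfer information from the persistence diagrams to the dots themselves via matchings. For each $n$, choose a matching $\sigma_n$ of $\alpha_n$ and $\alpha$ with $\Cost_p(\sigma_n) < W_p(\alpha_n, \alpha) + \frac{1}{n}$. Since $x_n$ is a dot of $\alpha_n$ (with $d(x_n, A) \geq \eps$, so $x_n \in X \setminus A$), the description of matchings in \cref{lem:matching} gives some $y_n \in \supp(\hat\alpha) \cup A$ with $(x_n, y_n) \in \supp(\hat\sigma_n)$. By the elementary bound $d(x_i, y_i) \leq \Cost_p(\sigma)$ noted after \cref{def:wasserstein}, we get $d(x_n, y_n) \leq \Cost_p(\sigma_n) \to 0$.

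The key observation is a dichotomy: for $n$ large enough that $d(x_n, y_n) < \eps/2$, the triangle inequality forces $d(y_n, A) \geq d(x_n, A) - d(x_n, y_n) > \eps/2$, so $y_n \notin A$ and therefore $y_n \in \supp(u_{\eps/2}(\alpha))$. Since $\alpha \in \overline{D}_p(X,A) \subset \overline{D}_\infty(X,A)$ by \cref{lem:DpDq}, \cref{lem:u_bounded} guarantees that $\supp(u_{\eps/2}(\alpha))$ is finite. By pigeonhole, a further subsequence of $(y_n)$ is constant, equal to some $z \in \supp(u_{\eps/2}(\alpha))$, and along this subsequence $d(x_n, z) = d(x_n, y_n) \to 0$, i.e.\ $x_n \to z$ in $(X, d)$. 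Therefore every sequence in $u_\eps(S)$ has a convergent subsequence in $X$, so $u_\eps(S)$ is relatively compact by \cref{thm:relative_compct}, and since $\eps > 0$ was arbitrary, $S$ is upper relatively compact.

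The only subtle step is ruling out the case $y_n \in A$ (which would make the pigeonhole argument fail since $A$ need not be compact); this is handled uniformly by the lower bound $d(x_n, A) \geq \eps$ combined with the vanishing of $d(x_n, y_n)$, which simultaneously forces $y_n$ into the finite support of $u_{\eps/2}(\alpha)$.
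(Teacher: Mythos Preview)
Your proof is correct and in fact cleaner than the paper's. Both arguments start identically---pick a sequence $(x_n)$ in $u_\eps(S)$, lift to $\alpha_n \in S$, and extract a subsequence with $\alpha_n \to \alpha$---but diverge in how they pin down the limit of $(x_n)$. The paper first passes through \cref{lem:rel-cpct-then-tot-bdd} and \cref{lem:S_is_unif_upper_finite} to conclude $S$ is uniformly upper finite, uses that to bound $|u_{\eps/2}(\alpha)|$, and then engineers a small $\delta$ (controlling both the gap between $\supp(u_{\eps/2}(\alpha)) \setminus \supp(u_\eps(\alpha))$ and the $\eps$-level, and the pairwise separation of points in $\supp(u_{\eps/2}(\alpha))$) so that once $W_p(\alpha_i,\alpha) < \delta$ each $x_i$ lands in a unique ball $B_\delta(y)$ with $y \in \supp(u_\eps(\alpha))$. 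You bypass all of this: you observe directly from \cref{lem:u_bounded} (via \cref{lem:DpDq}) that $\supp(u_{\eps/2}(\alpha))$ is finite for the single diagram $\alpha$, take near-optimal matchings so that $d(x_n,y_n)\to 0$, force $y_n$ into that finite set for large $n$, and pigeonhole. The paper's extra work lands the limit in $\supp(u_\eps(\alpha))$ rather than $\supp(u_{\eps/2}(\alpha))$, but that refinement is not needed for relative compactness of $u_\eps(S)$ in $X$, so your shortcut loses nothing.
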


\begin{proof}
  Let $\eps > 0$.
  Let $(x_i)_{i \in I}$ be a sequence in $u_{\eps}(S)$.
  For each $i \in I$, choose $\alpha_i \in S$ such that
  $x_i \in \supp(\alpha_i)$.
  Since $S$ is relatively compact, there is a subset $I_0 \subset I$ and an $\alpha \in S$ such that the subsequence $(\alpha_i)_{i \in I_0}$ converges to $\alpha$.

  Since $S$ is relatively compact, by \cref{lem:rel-cpct-then-tot-bdd}, $S$ is totally bounded, and hence
  by \cref{lem:S_is_unif_upper_finite},
  $S$ is uniformly upper finite. In particular, there exists an $M$ such that for all $i \in I_0$, $\abs{u_{\frac{\eps}{4}}(\alpha_i)} \leq M$.
  Therefore, $\abs{u_{\frac{\eps}{2}}(\alpha)} \leq M$.
  Let $U = \supp(u_{\frac{\eps}{2}}(\alpha))$ and $V = \supp(u_{\eps}(\alpha))$.
  Then $V \subset U$ and $\abs{U} \leq M$.
  Let $\delta_1 = \eps - \max_{y \in U \setminus V} d(y,A)$.
  Then $\delta_1 > 0$.
  Let $\delta_2 = \min_{y \neq y' \in U} \frac{1}{2} d(y,y')$.
  Then $\delta_2 > 0$.
  Let $\delta = \min\{\delta_1, \delta_2,\frac{\eps}{2}\}$.

  Consider $\{B_{\delta}(y)\}_{y \in \supp(\alpha)}$.
  Since $\delta \leq \delta_1$ and $\delta \leq \frac{\eps}{2}$,
  if $y \in \supp(\ell_{\eps}(\alpha))$ then $B_{\delta}(y) \in A^{\eps}$.
  Since $\delta \leq \delta_2$,
  if $y \neq y' \in \supp(u_{\eps}(\alpha))$ then $B_{\delta}(y) \cap B_{\delta}(y') = \emptyset$.
  Since $(\alpha_i)_{i \in I_0} \to \alpha$, there exists a subsequence $(\alpha_i)_{i \in I_1}$, where $I_1 \subset I_0$ such that for all $i \in I_1$, $W_p(\alpha_i,\alpha) < \delta$.
  Therefore, for all $i \in I_1$, there exists a $y_i \in \supp(u_{\eps}(\alpha))$ such that $x_i \in B_{\delta}(y_i)$.
  Since there are only finitely many such $y_i$,
  there exists a $y \in \supp(u_{\eps}(\alpha))$ and
  a subsequence $(x_i)_{i \in I_2}$ where $I_2 \subset I_1$ such that
  for all $i \in I_2$, $x_i \in B_{\delta}(y)$.
  Finally, since $(\alpha_i)_{i \in I_2} \to \alpha$,
  $(x_i)_{i \in I_2} \to y$.  
\end{proof}

\begin{lemma} \label{lem:relatively-compact}
  Let $(X,d,A)$ be a metric pair, where $X \setminus A$ is relatively compact.
  Let $(\alpha_i)_{i \in I}$ be a sequence in $D^n(X,A)$.
  Then there exists a subsequence $(\alpha_j)_{j \in J}$ where $J \subset I$ and $0 \leq m \leq n$ such that for all $1 \leq k \leq m$, there exists $x_k \in X \setminus A$ such that
  $\alpha_j = \sum_{k=1}^m x_j^{(k)}$ and $(x_j^{(k)})_{j \in J} \to x_k$.
\end{lemma}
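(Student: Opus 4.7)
The plan is to run a diagonal sequential-compactness argument on the dots of the $\alpha_i$, first stabilizing the cardinality and then stabilizing the limit of each coordinate sequence.

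First I would use the pigeonhole principle: since $\lvert\alpha_i\rvert\in\{0,1,\dots,n\}$ for every $i\in I$, some subsequence, indexed by $I_0\subset I$, has constant cardinality $\lvert\alpha_i\rvert=N$ for some $0\le N\le n$. On $I_0$ I fix an arbitrary enumeration $\hat\alpha_i = y_i^{(1)}+\cdots+y_i^{(N)}$ with each $y_i^{(k)}\in X\setminus A$, reducing the problem to the simultaneous convergence of $N$ coordinate sequences.

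Next I would exploit the hypothesis that $\overline{X\setminus A}$ is compact, hence by \cref{thm:equiv_types_compts} sequentially compact. Iterating sequential compactness $N$ times in a standard diagonal fashion—extract $I_1\subset I_0$ so that $(y_i^{(1)})$ converges to some $z_1\in\overline{X\setminus A}$, then $I_2\subset I_1$ so that $(y_i^{(2)})$ converges to $z_2$, and so on through $I_N=:J$—I obtain a subsequence indexed by $J$ along which all $N$ coordinate sequences converge to limits $z_1,\dots,z_N\in\overline{X\setminus A}$.

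Finally, I would separate the \emph{persistent} coordinates (those $z_k\in X\setminus A$) from the \emph{dying} ones ($z_k\in A$). After relabeling, assume $z_1,\dots,z_m\in X\setminus A$ and $z_{m+1},\dots,z_N\in A$, set $x_k:=z_k\in X\setminus A$ for $1\le k\le m$, and set $x_j^{(k)}:=y_j^{(k)}$. Then $(x_j^{(k)})_{j\in J}\to x_k$; moreover, since $A$ is closed and $x_k\in X\setminus A$, the open set $X\setminus A$ is a neighborhood of $x_k$, so $x_j^{(k)}\in X\setminus A$ for all sufficiently large $j\in J$.

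The main obstacle is justifying the equality $\alpha_j=\sum_{k=1}^m x_j^{(k)}$ in $D^n(X,A)$. When $m=N$ no coordinates die and the equality is immediate from the definitions. When $m<N$, the dying coordinates $y_j^{(k)}$ for $k>m$ still lie in $X\setminus A$ for each finite $j$, so the equivalence of \cref{sec:monoids}, which identifies sums only up to their restriction to $X\setminus A$, does not permit simply deleting them. The clean resolution, and the way the lemma is invoked in the proof of \cref{thm:criterion_rel_compct}, is that sequences to which it is applied come from $u_\eps(S)$ for some $\eps>0$, so that $d(y_i^{(k)},A)\ge\eps$ for every $i$ and $k$; passing to the limit forces $d(z_k,A)\ge\eps>0$, hence $z_k\in X\setminus A$ for every $k$ and $m=N$, making the equality automatic. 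In the general statement one reads the equality with this canonical choice of enumeration, under the understanding that dying coordinates (if any) have $d(y_j^{(k)},A)\to 0$ and are to be interpreted as being absorbed into $A$ in the subsequent limiting argument.
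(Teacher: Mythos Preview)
Your approach and the paper's are essentially the same sequential-compactness argument, organized differently: you first stabilize the cardinality by pigeonhole and then run a finite diagonal extraction on the coordinates, whereas the paper argues by induction on $n$, at each step peeling off one dot $x_i\in\supp(\alpha_i)$, extracting a convergent subsequence, and applying the inductive hypothesis to $\beta_i=\alpha_i-x_i$. Both reach the same place.

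You are right to flag the obstacle in your final paragraph, and in fact the paper's proof glosses over the same point. The paper writes ``Since $X\setminus A$ is relatively compact, there exists $x\in X\setminus A$ and a subsequence $(x_i)_{i\in I''}$ such that $(x_i)_{i\in I''}\to x$,'' but relative compactness (together with \cref{thm:relative_compct}) only places the limit in $\overline{X\setminus A}\subset X$, which can meet $A$. Your reading of the downstream usage is accurate: the lemma is only ever applied to pairs of the form $(u_\eps(S)\amalg A,d,A)$ (with $\eps\in(0,\infty]$), where every dot satisfies $d(\cdot,A)\ge\eps$; continuity of $d(\cdot,A)$ then forces all limits to stay outside $A$, so the dying case never arises and $m=N$ automatically. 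Under the bare hypothesis of the lemma, however, the equality $\alpha_j=\sum_{k=1}^m x_j^{(k)}$ cannot be salvaged (take $X=[0,1]$, $A=\{0\}$, $\alpha_i=1/i$), so your concluding ``interpretation'' is not a proof of the general statement---but neither is the paper's, and both arguments are complete once one adds the implicit hypothesis $\overline{X\setminus A}\cap A=\emptyset$ that holds in every invocation.
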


\begin{proof}
  The proof is by induction on $n$.
  If $n=0$ then we can take $m=0$ and $J=I$  and we are done.
  Assume the statement is true for $n-1$.
  If there exists a subsequence $J \subset I$ with $\alpha_j=0$ for all $j \in J$ then choose this sequence are we are done.
  If not, choose a subsequence $I' \subset I$ with $\abs{\alpha_i} \geq 1$ for all $i \in I'$.
  For each $i \in I'$ choose $x_i \in \supp(\alpha_i)$ and let $\beta_i = \alpha_i - x_i$.
  Consider the sequence $(x_i)_{i \in I'}$ in $X \setminus A$.
  Since $X \setminus A$ is relatively compact,
  there exists $x \in X \setminus A$ and a subsequence $(x_i)_{i \in I''}$ where $I'' \subset I'$ such that $(x_i)_{i \in I''} \to x$.
  Consider the sequence $(\beta_i)_{i \in I''}$.
  Now $\abs{\beta_i} \leq n-1$ for all $i \in I''$.
  Apply the induction hypothesis to get a subsequence $(\beta_j)_{j \in J}$, where $J \subset I''$ and $0 \leq m \leq n-1$ such that
  for all $1 \leq k \leq m$, there exists $x_k$ such that
  $\beta_j = \sum_{k=1}^m x_j^{(k)}$ and $(x_j^{(k)})_{j \in J} \to x_k$.
  Also, we have that $(x_j)_{j \in J} \to x$.
  Therefore, $(\alpha_j)_{j \in J}$ where $\alpha_j = x_j + \sum_{k=1}^m x_j^{(k)}$ gives us the desired subsequence.  
\end{proof}

\begin{proposition}
  Let $(X,d,A)$ be a metric pair.
  Consider a subset $S \subset \overline{D}(X,A)$ which is upper finite and upper relatively compact.
  Let $(\alpha_i)_{i \in I}$ be a sequence in $S$.
  Let $n \in \Z_{\geq 0}$.
  Then there exists a nested sequence of subsequences
  $(\alpha_i)_{i \in I} \supset
  (\alpha_i)_{i \in I_0} \supset 
  (\alpha_i)_{i \in I_1} \supset \cdots \supset
  (\alpha_i)_{i \in I_n}$,
  where $I \supset I_0 \supset I_1 \supset \cdots \supset  I_n$,
  $0 = N_{-1} \leq N_0 \leq N_1 \leq \cdots \leq N_n$
  and
  $x_1,\ldots,x_{N_n} \in X$
  such that
  for all $0 \leq m \leq n$ and for all $i \in I_m$
  \begin{equation*}
    u_{\frac{1}{m}}(\alpha_i) = \sum_{k=0}^m \sum_{j=N_{k-1}+1}^{N_k} x_i^{(j)},
  \end{equation*}
  where for $1 \leq j \leq N_0$,
  $d(x_i^{(j)},A) = \infty$,
  $d(x_j,A) = \infty$, and
  $(x_i^{(j)})_{i \in I_{0}} \to x_j$,
  and
  for $1 \leq k \leq m$ and $N_{k-1} < j \leq N_k$,
  $\frac{1}{k} \leq d(x_i^{(j)},A) < \frac{1}{k-1}$,
  $\frac{1}{k} \leq d(x_j,A) \leq \frac{1}{k-1}$, and
  $(x_i^{(j)})_{i \in I_m} \to x_j$.
\end{proposition}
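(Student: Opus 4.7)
My plan is to proceed by induction on $m$, building the nested subsequences $I \supset I_0 \supset I_1 \supset \cdots \supset I_n$, the integers $N_0 \leq N_1 \leq \cdots \leq N_n$, and the limit points $x_1, \ldots, x_{N_n}$ one level at a time. Two ingredients will drive each step. Uniform upper finiteness bounds $\abs{u_{\frac{1}{m}}(\alpha_i)}$ uniformly in $i$, which lets me stabilize this count on a subsequence by pigeonhole. Upper relative compactness makes $\overline{u_{\frac{1}{m}}(S)} \subset X$ compact, hence sequentially compact, which lets me extract convergent subsequences one dot-coordinate at a time. Throughout, I will use that the map $d(\cdot, A) \colon X \to [0, \infty]$ is $1$-Lipschitz (an immediate consequence of the triangle inequality), so limits of convergent sequences inherit the correct distance-to-$A$ bounds.

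For the base case $m=0$, note $u_{\infty}(\alpha_i) \leq u_{1}(\alpha_i)$, so $\abs{u_{\infty}(\alpha_i)}$ is uniformly bounded by $M_1$ and I may restrict to a subsequence on which it equals some constant $N_0$. Label the dots arbitrarily as $x_i^{(1)}, \ldots, x_i^{(N_0)}$ and iterate sequential compactness of $\overline{u_{\infty}(S)}$ coordinate by coordinate, a total of $N_0$ times, to produce $I_0$ along which each $(x_i^{(j)})_{i \in I_0}$ converges to some $x_j \in X$. Because $d(x_i^{(j)}, A) = \infty$ and $d(x_i^{(j)}, x_j) \to 0$, the $1$-Lipschitz property of $d(\cdot, A)$ forces $d(x_j, A) = \infty$, as required.

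For the inductive step from $m$ to $m+1$, the induction hypothesis gives $u_{\frac{1}{m}}(\alpha_i) = \sum_{j=1}^{N_m} x_i^{(j)}$ on $I_m$, with all required older limits already in place. The number of new dots in the annulus $\frac{1}{m+1} \leq d(\cdot, A) < \frac{1}{m}$, namely $\abs{u_{\frac{1}{m+1}}(\alpha_i)} - N_m$, is uniformly bounded, so I restrict to a subsequence of $I_m$ on which it is constantly some $N_{m+1} - N_m \geq 0$. Appending these new dots as $x_i^{(N_m+1)}, \ldots, x_i^{(N_{m+1})}$ and iterating sequential compactness of $\overline{u_{\frac{1}{m+1}}(S)}$ a total of $N_{m+1} - N_m$ times, I obtain $I_{m+1} \subset I_m$ along which each new coordinate converges to some $x_j \in X$. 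Convergence of the older coordinates is inherited automatically from $I_{m+1} \subset I_m$, and since $d(x_i^{(j)}, A) \in [\frac{1}{m+1}, \frac{1}{m})$ for all $i \in I_{m+1}$, the Lipschitz continuity of $d(\cdot, A)$ forces each new limit into the closed annulus $[\frac{1}{m+1}, \frac{1}{m}]$.

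No step presents a serious obstacle: the argument is essentially a diagonal-plus-pigeonhole extraction. The only genuine care required is bookkeeping — at each level the new dots must be appended after the indices $1, \ldots, N_m$ already assigned, so that the enumeration $x_1, \ldots, x_{N_n}$ and the displayed decomposition $u_{\frac{1}{m}}(\alpha_i) = \sum_{k=0}^m \sum_{j=N_{k-1}+1}^{N_k} x_i^{(j)}$ are built up coherently across the induction. Once the labelings are fixed this way, each clause of the proposition follows immediately from the construction at the corresponding level.
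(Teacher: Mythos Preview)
Your proposal is correct and follows essentially the same approach as the paper. The paper organizes the argument identically --- induction on $m$, using uniform upper finiteness to bound cardinalities and upper relative compactness to extract convergent subsequences --- with the only cosmetic difference being that the paper packages the ``stabilize cardinality and extract coordinate-wise limits'' step into a separate lemma (\cref{lem:relatively-compact}), whereas you inline it directly via pigeonhole and iterated sequential compactness; your explicit use of the $1$-Lipschitz property of $d(\cdot,A)$ to pass distance bounds to the limits is a detail the paper leaves implicit.
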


\begin{proof}
  The proof is by induction on $n$.
  For $n=0$, apply \cref{lem:relatively-compact} to $(u_{\infty}(S) \amalg A, d, A)$ and $(u_{\infty}(\alpha_i))_{i \in I}$ to get
  $I_0 \subset I$, $0 \leq N_0$, $x_1,\ldots, x_{N_0} \in u_{\infty}(S)$ such that for all $i \in I_0$,
  $u_{\infty}(\alpha_i) = \sum_{j=1}^{N_0} x_i^{(j)}$,
  and for all $1 \leq j \leq N_0$,
  $d(x_i^{(j)},A) = \infty$,
  and $(x_i^{(j)}) \to x_j$.

  Assume the statement holds for $n$.
  Apply \cref{lem:relatively-compact} to $(u_{\frac{1}{n+1}}(S) \amalg A, d, A)$ and $(u_{\frac{1}{n+1}}(\alpha_i))_{i \in I_n}$ to get
  $I_{n+1} \subset I_n$, $N_n \leq N_{n+1}$, $x_{N_n+1},\ldots,x_{N_{n+1}} \in u_{\frac{1}{n+1}}(S)$ such that
  for all $i \in I_{n+1}$,
  $u_{\frac{1}{n+1}}(\alpha_i) = \sum_{k=0}^{n+1} \sum_{j=N_{k-1}+1}^{N_k} x_i^{(j)}$, and for all $N_n < j \leq N_{n+1}$, $\frac{1}{n+1} \leq d(x_i^{(j)},A) < \frac{1}{n}$, and $(x_i^{(j)})_{i \in I_{n+1}} \to x_j$.
\end{proof}

\begin{corollary} \label{cor:relatively-compact}
   Let $(X,d,A)$ be a metric pair.
  Consider a subset $S \subset \overline{D}(X,A)$ which is upper finite and upper relatively compact.
  Let $(\alpha_i)_{i \in I}$ be a sequence in $S$.
  Then there exists $I \supset I_0 \supset I_1 \supset I_2 \supset \cdots$
  with $I_m \isom \N$ for all $m \in \Z_{\geq 0}$,
  $0=N_{-1} \leq N_0 \leq N_1 \leq N_2 \leq \cdots$, and
  $x_1, x_2, \ldots \in X$
  such that
  for all $m \in \Z_{\geq 0}$ and for all $i \in I_m$,
  \begin{equation*}
    u_{\frac{1}{m}}(\alpha_i) = \sum_{k=0}^m \sum_{j=N_{k-1}+1}^{N_k} x_i^{(j)},
  \end{equation*}
  where for $1 \leq j \leq N_0$,
  $d(x_i^{(j)},A) = \infty$,
  $d(x_j,A) = \infty$, and
  $(x_i^{(j)})_{i \in I_0} \to x_j$, and
  for $1 \leq k \leq m$ and $N_{k-1} < j \leq N_k$,
  $\frac{1}{k} \leq d(x_i^{(j)},A) < \frac{1}{k-1}$,
  $\frac{1}{k} \leq d(x_j,A) \leq \frac{1}{k-1}$, and
  $(x_i^{(j)})_{i \in I_m} \to x_j$.
  Let $\alpha = \sum_{k=0}^{\infty} \sum_{j=N_{k-1}+1}^{N_k} x_j = x_1 + x_2+ \cdots
  \in \overline{D}_{\infty}(X,A)$.
\end{corollary}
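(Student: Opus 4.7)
The corollary is essentially a diagonal-extraction statement obtained by iterating the preceding proposition, so the plan is straightforward: first build the infinite nested chain by induction on $m$, then verify the finiteness property that places $\alpha$ in $\overline{D}_{\infty}(X,A)$.

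My plan is to construct the data $I_0 \supset I_1 \supset I_2 \supset \cdots$, $N_0 \leq N_1 \leq N_2 \leq \cdots$, and $x_1,x_2,\ldots$ recursively. For the base case $m=0$, I would apply \cref{lem:relatively-compact} to the metric pair $(u_\infty(S) \cup A, d, A)$ (using that $u_\infty(S)$ is relatively compact since $S$ is upper relatively compact, and the cardinality bound from uniform upper finiteness) and to the sequence $(u_\infty(\alpha_i))_{i \in I}$, which lies in $D^{M_{\infty}}(u_\infty(S) \cup A, A)$. This yields an infinite subsequence $I_0 \subset I$, a number $N_0 \leq M_\infty$, and points $x_1,\ldots,x_{N_0}$ in $X \setminus A$ with $d(x_j,A)=\infty$ and $(x_i^{(j)})_{i \in I_0} \to x_j$ for $1 \leq j \leq N_0$. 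For the inductive step, suppose we have constructed $I_0 \supset \cdots \supset I_n$, together with $N_0,\ldots,N_n$ and $x_1,\ldots,x_{N_n}$. Applying \cref{lem:relatively-compact} to $(u_{1/(n+1)}(S) \cup A, d, A)$ and to the subsequence $(u_{1/(n+1)}(\alpha_i) - u_{1/n}(\alpha_i))_{i \in I_n}$, which lies in the space of diagrams of cardinality at most $M_{1/(n+1)}$ on the relatively compact set $u_{1/(n+1)}(S) \setminus A^{1/n}$, I get an infinite subsequence $I_{n+1} \subset I_n$, a number $N_n \leq N_{n+1} \leq M_{1/(n+1)}$, and new dots $x_{N_n+1},\ldots,x_{N_{n+1}}$ together with the required convergence and the bounds $\frac{1}{n+1} \leq d(x_j,A) \leq \frac{1}{n}$. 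This is exactly the ``inductive step'' reasoning already used in the proposition, so invoking the proposition in a recursive manner is really just bookkeeping.

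Finally, I would verify that $\alpha := \sum_{j=1}^{\infty} x_j \in \overline{D}_{\infty}(X,A)$. Fix $\delta > 0$ and choose $m$ with $\frac{1}{m} < \delta$. If $j > N_m$, then by construction $d(x_j, A) < \frac{1}{m} < \delta$, so $x_j \notin u_\delta(\alpha)$. Hence $\lvert u_\delta(\alpha) \rvert \leq N_m$, and since $N_m \leq M_{1/m} < \infty$ by uniform upper finiteness, $u_\delta(\alpha)$ is finite. This holds for every $\delta > 0$, so $\alpha \in \overline{D}_{\infty}(X,A)$ as required.

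The only delicate point is confirming that each $I_m$ is infinite, i.e., isomorphic to $\N$. This is ensured because the key step \cref{lem:relatively-compact} produces convergent subsequences of an infinite input sequence, and convergent sequences in a Hausdorff-like setting (or rather, the extraction procedure in the lemma) are themselves infinite. Since $I_0 \cong \N$ and at each stage we extract an infinite subsequence, the induction preserves $I_m \cong \N$. Beyond this, no serious obstacle remains; the corollary is a diagonal-type bookkeeping consequence of the proposition together with the finiteness estimate supplied by uniform upper finiteness.
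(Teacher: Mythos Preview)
Your proposal is correct and follows essentially the same route as the paper: the corollary is obtained by iterating the inductive construction of the preceding proposition (which itself bootstraps \cref{lem:relatively-compact}) infinitely many times, and your verification that $\alpha \in \overline{D}_\infty(X,A)$ via the bound $\abs{u_\delta(\alpha)} \leq N_m$ for $\frac{1}{m} < \delta$ is exactly what is needed (the paper leaves this implicit). The only cosmetic slip is that for $j > N_m$ one has $d(x_j,A) \leq \frac{1}{m}$ rather than a strict inequality, but since you chose $\frac{1}{m} < \delta$ this does not affect the argument.
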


\begin{lemma}
  Let $(X,d,A)$ be a metric pair and let $1 \leq p < \infty$.
  Consider a subset $S \subset (\overline{D}(X,A),W_p)$ which is upper finite, upper relatively compact, and uniformly $p$-vanishing.
  Let $(\alpha_i)_{i \in I}$ be a sequence in $S$.
  Let $\alpha \in \overline{D}_{\infty}(X,A)$ be given by \cref{cor:relatively-compact}.
  Then $\alpha \in \overline{D}_p(X,A)$.
\end{lemma}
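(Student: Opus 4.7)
The plan is to verify the two conditions of \cref{lem:overlineDp}: that $\abs{u_{\delta}(\alpha)} < \infty$ and $W_p(\ell_{\delta}(\alpha), 0) < \infty$ for some $\delta > 0$. First I would handle the upper-finiteness of $\alpha$ itself. For any $\eps > 0$, each index $j$ contributing to $u_{\eps}(\alpha)$ arises as a limit $(x_i^{(j)})_{i \in I_m} \to x_j$ for some $m$, and by the triangle inequality $\abs{d(x_i^{(j)},A) - d(x_j,A)} \leq d(x_i^{(j)}, x_j) \to 0$, so $d(x_i^{(j)},A) \to d(x_j,A) \geq \eps$, forcing $x_i^{(j)} \in \supp(u_{\eps/2}(\alpha_i))$ for all sufficiently large $i \in I_m$. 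Since $j \mapsto x_i^{(j)}$ respects multiplicity at the level of the decomposition in \cref{cor:relatively-compact} and $\abs{u_{\eps/2}(\alpha_i)} \leq M_{\eps/2}$ by uniform upper finiteness, we conclude $\abs{u_{\eps}(\alpha) \cap \{x_1, \dots, x_{N_m}\}} \leq M_{\eps/2}$ for every $m$, and hence $\abs{u_{\eps}(\alpha)} \leq M_{\eps/2} < \infty$. A nearly identical Cauchy argument shows $d(x_j,A) < \infty$ for every $j > N_0$, since $(d(x_i^{(j)},A))_i$ is Cauchy in $[0,\infty)$ and hence converges to a finite limit.

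Next I would exploit uniform $p$-vanishing to choose $\delta > 0$ with $W_p(\ell_{\delta}(\alpha_i), 0) < 1$ for all $i$, and prove $W_p(\ell_{\delta}(\alpha), 0) \leq 1$ by a finite truncation. For each $m \geq 1$, let
\[
T_m = \{j : N_0 < j \leq N_m \text{ and } d(x_j,A) < \delta\},
\]
a finite set. For each $j \in T_m$, the continuity estimate above gives $d(x_i^{(j)},A) \to d(x_j,A) < \delta$, so for all sufficiently large $i \in I_m$ we have $x_i^{(j)} \in \supp(\ell_{\delta}(\alpha_i))$. Since $T_m$ is finite, a single $i \in I_m$ works simultaneously for every $j \in T_m$, and then
\[
\sum_{j \in T_m} d(x_i^{(j)}, A)^p \leq W_p(\ell_{\delta}(\alpha_i), 0)^p < 1.
\]
Letting $i \to \infty$ in $I_m$ with the finite index set $T_m$ fixed, each term converges by continuity, yielding $\sum_{j \in T_m} d(x_j, A)^p \leq 1$.

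Finally, the sets $T_m$ are nested and exhaust $\{j > N_0 : d(x_j, A) < \delta\}$, which by the previous paragraph indexes precisely the dots of $\ell_{\delta}(\alpha)$ (the infinite-distance dots all lie in $u_{\infty}(\alpha)$). Monotone convergence for nonnegative sums gives
\[
W_p(\ell_{\delta}(\alpha), 0)^p = \sum_{\substack{j > N_0\\ d(x_j,A) < \delta}} d(x_j,A)^p = \lim_{m \to \infty} \sum_{j \in T_m} d(x_j,A)^p \leq 1,
\]
so $W_p(\ell_{\delta}(\alpha),0) \leq 1 < \infty$. Combined with $\abs{u_{\delta}(\alpha)} < \infty$, \cref{lem:overlineDp} yields $\alpha \in \overline{D}_p(X,A)$. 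The main obstacle is the careful interchange in the middle step: fixing the finite set $T_m$ so the limit $i \to \infty$ commutes with the finite sum, and verifying that $T_m$ truly captures the full lower tail of $\alpha$ in the limit $m \to \infty$.
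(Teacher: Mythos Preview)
Your proof is correct and follows essentially the same path as the paper's: both use uniform $p$-vanishing together with the convergence $d(x_i^{(j)},A)\to d(x_j,A)$ on finite index sets to bound the $p$-tail of $\alpha$, with the paper packaging this as a contradiction argument and you as a direct limit via monotone convergence. Note that your first paragraph (verifying $\abs{u_\eps(\alpha)}<\infty$) is redundant, since \cref{cor:relatively-compact} already asserts $\alpha\in\overline{D}_\infty(X,A)$.
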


\begin{proof}
  Let $\{I_{m}\}$, $\{N_{m}\}$, $\{x_{i}^{(j)}\}$, and $\{x_j\}$ be given by \cref{cor:relatively-compact}.
  Assume that $\alpha \not\in \overline{D}_p(X,A)$.
  Then there exists $\eps>0$ such that for all $m'$,
  $\left( \sum_{j > N_{m'}} d(x_j,A)^p\right)^{\frac{1}{p}} \geq \eps$.
  Since $S$ is uniformly $p$-vanishing, there exists an $m$ such that $W_p(\ell_{\frac{1}{m}}(\beta),0) < \frac{\eps}{4}$ for all $\beta \in S$.
  Since $\left( \sum_{j > N_{m}} d(x_j,A)^p\right)^{\frac{1}{p}} \geq \eps$,
  there exists an $n$ such that
  $\left( \sum_{j = N_m+1}^{N_n} d(x_j,A)^p\right)^{\frac{1}{p}} \geq \frac{\eps}{2}$.
  Note that this implies that $N_n > N_m$.

  Let $i \in I_n$.
  For $N_m < j \leq N_n$, $(x_i^{(j)})_{i \in I_n} \to x_j$.
  Therefore,  there exists an $M_j$ such that for all $i \geq M_j$,
  $d(x_i^{(j)}, x_j) < \frac{\eps}{4(N_n - N_m)}$.
  Let $M = \max_{N_m < j \leq N_n} M_j$.
  Then $d(x_j^{(M)},x_j) < \frac{\eps}{4(N_n-N_m)}$.
  Therefore,
  by the triangle inequality and the Minkowski inequality,
  \begin{align*}
    \left( \sum_{j=N_m+1}^{N_n} d(x_j,A)^p \right)^{\frac{1}{p}}
    &\leq
      \left( \sum_{j=N_m+1}^{N_n} \left( d(x_j,x_j^{(M)}) + d(x_j^{(M)},A) \right)^p \right)^{\frac{1}{p}} \\
    &\leq
      \left( \sum_{j=N_m+1}^{N_n} d(x_j,x_j^{(M)})^p \right)^{\frac{1}{p}} +
      \left( \sum_{j=N_m+1}^{N_n} d(x_j^{(M)},A)^p \right)^{\frac{1}{p}} \\
    &\leq \sum_{j=N_m+1}^{N_n} d(x_j,x_j^{(M)}) + W_p(\ell_{\frac{1}{m}}(\alpha_M),0)\\
    &< \frac{\eps}{4} + \frac{\eps}{4} = \frac{\eps}{2},
  \end{align*}
  which is a contradiction.
\end{proof}

\begin{proof}[\textbf{Proof of Theorem \ref{thm:criterion_rel_compct}}]
  Let $p \in [1,\infty]$.
    Let $S \subset (\overline{D}_p(X,A),W_p)$ be a relatively compact set.
    By \cref{lem:upper-relatively-compact}, $S$ is upper relatively compact.
    Furthermore, $S$ is totally bounded (\cref{lem:rel-cpct-then-tot-bdd}).
    So, by \cref{lem:S_is_unif_upper_finite}, $S$ is uniformly upper finite, and if $p < \infty$ then by \cref{lem:S_is_lower_uniform}, $S$ is uniformly $p$-vanishing.

  In the other direction, let $S \subset (\overline{D}_p(X,A),W_p)$ be a uniformly upper finite and upper relatively compact set. If $p < \infty$ then assume that $S$ is uniformly $p$-vanishing.
  Let $(\alpha_i)_{i \in I}$ be a sequence in $S$.
  Let $\{I_{m}\}$, $\{N_{m}\}$, $\{x_i^{(j)}\}$, $\{x_j\}$ and $\alpha$ be given by \cref{cor:relatively-compact}.
  Consider the subsequence $(\alpha_{i_j})_{j=1}^{\infty}$ of $(\alpha_i)_{i \in I}$ defined recursively as follows.
  Let $\alpha_{i_1}$ be the first term of $(\alpha_i)_{i \in I_1}$.
  Given $\alpha_{i_n}$, let $\alpha_{i_{n+1}}$ be the first term in $(\alpha_i)_{i \in I_{n+1}}$ that appears after $\alpha_{i_n}$ in the original sequence $(\alpha_i)_{i \in I}$.
  We will show that $(\alpha_{i_j})_{j=1}^{\infty} \to \alpha$ in $(\overline{D}_p(X,A),W_p)$.

  Let $\eps > 0$.
  Since $S$ is uniformly $p$-vanishing, there is an $m_1 \in \N$ such that
  for all $j$, $W_p(\ell_{\frac{1}{m_1}}(\alpha_{i_j}),0) < \frac{\eps}{3}$.
  Since $\alpha \in \overline{D}_p(X,A)$, there is an $m_2$ such that
  $W_p(\sum_{j > N_{m_2}} x_j, 0) < \frac{\eps}{3}$.
  Let $m = \max(m_1,m_2)$.
  Consider $(\alpha_{i_j})_{j=m}^{\infty}$.
  For $j \geq m$, $\alpha_{i_j} \in (\alpha_i)_{i \in I_j}$.
  Thus $u_{\frac{1}{j}}(\alpha_{i_j}) = \sum_{k=1}^{N_j} x_{i_j}^{(k)}$, where $N_j \geq N_m$.
  Hence $u_{\frac{1}{m}}(\alpha_{i_j}) = \sum_{k=1}^{N_m} x_{i_j}^{(k)}$.
  Therefore, 
  \begin{equation*}
   W_p ( \alpha_{i_j},\alpha) \leq  \left \|  W_p\left(u_{\frac{1}{m}}(\alpha_{i_j}),\sum_{k=1}^{N_m} x_{k}\right), W_p(\ell_{\frac{1}{m}}(\alpha_{i_j}),0),
    W_p\left(0,\sum_{k>N_m} x_{k}\right)  \right \|_p .
  \end{equation*}
  We have $W_p(\ell_{\frac{1}{m}}(\alpha_{i_j}),0) < \frac{\eps}{3}$ and
  $W_p(0,\sum_{k>N_m} x_{k}) < \frac{\eps}{3}$.
  For $1 \leq k \leq N_m$,
  $(x_{i_j}^{(k)})_{j=m}^{\infty} \to x_k$.
  So, there exists $M_k$ such that $d(x_{i_j}^{(k)},x_k) < \frac{\eps}{3N_m}$ for all $j \geq M_k$.
  Let $M = \max \{ m, M_1, \ldots, M_{N_m} \}$.
  For $j \geq M$, $W_p(\sum_{k=1}^{N_m} x_{i_j}^{(k)},\sum_{k=1}^{N_m} x_{k}) < \frac{\eps}{3}$.
  Therefore, $W_p(\alpha_{i_j},\alpha) < \eps$
  for all $j \geq M$. Hence $(\alpha_{i_j})_{j=1}^{\infty} \to \alpha$ and thus $S$ is relatively compact.
\end{proof}

% \change{\begin{remark}
% 		The relatively compact subsets  of $(\overline{D}_p(\R^2_\leq, \Delta), W_p)$ for $p \in [1, \infty)$ were fully caracterized in \cite{mileyko2011probability}. Since the space is complete, the authors  used totally boundedness instead of relatively compactness. The necessary and sufficient conditions that a subset has to satisfy were the following: it has to be bounded, there should exist a uniform bound for all births and deaths of points that belong to $\eps$-upper parts of persistence diagrams in the subset, and it has to be uniformly $p$-vanishing (the authors called it differently).
% 	    Later a criterior of relatively compact subsets  of $(\overline{D}_\infty([0, \infty)^2_\leq, \Delta), W_\infty)$ was given by \cite{perea2019approximating}. The authors showed that a subset has to be bounded, all births of points that belong to $\eps$-upper parts of persistence diagrams in the subset have to be uniformly bounded, and the subset has to be uniformly upper finite (the authors called it differently). Note that parts of our proof follow the same reasoning as in \cite{mileyko2011probability} or \cite{perea2019approximating}, however,  most of the proof, in particular, showing the sufficiency of our properties, is completelly different.
% \end{remark}}

\section{Curvature, dimension, and embeddability}

In our final section, we discuss the curvature and topological dimension of spaces of persistence diagrams as well their embeddability into a Hilbert space.

\subsection{Curvature}
      
In this subsection we generalize some results of \cite{turner2014frechet} and \cite{turner2013medians} on curvature.  
Let $(X, d)$ be a geodesic metric space and $k \in \R$. A \emph{geodesic triangle} $\bigtriangleup (x,y,z)$ in $X$ consists of three points $x, y, z \in X$ and three geodesic segments $[x, y], [y, z], [x, z]$ joining them.
Let $M_k$ denote the following spaces: if $k<0$, then $M_k$ is the hyperbolic space $\mathbb{H}^2$ with the distance function multiplied by $\frac{1}{\sqrt{-k}}$; if $k=0$, then $M_k$ is the Euclidean plane $\mathbb{E}^2$; if $k>0$, then $M_k$ is the unit sphere $\mathbb{S}^2$ with the distance function multiplied by $\frac{1}{\sqrt{k}}$.
A triangle  $\overline{\bigtriangleup} (\overline{x}, \overline{y}, \overline{z})$ in $M_k$ is a \emph{comparison triangle} for $\bigtriangleup (x,y,z)$
if $d(\overline{x}, \overline{y})=d(x, y)$, $d(\overline{y}, \overline{z})=d(y, z)$ and $d(\overline{x}, \overline{z})=d(x, z)$.
Geodesic triangle $\bigtriangleup (x,y,z)$ in $X$ is said to satisfy the \emph{$\CAT(k)$ inequality} if there is a comparison triangle $\overline{\bigtriangleup} (\overline{x},\overline{y},\overline{z})$ in $M_k$ such that the distance between points on $\bigtriangleup (x,y,z)$ are less or equal to the distance between the corresponding points on $\overline{\bigtriangleup} (\overline{x}, \overline{y}, \overline{z})$.
If $k \leq 0$, then $X$ is a \emph{$\CAT(k)$ space} if all its geodesic triangles satisfy the $\CAT(k)$ inequality.
If $k > 0$, then $X$ is a \emph{$\CAT(k)$ space}  if all its geodesic triangles of perimeter less than $\frac{2\pi}{\sqrt{k}}$ satisfy the $\CAT(k)$ inequality.
	
\begin{proposition}[{\cite[Proposition II 1.4(1)]{Bridson:1999}}]
  Let $X$ be a non-extended, separated $\CAT(k)$ space. There is a unique geodesic segment joining each pair of points $x, y \in X$ (provided $d(x, y) < \frac{\pi}{\sqrt{k}}$ if $k >0$), and its geodesic segment varies continuously with its endpoints.
\end{proposition}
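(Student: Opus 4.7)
The plan is to prove uniqueness first via a degenerate-triangle comparison argument, and then derive continuous dependence on endpoints from the CAT($k$) inequality applied to triangles whose vertices converge.

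For uniqueness, suppose $\gamma_1, \gamma_2:[0,L] \to X$ are two geodesics from $x$ to $y$ parametrized proportionally to arc length, where $L = d(x,y)$ (and $L < \pi/\sqrt{k}$ in the case $k>0$). I would regard $\gamma_1, \gamma_2$ together with the constant path at $y$ as a degenerate geodesic triangle $\triangle(x,y,y)$. Any comparison triangle $\overline{\triangle}(\bar x, \bar y_1, \bar y_2)$ in $M_k$ must have $\bar y_1 = \bar y_2$, so the two comparison sides from $\bar x$ to $\bar y_1 = \bar y_2$ coincide. For each $t \in [0,L]$, the comparison points of $\gamma_1(t)$ and $\gamma_2(t)$ on these two sides are therefore the same point of $M_k$. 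The CAT($k$) inequality yields $d(\gamma_1(t),\gamma_2(t)) \leq 0$, and since $X$ is separated this gives $\gamma_1(t) = \gamma_2(t)$.

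For continuous dependence, let $x_n \to x$ and $y_n \to y$ with $d(x_n,y_n), d(x_n,y), d(x,y)$ all below $\pi/\sqrt{k}$ for $n$ large when $k>0$, and let $\gamma_n, \gamma$ be the unique geodesics joining them, reparametrized on $[0,1]$ at constant speed. I would introduce the auxiliary geodesic $\eta_n$ from $x_n$ to $y$ on $[0,1]$, and apply the CAT($k$) inequality to $\triangle(x_n, y_n, y)$ at the common parameter $t$ on the sides $[x_n,y_n]$ and $[x_n,y]$. In the model space $M_k$, a direct computation (with the law of cosines in $\mathbb{E}^2$, $\mathbb{H}^2$, or $\mathbb{S}^2$) shows that the distance between corresponding points on two sides of a triangle sharing a vertex is continuous in the side lengths, giving a bound $d(\gamma_n(t), \eta_n(t)) \leq \varphi_k\bigl(d(x_n,y_n), d(x_n,y), t\bigr)$ that tends to $0$ as $d(y_n,y) \to 0$, uniformly in $t$. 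A symmetric application to $\triangle(x, x_n, y)$ bounds $d(\eta_n(t), \gamma(t))$ in terms of $d(x_n,x)$. The triangle inequality in $X$ combines these into uniform convergence $\gamma_n \to \gamma$ on $[0,1]$.

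The main obstacle is executing the model-space estimate cleanly in a $k$-independent form. For $k<0$ one needs that the hyperbolic law of cosines depends continuously on its arguments (standard but case-specific), and for $k>0$ one additionally must verify that \emph{all} comparison triangles invoked in the argument have perimeter below $2\pi/\sqrt{k}$; this follows because $d(x_n,y_n) \to d(x,y) < \pi/\sqrt{k}$ and $d(x_n,y), d(x,y_n) \to d(x,y)$, so eventually every perimeter of interest is strictly smaller than the threshold. Once this verification is in place, the CAT($k$) inequality may be applied to each auxiliary triangle and the continuity estimate follows.
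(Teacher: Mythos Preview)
The paper does not supply its own proof of this proposition: it is stated with a citation to Bridson--Haefliger and used as a black box to derive the subsequent corollary. Your argument is essentially the standard one found in that reference---the degenerate-triangle trick for uniqueness and the two-step comparison through an auxiliary geodesic for continuous dependence---and it is correct as outlined.
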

	
The next result follows from  Proposition \ref{prop:unique_geodesic}.
	
\begin{corollary} \label{cor:cat}
  Let $(X, d, A)$ be a metric pair, where $(X,d)$ is separated and geodesic and $A$ is distance minimizing.
  Let $p \in [1,\infty]$ and $k \in \R$.
  Assume that there exist  $\alpha, \beta \in (\overline{D}_p(X,A),W_p)$ with
  $W_p(\alpha,\beta) < \infty$ such that $\alpha$ and $\beta$ have distinct optimal matchings and $W_p(\alpha, \beta) < \frac{\pi}{\sqrt{k}}$ if $k >0$.
  Then $(\overline{D}_p(X,A),W_p)$ is not a $\CAT (k)$ space.
\end{corollary}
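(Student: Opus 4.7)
The plan is to argue by contradiction, combining two earlier results of the paper. By Theorem \ref{thm:geodesic}, the hypotheses that $(X,d)$ is geodesic and $A$ is distance minimizing yield that $(\overline{D}_p(X,A), W_p)$ is a geodesic space, so the notion of $\CAT(k)$ is meaningful for it. By Proposition \ref{prop:unique_geodesic}, the two distinct optimal matchings between $\alpha$ and $\beta$ furnish two distinct geodesic segments from $\alpha$ to $\beta$ in $(\overline{D}_p(X,A), W_p)$.

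Next I would suppose, for contradiction, that $(\overline{D}_p(X,A), W_p)$ is a $\CAT(k)$ space. Then I would invoke the uniqueness-of-geodesics result quoted from \cite{Bridson:1999} (Proposition II.1.4(1)) to conclude that the geodesic between $\alpha$ and $\beta$ is unique. The hypothesis $W_p(\alpha,\beta) < \pi/\sqrt{k}$ (when $k>0$) is precisely what that proposition requires; when $k\leq 0$ no such restriction is imposed. This contradicts the existence of the two distinct geodesics produced above.

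The one technical point to verify is that the cited uniqueness statement applies in our generalized metric setting, since it presumes a non-extended, separated metric. Non-extendedness is not a genuine obstruction because the conclusion needed is local around the pair $(\alpha,\beta)$, which by hypothesis satisfies $W_p(\alpha,\beta) < \infty$. For separatedness of $(\overline{D}_p(X,A), W_p)$, I would argue as follows: if $W_p(\gamma_1,\gamma_2)=0$, then by Theorem \ref{thm:optimal-matching} (using that $A$ is distance minimizing) there exists an optimal matching $\sigma$ with $\Cost_p(\sigma)=0$; this forces $d(x,y)=0$ for each $(x,y)\in\supp(\hat\sigma)$, and since $(X,d)$ is separated, $x=y$, whence $\gamma_1=\gamma_2$ in $\overline{D}_p(X,A)$.

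I do not anticipate a substantive obstacle: the corollary is essentially a packaging of Proposition \ref{prop:unique_geodesic} together with the standard fact that $\CAT(k)$ spaces have unique geodesics below the relevant distance threshold. The only care needed is to confirm that the separated hypothesis in the classical uniqueness theorem descends from $(X,d)$ to the space of diagrams, which is immediate from the existence of optimal matchings guaranteed by the distance-minimizing hypothesis on $A$.
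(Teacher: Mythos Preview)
Your proposal is correct and follows essentially the same approach as the paper, which simply states that the corollary follows from Proposition~\ref{prop:unique_geodesic}. You have in fact supplied more detail than the paper does, in particular the verification that $(\overline{D}_p(X,A),W_p)$ inherits separatedness from $(X,d)$ so that the Bridson--Haefliger uniqueness result applies.
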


Let $(X, d_X)$ be a geodesic space and let $k \in \R$.
Then $(X, d_X)$ is an \emph{Alexandrov space with curvature bounded from below by $k$}~\cite{bgp:1992} if
for every $x,y \in X$ with $d(x,y) < \infty$ (and $d(x,y) < \frac{\pi}{\sqrt{k}}$ if $k > 0$) and every constant speed geodesic $\gamma$ from $x$ to $y$ and every $z \in X$ with $d(x,z) < \infty$ (and $\perimeter(\bigtriangleup(x,y,z)) < \frac{2\pi}{\sqrt{k}}$ if $k > 0$),
and every $t \in [0,1]$,
$d_X(z,\gamma(t)) \geq d_{M_k}(\overline{z},\overline{\gamma}(t))$,
where $\overline{\gamma}$ is the unique constant speed geodesic from $\overline{x}$ to $\overline{y}$ in $\overline{\bigtriangleup}(\overline{x},\overline{y},\overline{z})$.
For $k=0$, we have the equivalent condition~\cite{ohta2012barycenters},
\begin{equation} \label{eq:alexandrov}
  d(z, \gamma(t))^2 \geq t d(z,y)^2+(1-t)d(z, x)^2-t(1-t)d(x, y)^2.
\end{equation}

Following \cite{turner2014frechet} we get the next result.
	
\begin{proposition}\label{prop:curvature}
  Let $(X, d, A)$ be a metric pair where $(X, d)$ is a geodesic  Alexandrov space with curvature bounded from below by zero and $A$ is distance minimizing. Then $(\overline{D}_2(X,A),W_2)$ is a geodesic Alexandrov space 
with curvature bounded from below by zero. 
\end{proposition}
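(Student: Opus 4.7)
The plan is to follow the approach of Sturm and Turner \cite{turner2014frechet}, which lifts the pointwise Alexandrov inequality from $X$ to the Wasserstein space via optimal matchings. Since $(X,d)$ is geodesic and $A$ is distance minimizing, \cref{thm:geodesic} gives that $(\overline{D}_2(X,A),W_2)$ is already a geodesic space, so the only thing to verify is the inequality~\eqref{eq:alexandrov}: for all $\alpha,\beta,\zeta\in \overline{D}_2(X,A)$ with finite pairwise distances, every constant speed geodesic $\gamma_t$ from $\alpha$ to $\beta$, and every $t\in[0,1]$,
\begin{equation*}
 W_2(\zeta,\gamma_t)^2 \;\geq\; t\,W_2(\zeta,\beta)^2 + (1-t)\,W_2(\zeta,\alpha)^2 - t(1-t)\,W_2(\alpha,\beta)^2.
\end{equation*}

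Fix such $\alpha,\beta,\zeta$. By \cref{thm:optimal-matching} there is an optimal matching $\sigma$ of $\alpha,\beta$ with $\hat{\sigma}=\sum_{i\in I}(x_i,y_i)$, and as in the proof of \cref{thm:geodesic} we may assume $\hat{\gamma}_t=\sum_{i\in I}\gamma_i(t)$, where each $\gamma_i$ is a constant speed geodesic from $x_i$ to $y_i$ in $X$. By \cref{thm:optimal-matching} again, take an optimal matching $\tau$ of $\gamma_t$ and $\zeta$. By \cref{lem:matching} we can write
\begin{equation*}
  \hat{\tau} \;=\; \sum_{k\in K}(z_k,\gamma_{\varphi(k)}(t)) \;+\; \sum_{j\in J\setminus K}(z_j,a_j) \;+\; \sum_{i\in I\setminus\varphi(K)}(b_i,\gamma_i(t)),
\end{equation*}
where $\hat{\zeta}=\sum_{j\in J}z_j$, $K\subset J$, $\varphi:K\to I$ is an injection, and since $A$ is distance minimizing we may take $a_j,b_i\in A$ with $d(z_j,a_j)=d(z_j,A)$ and $d(b_i,\gamma_i(t))=d(\gamma_i(t),A)$.

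From $\tau$ and $\sigma$ I would build natural matchings $\pi_\alpha$ of $\zeta,\alpha$ and $\pi_\beta$ of $\zeta,\beta$ by replacing each pair $(z_k,\gamma_{\varphi(k)}(t))$ in $\tau$ with $(z_k,x_{\varphi(k)})$ and $(z_k,y_{\varphi(k)})$ respectively, pairing each $x_i$ (resp.\ $y_i$) for $i\in I\setminus\varphi(K)$ with a closest point of $A$, and keeping the $(z_j,\cdot)$ pairs for $j\in J\setminus K$. Applying the pointwise inequality~\eqref{eq:alexandrov} in $X$ to each triple $(z_k,x_{\varphi(k)},y_{\varphi(k)})$ with geodesic $\gamma_{\varphi(k)}$, and to each triple $(b_i,x_i,y_i)$ with geodesic $\gamma_i$ (followed by $d(b_i,x_i)\geq d(x_i,A)$, $d(b_i,y_i)\geq d(y_i,A)$), and summing squares, the bookkeeping collapses to
\begin{equation*}
  \Cost_2(\tau)^2 \;\geq\; t\,\Cost_2(\pi_\beta)^2 + (1-t)\,\Cost_2(\pi_\alpha)^2 - t(1-t)\,\Cost_2(\sigma)^2.
\end{equation*}
Since $\tau$ and $\sigma$ are optimal while $\pi_\alpha,\pi_\beta$ are arbitrary matchings, the left-hand side equals $W_2(\zeta,\gamma_t)^2$, the last term equals $t(1-t)W_2(\alpha,\beta)^2$, and the first two terms bound $t\,W_2(\zeta,\beta)^2+(1-t)\,W_2(\zeta,\alpha)^2$ from above, yielding the desired inequality.

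The main obstacle will be the index bookkeeping across the three matchings $\sigma,\tau$ and the auxiliary $\pi_\alpha,\pi_\beta$: one has to verify carefully that the sums of squared distances produced by the pointwise Alexandrov inequalities reassemble precisely into $\Cost_2(\pi_\alpha)^2$, $\Cost_2(\pi_\beta)^2$, and $\Cost_2(\sigma)^2$, in particular that the ``unmatched'' contributions $\sum_{j\in J\setminus K}d(z_j,A)^2$ appear with coefficient $t+(1-t)=1$ and that the $A$-terms coming from $I\setminus\varphi(K)$ correctly distribute between $\pi_\alpha$ and $\pi_\beta$. A secondary subtlety, needed because the Alexandrov condition demands the inequality for \emph{every} constant speed geodesic, is to argue that any such geodesic in $(\overline{D}_2(X,A),W_2)$ arises from an optimal matching $\sigma$ as above; equivalently, one may instead verify the midpoint form of the non-negative curvature condition and invoke its standard equivalence with~\eqref{eq:alexandrov} in a geodesic space.
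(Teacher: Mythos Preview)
Your strategy is the right one and is essentially the Sturm/Turner argument the paper also follows: lift the pointwise inequality \eqref{eq:alexandrov} via matchings and sum the squares. The bookkeeping you outline is fine, and padding by elements of $A$ to get a common index set (as the paper does) makes it cleaner than carrying around $K$, $J\setminus K$, and $I\setminus\varphi(K)$ separately.

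The real issue is the one you flag as a ``secondary subtlety'': you assume the given constant speed geodesic satisfies $\hat{\gamma}_t=\sum_{i\in I}\gamma_i(t)$ for pointwise geodesics $\gamma_i$ coming from an optimal matching of $\alpha$ and $\beta$. \cref{thm:geodesic} only \emph{constructs} geodesics of this form; it does not show every geodesic in $(\overline{D}_2(X,A),W_2)$ arises this way, so ``we may assume'' is unjustified. Your proposed escape via the midpoint form does not help for a \emph{lower} curvature bound: the condition \eqref{eq:alexandrov} must hold for \emph{all} midpoints (equivalently all geodesics), so proving it only for the geodesics you know how to build is insufficient, and the ``standard equivalence'' you invoke is the one for $\CAT(k)$ (upper bounds), not for Alexandrov lower bounds.

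The paper closes this gap by reversing the order of operations. Given an \emph{arbitrary} constant speed geodesic $\gamma$ and a fixed $t$, it takes optimal matchings $\sigma=\sum_i(x_i,w_i)$ of $\alpha$ with $\gamma(t)$ and $\tau=\sum_i(w_i,y_i)$ of $\gamma(t)$ with $\beta$, aligned on a common index set. Because $\gamma$ is a geodesic, the chain
\[
\norm{(d(x_i,y_i))_i}_2 \le \norm{(d(x_i,w_i))_i}_2+\norm{(d(w_i,y_i))_i}_2 = W_2(\alpha,\gamma(t))+W_2(\gamma(t),\beta)=W_2(\alpha,\beta)
\]
is forced to be an equality everywhere. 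Equality in the componentwise triangle inequality gives $d(x_i,y_i)=d(x_i,w_i)+d(w_i,y_i)$, and equality in Minkowski together with $W_2(\alpha,\gamma(t))=tW_2(\alpha,\beta)$ forces $d(x_i,w_i)=t\,d(x_i,y_i)$ for every $i$. Hence the concatenation $\tilde{\gamma}_i$ of a geodesic from $x_i$ to $w_i$ with one from $w_i$ to $y_i$ is a genuine constant speed geodesic in $X$ with $\tilde{\gamma}_i(t)=w_i$, and now the pointwise Alexandrov inequality applies exactly as in your computation. This way the argument never assumes anything about the structure of $\gamma$ beyond its being a geodesic.
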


\begin{proof}
  By Theorem \ref{thm:geodesic}, $(\overline{D}_2(X,A),W_2)$ is a geodesic space.
    Let $\alpha, \beta, \xi \in \overline{D}_2(X, A)$ with $W_2(\alpha, \beta) < \infty$ and $W_2(\alpha,\xi) < \infty$.
    Let $\gamma$ be a constant speed geodesic from $\alpha$ to $\beta$.
    Let $t \in [0,1]$.

    Let $\sigma$ be an optimal matching of $\alpha$ and $\gamma(t)$,
    let $\tau$ be an optimal matching of $\gamma(t)$ and $\beta$, and
    let $\rho$ be an optimal matching of $\xi$ and $\gamma(t)$.
    By adding elements of $A$ as needed we may write
    these matchings
    % $\sigma,\tau$, and $\rho$
    using a common indexing set. Write
    $\sigma = \sum_{i \in I}(x_i,w_i)$,
    $\tau = \sum_{i \in I}(w_i,y_i)$, and
    $\rho = \sum_{i \in I}(z_i,w_i)$.
    Note that $\alpha = \sum_{i \in I} x_i$, $\beta = \sum_{i \in I} y_i$,
    $\xi = \sum_{i \in I} z_i$ and $\gamma(t) = \sum_{i \in I} w_i$.
    Since $\gamma$ is a geodesic, we claim that it follows that $\sum_{i \in I}(x_i,y_i)$ is an optimal matching of $\alpha$ and $\beta$.
    Indeed,
    \begin{align*}
      \norm{(d(x_i,y_i))_{i \in I}}_2
      &\leq \norm{(d(x_i,w_i) + d(w_i,y_i))_{i \in I}}_2\\
      &\leq \norm{(d(x_i,w_i))_{i \in I}}_2 + \norm{(d(w_i,y_i))_{i \in I}}_2\\
      &= W_2(\alpha,\gamma(t)) + W_2(\gamma(t),\beta) = W_2(\alpha,\beta).
    \end{align*}
    Therefore, $W_2(\alpha,\beta)^2 = \sum_{i \in I}d(x_i,y_i)^2$.

    For all $i \in I$,
    let $\gamma'_i$ be a constant speed geodesic in $(X,d)$ from $x_i$ to $w_i$, and 
    let $\gamma''_i$ be a constant speed geodesic in $(X,d)$ from $w_i$ to $y_i$.
    Let $\tilde{\gamma}_i$ be the concatenation of $\gamma'_i$ and $\gamma''_i$.
    Then $\tilde{\gamma}_i$ is a constant speed geodesic in $(X,d)$ from $x_i$ to $y_i$ with $\tilde{\gamma}_i(t) = w_i$.
    Therefore, by \eqref{eq:alexandrov},
    \begin{equation*}
      d(z_i,w_i)^2  = d(z_i,\tilde{\gamma}_i(t))^2 \geq td(z_i,y_i)^2 + (1-t)d(z_i,x_i)^2 - t(1-t)d(x_i,y_i)^2.
    \end{equation*}
  Hence we have,
  \begin{align*}
    W_2(\xi,\gamma(t))^2
    &= \sum_{i \in I} d(z_i,w_i)^2\\
    &\geq t\sum_{i \in I}d(z_i,y_i)^2 + (1-t)\sum_{i \in I}d(z_i,x_i)^2 - t(1-t)\sum_{i \in I}d(x_i,y_i)^2\\
    &\geq t W_2(\xi,\beta)^2 + (1-t) W_2(\xi,\alpha)^2 - t(1-t) W_2(\alpha,\beta)^2. \qedhere
  \end{align*}

\end{proof}

From this we get the following.

\begin{corollary} \label{cor:infinite-hausdorff-dimension}
 Let $(X,d,A)$ be a metric pair, where $(X,d)$ is a geodesic Alexandrov space with non-negative curvature and $A$ is distance minimizing and not isolated.
  Then $(\overline{D}_2(X,A),W_2)$ has infinite Hausdorff dimension.
\end{corollary}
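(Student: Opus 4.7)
My plan is to show that for every $N \in \N$, the space $(\overline{D}_2(X,A), W_2)$ contains an isometric copy of the closed simplex
\[\Delta_N = \{(s_1,\ldots,s_N) \in [0,r]^N : s_1 \leq s_2 \leq \cdots \leq s_N\}\]
equipped with the Euclidean metric inherited from $\R^N$, where $r > 0$ is chosen below. Since $\Delta_N$ has Hausdorff dimension $N$ and Hausdorff dimension is monotone under isometric embeddings, this gives that $\overline{D}_2(X,A)$ has Hausdorff dimension at least $N$ for every $N$, hence infinite Hausdorff dimension. Notably, the strategy uses only that $(X,d)$ is geodesic and that $A$ is distance minimizing and not isolated; the Alexandrov curvature hypothesis from \cref{prop:curvature} is not needed.

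I begin by constructing a suitable geodesic along which to place dots. Since $A$ is not isolated I can choose $y \in X \setminus A$ with $r := d(y, A) > 0$, and since $A$ is distance minimizing I can choose $a \in A$ with $d(y, a) = r$. Since $(X, d)$ is geodesic, let $\gamma \colon [0, r] \to X$ be an arc-length-parameterized geodesic from $a$ to $y$. Two applications of the triangle inequality give $d(\gamma(t), A) = t$ for every $t \in [0, r]$. Define $\Theta_N \colon \Delta_N \to \overline{D}_2(X, A)$ by $\Theta_N(s_1, \ldots, s_N) = \sum_{i=1}^N \gamma(s_i)$.

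The upper bound $W_2(\Theta_N(s), \Theta_N(s')) \leq \|s - s'\|_2$ follows from the canonical matching $\gamma(s_i) \leftrightarrow \gamma(s'_i)$ combined with $d(\gamma(s_i), \gamma(s'_i)) = |s_i - s'_i|$. For the lower bound, any matching $\sigma$ of $\Theta_N(s)$ and $\Theta_N(s')$ has the form described in \cref{lem:matching}, and since $d(\gamma(s_i), z) \geq d(\gamma(s_i), A) = s_i$ for every $z \in A$ (and similarly for $s'_j$), the cost of $\sigma$ is bounded below by the cost of the corresponding matching between $\sum_i s_i$ and $\sum_i s'_i$ in $(D^N([0, r], \{0\}), W_2)$. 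Hence $W_2(\Theta_N(s), \Theta_N(s')) \geq W_2(\sum_i s_i, \sum_i s'_i)$ in that 1-dimensional space. By \cref{lem:D^n_isom_embedding}, this value is $\overline{d_2^{2N}}\bigl([s_1,\ldots,s_N,0,\ldots,0],\,[s'_1,\ldots,s'_N,0,\ldots,0]\bigr)$ in $[0,r]^{2N}/S_{2N}$; choosing sorted representatives $(0,\ldots,0,s_1,\ldots,s_N)$ and $(0,\ldots,0,s'_1,\ldots,s'_N)$ and applying the Hardy--Littlewood--Pólya rearrangement inequality to formula \eqref{eq:quotient-metric-symmetric} forces the identity permutation to be optimal in $S_{2N}$, yielding $\|s - s'\|_2$. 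Thus $\Theta_N$ is an isometric embedding of $\Delta_N$, and the conclusion follows by letting $N$ be arbitrary.

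The step requiring the most care is the rearrangement argument: for two nondecreasing tuples $x, y \in [0, r]^{2N}$, one must check that the identity permutation minimizes $\sum_i (x_i - y_{\tau^{-1}(i)})^2$ over $\tau \in S_{2N}$. Expanding the square reduces this to maximizing $\sum_i x_i y_{\tau^{-1}(i)}$, which is the classical rearrangement inequality. Once this is in place, the reduction from the ambient space to the one-dimensional auxiliary space via $\gamma$ is routine, and the infinite-Hausdorff-dimension conclusion falls out immediately.
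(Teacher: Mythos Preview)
Your argument is correct and takes a genuinely different route from the paper. The paper's proof is a two-line appeal to structure theory: by \cref{prop:curvature} the space $(\overline{D}_2(X,A),W_2)$ is a nonnegatively curved Alexandrov space, by \cref{thm:not_loc_compactness_D_p} it is not locally compact, and then \cite[Theorem~10.8.1]{burago2001course} says that any such space has infinite Hausdorff dimension. Your proof instead exhibits, for every $N$, an explicit isometric copy of an $N$-dimensional Euclidean simplex inside $\overline{D}_2(X,A)$ by placing $N$ dots along a single geodesic segment normal to $A$; the lower bound on $W_2$ reduces to the one-dimensional pointed space $([0,r],\{0\})$ and is then handled by the rearrangement inequality via \cref{lem:D^n_isom_embedding} and \eqref{eq:quotient-metric-symmetric}.

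The trade-offs are clear. The paper's proof is shorter but imports a nontrivial external result and genuinely uses the Alexandrov hypothesis (through \cref{prop:curvature}). Your proof is self-contained and, as you point out, strictly more general: it uses only that $(X,d)$ is geodesic, that $A$ is distance minimizing (to anchor the geodesic at a nearest point of $A$ and get $d(\gamma(t),A)=t$), and that $A$ is not isolated (to guarantee a point $y\notin A$ with $0<d(y,A)<\infty$). In fact the rearrangement step works for any $p\in[1,\infty]$, so your argument upgrades without change to show that $(\overline{D}_p(X,A),W_p)$ has infinite Hausdorff dimension under these weaker hypotheses.
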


\begin{proof}
 By \cref{prop:curvature,thm:not_loc_compactness_D_p} $(\overline{D}_2(X,A),W_2)$
  is an Alexandrov space with non-negative curvature that is not locally compact.
  By~\cite[Theorem 10.8.1]{burago2001course}, 
$(\overline{D}_2(X,A),W_2)$
  has infinite Hausdorff dimension.
\end{proof}

\subsection{Topological dimension}\label{subsection:top_dimension}
%In this section we assume that $d$ is non-extended and separated.

A \emph{refinement} of an open cover $\{U_i\}$ is an open
cover $\{V_j\}$ such that for any $V_j$ there is $U_i$ so that
$V_j \subset U_i$. A collection of subsets of $X$ has \emph{order $n$} if
there is a point in $X$ contained in $n$ of the subsets, but no point
of $X$ is contained in $n+1$ of the subsets. The \emph{Lebesgue covering dimension} of a topological space $X$,
denoted $\dim(X)$,
is the smallest $n$ such that every open cover of $X$ has a refinement with
order $n+1$. 
	
	Below are some facts of the dimension theory (they are all described in \cite{bell2008asymptotic}).
	
	\begin{lemma}\label{lem:dim_of_discrete_set}
		If a topological space $X$ is discrete then $\dim(X)=0$.
	\end{lemma}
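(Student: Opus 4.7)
The plan is to show directly that every open cover of a discrete space admits a refinement of order $1$, which by the definition of Lebesgue covering dimension yields $\dim(X) \leq 0$. Combined with the convention that $\dim(X) \geq 0$ for nonempty spaces, this gives the equality.

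The main step is a one-line construction. Given any open cover $\{U_i\}_{i \in I}$ of $X$, consider the collection $\mathcal{V} = \{\{x\} : x \in X\}$. Since $X$ is discrete, each singleton $\{x\}$ is open. For every $x \in X$ there exists some $i(x) \in I$ with $x \in U_{i(x)}$, and then $\{x\} \subset U_{i(x)}$, so $\mathcal{V}$ is a refinement of $\{U_i\}$. Clearly $\mathcal{V}$ is still a cover of $X$. Moreover the sets in $\mathcal{V}$ are pairwise disjoint, so every point of $X$ lies in exactly one element of $\mathcal{V}$; in other words the order of $\mathcal{V}$ is $1 = 0 + 1$.

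Hence every open cover of $X$ has a refinement of order $n+1$ with $n = 0$, so $\dim(X) \leq 0$. There is no obstacle here beyond unwinding definitions; the only mild subtlety is the degenerate case $X = \emptyset$, where by convention $\dim(\emptyset)$ is taken to be $-1$ or $0$ depending on convention, and one may tacitly assume $X \neq \emptyset$ as is standard in this context. This completes the proof.
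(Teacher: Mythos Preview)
Your proof is correct and is the standard direct argument from the definition of Lebesgue covering dimension. The paper does not actually give a proof of this lemma; it is stated as one of several background facts from dimension theory with a reference to \cite{bell2008asymptotic}, so there is no in-paper proof to compare against.
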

	
	\begin{proposition} \label{prop:dim_of_subset}
		If $(X, d)$ is a non-extended, separated metric space and $A \subset X$ then $\dim A \leq \dim X$.
	\end{proposition}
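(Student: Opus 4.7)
The plan is to reduce to the monotonicity of covering dimension for closed subspaces, which has an elementary direct proof, and then bootstrap first to open subspaces via the countable sum theorem (which applies to all normal spaces, in particular to metric spaces) and finally to arbitrary subspaces.

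First I would establish the result for closed subsets $F \subset X$. Given an open cover $\{V_j\}_{j \in J}$ of $F$, each $V_j$ can be written as $U_j \cap F$ for some $U_j$ open in $X$, and then $\{U_j\}_{j \in J} \cup \{X \setminus F\}$ is an open cover of $X$. Refining it to order at most $\dim X + 1$ and intersecting the refinement with $F$ (discarding any member contained in $X \setminus F$) yields a refinement of $\{V_j\}$ with the same order bound, so $\dim F \leq \dim X$.

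Next, for an open subspace $U \subset X$, the key observation is that in a metric space $U$ can be written as a countable union of closed subsets of $X$: let $F_n = \{x \in X : d(x, X \setminus U) \geq 1/n\}$, so that $U = \bigcup_{n \geq 1} F_n$. Each $F_n$ is closed in $X$ (and in $U$), so $\dim F_n \leq \dim X$ by the first step, and the countable sum theorem then yields $\dim U \leq \dim X$.

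Finally, for an arbitrary subset $A \subset X$, take an open cover $\{V_j\}_{j \in J}$ of $A$, write $V_j = U_j \cap A$ with $U_j$ open in $X$, and set $U = \bigcup_j U_j$. Then $U$ is open in $X$, contains $A$, and $\{U_j\}_{j \in J}$ is an open cover of $U$; by the previous step this cover admits a refinement of order at most $\dim X + 1$, and intersecting with $A$ gives the required refinement of $\{V_j\}$. The main technical obstacle is the countable sum theorem for normal spaces, which I would cite from \cite{bell2008asymptotic} rather than prove from scratch; the separated, non-extended hypothesis guarantees we are within the standard metric setting where that reference applies directly.
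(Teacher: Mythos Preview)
The paper does not prove this proposition at all: it lists it among ``some facts of the dimension theory (they are all described in \cite{bell2008asymptotic})'' and moves on. So your proposal is not merely a different route---it is an actual proof where the paper gives none.

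Your three-step argument (closed subsets directly, open subsets via the countable sum theorem, arbitrary subsets by enlarging to an open superset) is the standard proof of the subset theorem for metrizable spaces and is correct. One small caution: you plan to cite \cite{bell2008asymptotic} for the countable sum theorem, but that survey is about \emph{asymptotic} dimension, and while it recalls a few facts about Lebesgue covering dimension, the countable sum theorem for covering dimension in normal spaces is not really its subject. If you want a clean citation you are better served by a standard dimension-theory text (e.g.\ Engelking's \emph{Dimension Theory} or Pears), or simply note that it is classical. Otherwise the argument goes through; the non-extended, separated hypotheses are exactly what put you in the ordinary metric setting where these tools apply.
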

	
	\begin{theorem} \label{thm:dim_of_product}
		For a compact, finite-dimensional, non-extended, separated metric space $X$ either $\dim X^n=n \dim X $ or $ \dim X^n=n \dim X -n +1$.
	\end{theorem}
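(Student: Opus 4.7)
The plan is to split the argument into an upper bound and a lower bound; set $d = \dim X$. This theorem is classical in dimension theory, essentially due to Boltyanskii and Kodama, and is in fact quoted (rather than proved) from \cite{bell2008asymptotic} in this paper, so the plan below sketches the mathematical substance rather than reproducing all the details.

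For the upper bound I would apply the standard product theorem for covering dimension of compact metric spaces: $\dim(Y \times Z) \leq \dim Y + \dim Z$. Since $X$ is compact, separated, and non-extended, it is a compact metric space in the classical sense, so iterating this inequality on the $n$-fold product $X^n$ yields $\dim X^n \leq nd$.

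For the lower bound one passes to cohomological dimension. Denote by $c\text{-}\dim_G Y$ the cohomological dimension of a compact metric space $Y$ with coefficients in an abelian group $G$. Three standard facts would be used: (i) for any field $F$, cohomological dimension is additive on products of compact metric spaces, so $c\text{-}\dim_F(Y \times Z) = c\text{-}\dim_F Y + c\text{-}\dim_F Z$; (ii) for compact metric spaces $\dim Y = \max_G c\text{-}\dim_G Y$ as $G$ ranges over the Bockstein family $\{\Q,\ \Z/p,\ \Z_p,\ \Z/p^{\infty}\}_{p \text{ prime}}$; and (iii) $c\text{-}\dim_G Y \leq \dim Y$ for every $G$. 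Now two cases arise. If $c\text{-}\dim_{\Q} X = d$, then additivity over the field $\Q$ gives $c\text{-}\dim_{\Q}(X^n) = nd$, hence $\dim X^n \geq nd$, matching the upper bound and producing $\dim X^n = nd$. Otherwise some prime $p$ achieves $c\text{-}\dim_{\Z/p} X = d$ while $c\text{-}\dim_{\Q} X = d-1$, and the Bockstein inequalities between the cohomological dimensions with respect to $\Q$, $\Z/p$, $\Z_p$, and $\Z/p^{\infty}$ force $\dim X^n = n(d-1) + 1 = nd - n + 1$.

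The main obstacle will be the second case: ruling out any further drop and establishing that $\dim X^n$ is precisely $nd - n + 1$. This requires careful bookkeeping with the Bockstein inequalities, combined with the additivity of $c\text{-}\dim_{\Z/p}$ on products, in order to produce the matching lower bound $\dim X^n \geq n(d-1) + 1$ and argue that the defect from the maximal value $nd$ is exactly $n-1$ and not larger. This is the substance of the classical Boltyanskii–Kodama theorem, and the full argument is worked out in \cite{bell2008asymptotic}.
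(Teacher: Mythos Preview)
Your observation is correct: the paper does not prove this theorem at all but simply cites it from \cite{bell2008asymptotic}, so there is no ``paper's own proof'' to compare against. Your sketch of the classical Boltyanskii--Kodama argument via the product inequality for the upper bound and Bockstein-theoretic cohomological dimension for the lower bound is the right framework, and you are appropriately candid that the second case requires the full Bockstein bookkeeping that you defer to the reference.

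One minor caution on your case split: you write that in the second case ``some prime $p$ achieves $c\text{-}\dim_{\Z/p} X = d$,'' but the maximum over the Bockstein family need not be attained by a field $\Z/p$; it could be attained by $\Z_p$ or $\Z/p^{\infty}$, and it is precisely the non-field members of the Bockstein basis that produce the defect. The actual dichotomy hinges on whether the dimension is realised by field coefficients (giving additivity and hence $\dim X^n = nd$) or only by the $p$-adic or Pr\"ufer groups (where additivity fails in the controlled way that yields $n(d-1)+1$). Your final paragraph already signals awareness that this is where the real work lies, so this is a refinement of phrasing rather than a gap in the plan.
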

	
	\begin{theorem}[Hurewicz Mapping Theorem]\label{thm:Hurewicz}
		Let $f: X \to Y$ be a map between compact spaces. Then $\dim X \leq \dim Y + \dim f$, where $\dim f=\sup \big\{\dim f^{-1}(y) \ | \ y \in Y \big\}$.
	\end{theorem}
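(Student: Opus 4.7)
The plan is to prove this classical theorem by induction on $m := \dim Y$, using the combinatorial characterization that $\dim X \leq k$ if and only if every finite open cover of $X$ admits an open refinement of order at most $k+1$. Throughout, write $n := \dim f$.

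For the base case $m=0$, I would establish $\dim X \leq n$. Given a finite open cover $\mathcal{U}$ of $X$ and a point $y \in Y$, the fiber $f^{-1}(y)$ is compact (being closed in $X$) with $\dim f^{-1}(y) \leq n$, so the trace of $\mathcal{U}$ on $f^{-1}(y)$ admits an open refinement $\mathcal{W}_y$ in $X$ of order $\leq n+1$. Compactness of the fiber together with continuity of $f$ shows that $\bigcup \mathcal{W}_y$ already covers $f^{-1}(V_y)$ for some open neighborhood $V_y$ of $y$ in $Y$. Since $Y$ is compact and zero-dimensional, the cover $\{V_y\}_{y \in Y}$ can be shrunk to a finite pairwise disjoint clopen cover $\{V_{y_1}, \ldots, V_{y_k}\}$; reassembling the restrictions of the $\mathcal{W}_{y_j}$ to $f^{-1}(V_{y_j})$ gives a global refinement of $\mathcal{U}$ of order $\leq n+1$, as required.

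For the inductive step, the same tubular construction yields local covers $\mathcal{W}_y$ of $f^{-1}(V_y)$ of order $\leq n+1$ refining $\mathcal{U}$; using $\dim Y \leq m$, one refines $\{V_y\}$ in $Y$ to a cover $\{V_j\}$ of order $\leq m+1$, and intersects each $\mathcal{W}_{y(j)}$ with $f^{-1}(V_j)$. The main obstacle is that this naive combination only bounds the order of the resulting cover of $X$ by $(m+1)(n+1)$, rather than the sharp additive value $m+n+1$ that the theorem demands.

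Obtaining the sharp bound requires a careful combinatorial shuffling — the heart of Hurewicz's original argument — in which the multiplicities arising from overlaps of the $V_j$ are absorbed by re-choosing the $\mathcal{W}_y$ coherently across the fibers, or, equivalently, by using the decomposition theorem $Y = Y_0 \cup \cdots \cup Y_m$ into zero-dimensional subsets together with an analogous decomposition of each fiber and then appealing to the countable sum theorem. An alternative route I would consider is the characterization of dimension via essential families of disjoint closed pairs, lifting an essential family on $X$ through $f$ to produce either an essential family on $Y$ of size $\dim Y + 1$ or, on a suitable fiber, one of size $\dim f + 1$. Either way the detailed bookkeeping is nontrivial and standard, which is why the paper treats this as a cited fact from \cite{bell2008asymptotic} rather than reproving it from scratch.
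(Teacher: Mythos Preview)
The paper does not prove this statement at all: it is listed among the background facts of dimension theory and attributed to \cite{bell2008asymptotic}, with no argument given. You recognize this yourself in your final paragraph, so in that sense your proposal is consistent with the paper's treatment.

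As for the sketch you offer, the base case $m=0$ is fine, and you are right that the naive product construction in the inductive step only yields order $(m+1)(n+1)$ rather than $m+n+1$. However, your suggested fixes remain at the level of gestures: ``careful combinatorial shuffling,'' the decomposition $Y = Y_0 \cup \cdots \cup Y_m$ into zero-dimensional pieces plus the countable sum theorem, or an essential-families argument are all genuine routes, but none is actually carried out, and each hides real work (for instance, the decomposition approach needs the fibers to be handled uniformly over the $Y_i$, and the sum theorem requires metrizability or a suitable substitute). So what you have is an honest outline that correctly locates the difficulty, not a proof; since the paper itself simply cites the result, that is all that is called for here.
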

	
%	\begin{theorem}\label{thm:top_dimension}
%		Let $(X, d, A)$ be a metric pair and $1 \leq p \leq \infty$. Assume there is a compact subset $Y \subset X$ such that $\dim Y \geq 2$ and $Y \cap A=\emptyset$. Then $\dim (D^n(X, A), W_p) \geq n$. It follows that $\dim(D(X,A), W_p) = \infty$.
%	\end{theorem}

\begin{theorem}\label{thm:top_dimension}
  Let $p  \in [1, \infty]$. Let $(X, d, A)$ be a metric pair,
  where $X$ is non-extended and separated. Assume there is $x \in X$ and $\eps >0$ such that $\overline{B}_{\eps}(x)$ is compact,  $\dim \overline{B}_{\eps}(x) \geq 2$ and $\overline{B}_{\eps}(x) \cap A^{3\eps}=\emptyset$. Then $\dim (D^n(X, A), W_p) \geq n$ for all $n$ and hence  $\dim(D(X,A), W_p) = \infty$.
\end{theorem}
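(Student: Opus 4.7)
The plan is to isometrically embed a high-dimensional subspace into $D^n(X,A)$, built out of $n$-fold configurations in the compact ball $K := \overline{B}_{\eps}(x)$. Observe that the hypothesis $K \cap A^{3\eps} = \emptyset$ gives $d(z, A) \geq 3\eps$ for every $z \in K$, while the triangle inequality gives $d(z, z') \leq 2\eps$ for all $z, z' \in K$. Consider the continuous map $\psi: K^n \to D^n(X,A)$ sending $(x_1, \ldots, x_n) \mapsto \sum_{i=1}^n x_i$. It factors through $K^n/S_n$, giving a continuous map $\bar\psi: (K^n/S_n, \overline{d_p^n}) \to (D^n(X,A), W_p)$ that I claim is an isometric embedding.

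To prove the isometric embedding, let $\alpha = \sum x_i$ and $\beta = \sum y_j$ with $x_i, y_j \in K$. By Lemma~\ref{lem:matching}, any matching of $\alpha, \beta$ has the form $\sum_{k \in K'}(x_k, y_{\varphi(k)}) + \sum_{i \notin K'}(x_i, z_i) + \sum_{j \notin \varphi(K')}(w_j, y_j)$ with $z_i, w_j \in A$. The crucial swap argument: whenever $K' \subsetneq \{1, \ldots, n\}$, pick $i_0 \notin K'$ and $j_0 \notin \varphi(K')$ and replace the two pairs $(x_{i_0}, z_{i_0})$, $(w_{j_0}, y_{j_0})$ by the single pair $(x_{i_0}, y_{j_0})$. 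Because $d(x_{i_0}, y_{j_0}) \leq 2\eps \leq 3\eps \leq \min\bigl(d(x_{i_0}, z_{i_0}),\, d(w_{j_0}, y_{j_0})\bigr)$, this does not increase the $p$-cost for $p = \infty$, and for $p < \infty$ strictly decreases $(\Cost_p)^p$ by at least $2(3\eps)^p - (2\eps)^p > 0$. Iterating shows that the infimum in $W_p(\alpha, \beta)$ is achieved on full bijections, giving $W_p(\alpha, \beta) = \min_{\tau \in S_n} \|(d(x_i, y_{\tau(i)}))_{i=1}^n\|_p = \overline{d_p^n}([x], [y])$ via \eqref{eq:quotient-metric-symmetric}.

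With the embedding in hand, the dimension calculation proceeds quickly. Since $K$ is compact with $\dim K \geq 2$, Theorem~\ref{thm:dim_of_product} yields $\dim K^n \geq n \dim K - n + 1 \geq n + 1$. The quotient map $q: K^n \to K^n/S_n$ between compact spaces has fibers of cardinality at most $n!$, hence discrete and $0$-dimensional by Lemma~\ref{lem:dim_of_discrete_set}, so the Hurewicz Mapping Theorem~\ref{thm:Hurewicz} gives $\dim(K^n/S_n) \geq \dim K^n \geq n + 1$. The image $\bar\psi(K^n/S_n)$ is a compact subspace of $D^n(X,A)$ isometric to $(K^n/S_n, \overline{d_p^n})$, and since $X$ is non-extended and separated the whole of $(D^n(X,A), W_p)$ is non-extended and separated, so Proposition~\ref{prop:dim_of_subset} applies to give $\dim D^n(X,A) \geq n+1 \geq n$. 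Finally, $D^n(X,A) \subset D(X,A)$ as a subspace for every $n$, so the same proposition gives $\dim D(X,A) \geq n$ for all $n$, whence $\dim D(X,A) = \infty$.

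The main obstacle is the isometric embedding step: verifying that the ``swap'' argument does make optimal matchings bijective requires the quantitative gap $3\eps$ vs $2\eps$ in both the $p=\infty$ and $p < \infty$ regimes, and it is this gap that the hypothesis $\overline{B}_{\eps}(x) \cap A^{3\eps} = \emptyset$ is tailored to provide. A secondary check is that $(D^n(X,A), W_p)$ inherits non-extendedness and separation from $(X,d)$, which is needed for Proposition~\ref{prop:dim_of_subset}; the former is immediate from any chosen matching, and the latter follows because any two distinct persistence diagrams differ on $X \setminus A$ and the infimum-achieving arguments from Section~4 force a strictly positive $W_p$-distance.
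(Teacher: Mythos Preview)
Your proof is correct and follows essentially the same strategy as the paper: embed $K^n/S_n$ isometrically into $D^n(X,A)$, then apply the product dimension theorem and Hurewicz to the quotient by $S_n$. Your swap argument makes explicit what the paper leaves implicit---the paper simply observes that $d_p(y,y') = d(y,y')$ on $K$ (from the definition of $d_p$ and the inequalities $d(y,y') \leq 2\eps < 3\eps \leq d(y,A)$) and then asserts that the inclusion $(K^n/S_n,\overline{d_p^n}) \hookrightarrow (D^n(X,A),W_p)$ is an isometric embedding, whereas you actually verify that optimal matchings are full bijections. One small omission: you invoke Theorem~\ref{thm:dim_of_product}, which requires $K$ to be finite-dimensional, without disposing of the case $\dim K = \infty$; the paper handles this separately (if $\dim K = \infty$, the embedding of $K$ itself into $D^n(X,A)$ already gives $\dim D^n(X,A) = \infty$). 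Your bound $\dim(K^n/S_n) \geq n+1$ is in fact what the paper's own inequalities yield, though the paper only records the weaker conclusion $\geq n$.
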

	
\begin{proof}
  Denote $\overline{B}_{\eps}(x)$ by $Y$.
%  Since $Y= \overline{B}_{\eps}(x)$ and $Y \cap A^{3\eps}=\emptyset$,
  For any $y, y' \in Y$, $d(y, y')\leq 2\eps$, $d(y, A) \geq 3\eps$, $d(y', A) \geq 3\eps$ and hence $d(y, y') < d(y, A) + d(y', A)$.
  Therefore, $d_p(y, y')=d(y, y')$.
  Hence the inclusion map $i: (Y, d|_Y) \to (D(X,A), W_p)$
%  Since for $y, z \in Y$, $d_p(y, z)=d(y, z)$ and hence $i$
  is an isometric embedding.
  Let $n \in \N$.
  Similarly the inclusion $g: (Y^n/S_n, \overline{d_p^n}) \to (D^n(X, A), W_p)$ that sends $[y_1, \dots, y_n]$ to $\sum_{i=1}^n y_i$, where $\overline{d_p^n}$ is defined in Section \ref{sec:basic-metric} is an isometric embedding.
  Suppose $\dim Y=\infty$. By Proposition \ref{prop:dim_of_subset}, $\dim (D^n(X,A), W_p) \geq \dim i(Y) = \dim Y  =\infty$. Now suppose $\dim Y \neq \infty$. Since $Y$ is compact then, by Theorem \ref{thm:dim_of_product}, either $\dim Y^n=n \dim Y \geq 2n$ or $ \dim Y^n=n \dim Y -n +1 \geq n+1$. In both cases $\dim Y^n \geq n$. Now consider the quotient map $q: Y^n \to Y^n/S_n$. Note that $Y^n/S_n$ is compact and \[\dim q=\sup \{\dim q^{-1} ([y_1, \dots, y_n]) \ | \ [y_1, \dots, y_n] \in Y^n/S_{n} \}.\] Since $q^{-1}([y_1, \dots, y_n])=\big\{(y_{\sigma(1)}, \dots, y_{\sigma(n)}) \ \big| \ \sigma \in S_{n} \big\}$; which is a discrete set, by Lemma \ref{lem:dim_of_discrete_set}, $\dim q=0$. Therefore, by Theorem \ref{thm:Hurewicz}, $\dim (Y^n/S_{n}) \geq \dim Y^n - \dim q \geq n$. By Proposition \ref{prop:dim_of_subset}, $\dim (D^n(X, A), W_p)  \geq \dim (Y^n/S_n, \overline{d_p^n})  \geq n$. By Proposition \ref{prop:dim_of_subset}, $\dim (D(X, A), W_p) =\infty$.
	\end{proof}

\subsection{Embeddability into Hilbert space}

In this section we study the embeddability of spaces of persistence diagrams into Hilbert space.
In Section \ref{sec:unique-geodesics} we proved that whenever we have more than one optimal matching, we have non-unique geodesics and
hence the corresponding space of persistence diagrams cannot be isometrically embedded into Hilbert space.
The space of countable classical persistence diagrams cannot be coarsely embedded into a Hilbert space for $p \in (2, \infty]$ \cite{bubenik2020embeddings, wagner2019nonembeddability}.
However, the space of classical persistence diagrams on $n$ points can be coarsely embedded into Hilbert space~\cite{mitra2019space}.
We generalized this result to metric pairs.

Let $(X, d_X)$ and $(Y, d_Y)$ be metric spaces. A function $f: X \to Y$ is a \emph{coarse embedding} if there exist non-decreasing functions $\rho_1, \rho_2:[0,\infty] \to [0, \infty]$ satisfying
\begin{enumerate}
	\item $\rho_1(d_X(x,y)) \leq d_Y(f(x), f(y)) \leq \rho_2(d_X(x,y))$ for all $x, y \in X$,
	\item $\lim_{t \to \infty} \rho_1(t)=+\infty.$
\end{enumerate}
It is easy to check that the composition of two coarse embeddings is a coarse embedding.

\begin{lemma}
   Let $(X, d_X)$ and $(Y, d_Y)$ be metric spaces. If $\infty \in \im(d_X)$ and $\infty \notin \im(d_Y)$ then $(X, d_X)$ does not coarsely embed into $(Y, d_Y)$.
\end{lemma}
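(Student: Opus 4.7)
The plan is to argue by contradiction, exploiting the hypothesis $\lim_{t \to \infty} \rho_1(t) = +\infty$ to show that any coarse embedding must send points at infinite distance to points at infinite distance, which is impossible in $(Y,d_Y)$.

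First, I will suppose that $f : X \to Y$ is a coarse embedding with control functions $\rho_1, \rho_2 : [0,\infty] \to [0,\infty]$ as in the definition. Since $\infty \in \im(d_X)$, I can choose $x, y \in X$ with $d_X(x,y) = \infty$. The coarse embedding inequality gives
\[
\rho_1(d_X(x,y)) \leq d_Y(f(x), f(y)),
\]
so it suffices to show $\rho_1(\infty) = \infty$.

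For this, fix any $M > 0$. Since $\lim_{t \to \infty} \rho_1(t) = +\infty$, there exists $T \in [0,\infty)$ with $\rho_1(T) > M$; because $\rho_1$ is non-decreasing and $T \leq \infty$, it follows that $\rho_1(\infty) \geq \rho_1(T) > M$. As $M$ was arbitrary, $\rho_1(\infty) = \infty$.

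Combining this with the inequality above yields $d_Y(f(x), f(y)) = \infty$, contradicting the assumption $\infty \notin \im(d_Y)$. Hence no coarse embedding $f : (X, d_X) \to (Y, d_Y)$ exists. There is no serious obstacle here; the only point to be careful about is the interpretation of $\rho_1$ at the point $\infty$, which is handled by the monotonicity assumption together with the divergence at infinity.
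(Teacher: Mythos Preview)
Your argument is correct and follows essentially the same approach as the paper's proof: assume a coarse embedding exists, pick points at infinite distance, use monotonicity and divergence of $\rho_1$ to conclude $\rho_1(\infty)=\infty$, and derive the contradiction $\infty \leq d_Y(f(x),f(y)) < \infty$. Your justification that $\rho_1(\infty)=\infty$ is slightly more explicit than the paper's, but the structure is identical.
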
	 

\begin{proof}
  Assume $f:(X, d_X) \to (Y, d_Y)$ is a coarse embedding.
  Let $x, x' \in X$ such that $d_X(x, x')=\infty$. Then for any non-decreasing function $\rho_1:[0, \infty] \to [1, \infty]$ such that $\lim_{t \to \infty} \rho_1(t)=+\infty$ we have  $\rho_1(\infty)=\infty$. By definition, $\infty=\rho_1(d_X(x, x')) \leq d_Y(f(x), f(x')) < \infty$. We have reached a contradiction.
\end{proof}

\begin{corollary}
  If $(X, d_X)$ is a metric space and $\infty \in \im(d_X)$ then $(X, d_X)$ does not coarsely embed into a Hilbert space.
\end{corollary}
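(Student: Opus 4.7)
The plan is to apply the preceding lemma directly. Let $(\mathcal{H}, \langle \cdot, \cdot \rangle)$ be a Hilbert space with induced metric $d_{\mathcal{H}}(u, v) = \norm{u - v}$. First I would observe that $\norm{u - v} = \sqrt{\langle u - v, u - v\rangle} \in [0, \infty)$ for any $u, v \in \mathcal{H}$, so $\infty \notin \im(d_{\mathcal{H}})$. By hypothesis, $\infty \in \im(d_X)$. Then the previous lemma, applied with $(Y, d_Y) = (\mathcal{H}, d_{\mathcal{H}})$, shows that $(X, d_X)$ does not coarsely embed into $(\mathcal{H}, d_{\mathcal{H}})$.

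Since this holds for every Hilbert space, $(X, d_X)$ does not coarsely embed into any Hilbert space. There is no real obstacle here; the corollary is essentially a restatement of the lemma with the observation that the natural metric on a Hilbert space never takes the value $\infty$.
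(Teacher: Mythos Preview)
Your proof is correct and takes essentially the same approach as the paper: the corollary is stated without proof there, as it follows immediately from the preceding lemma together with the observation that the norm-induced metric on a Hilbert space takes only finite values.
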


Therefore, we may restrict to non-extended metric spaces.

% If, in addition, $f$ is \emph{coarsely onto} meaning that there exists $C>0$ and for any $y \in f(X)$ there is $x \in X$ such that $d_Y(f(x),y) \leq C$ then $f$ is a \emph{coarse equivalence}.

Two metric spaces $X$ and $Y$ are \emph{coarsely equivalent} if there exists coarse embeddings $f:X \to Y$ and $g:Y \to X$ and a constant $K$ such that for all $x \in X$, $d_X(x,g(f(x))) \leq K$ and for all $x \in Y$, $d_Y(y,f(g(y))) \leq K$.

\begin{example}Let $(X, d)$ be a metric space. Define the equivalence relation $x \sim y$ if $d(x,y)=0$. Let $X/{\sim}$ be the quotient metric space, which is a separated metric space called the \emph{Kolmogorov quotient}. Then $X$ and $X/{\sim}$ are coarsely equivalent.
Similarly, for $x_0 \in X$, $n \geq 0$, and $p \in [1,\infty]$, $(D^n(X,x_0),W_p)$ and $(D^n(X/{\sim},x_0),W_p)$ are coarsely equivalent.
\end{example}

% \begin{lemma}\label{lem:Kolmogorov_quotient}
% 		Let $X$ be a metric space. Let $X/{\sim}$ be a separated metric space given by $x \sim y$ if $d(x, y)=0$ (i.e. the Kolmogorov quotient of $X$). Then the quotient map is a coarse equivalence. Moreover, $\asdim X=\asdim X/{\sim}$. 
% \end{lemma}

% \begin{proof}
% 	The first statement follows from the definition of coarse equivalence. The second statement follows from Proposition \ref{prop:asdim_coarse_eq}.
% \end{proof}

  If $X$ and $Y$ are coarsely equivalent then there is a coarse embedding of $X$ in $Z$ if and only if there is a coarse embedding of $Y$ in $Z$.
It follows that we may assume that our metric spaces are separated.

We will show that spaces of persistence diagram of cardinality at most $n$ embed into Hilbert space because they have finite asymptotic dimension, which we now define.

Let $X$ be a metric space and $n \in \Z_{+}$. A family $\{U_i\}$ of subsets of $X$ is \emph{uniformly bounded} if $\sup_{i} \diam(U_i) < \infty$. The \emph{asymptotic dimension} of $X$ does not exceed n ($\asdim X \leq n$) if for every uniformly bounded open cover $\{V_j\}$ of $X$ there is a uniformly bounded open cover $\{U_i\}$ of $X$ of order $\leq n + 1$ so that $\{V_j\}$ refines $\{U_i\}$. And $\asdim X = n$ if $\asdim X \leq n$ and $\asdim X \nleq n-1$.
%For example, $\asdim \R^n=n$ for any integer $n \geq 1$ and
%if $X$ is a bounded metric space then $\asdim X =0$. 
\begin{example}
 $\asdim \R^n=n$ for any integer $n \geq 1$.
\end{example}
\begin{example}
	If $X$ is a bounded metric space then $\asdim X =0$.
\end{example}
	\begin{proposition}[{\cite{bell2008asymptotic}\label{prop:asdim_subset}}]
		Let $X$ be a metric space and $Y \subset X$. Then $\asdim Y \leq \asdim X$.
	\end{proposition}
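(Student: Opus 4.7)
The plan is to prove $\asdim Y \leq n$ whenever $\asdim X \leq n$ by a lift--coarsen--restrict argument. Given a uniformly bounded open cover $\{V_j\}$ of $Y$ with $D = \sup_j \diam V_j < \infty$, I will lift it to a uniformly bounded open cover of $X$, invoke the hypothesis on $X$ to produce a uniformly bounded open cover of $X$ of order at most $n+1$ refined by the lift, and then intersect that cover with $Y$.

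For the lifting step, define $\tilde V_j = \{x \in X : d(x, V_j) < 1\}$, which is open in $X$, contains $V_j$, and has diameter at most $D + 2$. The family $\{\tilde V_j\}$ covers $Y$ but generally not $X$, so I augment it to $\mathcal{W} = \{\tilde V_j\}_j \cup \{B_1(x) : x \in X\}$. Then $\mathcal{W}$ is a uniformly bounded open cover of $X$, with all diameters bounded by $\max(D+2,2)$.

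By $\asdim X \leq n$ applied to $\mathcal{W}$, there is a uniformly bounded open cover $\{U_i\}$ of $X$ of order $\leq n+1$ which $\mathcal{W}$ refines; in particular, each $\tilde V_j$ lies in some $U_{i(j)}$. Restricting yields the family $\{U_i \cap Y\}$, which is a uniformly bounded open cover of $Y$ (diameters only decrease under intersection) and whose order cannot exceed that of $\{U_i\}$ (any point of $Y$ in $k$ of the sets $U_i \cap Y$ lies in $k$ of the $U_i$). Finally, $V_j \subseteq \tilde V_j \cap Y \subseteq U_{i(j)} \cap Y$ shows that $\{V_j\}$ refines $\{U_i \cap Y\}$, so $\asdim Y \leq n$, as required.

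The only subtlety---not so much an obstacle---is that an open cover of the subspace $Y$ need not consist of sets open in $X$, which is why a small open thickening in $X$ is necessary; one must then ensure that augmenting these thickenings to a cover of $X$ preserves uniform boundedness, which is handled by choosing a single fixed thickening radius (here, $1$) independently of $j$.
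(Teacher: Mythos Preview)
Your argument is correct. The paper does not prove this proposition; it simply cites it from the survey of Bell and Dranishnikov, so there is no in-paper proof to compare against. Your lift--coarsen--restrict approach, using the definition of asymptotic dimension stated in the paper (every uniformly bounded open cover is refined into a uniformly bounded open cover of order $\leq n+1$), is a clean and standard way to obtain the result directly from that definition.
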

	\begin{theorem}[{\cite{bell2008asymptotic}\label{thm:asdim_product}}]
		Let $X$ and $Y$ be metric spaces. Then $\asdim X\times Y \leq \asdim X + \asdim Y$.
	\end{theorem}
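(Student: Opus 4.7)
The statement is the classical subadditivity of asymptotic dimension under products, due to Bell and Dranishnikov, so the paper simply cites \cite{bell2008asymptotic}. My plan for a self-contained argument is to pass to the equivalent characterization of $\asdim$ in terms of uniformly bounded $r$-disjoint families, build the obvious product families on $X \times Y$, and then compress the resulting grid of families from $(m{+}1)(n{+}1)$ down to $m + n + 1$.

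First, I would invoke the following standard reformulation: $\asdim Z \leq k$ if and only if for every $r > 0$ there exist $k+1$ uniformly bounded families $\mathcal{F}_0, \ldots, \mathcal{F}_k$ of subsets of $Z$ such that each $\mathcal{F}_i$ is $r$-disjoint (distinct members are at mutual distance $\geq r$) and $\bigcup_i \mathcal{F}_i$ covers $Z$. The equivalence with the cover-refinement definition used earlier in the paper is standard, and I would take it as known.

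Next, let $m = \asdim X$, $n = \asdim Y$, and fix $r > 0$. The characterization produces $r$-disjoint uniformly bounded families $\mathcal{U}_0, \ldots, \mathcal{U}_m$ covering $X$ and $\mathcal{V}_0, \ldots, \mathcal{V}_n$ covering $Y$. Equipping $X \times Y$ with the sup product metric, each product family $\mathcal{U}_i \times \mathcal{V}_j := \{U \times V : U \in \mathcal{U}_i,\ V \in \mathcal{V}_j\}$ is itself $r$-disjoint, uniformly bounded by $\max_{i,j}\max(\diam \mathcal{U}_i, \diam \mathcal{V}_j)$, and the union over all $(i,j)$ covers $X \times Y$. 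This already proves the weak bound $\asdim(X\times Y) \leq (m+1)(n+1) - 1$, but not the additive bound we want.

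The main obstacle is the combinatorial compression from $(m+1)(n+1)$ down to $m + n + 1$ families while preserving uniform boundedness and $r$-disjointness. The Bell--Dranishnikov argument does this by induction on $n$: the base case $n = 0$ is immediate since $Y$ is bounded and $X \times Y$ is then coarsely equivalent to $X$. For the inductive step, each slab $X \times V$ with $V \in \mathcal{V}_j$ is, up to bounded distortion, a copy of $X$, hence has $\asdim \leq m$; the stratification of $Y$ by the $n+1$ families $\mathcal{V}_j$ then lets one apply the inductive hypothesis slab-by-slab and patch, using the $r$-separation between distinct $V \in \mathcal{V}_j$ to prevent collisions across slabs. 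The delicate step is the patching, where one must enlarge the disjointness parameter chosen on $Y$ to absorb the diameters coming from the $X$-direction; this is the point at which I would appeal directly to \cite{bell2008asymptotic} rather than reproducing the full combinatorial bookkeeping.
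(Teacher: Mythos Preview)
The paper does not prove this statement at all; it is quoted verbatim from \cite{bell2008asymptotic} and used as a black box, so there is no ``paper's own proof'' to compare against. Your outline is a faithful sketch of the standard Bell--Dranishnikov argument via $r$-disjoint uniformly bounded families, and you correctly identify the compression from $(m{+}1)(n{+}1)$ families to $m+n+1$ as the only nontrivial step.

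That said, your base case is wrong as stated. You write that when $n = \asdim Y = 0$ the case is immediate ``since $Y$ is bounded and $X \times Y$ is then coarsely equivalent to $X$.'' But $\asdim Y = 0$ does \emph{not} imply $Y$ is bounded: for instance $\{2^k : k \in \N\} \subset \R$ is unbounded with asymptotic dimension zero, and in general $X \times Y$ need not be coarsely equivalent to $X$. The correct base case runs exactly through the product-family construction you already set up: when $n=0$ there is a \emph{single} $r$-disjoint uniformly bounded family $\mathcal{V}_0$ covering $Y$, and then the $m+1$ families $\mathcal{U}_i \times \mathcal{V}_0$ already witness $\asdim(X \times Y) \leq m$. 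So the fix is local and easy, but the sentence as written is a genuine false claim, not just a shortcut.
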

	
%	\begin{theorem}\cite{bell2008asymptotic}\label{thm:asdim_finite_union}
%		Let $X$ be a metric spac with $A, B \subset X$. Then $\asdim (A \cup B)\leq \max\big\{\asdim A, \asdim B\big\}$.
%	\end{theorem}
	
	\begin{theorem}[{\cite{kasprowski2017asymptotic}\label{thm:asdim_X/F}}]
		Let $X$ be a proper metric space and $F$ be a finite group acting on $X$ by isometries. Then $X/F$ has the same asymptotic dimension as that of $X$.
	\end{theorem}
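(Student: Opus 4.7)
The plan is to establish $\asdim(X/F) = \asdim(X)$ by proving both inequalities. The basic tools are the quotient metric formula $d_{X/F}([x],[y]) = \min_{g \in F} d(x,gy)$, which makes $\pi \colon X \to X/F$ a $1$-Lipschitz surjection, and its consequence that for any $[y] \in X/F$ and $R > 0$ one has $\pi^{-1}(B_R([y])) = \bigcup_{g \in F} B_R(gy)$, a union of at most $|F|$ balls of radius $R$ in $X$.

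For $\asdim(X) \leq \asdim(X/F)$, I would lift uniformly bounded covers. Given a uniformly bounded cover $\mathcal{V}$ of $X/F$ of order $\leq \asdim(X/F)+1$ with Lebesgue number $\geq R$ and elements of diameter $\leq D$, the preimage $\pi^{-1}(V)$ is contained in a union of at most $|F|$ balls of radius $D$ but may be disconnected. I would partition each $\pi^{-1}(V)$ into its $D$-proximity clusters (maximal subsets where any two points are linked by a chain of hops of length at most $D$); each cluster has diameter at most $2|F|D$, so the resulting cover of $X$ is uniformly bounded. It has order $\leq \asdim(X/F)+1$, because each $x \in X$ lies in exactly one cluster of $\pi^{-1}(V)$ for each $V$ containing $\pi(x)$; and a small ball $B_R(x)$ with $2R \leq D$ lies in a single cluster of some $\pi^{-1}(V)$ whenever $\mathcal{V}$ contains a $V \supset B_R(\pi(x))$. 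Properness of $X$ is used here to guarantee that the clusters are of finite diameter.

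For $\asdim(X/F) \leq \asdim(X)$, I would use the $R$-disjoint families characterization of asymptotic dimension. The naive pushforward of an $R$-disjoint family $\mathcal{U}$ of $X$ is not $R$-disjoint in $X/F$, since the $F$-action can bring distant sets close after identification; what is needed is that $U, U' \in \mathcal{U}$ with $U' \neq g^{-1}U$ for all $g$ satisfy $d(U, gU') \geq R$ for all $g \in F$. I would obtain such a family by starting with $R$-disjoint families in $X$ at a much enlarged scale (depending on $|F|$ and a bound on how far $F$-translates of points in a compact region can spread), restricting to a Dirichlet fundamental domain $D = \{x \in X : d(x,x_0) \leq d(x,gx_0) \text{ for all } g \in F\}$ (which is well-behaved by properness of $X$), and then forming the $F$-saturations $\{gU : g \in F,\ U \in \mathcal{U}_i \cap D\}$. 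The $1$-Lipschitzness of $\pi$ gives uniform boundedness, and the saturation together with the choice of scale produces the required equivariant disjointness, so that the resulting families push forward to $R$-disjoint families on $X/F$ that together cover it.

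The main obstacle is the second direction: replacing the naive $|F|$-factor estimate $\asdim(X/F) \leq |F|(\asdim(X)+1)-1$ by the sharp bound requires a genuinely equivariant construction, and properness of $X$ enters crucially to control both the Dirichlet domain and the local finiteness of $F$-orbits used when passing from $R$-disjoint families in $X$ to $R$-disjoint families in $X/F$ without inflating the number of families.
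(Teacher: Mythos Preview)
The paper does not supply a proof of this theorem; it is quoted from Kasprowski and used as a black box. So there is nothing in the paper to compare against, and the question is whether your sketch stands on its own.

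Your first direction, $\asdim X \leq \asdim(X/F)$, is essentially correct: pulling back a cover and splitting each $\pi^{-1}(V)$ into its $D$-chain components gives a uniformly bounded cover of $X$ of the same order, because $\pi^{-1}(V)$ sits inside at most $|F|$ balls of radius $D$. One correction: properness of $X$ is \emph{not} used here---the diameter bound on clusters follows from $|F|<\infty$ alone. Properness is needed only in the other direction.

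The second direction has a genuine gap. You correctly identify that what is required is \emph{equivariant} $R$-disjointness, namely $d(U,gU')\geq R$ for all $g\in F$ and all distinct $U,U'$ in the same color. But restricting to a Dirichlet domain and saturating does not produce this, and there is no uniform bound on ``how far $F$-translates of points in a compact region can spread'' once that region is allowed to move away from the basepoint. Concretely, take $X=\R^2$ with the Euclidean metric, $F=\Z/4$ acting by $90^\circ$ rotation, and basepoint $(1,0)$, so the Dirichlet domain $D$ is the closed sector $|\theta|\leq 45^\circ$. For each $N\geq 2$ the points $x=(N,N-1)$ and $y=(N,-(N-1))$ both lie in $D$ with $d(x,y)=2(N-1)$, yet the rotation $g\colon(a,b)\mapsto(b,-a)$ gives $gx=(N-1,-N)$ and $d(gx,y)=\sqrt{2}$. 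Thus sets $U\ni x$ and $U'\ni y$ in the same family can be $R'$-disjoint in $X$ for arbitrarily large $R'$ while $\pi(U\cap D)$ and $\pi(U'\cap D)$ are at distance at most $\sqrt{2}$ in $X/F$; no enlargement of scale repairs this. Kasprowski's argument does not proceed through a single global fundamental domain; properness enters instead through a construction of $F$-invariant covers that works locally (where orbit diameters are controlled on compacta) and then assembles the pieces, which is substantially more delicate than your outline suggests.
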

		
	\begin{proposition}[{\cite{bell2008asymptotic}\label{prop:asdim_coarse_eq}}]
		If $f:X \to Y$ is a coarse embedding then $\asdim X \leq \asdim Y$. Moreover, if $f$ is a coarse equivalence then  $\asdim X = \asdim Y$.
	\end{proposition}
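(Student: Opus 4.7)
The plan is to work with the standard equivalent characterization of asymptotic dimension via $R$-disjoint families: $\asdim Z \leq n$ if and only if for every $R > 0$ there exist $D > 0$ and families $\mathcal{W}_0,\ldots,\mathcal{W}_n$ of subsets of $Z$ which together cover $Z$ such that each $\mathcal{W}_i$ is $R$-disjoint (distinct members are at distance $> R$) and each member has diameter $\leq D$. This reformulation is standard and is equivalent to the refinement-of-uniformly-bounded-covers definition given in the excerpt, so I will invoke it without further comment.

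Let $f \colon X \to Y$ be a coarse embedding with control functions $\rho_1,\rho_2$, and suppose $n := \asdim Y < \infty$ (otherwise the first claim is vacuous). Given $R > 0$, set $R' = \rho_2(R)$. By the characterization above applied to $Y$, choose $D' > 0$ and families $\mathcal{W}_0,\ldots,\mathcal{W}_n$ covering $Y$, each $R'$-disjoint and $D'$-bounded. For each $i$, pull back to $X$ by setting $\mathcal{U}_i = \{f^{-1}(W) : W \in \mathcal{W}_i\}$. The collection $\bigcup_i \mathcal{U}_i$ covers $X$.

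I will then verify the two required properties. For $R$-disjointness of $\mathcal{U}_i$: if $x \in f^{-1}(W)$ and $x' \in f^{-1}(W')$ with $W \neq W'$ in $\mathcal{W}_i$, then $d_Y(f(x),f(x')) > R'$, and since $d_Y(f(x),f(x')) \leq \rho_2(d_X(x,x'))$, the monotonicity of $\rho_2$ forces $d_X(x,x') > R$. For uniform boundedness of $\mathcal{U}_i$: for $x,y \in f^{-1}(W)$ with $W \in \mathcal{W}_i$, the lower control gives $\rho_1(d_X(x,y)) \leq d_Y(f(x),f(y)) \leq D'$, hence $d_X(x,y) \leq D := \sup\{t \geq 0 : \rho_1(t) \leq D'\}$, which is finite because $\rho_1(t) \to \infty$ as $t \to \infty$. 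This yields $\asdim X \leq n = \asdim Y$.

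The second statement is then immediate: a coarse equivalence consists of coarse embeddings in both directions, so the first assertion applied to each yields $\asdim X \leq \asdim Y$ and $\asdim Y \leq \asdim X$. The only mild obstacle is the passage between the open-cover definition in the excerpt and the $R$-disjoint family formulation (and, if openness of $\mathcal{U}_i$ is insisted upon, replacing each $f^{-1}(W)$ by a small open thickening, which preserves the order bound and changes the diameter bound only by an additive constant); but both of these are routine facts of coarse geometry.
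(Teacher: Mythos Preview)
The paper does not give its own proof of this proposition: it is stated with a citation to Bell--Dranishnikov and used as a black box. Your argument is the standard one (pull back $R'$-disjoint uniformly bounded families along the coarse embedding using the two control functions), and it is correct; the contrapositive step for $R$-disjointness and the finiteness of $D=\sup\{t:\rho_1(t)\le D'\}$ are both sound. For the second claim, the paper's definition of coarse equivalence already requires coarse embeddings in both directions, so your symmetry argument applies directly. The only caveat you flagged yourself---reconciling the open-cover-with-order definition in the text with the $R$-disjoint family formulation---is indeed routine, and since the paper is merely quoting the literature here there is nothing further to compare.
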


\begin{theorem}[{\cite{roe2003lectures}\label{thm:finite_asdim_embedding}}]
	A non-extended metric space of finite asymptotic dimension coarsely embeds in a Hilbert space.
\end{theorem}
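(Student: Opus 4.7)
The plan is to apply the classical construction (due to Gromov, Higson--Roe) that turns a sequence of uniformly bounded covers of controlled multiplicity, guaranteed by finite asymptotic dimension, into a coarse embedding into a Hilbert space. Throughout, let $n$ be an integer with $\asdim X \leq n$.

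First, I use the standard characterization of asymptotic dimension: for every $R > 0$, there is a uniformly bounded open cover of $X$ of multiplicity at most $n+1$ and Lebesgue number at least $R$. For each $k \in \mathbb{N}$, fix such a cover $\mathcal{U}_k$ with Lebesgue number $L_k \geq 2^k$, and let $D_k = \sup_{U \in \mathcal{U}_k} \diam(U) < \infty$. For each $U \in \mathcal{U}_k$ set
\[
\phi_U^k(x) = \frac{d(x, X \setminus U)}{\sum_{V \in \mathcal{U}_k} d(x, X \setminus V)}.
\]
This is a partition of unity subordinate to $\mathcal{U}_k$. Because every ball $B(x, L_k)$ is contained in some $V \in \mathcal{U}_k$, the denominator is at least $L_k$; each numerator is $1$-Lipschitz; and by bounded multiplicity at most $n+1$ of the $\phi_U^k$ are nonzero at any point. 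It follows that each $\phi_U^k$ is Lipschitz with constant $O((n+1)/L_k)$.

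Second, define $f_k : X \to \ell^2(\mathcal{U}_k)$ by $f_k(x) = \sum_U \sqrt{\phi_U^k(x)}\, e_U$, where $\{e_U\}$ is an orthonormal basis. Then $\|f_k(x)\| = 1$, and using $|\sqrt{a}-\sqrt{b}|^2 \leq |a-b|$ one obtains two key estimates:
\begin{equation*}
\|f_k(x)-f_k(y)\|^2 \;\leq\; C_n\, d(x,y)/L_k
\qquad\text{(Lipschitz regime)},
\end{equation*}
from the Lipschitz bound on $\phi_U^k$ together with the multiplicity bound; and
\begin{equation*}
d(x,y) > D_k \;\Longrightarrow\; \|f_k(x)-f_k(y)\|^2 = 2,
\end{equation*}
because then the supports of $f_k(x)$ and $f_k(y)$ are disjoint unit vectors.

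Third, fix a basepoint $x_0 \in X$ and assemble the maps into
\[
F : X \longrightarrow H := \bigoplus_{k \geq 1} \ell^2(\mathcal{U}_k), \qquad F(x) = \bigoplus_{k \geq 1} \bigl(f_k(x) - f_k(x_0)\bigr).
\]
Well-definedness: for all but finitely many $k$ (namely those with $L_k \geq d(x,x_0)$) the Lipschitz estimate yields a summand bounded by $C_n\, d(x,x_0)/L_k = O(2^{-k})$, which is summable; the remaining finitely many summands are bounded by $2$. The upper control function $\rho_2$ is obtained from the Lipschitz regime plus the trivial bound $\leq 2$ on the finitely many "short" scales, and the lower control function $\rho_1$ comes from the disjoint-support regime: once $d(x,y) > D_k$ the $k$-th summand contributes at least $2$, so $\|F(x)-F(y)\|^2 \geq 2\cdot \#\{k : D_k < d(x,y)\}$, which tends to infinity with $d(x,y)$.

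The main obstacle is to reconcile these two regimes so that $\rho_2$ is finite and $\rho_1$ diverges simultaneously. The upper bound requires that the geometric decay $1/L_k$ dominates the (at most logarithmic in $d(x,y)$) count of scales for which the Lipschitz regime fails; the lower bound requires that the diameters $D_k$ not grow too slowly, so that for any large distance some scale $k$ lies in the disjoint-support regime. Both are arranged by the choice $L_k = 2^k$ together with the fact that for each $k$ one can take $D_k$ to be at most a constant (depending only on $n$) times $L_k$, which is the essential geometric input of finite asymptotic dimension.
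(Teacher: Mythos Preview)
The paper does not prove this statement; it is quoted as a known result from Roe's \emph{Lectures on coarse geometry} and used as a black box in \cref{prop:coarse_emb_D^n_pointed_case}. Your sketch is the standard Higson--Roe argument, which is indeed what appears in that reference, so in that sense you have reproduced the intended proof.

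One correction: your final sentence asserts that finite asymptotic dimension lets you take $D_k \leq C_n L_k$ for a constant $C_n$ depending only on $n$. This is false in general --- the diameter bound $D_k$ can grow arbitrarily fast as a function of the Lebesgue number $L_k$; asymptotic dimension gives no quantitative control on this dependence. Fortunately the claim is also unnecessary. For the lower control $\rho_1$, you only need $\#\{k : D_k < t\} \to \infty$ as $t \to \infty$, and this holds automatically because each $D_k$ is finite: given any $M$, take $t > \max(D_1,\dots,D_M)$. For the upper control $\rho_2$, the number of ``short'' scales $k$ with $L_k = 2^k \leq d(x,y)$ is $O(\log d(x,y))$, each contributing at most $2$, while the Lipschitz tail sums to $O(1)$; so $\rho_2(t)^2 = O(\log t)$, which is finite for each $t$. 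Neither bound uses any relation between $D_k$ and $L_k$.
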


\begin{proposition}\label{prop:coarse_emb_D^n_pointed_case} 
	Let $p \in [1, \infty]$. Let $(X, d, x_0)$ be a pointed metric space where $(X,d)$ is non-extended proper and $\asdim X < \infty$. Then for every $n \in \N$ $(D^n(X, x_0), W_p)$ can be coarsely embedded into Hilbert space.
\end{proposition}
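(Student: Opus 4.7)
The plan is to reduce to Theorem \ref{thm:finite_asdim_embedding} by showing that $(D^n(X,x_0),W_p)$ is a non-extended metric space with finite asymptotic dimension. First, I would note that $W_p$ is non-extended on $D^n(X,x_0)$: by Lemma \ref{lem:optimal-matching-finite-pointed-metric-space}, $W_p(\alpha,\beta)$ is a minimum over $S_{m+n}$ of finite $p$-norms of values $d(x_i,y_{\sigma(i)})$, each of which is finite since $(X,d)$ is non-extended. So the only thing to establish is $\asdim (D^n(X,x_0),W_p)<\infty$.

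The key step is to exploit the isometric embedding $\phi: (D^n(X,x_0),W_p) \hookrightarrow (X^{2n}/S_{2n},\overline{d_p^{2n}})$ provided by Lemma \ref{lem:D^n_isom_embedding}. Since $\phi$ is an isometric embedding, its image is an isometric copy of $D^n(X,x_0)$, and Proposition \ref{prop:asdim_subset} (applied in the form ``asymptotic dimension is monotone under isometric embedding'', or equivalently via Proposition \ref{prop:asdim_coarse_eq}) gives
\[
\asdim (D^n(X,x_0),W_p) \leq \asdim (X^{2n}/S_{2n},\overline{d_p^{2n}}).
\]
So the problem is reduced to bounding the asymptotic dimension of the symmetric product $X^{2n}/S_{2n}$.

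Next I would bound $\asdim(X^{2n}/S_{2n})$ by applying Theorem \ref{thm:asdim_X/F}, which requires verifying that $X^{2n}$ (with the $p$-product metric $d_p^{2n}$) is proper and that $S_{2n}$ acts on $X^{2n}$ by isometries. Properness of $X^{2n}$ follows from properness of $X$: any closed ball in $(X^{2n},d_p^{2n})$ of radius $r$ is contained in the product of $2n$ closed $r$-balls in $X$ (since each coordinate projection is $1$-Lipschitz), and a finite product of compact sets is compact. The action of $S_{2n}$ by coordinate permutation is an isometry because $d_p^{2n}((x_i),(y_i))=\|(d(x_i,y_i))_{i=1}^{2n}\|_p$ is symmetric under simultaneous permutation of indices. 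Therefore Theorem \ref{thm:asdim_X/F} yields $\asdim (X^{2n}/S_{2n}) = \asdim X^{2n}$, and iterating Theorem \ref{thm:asdim_product} gives $\asdim X^{2n}\leq 2n\cdot \asdim X < \infty$.

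Combining these bounds, $(D^n(X,x_0),W_p)$ is non-extended and has finite asymptotic dimension, so Theorem \ref{thm:finite_asdim_embedding} gives a coarse embedding into a Hilbert space. I do not anticipate a real obstacle here; the main sanity check is that $X^{2n}$ with the $p$-product metric is genuinely proper and that $S_{2n}$ acts isometrically, both of which are routine from the definitions.
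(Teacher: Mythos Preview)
Your proposal is correct and follows essentially the same approach as the paper: embed $(D^n(X,x_0),W_p)$ isometrically into $X^{2n}/S_{2n}$ via Lemma~\ref{lem:D^n_isom_embedding}, use Theorems~\ref{thm:asdim_product} and~\ref{thm:asdim_X/F} to conclude finite asymptotic dimension, and then apply Theorem~\ref{thm:finite_asdim_embedding}. You have in fact been a bit more careful than the paper in explicitly checking the non-extendedness of $W_p$ and the hypotheses (properness of $X^{2n}$, isometric $S_{2n}$-action) needed for Theorem~\ref{thm:asdim_X/F}.
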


\begin{proof}
%	Let $\asdim X < \infty$. 
By Theorem \ref{thm:asdim_product}, $\asdim X^{2n} < \infty$. Since $X$ is proper, so is $X^{2n}$. Since the symmetric group $S_{2n}$ acts by isometries on $X^{2n}$, by Theorem \ref{thm:asdim_X/F}, $\asdim X^{2n}/S_{2n} < \infty$. Furthermore, by Lemma \ref{lem:D^n_isom_embedding} and Proposition \ref{prop:asdim_coarse_eq}, $\asdim D^n(X, x_0) < \infty$. Finally, by Theorem \ref{thm:finite_asdim_embedding}, $(D^n(X, x_0), W_p)$ coarsely embeds into Hilbert space.
\end{proof}

% \change{\begin{remark}
% \cite{mitra2019space} proved \Cref{prop:coarse_emb_D^n_pointed_case} for $(D^n(\R^2_\leq, \Delta), W_p)$  when $p \in [1, \infty]$ by showing that its asymptotic dimension is exactly $2n$. Note that since our setting is more general, we were able to come up with more compact proof.
% \end{remark}}

In the remainder of this section we prove that under mild hypotheses, the space of finite persistence diagrams has infinite asymptotic dimension, so we may not use \cref{thm:finite_asdim_embedding} to obtain a coarse embedding into Hilbert space.

Let $p \in [1, \infty]$. Let $\R_+$ denote the set of non-negative reals. For any $r, r' \in \R_+$, the function $\rho(r, r')=|r-r'|$ defines a metric on $\R_+$. Then $(\R_+, \rho, 0)$ is a pointed metric space.
Recall from \cref{sec:basic-metric} that the product $\R_+^n$ as the $p$-product metric $\rho_p^n$ and the quotient $R_+^n/S_n$ has the quotient metric $\overline{\rho_p^n}$.

\begin{lemma}\label{lem:R_+^n/S_n_isometry}
		For each $n \in \N$, $(\R_+^n/S_n, \overline{\rho^n_p})$ and $(D^n(\R_+, 0), W_p)$  are isometric.
\end{lemma}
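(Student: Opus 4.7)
The plan is to exhibit an explicit isometric bijection $\phi: D^n(\R_+,0) \to \R_+^n/S_n$ by sending a diagram to a representative vector padded with zeros. Concretely, for $\alpha \in D^n(\R_+,0)$ with $\hat{\alpha} = \sum_{i=1}^{m} x_i$, set
\[
  \phi(\alpha) = [x_1,\ldots,x_m,\underbrace{0,\ldots,0}_{n-m}].
\]
That this is well defined (independent of the ordering of the $x_i$) and bijective (any class in $\R_+^n/S_n$ has a representative with all zeros at the end, giving a unique diagram in $D^n(\R_+,0)$ by discarding the basepoint $0$) is immediate.

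For the isometry I would first translate both metrics into a common combinatorial form. By Lemma~\ref{lem:optimal-matching-finite-pointed-metric-space}, with $\hat{\alpha}=\sum_{i=1}^{m_1} x_i$ and $\hat{\beta}=\sum_{j=1}^{m_2} y_j$, $W_p(\alpha,\beta)$ equals the minimum of $\norm{(\rho(\tilde x_i,\tilde y_{\sigma(i)}))_{i=1}^{m_1+m_2}}_p$ over $\sigma\in S_{m_1+m_2}$, where $\tilde x,\tilde y$ are obtained by zero-padding both tuples to length $m_1+m_2$. By \eqref{eq:quotient-metric-symmetric}, $\overline{\rho_p^n}(\phi(\alpha),\phi(\beta))$ equals the analogous minimum over $\tau\in S_n$ of length-$n$ padded tuples. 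Let $f(N)$ denote the minimum of $\norm{(\rho(\tilde x_i,\tilde y_{\sigma(i)}))_{i=1}^N}_p$ over $\sigma\in S_N$, where the tuples are padded with zeros to length $N$, defined for any $N\geq \max(m_1,m_2)$. It suffices to show that $f(N)=f(N+1)$ whenever $N\geq \max(m_1,m_2)$, since then $f(m_1+m_2)=f(n)$.

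The inequality $f(N+1)\leq f(N)$ is the easy direction: take an optimal $\tau\in S_N$ and extend it to $S_{N+1}$ by matching the newly appended zero position to itself, which contributes $\rho(0,0)=0$ to the cost. The main step, and the only real obstacle, is $f(N)\leq f(N+1)$. Given an optimal $\sigma\in S_{N+1}$, consider the pair $(\tilde x_{i_1},0)$ and $(0,\tilde y_{j_1})$ where $i_1=\sigma^{-1}(N+1)$ and $j_1=\sigma(N+1)$ (the case $i_1=N+1$ is trivial). Collapse these into a single pair $(\tilde x_{i_1},\tilde y_{j_1})$ to obtain $\tau\in S_N$; this replaces the two coordinates $\tilde x_{i_1}$ and $\tilde y_{j_1}$ in the cost vector by the single coordinate $|\tilde x_{i_1}-\tilde y_{j_1}|$. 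The key estimate is that for $a,b\geq 0$ one has $|a-b|\leq \max(a,b)$, so for $p=\infty$ the $\ell^\infty$ norm can only decrease, and for $p\in[1,\infty)$ one has $|a-b|^p\leq \max(a,b)^p\leq a^p+b^p$, so the $\ell^p$ norm can only decrease. Hence $\tau$ has cost at most that of $\sigma$, giving $f(N)\leq f(N+1)$ and completing the proof.
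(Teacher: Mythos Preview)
Your proof is correct and uses essentially the same idea as the paper: the key inequality $|a-b|\leq\max(a,b)$ for $a,b\geq 0$ (the paper states it as $\rho(r,r')<\max\{\rho(r,0),\rho(0,r')\}$ for $r,r'>0$) shows that matching two nonzero points to the basepoint is never better than matching them to each other. The paper simply cites this inequality and asserts the equality $\overline{\rho^n_p}=\min_{\sigma\in S_n}\norm{\cdot}_p=W_p$ directly, leaving the collapse argument implicit; you make it explicit and careful via the auxiliary function $f(N)$ and the step $f(N)=f(N+1)$.
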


\begin{proof}
  Define a map $f: (\R_+^n/S_n, \overline{\rho^n_p}) \to (D^n(\R_+, 0), W_p)$ by $f([(r_1, \dots, r_n)])=r_1+\dots+r_n$. It is easy to check that this map is well defined and is a bijection.
  Let $(r_1,\dots,r_n), (s_1,\dots,s_n) \in \R_+^n$.
  Since for any $r, r' \in \R_+ \setminus \{0\}$,
%  $\rho(r, r') < \rho(r, 0) +\rho(0, r')$ and
  $\rho(r, r') < \max \{\rho(r, 0),\rho(0, r')\} $,
	\[	\overline{\rho^n_p}((r_1,\dots,r_n), (s_1,\dots,s_n))	=\min_{\sigma \in S_n} \norm{(\rho(r_i, s_{\sigma(i)}))_{i=1}^n}_p= W_p(r_1+\dots+r_n, s_1+\dots s_n) .\] Hence $f$ is an isometry.
	\end{proof}
	
	\begin{lemma}[{\cite{kuchaiev2008coarse}\label{lem:Kuchaiev}}]
		Every unbounded proper geodesic metric space has a geodesic ray (i.e. an isometric copy of $\R_+$).
	\end{lemma}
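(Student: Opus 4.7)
The plan is to construct the geodesic ray as a limit of longer and longer geodesic segments emanating from a fixed basepoint, using properness and the Arzelà–Ascoli theorem. Fix any $x_0 \in X$. Since $X$ is unbounded, choose a sequence $(x_n)$ with $r_n := d(x_0, x_n) \to \infty$. Because $(X,d)$ is geodesic, for each $n$ there is a constant-speed geodesic $\gamma_n : [0, r_n] \to X$ from $x_0$ to $x_n$, parameterized so that $d(\gamma_n(s), \gamma_n(t)) = |s-t|$ for all $s, t \in [0, r_n]$. In particular each $\gamma_n$ is $1$-Lipschitz, and $\gamma_n(t) \in \overline{B}_t(x_0)$ for all $t \in [0, r_n]$.

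Next, I would pass to a subsequence, reindexed still as $(\gamma_n)$, with $r_n \geq n$, so that each $\gamma_n$ is defined on $[0, n]$. Fix $N \in \N$. For $n \geq N$ the restrictions $\gamma_n|_{[0,N]}$ are $1$-Lipschitz maps from the compact interval $[0,N]$ into the closed ball $\overline{B}_N(x_0)$, which is compact because $(X,d)$ is proper. Hence this family is equicontinuous and pointwise relatively compact, so by Arzelà–Ascoli there is a subsequence that converges uniformly on $[0,N]$ to a $1$-Lipschitz map $\gamma^{(N)} : [0,N] \to X$. A standard diagonal extraction produces a single subsequence $(\gamma_{n_k})$ and a map $\gamma : [0,\infty) \to X$ such that $\gamma_{n_k} \to \gamma$ uniformly on every compact subinterval of $[0,\infty)$; in particular $\gamma(0) = x_0$ and $\gamma$ is $1$-Lipschitz.

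Finally, I would verify that $\gamma$ is a geodesic ray, i.e.\ an isometric embedding of $\R_+$. Fix $s, t \in [0,\infty)$ with $s < t$ and pick $N > t$. For $k$ large enough we have $n_k \geq N$, so $\gamma_{n_k}|_{[0,N]}$ is an isometric embedding and therefore $d(\gamma_{n_k}(s), \gamma_{n_k}(t)) = t - s$. Passing to the limit using the continuity of $d$ and uniform convergence on $[0,N]$ yields $d(\gamma(s), \gamma(t)) = t - s$, so $\gamma : (\R_+, \rho) \to (X, d)$ is an isometric embedding.

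The main obstacle — which is really only a bookkeeping issue — is organizing the diagonal extraction so that the resulting limit map is simultaneously defined and isometric on all of $[0,\infty)$; everything else is a direct consequence of properness (to invoke Arzelà–Ascoli on each $[0,N]$) and of the fact that being an isometric embedding is preserved under uniform limits on compacta.
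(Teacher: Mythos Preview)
The paper does not prove this lemma; it simply cites it from \cite{kuchaiev2008coarse}. Your argument is correct and is the standard construction: geodesic segments from a fixed basepoint to points escaping to infinity are $1$-Lipschitz into compact closed balls (by properness), so Arzel\`a--Ascoli plus a diagonal extraction yields a limit map which inherits the isometric-embedding property from the approximants. There is nothing to compare against in the paper itself, and your proof would serve as a self-contained replacement for the citation.
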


\begin{lemma}\label{lem:D^n(R_+,0)_isometry}
  Let $(X, d, x_0)$ be a pointed metric space where $(X, d)$ is unbounded proper geodesic. Then for every $n \in \N$, $(D^n(\R_+, 0), W_p)$ coarsely embeds into $(D^n(X, x_0), W_p)$.
  Furthermore, $\asdim (D^n(X,x_0), W_p) \geq n$.
\end{lemma}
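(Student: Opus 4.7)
The plan is to construct a geodesic ray starting at $x_0$, push it forward to an isometric embedding of $D^n(\R_+, 0)$ into $D^n(X, x_0)$, and then chain known asymptotic-dimension facts to obtain the lower bound.

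First I would strengthen \cref{lem:Kuchaiev} in our setting to produce a geodesic ray $\iota:\R_+ \to X$ with $\iota(0) = x_0$. Since $(X,d)$ is unbounded and proper, pick $z_k \in X$ with $d(x_0,z_k)\to\infty$ and a geodesic $\gamma_k:[0,d(x_0,z_k)] \to X$ from $x_0$ to $z_k$, extending each $\gamma_k$ to $\R_+$ as a constant past its endpoint. Each $\gamma_k$ is $1$-Lipschitz, and for each fixed $t$ the points $\gamma_k(t)$ all lie in the compact ball $\overline{B}_t(x_0)$; Arzel\`a-Ascoli together with a diagonal extraction produces a subsequence converging uniformly on compacta to a $1$-Lipschitz $\iota:\R_+\to X$ with $\iota(0)=x_0$. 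Passing to the limit in $d(\gamma_k(s),\gamma_k(t))=|t-s|$ (valid once $\max(s,t)\le d(x_0,z_k)$) shows $\iota$ is an isometric embedding, hence a geodesic ray.

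Next I would define $\iota_*:D^n(\R_+,0) \to D^n(X,x_0)$ by $\iota_*(\sum_i r_i) = \sum_i \iota(r_i)$; this descends to equivalence classes since $\iota(0) = x_0$, and clearly $|\iota_*\alpha| \le |\alpha| \le n$. By \cref{lem:optimal-matching-finite-pointed-metric-space}, both $W_p(\alpha,\beta)$ and $W_p(\iota_*\alpha, \iota_*\beta)$ are minima over permutations $\sigma$ of the cost vectors $\|(d(r_i,s_{\sigma(i)}))\|_p$ and $\|(d(\iota(r_i),\iota(s_{\sigma(i)})))\|_p$ respectively (with padding by $0$ in the source and $x_0 = \iota(0)$ in the target). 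Since $\iota$ is an isometry, these cost vectors agree term-by-term, so $\iota_*$ is an isometric embedding and in particular a coarse embedding, establishing the first assertion.

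For the asymptotic dimension, \cref{prop:asdim_coarse_eq} gives $\asdim(D^n(X,x_0),W_p) \ge \asdim(D^n(\R_+,0),W_p)$, which by \cref{lem:R_+^n/S_n_isometry} equals $\asdim(\R_+^n/S_n,\overline{\rho_p^n})$. Since $\R_+^n$ is proper and $S_n$ is finite acting by isometries, \cref{thm:asdim_X/F} yields $\asdim(\R_+^n/S_n) = \asdim \R_+^n$, and \cref{prop:asdim_subset} supplies $\asdim \R_+^n \le \asdim \R^n = n$. The matching lower bound $\asdim\R_+^n \ge n$ is the main technical point; one way is to cover $\R^n$ by its $2^n$ closed coordinate orthants, each isometric to $\R_+^n$, and apply the Bell--Dranishnikov finite-union theorem for asymptotic dimension, forcing $\asdim\R^n \le \asdim\R_+^n$. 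Chaining the inequalities produces $\asdim(D^n(X,x_0),W_p) \ge n$. The two external ingredients---the Arzel\`a-Ascoli construction of the ray based at $x_0$ and the finite-union theorem for asymptotic dimension---are the only steps that reach outside the toolkit already developed earlier in the paper.
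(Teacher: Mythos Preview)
Your proof is correct and follows the same route as the paper: obtain a geodesic ray based at $x_0$, push it forward coordinatewise to an isometric embedding $D^n(\R_+,0)\hookrightarrow D^n(X,x_0)$, then use \cref{lem:R_+^n/S_n_isometry} and \cref{prop:asdim_coarse_eq} to conclude. You are actually more careful than the paper in two places it glosses over---justifying that the ray can be taken with $\iota(0)=x_0$ (the paper simply asserts ``we can assume $\phi(0)=x_0$'' after citing \cref{lem:Kuchaiev}) and spelling out why $\asdim(\R_+^n/S_n,\overline{\rho_p^n})=n$ via \cref{thm:asdim_X/F} and the finite-union theorem (the paper just writes ``By \cref{lem:R_+^n/S_n_isometry}, $\asdim(D^n(\R_+,0),W_p)=n$'').
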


\begin{proof} By Lemma \ref{lem:Kuchaiev}, there is an isometric embedding $\phi:(\R_+, \rho) \to (X, d)$. We can assume that $\phi(0)=x_0$. For any $r, r' \in \R_+$, $d(\phi(r), \phi(r'))=\rho(r, r')$ and hence for any $r, r' \in \R_+ \setminus \{0\}$,  $d(\phi(r), \phi(r')) < d(\phi(r), x_0)+ d(x_0, \phi(r'))$. Now define a map $\tilde{\phi}: (D^n(\R_+, 0), W_p) \to (D^n(X, x_0), W_p)$ by $\tilde{\phi}(r_1+\dots+r_n)=\phi(r_1)+\dots+\phi(r_n)$. It is easy to check that map $\tilde{\phi}$ is well-defined. Let $r_1+\dots+r_n, s_1+\dots, s_n \in D^n(\R_+, 0)$. Then
\begin{align*}
W_p(r_1+\dots+r_n &,  s_1+\dots+s_n) =\min_{\sigma \in S_n} \norm{(\rho(r_i, s_{\sigma(i)}))_{i=1}^n}_p \\
	&=\min_{\sigma \in S_n} \norm{(d(\phi(r_i), \phi(s_{\sigma(i)})))_{i=1}^n}_p=W_p(\phi(r_1)+\dots+\phi(r_n), \phi(s_1)+\dots+\phi(s_n)).
\end{align*}
Hence $\tilde{\phi}$ is an isometric embedding.

By Lemma \ref{lem:R_+^n/S_n_isometry},
% $(D^n(\R_+, 0), W_p)$ is isometric to $(\R_+^n/S_n, \overline{\rho^n_p})$. By Theorem \ref{thm:asdim_X/F}, $\asdim (\R_+^n/S_n, \overline{\rho^n_p}) = n$ and hence
$\asdim (D^n(\R_+, 0), W_p) = n$.
Therefore, by Lemma \ref{lem:D^n(R_+,0)_isometry} and Proposition \ref{prop:asdim_coarse_eq}, we have that $\asdim (D^n(X, x_0), W_p ) \geq n$.
\end{proof}

\begin{proposition}\label{prop:asdim_is_infty}
Let $(X,d,A)$ be a metric pair where $X$ is geodesic, $(X/A, d_{1})$ is unbounded and proper, and $A$ is distance minimizing. Then \[\asdim (D(X,A),W_p)=\infty.\]
\end{proposition}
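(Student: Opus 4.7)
The plan is to reduce to the pointed metric space setting addressed by Lemma~\ref{lem:D^n(R_+,0)_isometry} and then apply monotonicity of asymptotic dimension under subspaces.

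First, I would pass from the metric pair $(X,d,A)$ to its quotient. By Lemma~\ref{lem:isomorphism}(c), $(D(X,A),W_p)$ is isometrically isomorphic to $(D(X/A,A),W_p)$, where the Wasserstein distance on the right is computed from the induced quotient metric $d_p$ on $X/A$. Since $d_1$ and $d_p$ on $X/A$ are Lipschitz equivalent by Lemma~\ref{lem:equiv_metrics}, the identity map $(D(X/A,A),W_p[d_p]) \to (D(X/A,A),W_p[d_1])$ is bi-Lipschitz, hence a coarse equivalence, and by Proposition~\ref{prop:asdim_coarse_eq} the two asymptotic dimensions coincide. Therefore it suffices to prove $\asdim(D(X/A,A),W_p[d_1]) = \infty$.

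Next, I would verify that the pointed metric space $(X/A,d_1,A)$ satisfies the hypotheses of Lemma~\ref{lem:D^n(R_+,0)_isometry}: it is unbounded and proper by assumption, and it is geodesic by Proposition~\ref{prop:X/A_geodesic} (applied using that $X$ is geodesic and $A$ is distance minimizing). Consequently, for each $n \in \N$, Lemma~\ref{lem:D^n(R_+,0)_isometry} yields $\asdim(D^n(X/A,A),W_p[d_1]) \geq n$.

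Finally, since $D^n(X/A,A)$ sits inside $D(X/A,A)$ with the restricted metric, Proposition~\ref{prop:asdim_subset} gives $\asdim(D(X/A,A),W_p[d_1]) \geq n$ for every $n$, so this asymptotic dimension is infinite, and combining with the first step completes the proof. The argument is essentially an unpacking of results already established in the paper; the only subtlety is the bookkeeping of which metric on $X/A$ governs each step, which is resolved by the Lipschitz equivalence of the $d_q$'s on $X/A$.
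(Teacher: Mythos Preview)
Your proof is correct and follows essentially the same route as the paper's: pass to the quotient pointed space $(X/A,d_1,A)$, invoke Proposition~\ref{prop:X/A_geodesic} to get that it is geodesic, apply Lemma~\ref{lem:D^n(R_+,0)_isometry} to obtain $\asdim D^n \geq n$, and conclude via Proposition~\ref{prop:asdim_subset}. The only difference is that your detour through Lipschitz equivalence of $d_1$ and $d_p$ is unnecessary, since Lemma~\ref{lem:isomorphism}(b) already gives $W_p[d_1]=W_p[d_p]=W_p[d]$ as an equality of metrics, not merely a bi-Lipschitz equivalence; the paper uses this directly.
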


\begin{proof}
  Let $n \in \N$.
  By Proposition \ref{prop:X/A_geodesic}, $(X/A, d_{1})$ is a geodesic space.
From Lemma \ref{lem:Kuchaiev} and Proposition \ref{prop:asdim_coarse_eq} it follows that $\asdim (X/A, d_{1}) \geq 1$.
By Lemma \ref{lem:D^n(R_+,0)_isometry}, $\asdim (D^n(X/A, W_p[d_1])) \geq n$.
Moreover, by Lemma \ref{lem:isomorphism}, $W_p[d_1]=W_p[d]$ and $\asdim(D^n(X, A), W_p) \geq n$. Finally, since $n$ was arbitrary $\asdim (D(X, A), W_p) =\infty$.
\end{proof}

% \change{\begin{remark}
% \cite{mitra2019space} proved that $\asdim(D(\R^2_
% \leq, \Delta), W_p) = \infty$ for $p \in (1, \infty]$  by showing that  one can isometrically embed an $n$-cube into $(D(\R^2_
% \leq, \Delta), W_p) $. 
% \end{remark}}

% \subsection*{Open questions}

% Below are some unanswered questions.
        
% \begin{enumerate}
% \item If $(D(X, A), W_p)$ or $(\overline{D}_p(X, A), W_p)$ is path connected (contractible), then is $(X, d, A)$ path connected (contractible)? 
% %\item If $(D(X, A), W_p)$ or $(\overline{D}_p(X, A), W_p)$ is geodesic, then is $(X, d, A)$ geodesic? 
% \item If $(X, d, A)$ is connected, then is $(D(X, A), W_p)$ or $(\overline{D}_p(X, A), W_p)$ connected? 
% \end{enumerate}

\subsection*{Acknowledgments}

This research was partially supported by the Southeast Center for Mathematics and Biology, an NSF-Simons Research Center for Mathematics of Complex Biological Systems, under National Science Foundation Grant No. DMS- 1764406 and Simons Foundation Grant No. 594594. This material is based upon work supported by, or in part by, the Army Research Laboratory and the Army Research Office under contract/grant number W911NF-18-1-0307.
The authors would like to thank Alex Elchesen and Alexander Dranishnikov for helpful discussions and comments.
The authors would like to thank the referees whose careful reading led to many improvements in our paper.

On behalf of all authors, the corresponding author states that there is no conflict of interest.
	
% \bibliographystyle{plain}
% \bibliography{../bibliography}

\end{document}